\DeclareMathAlphabet{\mathcal}{OMS}{cmsy}{m}{n}
\patchcmd{\@maketitle}{\LARGE}{\Large}{}{}
\newtheorem{mainthm}{Theorem}
\begin{document}
\title{Monodromy Groups of Supersingular Abelian Surfaces over \ensuremath{\qp}}
\author{\small{MOQING CHEN}}
\date{\today}
\maketitle
\begin{abstract}
For primes $p\ge 7$, we give a parametrization of the filtered $\varphi$-modules attached to the $p$-adic Tate modules of abelian surfaces over $\qp$ with supersingular good reduction. We use this classification to determine the neutral components of the monodromy groups of the associated $p$-adic representations up to $\bqp$-isomorphism. Furthermore, we analyze the $p$-adic distribution of these groups in the moduli space of filtered $\varphi$-modules. In particular, we prove that the neutral components are generically isomorphic to $\GL_2\times_{\det}\GL_2$.
\end{abstract}
\tableofcontents

\section{Introduction}
\subsection{Main classification result}
In this paper we study the monodromy groups of the $p$-adic crystalline representations arising as the $p$-adic Tate modules of abelian surfaces over $\qp$ with supersingular good reduction. Let $\mathbf{AbSurf}^{\sss}_{\qp}$ be the full subcategory of the category of abelian varieties over $\qp$ consisting of such abelian surfaces. Via Fontaine's functor $D_{\cris}$, we work in the category $\mf_{\qp}^{\f}$ of admissible filtered $\varphi$-modules: 
$$
\begin{tikzcd}
\mathbf{AbSurf}^{\sss}_{\qp} \arrow[r, "V_p"] & \Rcris{\qp}(\Gamma_{\qp}) \arrow[r, "D_{\cris}"] & \mf_{\qp}^{\f}
\end{tikzcd}.$$
Denote the essential image of the functor $D_{\cris}\circ V_p$ by $\mfs$. We give an explicit classification of objects in $\mfs$ and their associated monodromy groups.

\begin{mainthm}[Theorem \ref{mainthm0}, Theorem \ref{mainclass}]\label{introthm1}
    Let $p\ge 7$. The objects in $\mfs$ are precisely the filtered $\varphi$-modules $D^{prod}_{\epsilon'}$, $D^{\epsilon,iso}_{\epsilon'}$, $D_{a'}^{\epsilon,\nu}$ or $D_{(a,b)}^{\epsilon,\mu}$ constructed in Definitions \ref{prodcase} and \ref{canfam}, with parameters satisfying the explicit arithmetic conditions given in Theorem \ref{mainclass}.
\end{mainthm}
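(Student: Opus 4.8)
The plan is to translate the statement into a classification problem in semilinear algebra, solve it by a finite case analysis, and then use integral $p$-adic Hodge theory together with Serre--Tate deformation theory to check that the resulting list is exactly $\mfs$. Concretely, for $A\in\mathbf{AbSurf}^{\sss}_{\qp}$ set $D=D_{\cris}(V_p(A))$ and record the structure it carries: a $4$-dimensional $\qp$-vector space; a $\qp$-linear automorphism $\varphi$ (Frobenius on $\qp$ being trivial) whose Newton polygon, by supersingularity, is the segment of slope $\tfrac12$, so every $\bqp$-eigenvalue of $\varphi$ has $p$-adic valuation $\tfrac12$; a Hodge filtration with $\mathrm{Fil}^0 D=D$, $\dim_{\qp}\mathrm{Fil}^1 D=2$, $\mathrm{Fil}^2 D=0$; a perfect alternating pairing $\langle\,,\,\rangle\colon D\times D\to\qp$ coming from a polarization of $A$, with $\langle\varphi x,\varphi y\rangle=p\langle x,y\rangle$, relative to which $\mathrm{Fil}^1 D$ is Lagrangian (by compatibility of the pairing with filtrations); and weak admissibility, i.e.\ $t_N(D)=t_H(D)=2$ while $t_N(D')\le t_H(D')$ for every $\varphi$-stable $D'\subsetneq D$. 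I would also prove the converse, that every tuple $(D,\varphi,\mathrm{Fil}^\bullet,\langle\,,\,\rangle)$ with these properties lies in $\mfs$, so the problem reduces to classifying such tuples up to isomorphism of filtered $\varphi$-modules.

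For the forward direction, since every eigenvalue of $\varphi$ has valuation $\tfrac12\notin\mathbb Z$ the characteristic polynomial of $\varphi$ has no $\qp$-rational root, so as a $\qp[\varphi]$-module $D$ is one of: (i) a quartic field $L/\qp$ of even ramification index, $\varphi$ being multiplication by a valuation-$\tfrac12$ element; (ii) a direct sum $K_1\oplus K_2$ of ramified quadratic fields with $\varphi$ acting on each $K_i$ by a valuation-$\tfrac12$ element; (iii) a non-semisimple module with minimal polynomial $g^2$ for an irreducible quadratic $g$ with valuation-$\tfrac12$ roots. Together with the structure of the orbit spaces in the next step, these account respectively for the irreducible families, for the product family $D^{prod}_{\epsilon'}$ (when $K_1,K_2$ are Dieudonn\'e modules of supersingular elliptic curves) and the isotypic-reducible family $D^{\epsilon,iso}_{\epsilon'}$, and for the families $D_{a'}^{\epsilon,\nu}$ and $D_{(a,b)}^{\epsilon,\mu}$. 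In each case I would compute the automorphism group $\mathrm{Aut}(D,\varphi,\langle\,,\,\rangle)$ — a unitary-type group over the relevant quadratic or quartic algebra, into which the non-split quaternion algebra over $\qp$ and the norm forms of the quadratic extensions enter — and let it act on the set of Lagrangian planes $\mathrm{Fil}^1 D$; a set of orbit representatives yields the normal forms of Definitions~\ref{prodcase} and~\ref{canfam}, the discrete datum $\epsilon$ (resp.\ $\epsilon'$) recording which ramified extension or quadratic twist occurs and the parameters $a,b,a'$ recording the position of $\mathrm{Fil}^1 D$ relative to the $\varphi$-stable flags. Finally, imposing $t_N(D')\le t_H(D')$ on the finitely many $\varphi$-stable lines and planes $D'$ carves out exactly the arithmetic inequalities on $(\epsilon,\epsilon',a,b,a')$ in Theorem~\ref{mainclass}; in the irreducible case weak admissibility is automatic and the condition instead records when a polarization pairing exists.

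For the reverse direction, fix one of the listed tuples; by the choice of parameters it is weakly admissible, so by Colmez--Fontaine it equals $D_{\cris}(V)$ for a crystalline representation $V$ of $\Gamma_{\qp}$, carrying a $\Gamma_{\qp}$-equivariant perfect alternating pairing $V\times V\to\qp(1)$. Since $p\ge7$ and the Hodge--Tate weights of $V$ lie in $\{0,1\}$, integral $p$-adic Hodge theory (Fontaine--Laffaille) realizes $V$, up to a Tate twist, as the rational $p$-adic Tate module of a $p$-divisible group $G/\mathbb{Z}_p$, and the pairing gives an isogeny $\lambda\colon G\to G^{\vee}$. The special fibre of $(G,\lambda)$ is isoclinic of slope $\tfrac12$, so, using the classification of supersingular $p$-divisible groups and replacing the Galois-stable lattice by a suitable one, one may arrange $(G_0,\lambda_0)\cong(A_0[p^\infty],\lambda_{A_0})$ for a supersingular abelian surface $A_0/\mathbb{F}_p$. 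Serre--Tate theory then identifies deformations of $(A_0,\lambda_0)$ over $\mathbb{Z}_p$ with deformations of $(G_0,\lambda_0)$, and a polarized abelian scheme is algebraizable, so the deformation matching $(G,\lambda)$ is an abelian surface $A/\mathbb{Z}_p$ with $A[p^\infty]\cong G$, supersingular special fibre, and $V_p(A)\cong V$; hence $D_{\cris}(V_p(A))\cong D$, i.e.\ $D\in\mfs$. As the notation hints, for most families this inclusion can instead be made concrete by exhibiting an explicit $A$ — a product of elliptic curves, a Weil restriction, or a quadratic twist of such.

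The heart of the argument is the forward classification: enumerating the shapes of $(D,\varphi)$ without redundancy, computing the relevant automorphism groups and their orbits on Lagrangian subspaces, and verifying that the weak-admissibility inequalities reproduce precisely — and only — the conditions of Theorem~\ref{mainclass}; the hypothesis $p\ge7$ enters here to keep the various quadratic and quartic extensions and their norm and trace forms uniform and to exclude small-prime degeneracies. A secondary subtlety, in the reverse direction, is ensuring that the isoclinic polarized $p$-divisible group over $\mathbb{F}_p$ is genuinely realized by a polarized abelian surface \emph{over $\mathbb{F}_p$}, not merely over $\overline{\mathbb{F}}_p$, and that the resulting formal abelian scheme algebraizes.
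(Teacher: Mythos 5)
Your route (classify $(D,\varphi)$ as a $\qp[\varphi]$-module, take orbits of Lagrangian planes under the automorphism group, and re-prove realizability via Fontaine--Laffaille and Serre--Tate) is genuinely different from the paper, which reduces everything to Volkov's criterion (Theorem \ref{dfromav}), classifies the degree-$4$ supersingular $p$-Weil polynomials (Proposition \ref{classssweil}), and organizes the case analysis by Wintenberger's splitting. But as written your proposal has concrete gaps. The central one is your claim that the arithmetic conditions of Theorem \ref{mainclass} are carved out by weak admissibility. They cannot be: for $D$ satisfying $\mathbf{S1}$ with $\chi_{\varphi}(X)=X^4+\epsilon pX^2+p^2$ every eigenvalue of $\varphi$ has valuation $\tfrac12$, so there are no $\varphi$-stable lines and at most two $\varphi$-stable planes, and weak admissibility reduces (Lemma \ref{adm}) to the single open condition that $\fil_1D$ is not $\varphi$-stable. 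In particular $D^{\epsilon,\nu}_{a'}$ is weakly admissible for \emph{every} $a'$, so no congruence such as $a'\in\epsilon p+p^2\zp$, nor the valuation conditions on $(a,b)$, can come from Newton-versus-Hodge inequalities. In the paper these conditions are of a different nature: they express the integral part (d) of Wintenberger's theorem (Theorem \ref{hodgedecomp}), via Lemma \ref{rathertechnical}, i.e.\ they normalize representatives whose canonical basis is adapted to the Hodge decomposition, and the case division is by the Wintenberger type (Lemmas \ref{listxd}, \ref{xdneqc}), not by orbits of a unitary group. Relatedly, your enumeration of shapes of $(D,\varphi)$ is misassigned: $D^{\epsilon,\nu}_{a'}$ and $D^{\epsilon,\mu}_{(a,b)}$ have the separable characteristic polynomial $X^4+\epsilon pX^2+p^2$, hence semisimple $\varphi$, while your non-semisimple case (iii) is vacuous (Frobenius of an abelian variety acts semisimply; this is built into $\mathbf{S2}$). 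For fixed $\epsilon$ the families $iso$, $\nu$, $\mu$ all share the same $(D,\varphi)$ and differ only in the position of $\fil_1D$; only $D^{prod}_{\epsilon'}$ has a different (isotypic) $\varphi$-module structure. So the orbit analysis as you set it up would not land on the paper's list with its stated constraints.

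The converse direction also has a gap. The properties you record (slope-$\tfrac12$ eigenvalues, two-dimensional $\fil_1D$, symplectic pairing with $\varphi$ a $p$-similitude, weak admissibility) do not force $\chi_{\varphi}$ to be a supersingular $p$-Weil polynomial --- it need not even lie in $\mq[X]$ --- whereas by Honda--Tate only Weil polynomials occur for abelian surfaces over $\fp$. For instance $\chi_{\varphi}(X)=X^4-up^2$ with a suitably generic unit $u\in\zp^\times$ yields a weakly admissible symplectic tuple that is not in $\mfs$, so the converse as you state it is false; your Fontaine--Laffaille/Serre--Tate argument breaks exactly at identifying the special fibre with $A_0[p^\infty]$ for an abelian surface $A_0$ over $\fp$ (not merely over $\overline{\mathbb F}_p$), since the Frobenius is pinned down by $\varphi$. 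The paper sidesteps all of this by citing Volkov's characterization, whose conditions $\mathbf{S2}$, $\mathbf{S3}$ contain precisely the Weil-polynomial, semisimplicity and polarization requirements, by proving the list of admissible characteristic polynomials for $p\ge 7$ (this is where $p\ge 7$ is really used), and by exhibiting the skew form explicitly (Proposition \ref{automat}). To salvage your approach you would need to add the supersingular Weil-polynomial condition to the data from the start and replace the weak-admissibility mechanism by a genuinely integral (adapted-lattice) argument for the parameter constraints.
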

Based on this classification, we determine the $p$-adic algebraic monodromy groups of supersingular abelian surfaces over $\qp$. Denote by $H_D$ the monodromy group of $D\in\mfs$ and by $H_D^\circ$ the neutral connected component.
\begin{mainthm}[Theorem \ref{mainthm}]\label{introthm2}
    Fix a prime $p\ge 7$. For $\epsilon'\in\st{\pm1}$, $\epsilon\in\st{0,\pm1}$ and $a',a,b\in\qp$ with $ab\neq-1$, we have the following isomorphisms of algebraic groups over $\bqp$.
    \begin{enumerate}
        \item $H_{D^{prod}_{\epsilon'}}^\circ\simeq\g_{m}^2$,
        \item $H_{D^{\epsilon,\nu}_{a'}}^\circ\simeq\begin{cases}
        \g_m^3 &\mbox{if }a'=\epsilon=0\\
        \GL_2\times_{\det}\GL_2&\mbox{otherwise},
    \end{cases}$
        \item $H_{D^{\epsilon,\mu}_{(a,b)}}^\circ\simeq\begin{cases}
        \g_m^2&\mbox{if } a=b=0\\
        \g_a^2\rtimes_g\g_m^2 &\mbox{if }c(a,b)=-\epsilon p,\mbox{ and }a\neq 0\mbox{ or }b\neq 0\\
        \GL_2\times_{\det}\GL_2&\mbox{otherwise},
    \end{cases}$
        \item $H_{D^{\epsilon,iso}_{\epsilon'}}^\circ\simeq\GL_2$
    \end{enumerate}
    
    where $c(a,b)=-\frac{a^2+\epsilon p+b^2p^2}{ab+1}$ and $g(s,t)=\br{\begin{smallmatrix}
            s & 0\\
            0 & t
        \end{smallmatrix}}$. In particular, for any object $A$ in $\mathbf{AbSurf}^{\sss}_{\qp}$, the neutral connected component $G_{V_p(A)}^\circ$ of the $p$-adic algebraic monodromy group associated with $A$ is $\bqp$-isomorphic to one of the following:
    $$\mathbf G_{m}^2,~\mathbf G_{m}^3,~\mathbf G_{a}^2\rtimes_g\mathbf G_{m}^2,~\GL_{2},~\GL_{2}\times_{\det}\GL_{2}.$$
\end{mainthm}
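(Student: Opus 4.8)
The plan is to reduce Theorem~\ref{mainthm} to the four explicit families produced by Theorems~\ref{mainthm0} and~\ref{mainclass}, and then to compute the monodromy group of each representative directly from its presentation.

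\textbf{Reduction.} The monodromy group $H_D$ depends only on the isomorphism class of $D$ in $\mf_{\qp}^{\f}$, and under the functor $D_{\cris}\circ V_p$ one has $G_{V_p(A)}\simeq H_{D_{\cris}(V_p(A))}$ as algebraic groups over $\qp$, hence over $\bqp$. By Theorems~\ref{mainthm0} and~\ref{mainclass}, every object of $\mfs$ is $\qp$-isomorphic to one of $D^{prod}_{\epsilon'}$, $D^{\epsilon,iso}_{\epsilon'}$, $D^{\epsilon,\nu}_{a'}$, $D^{\epsilon,\mu}_{(a,b)}$ with parameters in the admissible set. Therefore the groups appearing in the concluding ``in particular'' clause are exactly those produced by items~(1)--(4), and it suffices to prove~(1)--(4). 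For each family I take the explicit data of Definitions~\ref{prodcase} and~\ref{canfam} --- the Frobenius matrix and the Hodge line --- and feed it into the description of the monodromy group. The computation organises around the two-dimensional ``supersingular elliptic'' piece, for which (using that $p\ge7$ forces $a_p=0$, so that $\varphi^2=-p$ and the Hodge line is transverse to the two $\varphi$-eigenlines) the monodromy group is $\GL_2$; the general families are then assembled from this building block by tensor products, direct sums, and the degenerations occurring along the boundary of the parameter space.

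\textbf{Case analysis.} \emph{Item~(4).} $D^{\epsilon,iso}_{\epsilon'}$ is isoclinic, and over $\bqp$ it is the tensor product of the supersingular elliptic piece with a trivial two-dimensional object (the configurations $A\sim E^2$ and $A$ a Weil restriction); the monodromy group preserves this factorisation and acts through the first factor, so $H_D^\circ\simeq\GL_2$. \emph{Item~(1).} $D^{prod}_{\epsilon'}$ is a direct sum of pieces whose joint monodromy is forced, by the relations among the Frobenius eigenvalues together with the position of the Hodge line, to lie in a rank-$2$ torus, whence $H_D^\circ\simeq\g_m^2$. \emph{Items~(2)--(3).} Off the degeneracy locus the Frobenius is semisimple and the Hodge line is in general position with it, subject only to the symplectic normalisation $\det=p^2$, and the resulting monodromy group is the full $\det$-balanced product $\GL_2\times_{\det}\GL_2$. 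On the degeneracy loci: for $a'=\epsilon=0$ in~(2) a rank-$3$ torus survives and still contains the Hodge cocharacter, giving $\g_m^3$; for $c(a,b)=-\epsilon p$ in~(3) the Frobenius acquires a two-dimensional Jordan part, producing a unipotent radical $\g_a^2$ on which the residual rank-$2$ torus acts through $g(s,t)=\mathrm{diag}(s,t)$, so $H_D^\circ\simeq\g_a^2\rtimes_g\g_m^2$, unless the relevant extension class (an explicit function of $(a,b)$) vanishes, in which case the unipotent part disappears and $H_D^\circ\simeq\g_m^2$.

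\textbf{Main obstacle.} The routine parts are the upper bounds --- the monodromy group preserves the polarisation pairing $D\otimes D\to\mathbf 1(-1)$ and, in the non-generic families, the extra idempotents or sub-objects that the parametrisation makes visible --- together with the bookkeeping on the boundary loci. The real work is the \emph{lower} bound in the generic cases~(2) and~(3): showing the monodromy group is all of $\GL_2\times_{\det}\GL_2$ and not a proper subgroup, i.e.\ that the two supersingular two-dimensional constituents do not become isomorphic, twist-equivalent, or degenerate over $\bqp$. This is precisely where the arithmetic conditions of Theorem~\ref{mainclass} are indispensable: the condition $c(a,b)\ne-\epsilon p$, and its analogue in~(2), is exactly what keeps the Frobenius semisimple with the Hodge line in general position, and the argument requires the full explicit parametrisation rather than soft Tannakian formalism. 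A secondary subtlety is pinning the generic answer down to exactly $\GL_2\times_{\det}\GL_2$ --- no larger, by the shared determinant forced by the polarisation, and no smaller, by genuine independence of the two factors --- and, on the locus $c(a,b)=-\epsilon p$, computing the extension class precisely enough to distinguish $\g_a^2\rtimes_g\g_m^2$ from $\g_m^2$.
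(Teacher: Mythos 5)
Your reduction to the four explicit families and your final list of groups agree with the paper, but as a proof the proposal is missing its engine: you never give a method for computing $H_D$ from the pair $(\varphi,\fil_\bullet)$. The paper's entire Section \ref{classmonodromy} rests on Pink's density result (Proposition \ref{zardense}): since $\varphi$ has finite order in $\mathrm{PGL}(D)$, the neutral component $H_D^\circ$ is generated by the conjugated Hodge tori $\varphi^i\mu(t)\varphi^{-i}$ and scalars, and each group is then pinned down by a Lie-algebra generation argument (Lemma \ref{Liealg} together with the bracket computations of Lemma \ref{Liegene}), carried out matrix-by-matrix in Propositions \ref{x2pmp}, \ref{sit1}, \ref{sit2} and \ref{sit3}. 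The ``lower bound'' in the generic cases (2)--(3), which you yourself single out as the main obstacle, is exactly this computation and is nowhere supplied; likewise the upper bound $\GL_2\times_{\det}\GL_2$ (rather than, say, $\mathrm{GSp}_4$) is not a consequence of the polarisation alone but of the explicit commuting matrices $M$ and $S$ of Proposition \ref{sit2}, which you assert but never construct. (Also note that $\fil_1D$ is two-dimensional, $h^{1,0}=2$, so there is no ``Hodge line''.)

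Two of the mechanisms you do invoke are false. In case (3) you attribute the non-reductive group on the locus $c(a,b)=-\epsilon p$ to the Frobenius ``acquiring a two-dimensional Jordan part''; this cannot happen, since $\chi_{\varphi}=X^4+\epsilon pX^2+p^2$ is separable (and condition $\mathbf{S2}$ demands semisimplicity), so $\varphi$ is semisimple for every admissible parameter. In the paper the unipotent radical comes instead from the relative position of the Hodge cocharacter and the semisimple Frobenius: when $c=-\epsilon p$ the matrix $S$ of Proposition \ref{sit2} satisfies $S^2=0$, the conjugated cocharacters only generate $\st{\br{\begin{smallmatrix}\alpha_{1,1}&\alpha_{1,2}S\\ \alpha_{2,1}S&\alpha_{2,2}\end{smallmatrix}}}$, and this is $\g_a^2\rtimes_g\g_m^2$ when $S\neq0$ and $\g_m^2$ exactly when $a=b=0$; no ``extension class of Frobenius'' is involved. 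Similarly, in case (4) the module $D^{\epsilon,iso}_{\epsilon'}$ is not the tensor product over $\bqp$ of a supersingular elliptic object with a trivial two-dimensional object: such a product would have $\chi_{\varphi}=(X^2-\epsilon'p)^2$, i.e.\ it would be the $prod$ case, whereas here $\chi_{\varphi}=X^4+\epsilon pX^2+p^2$. The correct identification in Proposition \ref{sit3} again comes from the cocharacter computation, producing the twisted form $\st{\br{\begin{smallmatrix}\alpha_{1,1}&\alpha_{1,2}S\\ \alpha_{2,1}S&\alpha_{2,2}\end{smallmatrix}}}$ with $S^2=-(\epsilon+2\epsilon')p$, which becomes $\GL_2$ only after diagonalising $S$ over $\bqp$. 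As it stands, then, the proposal records the expected answers but neither proves the generic lower bounds nor gives correct reasons for the degenerate cases.
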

As a consequence, we obtain an explicit semisimplicity criterion (see Corollary \ref{semisimplicity}). The exceptional non-semisimple objects are listed there, and may be normalized as in Remark \ref{normalization}.
\subsection{A moduli space description}
The families occurring in Theorem \ref{introthm1} are parametrized by "algebraic" subvarieties in a certain coarse moduli space of filtered $\varphi$-modules. Fix $\epsilon\in\st{0,\pm1}$. We focus on the objects $D\in\mfs$ such that $\chi_{\varphi_D}(X)=X^4+\epsilon pX^2+p^2$, where $\varphi_D$ denotes the crystalline Frobenius of $D$. We denote the set of the isomorphism classes of these objects by $\mwa$. We show that $\mwa$ is in natural bijection (up to finitely many points) with the $\qp$-points of a projective GIT quotient of an open subvariety $\dm_c$ of $\Gr(2,4)$.
\begin{mainthm}[Theorem \ref{propmain}]\label{introthm3}
    There exists a commutative diagram
    $$
\begin{tikzcd}
\dm_c(\qp) \arrow[d, two heads] \arrow[r, "c"] & \mbp^1(\qp) \\
\mwa \arrow[ru, "\bar c", dashed]                &            
\end{tikzcd}$$
where
\begin{enumerate}
    \item $c$ is algebraic and becomes a projective GIT quotient after base change to $\bqp$;
    \item if the polynomial $X^2+\epsilon pX+p^2$ has no root in $\qp$, the map $\bar c$ is injective;
    \item if the polynomial $X^2+\epsilon pX+p^2$ splits over $\qp$, the map $\bar c$ is injective outside $\bar c^{-1}([-\epsilon p\!:\!1])$ and ${\bar c}^{-1}([-\epsilon p\!:\!1])$ consists of three points.
\end{enumerate}
\end{mainthm}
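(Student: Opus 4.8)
\emph{Proof strategy.}
The plan is to reduce the whole statement to a torus quotient of a single Grassmannian. For $p\ge7$ the polynomial $X^4+\epsilon pX^2+p^2$ is separable, so every crystalline Frobenius with this characteristic polynomial is conjugate to a fixed regular semisimple operator $\varphi$ on a fixed $4$-dimensional $\qp$-space $D$; writing $X^4+\epsilon pX^2+p^2=(X^2-\lambda^2)(X^2-\mu^2)$, the eigenvalues of $\varphi$ are $\pm\lambda,\pm\mu$, the $\varphi$-stable planes are $D[\lambda^2]:=\ker(\varphi^2-\lambda^2)$ and $D[\mu^2]$, and $\lambda^2,\mu^2$ are the roots of $X^2+\epsilon pX+p^2$. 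An object of $\mwa$ is then an admissible filtration $W=\mathrm{Fil}^1\subset D$, i.e.\ a point of the open locus $Y^\df\subseteq\Gr(2,4)$, taken modulo the action of $Z:=Z_{\GL(D)}(\varphi)$, a $\qp$-form of a maximal torus of $\GL_4$ split over the splitting field of $\chi_\varphi$; thus $\mwa\cong Y^\df(\qp)/Z(\qp)$ up to the finitely many excluded points. Whether the planes $D[\lambda^2],D[\mu^2]$ are $\qp$-rational --- equivalently whether $X^2+\epsilon pX+p^2$ has a root in $\qp$ --- is what separates cases (2) and (3).

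For part (1) I would base change to $\bqp$, where $Z$ becomes the diagonal torus $T\simeq\g_m^4$ in the eigenbasis, the central $\g_m$ acting trivially on $\Gr(2,4)$. In Plücker coordinates $p_{ij}$, with eigenvalues labelled so that $\{1,2\}$ and $\{3,4\}$ are the $\pm$-pairs, $T$ scales $p_{ij}$ by $t_it_j$; after twisting the natural linearization by the character $(1,1,1,1)$, the invariants are generated by the three monomials $p_{12}p_{34},\ p_{13}p_{24},\ p_{14}p_{23}$, which on $\Gr(2,4)$ obey the single linear Plücker relation $p_{12}p_{34}=p_{13}p_{24}-p_{14}p_{23}$. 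Hence the GIT quotient of the semistable locus is $\mbp^1$ and $c_{\bqp}$ is the quotient map. The finite Galois action on the eigenbasis permutes this quotient only through the symmetries of the three Galois-stable pairings $\{1,2\}|\{3,4\}$, $\{1,3\}|\{2,4\}$, $\{1,4\}|\{2,3\}$, so the target $\mbp^1$ inherits a $\qp$-structure along which the quotient map descends; this structure is $\qp$-isomorphic to $\mbp^1_{\qp}$ (it carries a rational point, since $\mwa\neq\emptyset$), and one reads off the explicit coordinate on the families of Theorem \ref{introthm1}, where in particular $\bar c(D^{\epsilon,\mu}_{(a,b)})=[c(a,b):1]$. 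Being $T(\bqp)$-invariant and $\qp$-rational, $c$ is $Z(\qp)$-invariant, hence factors through $\mwa$ to give $\bar c$, and the triangle commutes by construction.

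For (2) and (3) I would combine stable-locus GIT with Galois cohomology. Over $\bqp$, on the stable locus the fibres of the quotient map are exactly the $T$-orbits, so over $\qp$ the fibre of $c$ over a stable $P\in\mbp^1(\qp)$ is the set of $\qp$-points of a single Galois-stable $T_{\bqp}$-orbit $O_P$, and $O_P(\qp)/Z(\qp)\hookrightarrow H^1(\qp,S_P)$ with $S_P=\mathrm{Stab}_T(x)$, $x\in O_P$. Since for stable $P$ one checks $S_P$ is precisely the ineffective diagonal $\g_m$, Hilbert~90 forces $O_P(\qp)/Z(\qp)$ to be a single point, so $\bar c$ is injective on the stable locus. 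The one non-stable value is $[-\epsilon p:1]$: it is the image of the filtrations meeting $D[\lambda^2]$ or $D[\mu^2]$ nontrivially, equivalently (via the family $D^{\epsilon,\mu}_{(a,b)}$) the locus $c(a,b)=-\epsilon p\iff a^2-\epsilon p\,ab+b^2p^2=0$. If $X^2+\epsilon pX+p^2$ is inert, $D[\lambda^2],D[\mu^2]$ are conjugate and not $\qp$-rational, $a^2-\epsilon p\,ab+b^2p^2$ is irreducible over $\qp$, and this exceptional fibre causes no failure of injectivity --- part (2). If it splits, $a^2-\epsilon p\,ab+b^2p^2$ factors over $\qp$ as $(a-\rho_1b)(a-\rho_2b)$ with $\rho_1,\rho_2\in\qp$, and one counts $\bar c^{-1}([-\epsilon p:1])$ directly against the isomorphism relations of Theorem \ref{mainclass}: each of the two lines $a=\rho_ib$ yields one class, the origin $(a,b)=(0,0)$ a third, no further class arises from the $D^{prod}$- or $D^{\epsilon,iso}$-type families, and the three are pairwise non-isomorphic --- for instance, by Theorem \ref{introthm2} they do not all have isomorphic neutral components --- giving exactly three points, part (3).

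The hard part is the endgame of (3): pinning the exceptional fibre down to \emph{exactly} three points. Over $\bqp$ it is a union of several $T$-orbits with varying stabilizers, and counting its $\qp$-points modulo $Z(\qp)$ intertwines the orbit combinatorics at the non-stable point with the Galois cohomology of the finite stabilizer groups (forms of $\mu_2$, and of $\mu_3$ precisely when $3\mid p-1$), so that both an undercount and an overcount must be excluded; I expect the cleanest route is to bypass the abstract computation and read the three classes --- and their pairwise non-isomorphism --- off the explicit isomorphism conditions of Theorem \ref{mainclass} at the degenerate parameters, taking care of the identifications relating the non-semisimple members and the rescalings of $(a,b)$. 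A secondary technical point, already present in part (1), is making the $\qp$-descent of the GIT quotient fully explicit: exhibiting a $\qp$-rational target coordinate whose $\bqp$-fibres are single torus orbits.
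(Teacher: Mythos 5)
Your reduction of the problem to the torus quotient of $\Gr(2,4)$ and your computation of the invariants for part (1) match the paper (Propositions \ref{ginvsubalg} and \ref{cisgit}), except that the paper avoids the descent discussion by defining $c$ over $\qp$ from the start, via $P\mapsto \tr(\pr_P\circ\varphi^2|_{V_P})$, and only afterwards identifies it over $\bqp$ with the GIT quotient. The genuine gap is in your treatment of (2) and (3): it is not true that $[-\epsilon p\!:\!1]$ is the only non-stable value. Over $\bqp$ the quotient has \emph{three} strictly semistable values, corresponding to the three pair-partitions of the four eigenvalues $\{\pm\lambda,\pm\mu\}$ (equivalently to the vanishing of one of the three invariant monomials $x_{12}x_{34},x_{13}x_{24},x_{14}x_{23}$, i.e.\ to cross-ratio $0,1,\infty$ under Gelfand--MacPherson); in the paper's coordinate these are $[-\epsilon p\!:\!1]$ and $[\pm 2p\!:\!1]$. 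Over $\bqp$ the fibres over $[\pm 2p\!:\!1]$ have the same multi-orbit structure as the fibre over $[-\epsilon p\!:\!1]$ (non-closed orbits degenerating to a closed one), so your stable-orbit-plus-Hilbert-90 argument says nothing about them, and your proposal gives no argument that each contains only one $\qp$-class --- which is exactly what the injectivity assertions in both (2) and (3) require. The collapse to a single class there is an arithmetic fact: in the paper it follows from the computation that $c^{-1}(2\epsilon' p)\cap\iota(\mbp^2(\qp))$ is the single point $P_{\epsilon'}$, because the fibre equation reduces to $(x-(\epsilon+\epsilon')pz)^2=-(\epsilon+2\epsilon')p\,y^2$ and $-(\epsilon+2\epsilon')p$ has odd valuation, hence is not a square in $\qp$ (equivalently, the relevant eigenplanes are not $\qp$-rational and Galois interchanges the two open orbits). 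Some argument of this kind must be supplied.

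On the endgame of (3), which you correctly flag as the hard part, the tools you propose do not suffice as stated: Theorem \ref{mainthm} gives $\g_{a}^2\rtimes_g\g_{m}^2$ for \emph{both} classes coming from the two rational lines, so monodromy groups distinguish only the origin class from the other two; and Theorem \ref{mainclass} lists which parameters occur but contains no criterion for when $D^{\epsilon,\mu}_{(a,b)}\simeq D^{\epsilon,\mu}_{(a',b')}$, so there is nothing there to ``read off''. The paper's actual argument in the proof of Theorem \ref{propmain} is the missing ingredient: when $c=-\epsilon p$ one diagonalizes $M$ over $\qp$ (split case), observes that the isomorphism class is determined by the conjugacy class of the nilpotent matrix $\widetilde S$ under the centralizer $Z(\widetilde M)$ (the diagonal torus of $\GL_2$), and that there are exactly three such classes (strictly upper, strictly lower, zero); one must also fold in the $\nu$-family points with $c(\nu(a'))=-\epsilon p$ and the locus $c=\infty$, as the paper does via the decomposition $\mbp^2(\qp)=U_\infty\sqcup U_\mu\sqcup U_\nu\sqcup\{P_\pm\}$. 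Until the $[\pm2p\!:\!1]$ fibres and this three-class count (with pairwise non-isomorphism) are actually carried out, the proposal does not establish (2) or (3).
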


For a more concrete study, we construct a closed immersion $\mbp^2\to\dm_c$ such that the induced map $\pi:\mbp^2(\qp)\to\mwa$ is surjective.
$$
\begin{tikzcd}
\mbp^2(\qp) \arrow[rd, "\iota"] \arrow[rdd, "\pi"', two heads] \arrow[rrd, "c\circ\iota"] &                                                  &             \\
                                                                                          & \dm_c(\qp) \arrow[d, two heads] \arrow[r, "c"] & \mbp^1(\qp) \\
                                                                                          & \mwa \arrow[ru, "\bar c", dashed]                &            
\end{tikzcd}$$
This approach clarifies the structure of the map $\bar c$. When $X^2+\epsilon pX+p^2$ has two different roots $\mu_{1},\mu_2$ over $\qp$, the preimage $(c\circ\iota)^{-1}([-\epsilon p\!:\!1])$ consists of two projective lines $l_1,l_2$ in $\mbp^2(\qp)$ intersecting at a point $o$, and the preimages of the three points in ${\bar c}^{-1}([-\epsilon p\!:\!1])$ under $\pi$ are $l_1\backslash\st{o},l_2\backslash\st{o}$ and $\st{o}$. In particular, we see that under the quotient topology of the Zariski topology, the points $\pi(x_i)$ specialize to $\pi(o)$ where $x_i\in l_i\backslash\st{o}$, $i=1,2$. 

Moreover, we have natural lifts of the families $D^{\epsilon,iso}_{\epsilon'}$, $D_{a'}^{\epsilon,\nu}$ and $D_{(a,b)}^{\epsilon,\mu}$ (which are defined on $\mwa$) along $\pi$. We construct points $P_{\pm1}\in\mbp^2(\qp)$ and morphisms
    $$\mba^1\stackrel{\nu}\to\mbp^2\qaq\mba^2\backslash\st{ab=-1}\stackrel{\mu}\to\mbp^2$$
such that $[D_{\pm1}^{\epsilon,iso}]=\pi(P_{\pm1})$ and the diagrams
$$
\begin{tikzcd}
\mba^1(\qp) \arrow[rd, "{D^{\epsilon,\nu}_{-}}"'] \arrow[r, "\nu"] & \mbp^2(\qp) \arrow[d, "\pi", two heads] \\
                                                                   & \mwa                                   
\end{tikzcd}
\qaq
\begin{tikzcd}
\mba^2(\qp)\backslash\st{ab=-1} \arrow[rd, "{D^{\epsilon,\mu}_{-}}"'] \arrow[r, "\mu"] & \mbp^2(\qp) \arrow[d, "\pi", two heads] \\
                                                                   & \mwa                                   
\end{tikzcd}$$
commute. Here, $[D]\in\mwa$ denotes the isomorphism classes of $D\in\mf_{\qp}^{\f}$.

In terms of our moduli space description, we have the following corollary of Theorem \ref{introthm2}.
\begin{mainthm}[Theorem \ref{distmono}]\label{introthm4}
    For each point $P\in\mwa$, let $H_{D_P}$ denote the monodromy group of any filtered $\varphi$-module representing the isomorphism class corresponding to $P$. Then, we have $H_{D_{P}}^\circ\simeq_{\bqp}\GL_{2}\times_{\det}\GL_{2}$ for all but finitely many $P\in\mwa$. The exceptions are 
    $$H_{D_{P}}^\circ\simeq_{\bqp}\begin{cases}
        \GL_{2}&\bar c(P)=[\pm2p\!:\!1]\\
        \g_{m}^2~\mbox{or}~\g_{a}^2\rtimes\g_{m}^2&\bar c(P)=[-\epsilon p\!:\!1]\\
        \g_{m}^3 &\epsilon=0\mbox{ and }\bar c(P)=[1:0]
    \end{cases}$$
    where $g(s,t)=\br{\begin{smallmatrix}
        s & 0\\
        0 & t
    \end{smallmatrix}}$. When $\bar c(P)=[-\epsilon p\!:\!1]$, in the context of Theorem \ref{introthm3}, the case $H_{D_P}^\circ\simeq_{\bqp}\g_{m}^2$ occurs when $P=\pi(o)$, and $H_{D_P}^\circ\simeq_{\bqp}\g_{a}^2\rtimes_g\g_{m}^2$ when $P=\pi(x_i)$ where $x_i\in l_i\backslash\st{o}$, $i=1,2$.
\end{mainthm}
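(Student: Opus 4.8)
The plan is to read Theorem~\ref{introthm4} off Theorems~\ref{introthm1}, \ref{introthm2} and~\ref{introthm3} by translating the arithmetic conditions that appear in Theorem~\ref{introthm2} into conditions on the point $\bar c(P)\in\mbp^1(\qp)$, and then quoting the fibre structure of $\bar c$ from Theorem~\ref{introthm3} to get finiteness.

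Fix $\epsilon\in\{0,\pm1\}$. First I would record that every $P\in\mwa$ is the isomorphism class of one of $D^{\epsilon,iso}_{\epsilon'}$, $D^{\epsilon,\nu}_{a'}$ or $D^{\epsilon,\mu}_{(a,b)}$: this is immediate from Theorem~\ref{introthm1} (the modules $D^{prod}_{\epsilon'}$ being excluded by the condition $\chi_{\varphi_D}(X)=X^4+\epsilon pX^2+p^2$), and is also consistent with the surjectivity of $\pi\colon\mbp^2(\qp)\to\mwa$ together with the commuting squares relating $\nu$, $\mu$ and $P_{\pm1}$ to $\pi$. Next, using the explicit descriptions of $\iota$, $\nu$, $\mu$, $P_{\pm1}$ and of the map $c$, I would compute $\bar c$ on each family: $\bar c(D^{\epsilon,iso}_{\epsilon'})=[2\epsilon'p:1]$, $\bar c(D^{\epsilon,\mu}_{(a,b)})=[c(a,b):1]$, and an explicit expression for $\bar c(D^{\epsilon,\nu}_{a'})$ that equals $[1:0]$ precisely when $\epsilon=0$ and $a'=0$. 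Feeding these into Theorem~\ref{introthm2} gives the dictionary: $D_P$ is of type $D^{\epsilon,iso}$ iff $\bar c(P)\in\{[2p:1],[-2p:1]\}$, and then $H_{D_P}^\circ\simeq\GL_2$; $D_P\simeq D^{0,\nu}_0$ iff $\epsilon=0$ and $\bar c(P)=[1:0]$, and then $H_{D_P}^\circ\simeq\g_m^3$; $D_P\simeq D^{\epsilon,\mu}_{(a,b)}$ with $c(a,b)=-\epsilon p$ iff $\bar c(P)=[-\epsilon p:1]$, and then $H_{D_P}^\circ\simeq\g_m^2$ or $\g_a^2\rtimes_g\g_m^2$; in every other case $H_{D_P}^\circ\simeq\GL_2\times_{\det}\GL_2$.

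It remains to see that the three exceptional loci are disjoint and finite. The four values $[2p:1]$, $[-2p:1]$, $[-\epsilon p:1]$, $[1:0]$ are pairwise distinct (an equality $-\epsilon p=\pm2p$ would force $\epsilon=\mp2$), and a short valuation computation shows that no $\mu$-module reaches $[\pm2p:1]$: the equation $c(a,b)=\pm2p$ rearranges to $(a\pm bp)^2=-p(\epsilon\pm2)$, whose right-hand side has odd $p$-adic valuation (since $\epsilon\pm2\in\{1,2,3\}$) and so is not a square in $\qp$; moreover $\bar c(D^{\epsilon,\mu}_{(a,b)})=[c(a,b):1]$ is never $[1:0]$, and a similar direct check shows the $\nu$-family never meets $[\pm2p:1]$ or $[-\epsilon p:1]$. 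Combined with Theorem~\ref{introthm3}(2)--(3) — $\bar c$ is injective away from $[-\epsilon p:1]$ and has a fibre of at most three points there — this shows that $\{P\in\mwa : H_{D_P}^\circ\not\simeq_{\bqp}\GL_2\times_{\det}\GL_2\}$ is finite, lies over $\{[2p:1],[-2p:1],[-\epsilon p:1],[1:0]\}$, and carries the groups stated for $[2p:1]$, $[-2p:1]$ and (when $\epsilon=0$) $[1:0]$.

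The last and hardest step is to resolve the alternative ``$\g_a^2\rtimes_g\g_m^2$ or $\g_m^2$'' inside $\bar c^{-1}([-\epsilon p:1])$. Here I would use the geometric picture preceding Theorem~\ref{introthm3}: when $X^2+\epsilon pX+p^2$ splits over $\qp$, the preimage $(c\circ\iota)^{-1}([-\epsilon p:1])$ is a union of two projective lines $l_1,l_2$ meeting at a point $o$, and $\pi$ contracts $l_i\setminus\{o\}$ and $\{o\}$ onto the three points of the fibre; pulling the parametrisation $\mu$ back to $l_1\cup l_2$ I would split the pairs $(a,b)$ with $c(a,b)=-\epsilon p$ into those lying over $o$ (a single class, represented by $D^{\epsilon,\mu}_{(0,0)}$) and those lying over $l_i\setminus\{o\}$, and then rerun the computation of Theorem~\ref{introthm2}(3) on these explicit filtered $\varphi$-modules. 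The crux is to recognise that the classes over $l_i\setminus\{o\}$ are exactly the non-semisimple modules $D^{1,\mu}_{(-\zeta_3p,1)}$, so that $H_{D_P}^\circ\simeq\g_a^2\rtimes_g\g_m^2$, whereas $\pi(o)$ is semisimple with $H_{D_P}^\circ\simeq\g_m^2$; in the non-split case the fibre is a single semisimple point with $H_{D_P}^\circ\simeq\g_m^2$, which is immediate. Assembling these pieces yields the stated description, and I expect this matching of a non-reductive monodromy group with the explicit non-semisimple members of the $\mu$-family to be the part requiring the most care.
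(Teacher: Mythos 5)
Your proposal is correct and follows essentially the same route as the paper: Theorem \ref{distmono} is obtained there by assembling Theorem \ref{mainthm} with Proposition \ref{munu}, Theorem \ref{propmain} and Remark \ref{intersection} -- exactly the dictionary-plus-fibre-analysis you describe -- and the dichotomy over $[-\epsilon p\!:\!1]$ is settled in the proof of Proposition \ref{sit2}(ii) by the very computation you propose to rerun: $\g_{m}^2$ occurs precisely for $(a,b)=(0,0)$ (i.e.\ $S=0$), and $\g_{a}^2\rtimes_g\g_{m}^2$ precisely when $ab\neq 0$. Two small slips in your write-up should be fixed, though neither derails the argument. First, $\bar c$ on the $\nu$-family equals $[1\!:\!0]$ exactly when $a'=\epsilon p$, for every $\epsilon$, not only when $\epsilon=0$ and $a'=0$; this is harmless because for $\epsilon\neq 0$ the unique class over $[1\!:\!0]$ is $[D^{\epsilon,\nu}_{\epsilon p}]$, which has $(a',\epsilon)\neq(0,0)$ and hence generic monodromy, so the exception list is unchanged. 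Second, the classes over $l_i\setminus\st{o}$ are the $[D^{\epsilon,\mu}_{(a,b)}]$ with $c(a,b)=-\epsilon p$ and $ab\neq 0$, equivalently with $a/(bp)$ a root of $X^2-\epsilon X+1$; these coincide with the modules $D^{1,\mu}_{(-\zeta_3 p,1)}$ only in the case $\epsilon=1$, so the separating criterion is the vanishing of $ab$ (i.e.\ of $S$), not the non-semisimplicity remark from the introduction -- although redoing the Lie-algebra computation on these explicit modules, as you plan, does yield the stated matching of $\pi(o)$ with $\g_{m}^2$ and of $\pi(x_i)$ with $\g_{a}^2\rtimes_g\g_{m}^2$.
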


The distribution of monodromy group can be visualized when pulled back to $\mbp^2(\qp)$ (based on Remark \ref{diagramp2}). For example, when $\epsilon=0$ we have the following description:
\begin{center}
\tikzset{every picture/.style={line width=0.75pt}} 

\begin{tikzpicture}[x=0.75pt,y=0.75pt,yscale=-1,xscale=1]

\draw  [draw opacity=0] (264.01,249.64) .. controls (212.32,248.74) and (170.69,206.78) .. (170.69,155.15) .. controls (170.69,103.28) and (212.7,61.18) .. (264.72,60.66) -- (265.69,155.15) -- cycle ; \draw   (264.01,249.64) .. controls (212.32,248.74) and (170.69,206.78) .. (170.69,155.15) .. controls (170.69,103.28) and (212.7,61.18) .. (264.72,60.66) ;  
\draw  [draw opacity=0][dash pattern={on 4.5pt off 4.5pt}] (264.68,249.65) .. controls (265.02,249.65) and (265.35,249.65) .. (265.69,249.65) .. controls (318.16,249.65) and (360.69,207.34) .. (360.69,155.15) .. controls (360.69,102.96) and (318.16,60.65) .. (265.69,60.65) .. controls (265.02,60.65) and (264.36,60.66) .. (263.69,60.67) -- (265.69,155.15) -- cycle ; \draw  [dash pattern={on 4.5pt off 4.5pt}] (264.68,249.65) .. controls (265.02,249.65) and (265.35,249.65) .. (265.69,249.65) .. controls (318.16,249.65) and (360.69,207.34) .. (360.69,155.15) .. controls (360.69,102.96) and (318.16,60.65) .. (265.69,60.65) .. controls (265.02,60.65) and (264.36,60.66) .. (263.69,60.67) ;  
\draw  [color={rgb, 255:red, 0; green, 0; blue, 0 }  ,draw opacity=1 ][fill={rgb, 255:red, 0; green, 0; blue, 0 }  ,fill opacity=1 ] (197.68,87.3) .. controls (197.68,85.93) and (198.8,84.81) .. (200.17,84.81) .. controls (201.55,84.81) and (202.67,85.93) .. (202.67,87.3) .. controls (202.67,88.68) and (201.55,89.8) .. (200.17,89.8) .. controls (198.8,89.8) and (197.68,88.68) .. (197.68,87.3) -- cycle ;
\draw  [color={rgb, 255:red, 0; green, 0; blue, 0 }  ,draw opacity=1 ][fill={rgb, 255:red, 0; green, 0; blue, 0 }  ,fill opacity=1 ] (197.68,223.3) .. controls (197.68,221.93) and (198.8,220.81) .. (200.17,220.81) .. controls (201.55,220.81) and (202.67,221.93) .. (202.67,223.3) .. controls (202.67,224.68) and (201.55,225.8) .. (200.17,225.8) .. controls (198.8,225.8) and (197.68,224.68) .. (197.68,223.3) -- cycle ;
\draw    (235.17,65.8) -- (295.67,244.67) ;
\draw    (293.67,65.67) -- (237.67,244.67) ;
\draw  [color={rgb, 255:red, 0; green, 0; blue, 0 }  ,draw opacity=1 ][fill={rgb, 255:red, 255; green, 255; blue, 255 }  ,fill opacity=1 ] (261.67,156.39) .. controls (261.67,154.03) and (263.57,152.13) .. (265.92,152.13) .. controls (268.27,152.13) and (270.18,154.03) .. (270.18,156.39) .. controls (270.18,158.74) and (268.27,160.64) .. (265.92,160.64) .. controls (263.57,160.64) and (261.67,158.74) .. (261.67,156.39) -- cycle ;
\draw  [color={rgb, 255:red, 0; green, 0; blue, 0 }  ,draw opacity=1 ][fill={rgb, 255:red, 0; green, 0; blue, 0 }  ,fill opacity=1 ] (263.43,156.39) .. controls (263.43,155.01) and (264.55,153.89) .. (265.92,153.89) .. controls (267.3,153.89) and (268.42,155.01) .. (268.42,156.39) .. controls (268.42,157.76) and (267.3,158.88) .. (265.92,158.88) .. controls (264.55,158.88) and (263.43,157.76) .. (263.43,156.39) -- cycle ;
\draw    (174,130) .. controls (214,100) and (291.67,253) .. (331.67,223) ;
\draw    (257.67,33) -- (246.67,90) ;
\draw    (270.67,33) -- (283.67,90) ;
\draw    (274,160) -- (370.67,206) ;

\draw (235,100.4) node [anchor=north west][inner sep=0.75pt]    {$l_{1}$};
\draw (252,151.4) node [anchor=north west][inner sep=0.75pt]    {$o$};
\draw (283,100.4) node [anchor=north west][inner sep=0.75pt]    {$l_{2}$};
\draw (158,72.4) node [anchor=north west][inner sep=0.75pt]    {$\GL_{2}$};
\draw (197,147.4) node [anchor=north west][inner sep=0.75pt]    {$\g_{m}^3$};
\draw (373,201.4) node [anchor=north west][inner sep=0.75pt]    {$\g_{m}^2$};
\draw (158,217.4) node [anchor=north west][inner sep=0.75pt]    {$\GL_{2}$};
\draw (233,11.4) node [anchor=north west][inner sep=0.75pt]    {$\g_{a}^2\rtimes\g_{m}^2$};
\draw (296,129.4) node [anchor=north west][inner sep=0.75pt]    {$\GL_{2}\times_{\det}\GL_{2}$};
\draw (102,257) node [anchor=north west][inner sep=0.75pt]   [align=left] {Distribution of monodromy groups on $\mbp^2(\qp)$ when $\epsilon=0$};

\end{tikzpicture}
\end{center}
\subsection{Distribution of Wintenberger types}
In the proof, we use a refinement of Hodge decomposition in the proof,
$$D=\bigoplus_{\xi\in X_{\p}}D_{\xi}$$
which was introduced in \cite{wintenberger1984scindage}. Here, $X_{\p}$ is the abelian group of periodic functions from $\mz$ to itself, and we call the subset consisting of the $\xi$ that appear in this decomposition the \textit{Wintenberger type} of $D$, denoted $X_D$. We study the distribution of $X_{D_P}$ for $P\in\mwa$, where $D_P\in\mf_{\qp}^{\f}$ represents the isomorphism class associated to $P$.
\begin{mainthm}[Corollary \ref{distwintype}]\label{introthm5}
Let $P\in\mwa$. The Wintenberger type of $D_P$ satisfies
    $$X_{D_P}=\begin{cases}
    (A)\quad &v_p(\bar c(P))\le 0\\
    (B) &v_p(\bar c(P))\ge 1
\end{cases}$$
where $(A)$ and $(B)$ are given in Lemma \ref{listxd} and conventionally, we set $v_p([a\!:\!1]):=v_p(a)$ and $v_p([1\!:\!0]):=-\infty$.
\end{mainthm}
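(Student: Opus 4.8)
The plan is to deduce the statement from the explicit classification of Theorem~\ref{introthm1}, the list of possible Wintenberger types in Lemma~\ref{listxd}, and the moduli description of Theorem~\ref{introthm3}, by a case analysis over the families. For a fixed $\epsilon\in\{0,\pm1\}$ the members of $\mwa$ are, by Theorem~\ref{introthm1}, up to isomorphism the $D^{\epsilon,iso}_{\epsilon'}$, $D^{\epsilon,\nu}_{a'}$ and $D^{\epsilon,\mu}_{(a,b)}$ (the product family $D^{prod}_{\epsilon'}$ has $\chi_{\varphi}(X)=(X^{2}+p)^{2}$ and so does not occur here); and by Lemma~\ref{listxd} the Wintenberger type $X_{D_{P}}$ is one of the two prescribed lists $(A)$, $(B)$. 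Since $D\mapsto X_{D}$ and $\bar c$ are both functions of the parameters, it then suffices to check on each of these three families that $X_{D_{P}}=(A)$ precisely when $v_{p}(\bar c(P))\le 0$.

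First I would record, for each family, the Wintenberger type as a function of the parameters. Recall from \cite{wintenberger1984scindage} that $D=\bigoplus_{\xi}D_{\xi}$ is the canonical refinement of the Hodge decomposition attached to the filtered $\varphi$-module $D$; since the Newton polygon of $\chi_{\varphi_{D}}(X)=X^{4}+\epsilon pX^{2}+p^{2}$ has constant slope $\tfrac12$, the periodic functions $\xi$ that can occur take values in $\{0,1\}$ and are balanced over each period, and which of the lists $(A)$, $(B)$ is realized is governed by the relative position of the weight-one step of the Hodge filtration with respect to the generalized $\varphi^{2}$-eigenspace decomposition over $\bqp$. Feeding in the explicit matrices of $\varphi$ and of the filtration from Definitions~\ref{prodcase} and \ref{canfam}, this relative position is detected by the $p$-adic valuation of a single rational expression in the parameters, which---comparing with the computation in the proof of Lemma~\ref{listxd}---is $\le 0$ exactly on the list $(A)$ and $\ge 1$ exactly on the list $(B)$.

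Next I would bring in the explicit form of $\bar c$ on each family: on $D^{\epsilon,\mu}_{(a,b)}$ one has $\bar c(P)=[\,c(a,b):1\,]$ with $c(a,b)=-\tfrac{a^{2}+\epsilon p+b^{2}p^{2}}{ab+1}$, on $D^{\epsilon,\nu}_{a'}$ the corresponding degeneration, on $D^{\epsilon,iso}_{\epsilon'}$ a value of the form $[{\pm}2p:1]$ (so $v_{p}=1$, matching the $\GL_{2}$-case of Theorem~\ref{introthm4}), and, when $\epsilon=0$, the $\mathbf G_{m}^{3}$-point maps to $[1:0]$. Computing $v_{p}(\bar c(P))$ in the parameters and matching it with the valuation from the previous step proves the claim on the locus where $\bar c$ is injective; over the degenerate fibre $\bar c^{-1}([-\epsilon p:1])$ one reads off directly from the normal forms (as for Theorem~\ref{introthm2}(3), including the non-semisimple class) that all three isomorphism classes there have type $(B)$, consistent with $v_{p}(-\epsilon p)=1$, while the convention $v_{p}([1:0])=-\infty\le 0$ puts the exceptional point $[1:0]$ in $(A)$, consistent with the $\mathbf G_{m}^{3}$-computation. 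As $\bar c$ need not be injective (Theorem~\ref{introthm3}) and the three families overlap in $\mwa$, a final check that the computations agree on the overlaps shows that $X_{D_{P}}$ is genuinely a function of $v_{p}(\bar c(P))$ on all of $\mwa$.

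The main obstacle is bookkeeping rather than anything conceptual: one has to push the explicit Wintenberger computation through the two-parameter family $D^{\epsilon,\mu}_{(a,b)}$, where the $\varphi^{2}$-eigenspaces become visible only after base change to $\bqp$ and one must keep track of which eigenvalue is which, and one has to pin down the projective normalization of $\bar c$ so that the valuation coming out of that computation is literally $v_{p}(\bar c(P))$ and not a twist of it---I would fix this by evaluating both sides at a couple of explicit test points, e.g.\ a generic $(a,b)$ and the points of $l_{1}\cup l_{2}$. The one genuinely delicate locus is $\bar c(P)=[-\epsilon p:1]$, where $\bar c$ degenerates and three isomorphism classes collapse to a single value, and where the type has to be extracted from the normal forms by hand rather than from the generic valuation formula.
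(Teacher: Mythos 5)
Your overall skeleton coincides with the paper's: reduce to the three families of Theorem \ref{mainclass}, compute $\bar c$ on each (it is indeed $[c(a,b)\!:\!1]$ on the $\mu$-family, $\pm 2p$ on the iso-family, and $(a'-\epsilon p)+p^2/(a'-\epsilon p)$ on the $\nu$-family), and compare valuations, treating the fibre over $[-\epsilon p\!:\!1]$ and the point $[1\!:\!0]$ separately. However, the central step of your plan is not carried out, and the justification you offer for it does not exist where you point. You assert that which of $(A)$, $(B)$ occurs is ``detected by the $p$-adic valuation of a single rational expression in the parameters, which---comparing with the computation in the proof of Lemma \ref{listxd}---is $\le 0$ exactly on $(A)$ and $\ge 1$ exactly on $(B)$.'' That valuation dichotomy is precisely the statement to be proven, and the proof of Lemma \ref{listxd} contains no such computation: that lemma only enumerates the four a priori possible types $(A)$--$(D)$ and shows $\dim D_{[0]}\neq 1$. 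The ingredient you are missing is Proposition \ref{mainthm1}, whose proof uses the integrality condition (d) of Theorem \ref{hodgedecomp} on adapted lattices (via the unipotent $u$, the operator $\F^{\el}$, and Lemma \ref{rathertechnical}) to show that type $(A)$ forces $a'\in\epsilon p+p^2\zp$ on the $\nu$-family while type $(B)$ forces $c(a,b)\in p\zp$ on the $\mu$-family; the ``relative position of $\fil_1D$ with respect to the $\varphi^2$-eigenspaces over $\bqp$'' does not by itself see these lattice-theoretic constraints, and no eigenspace argument of the kind you sketch is given or needed in the paper. Once Proposition \ref{mainthm1} is invoked, the corollary is exactly the paper's short check; without it, your proposal asserts its key equivalence rather than proving it.

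A secondary inaccuracy: Lemma \ref{listxd} does not restrict $X_{D_P}$ to $(A)$ or $(B)$; the exclusion of $(C)$ and $(D)$ for $\chi_\varphi(X)=X^4+\epsilon pX^2+p^2$ is Lemma \ref{xdneqc}, proved by an explicit matrix computation. Your replacement argument---that the $\xi$'s must be ``balanced over each period'' because all Frobenius slopes are $\tfrac12$---is only a heuristic: the automorphism $u$ does not preserve the decomposition $D=\bigoplus_\xi D_\xi$ but only a flag adapted to it, so the naive slope count does not directly apply, and making it rigorous amounts to the same lattice analysis that you skip. The remaining parts of your plan (the values of $\bar c$ on the families, the three classes over $[-\epsilon p\!:\!1]$ all being of type $(B)$, and the convention at $[1\!:\!0]$ matching the $\nu$-family at $a'=\epsilon p$) are correct and agree with the paper.
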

\subsection{Strategy of proof}
Throughout this subsection, we fix an unramified finite extension $K$ of $\qp$. The structure of any admissible filtered $\varphi$-module over $K$ is determined by its
$$\mbox{crystalline Frobenius}~\varphi \qaq \mbox{Hodge cocharacter}\footnote{The cocharacter associated to the Hodge decomposition \cite[Theorem 3.1.2]{wintenberger1984scindage}.}~ \mu:\g_{m}\to\GL_{4,K}.$$
When $K=\qp$, the crystalline Frobenius is linear and hence determined by its characteristic polynomial, which must be a supersingular $p$-Weil polynomial of degree $4$. We prove that, for $p\ge 7$, such polynomials are precisely
$$(X^2\pm p)^2\qaq X^4+\epsilon pX^2+p^2,~\epsilon\in\st{0,\pm1}.$$
Moreover, the conjugacy class of the Hodge cocharacter is fixed, as the $p$-adic Tate module of any abelian surface has Hodge numbers $h^{1,0}=h^{0,1}=2$. There are technical constraints on the relative position of $\varphi$ and $\mu$ inside $\GL_4(\qp)$ \cite[Introduction]{wintenberger1984scindage}, which translate into the arithmetic conditions in Theorem \ref{introthm1}. Finally, we complete the proof of Theorem \ref{introthm1} by verifying the conditions required in Volkov's characterization of filtered $\varphi$-modules arising from abelian varieties with good reduction over $\qp$ \cite[Corollary 5.9]{volkov2005class}.

In the proof of Theorem \ref{introthm2}, one of the main ingredients is a density result due to Pink \cite[Proposition 2.5]{pink1998ℓ}, which asserts that for any admissible filtered $\varphi$-module $D\in\mf^{\f}_K$, the conjugations of $\mu(K^\times)$ by $\varphi^i$ together with $\varphi^{[K:\qp]}$ generates a Zariski dense subgroup in $H_D$. We also use the algebraic Lie correspondence \cite[Corollary 10.17]{milne2017algebraic} in our computations.

Theorem \ref{introthm3} is established by proving that the isomorphism class of $D^{\epsilon,\nu}_{a'}$ (resp. $D^{\epsilon,\mu}_{(a,b)}$) is determined by $c(\iota(\nu(a')))$ (resp. $c(\iota(\mu(a,b)))$), except for the case $c=[-\epsilon p\!:\!1]$, which requires a special discussion.
$$
\begin{tikzcd}
                                                                                         & \mbp^2(\qp) \arrow[rd, "\iota"] \arrow[rdd, "\pi"', two heads] \arrow[rrd, "c\circ\iota"] &                                                  &             \\
\mba^2(\qp)\backslash\st{ab=-1} \arrow[ru, "\mu"] \arrow[rrd, "{[D_{-}^{\epsilon,\mu}]}"'] & \mba^1(\qp) \arrow[u, "\nu"] \arrow[rd, "{[D_{-}^{\epsilon,\nu}]}" description]             & \dm_c(\qp) \arrow[d, two heads] \arrow[r, "c"] & \mbp^1(\qp) \\
                                                                                         &                                                                                           & \mwa \arrow[ru, "\bar c", dashed]                &            
\end{tikzcd}
$$
Theorem \ref{introthm4} and \ref{introthm5} are direct corollaries of Theorem \ref{introthm2} and \ref{introthm3}.

\subsection{Relation to previous work and broader motivation}
Our study provides examples of filtered $\varphi$-modules arising from $p$-adic geometry, which is inspired by and generalizes the work of Volkov \cite{volkov2023abelian}. In \cite{volkov2023abelian} \S2 and \S3, Volkov constructed two filtered $\varphi$-modules and claimed that they are both non-semisimple, which is equivalent to saying that the associated monodromy groups are both non-reductive. We can show that these two filtered $\varphi$-modules are isomorphic to $D^{prod}_{-1}$ and $D^{1,\mu}_{(-\zeta_3^{-1}p,1)}$ respectively. The neutral components of the monodromy groups of these two filtered $\varphi$-modules are $\mathbf G_{m}^2$ and $\mathbf G_{a}^2\rtimes_g\mathbf G_{m}^2$ respectively. The second group is non-reductive, reproving Proposition 3.1 (b) in \cite{volkov2023abelian}, while the first is reductive, in contrast with Proposition 2.1 (b) in \cite{volkov2023abelian}\footnote{In fact, we found a mistake in her proof of Proposition 2.1. Using the notation in this proof, she claimed that the nontrivial $\varphi$-stable subspaces of $D$ are precisely the $D_i$. However, since $\varphi^2=-p$ in this case, for any nonzero vector $v\in D$, the $2$-dimensional subspace of $D$ spanned by $v$ and $\varphi(v)$ is $\varphi$-stable. Therefore, $D_1$ actually has a complement subobject in $D$. Indeed, setting $z:=y_1+x_2$, the subspace of $D$ spanned by $z$ and $\varphi(z)$ is such a complement. A similar argument implies that every subobject of $D$ has a complement and $D$ is semisimple.}.

Theorem \ref{introthm2} lies in the general framework of algebraic monodromy groups arising from geometry, defined as the Zariski closure of the Galois action on \'etale cohomology groups. When the base field is a number field, these groups are deeply studied. For instance, they satisfy certain $\ell$-independence properties (see \cite[\S5]{serre2000collected}, \cite{larsen1992independence}) and are large for low-dimensional abelian varieties with constraints on their endomorphism rings (see \cite[IV 2.2]{serre1997abelian}, \cite[\S5]{serre2000collected}).

The case where the base field is a mixed characteristic $p$-adic field and $\ell=p$ is more delicate. The central case concerns the monodromy group of the $p$-adic Tate module of an abelian variety with good reduction over $K$. Pink \cite[Proposition 2.9]{pink1998ℓ} proved that when the reduction is ordinary, this $p$-adic algebraic monodromy group is always solvable. When the reduction is supersingular, Serre \cite{serre1997abelian} showed that for elliptic curves, the $p$-adic monodromy group is either $\GL_2$ or a non-split Cartan subgroup of $\GL_2$. Theorem \ref{introthm2} may be viewed as an extension of Serre's result.

Another motivation for studying the distribution of $p$-adic monodromy groups is the theory of Hecke operators on Shimura varieties, since monodromy groups are stable under the Hecke action. For instance, the determination of local monodromy groups of $F$-isocrystals was one of the main ingredients for the resolution of Chai--Oort Hecke orbit conjecture in
positive characteristic \cite{d2022hecke}.

Yu Fu recently formulated a $p$-adic nowhere density conjecture \cite[Conjecture 1.3]{fu2025padicdensityconjecturehecke} for Hecke orbits of positive-dimensional subvarieties in Hodge type Shimura varieties. To provide evidence for this conjecture, she proves the $p$-adic density of the "big monodromy" locus in the formal neighborhood of certain points $x\in Sh(\overline{\mathbb F}_p)$ \cite[Theorem 1.5]{fu2025padicdensityconjecturehecke}. In our work, we study instead the variation of monodromy groups on the coarse moduli space $\mwa$ of admissible filtered $\varphi$-modules with isomorphic $\varphi$-action. These two spaces are related by Grothendieck--Messing type period maps. 

\subsection*{Outline of the article}
In \S\ref{pre} we recall the necessary background on filtered $\varphi$-modules and the Tannakian formalism. 

In \S\ref{classphimodule} we establish the classification Theorem \ref{introthm1}. 

In \S\ref{classmonodromy} we compute the associated monodromy groups and prove Theorem \ref{introthm2}. 

In \S\ref{moduli} we study the space $\mwa$, and we prove Theorem \ref{introthm3}--\ref{introthm5}.

\subsection*{Acknowledgement}
I am grateful to my advisor Marco d'Addezio, who suggested that I study $p$-adic local monodromy groups coming from geometry. I also thank him for helpful discussions on this topic and for the many constructive suggestions he gave on the draft of this article. I thank Maja Volkov, who kindly answered my questions on filtered $\varphi$-modules arising from abelian surfaces in her work. I have also greatly benefited from discussions with Yu Fu, who is likewise interested in $p$-adic monodromy groups arising from deformations of supersingular abelian varieties. This work was supported in part by the Agence Nationale de la Recherche (ANR) under the project ANR-25-CE40-7869-01 (pDefi), and by China Scholarship Council (CSC) program (File No. 202306340018). 
\section{Preliminaries}\label{pre}
Let $K$ be a finite extension of $\qp$. We denote by $\Gamma_K$ the absolute Galois group of $K$. Let $K_0$ be the maximal unramified subextension inside $K$, we have a Frobenius lift $\sigma\in\gal(K_0/\qp)$.
\subsection{Crystalline representations and filtered $\varphi$-modules}
For a proper smooth variety $X$ over $K$ and $n\ge 0$, the $p$-adic \'etale cohomology groups $H_{\Acute et}^n(X_{\overline K},\qp)$ give rise to Galois representations of $\Gamma_K$. Unlike the $\ell$-adic case, these representations are typically highly ramified. Nevertheless, they are controlled by Fontaine's period ring formalism (see \cite[\S5]{brinon2009cmi}): if $X$ has good reduction, its $p$-adic cohomology groups are \textit{crystalline representations}; in the case of semistable reduction, one obtains \textit{semistable representations}; and in general one always obtains \textit{de Rham representations}. For our purposes, we will restrict attention to crystalline representations, which form a subcategory $\Rcris{\qp}(\Gamma_K)$ of the category $\Rep{\qp}(\Gamma_K)$ of finite-dimensional continuous $\qp$-representations of $\Gamma_K$ \cite[\S5, \S9]{brinon2009cmi}.

A central aspect of classical $p$-adic Hodge theory is the passage from categories of $p$-adic representations of $\Gamma_K$ to corresponding categories of semilinear algebraic objects. As an important example, the category of crystalline representations corresponds to the category of filtered $\varphi$-modules defined as follows.
\begin{defn}[{\cite{fontaine1979modules}}]
    A \textit{filtered $\varphi$-module} over $K$ is a triple $(D,\varphi,\fil_\bullet)$ where
    \begin{enumerate}
        \item $D$ is a finite-dimensional vector space over $K_0$,
        \item $\varphi$ is a $\sigma$-semilinear automorphism of $D$,
        \item $\fil_\bullet$ is a decreasing exhaustive and separated filtration on $D\otimes_{K_0}K$.
    \end{enumerate}
    A morphism between two filtered $\varphi$-modules over $K$ is a $K_0$-linear map which commutes with the $\varphi$-actions and respects the filtration after tensoring $K$. We denote by $\mf_{K}$ the category of filtered $\varphi$-modules over $K$.
\end{defn}
The category $\mf_K$ is not abelian. However, we can define a full subcategory $\mf_K^{\f}$ of $\mf_K$ which is abelian by introducing some additional properties relating the $\varphi$-action and the filtration called \textit{admissibility} \cite[\S8]{brinon2009cmi}, which requires the existence of a $\zp$-lattice $M\subset D$ such that $\sum_{i\in\mz}\frac{1}{p^i}(\fil_iD\cap M)=M$. The objects in $\mf_K^{\f}$ are called \textit{admissible filtered $\varphi$-modules} and $M$ is called an \textit{adapted lattice}.

Based on works of many, Fontaine and Colmez \cite{colmez2000construction} proved an equivalence of categories from $\mf_{K}^{\f}$ to the category $\Rcris{\qp}(\Gamma_K)$ of crystalline $\qp$-representations of $\Gamma_K$:
$$
\begin{tikzcd}
\mf_{K}^{\f} \arrow[r, "V_\cris"] & \Rcris{\qp}(\Gamma_K) \arrow[l, "D_\cris", shift left]
\end{tikzcd}.$$
Moreover, the natural forgetful functor $\mf_{K}^{\f}\stackrel{\omega}\to\vc_{K}$ (resp. $\Rcris{\qp}\stackrel{\eta}\to\vc_{\qp}$) makes $\mf_{K}^{\f}$ (resp. $\Rcris{\qp}$) into a Tannakian category (resp. a neutral Tannakian category) over $\qp$ in the sense of \cite{deligne2012tannakian}, and the functors $D_{\cris}$, $V_{\cris}$ are compatible with the $\otimes$-structures. For an object $D\in\mf_{K}^{\f}$, let $V=V_{\cris}(D)$ and $\ang{D}^\otimes$ (resp. $\ang{V}^\otimes$) be the Tannakian subcategory of $\mf_{K}^{\f}$ (resp. of $\Rcris{\qp}$) generated by $D$ (resp. by $V$). By Tannakian formalism \cite{deligne2012tannakian}, the automorphism groups $\aot(\omega|_D)$ and $\aot(\eta|_V)$ are representable by algebraic subgroups $H_D$ of $\GL_D$ and $H_V$ of $\GL_V$ respectively, which are called the monodromy groups of $D$ and of $V$. Moreover, $H_D$ and $H_V$ are inner forms of each other (see \cite[Theorem 3.2]{deligne2012tannakian}), which are in particular isomorphic over $\bqp$. On the other hand, the algebraic monodromy group of a crystalline representation $V$, defined as the Zariski closure of the image of $\rho_V$ inside $\GL_V$, is isomorphic to $H_V$, which hence can be computed from the corresponding filtered $\varphi$-module.

\subsection{Hodge decomposition for admissible filtered $\varphi$-modules}
Suppose throughout this subsection that the extension $K/\qp$ is unramified. Wintenberger \cite{wintenberger1984scindage} proved that any object $D\in\mf_{K}^{\f}$ enjoys a natural splitting of its filtration, called the \textit{Hodge decomposition} as an analogue of the Hodge decomposition in complex geometry, which is a functorial decomposition $D=\bigoplus_{i\in\mz}D_i$ such that for all $i\in\mz$, $\fil_iD=\bigoplus_{i'\ge i} D_{i'}$. Via Tannakian formalism, there arises a \textit{Hodge cocharacter} $\mu:\g_{m,K}\to\GL_D$ which factors through the monodromy group $H_D$ of $D$. Moreover, the splitting satisfies and is determined by the following conditions.

\begin{thm}[{\cite[Introduction and Theorem 3.1.2]{wintenberger1984scindage}}]\label{hodgedecomp}
    Let $D=(D,\varphi,\fil_\bullet)\in\mf_{K}^{\f}$ and $X_{\p}=\st{\xi:\mz\to\mz~\mbox{periodic}}$. There exists a unique pair $\br{\br{D_i}_{i\in\mz},u}$ where $D_i$ are $K$-linear subspaces of $D$, $u\in\GL(D)$ such that
    \begin{enumerate}[label=(\alph*)]
        \item $D=\bigoplus_{i\in\mz}D_i$ and for all $i\in\mz$, $\fil_iD=\bigoplus_{i'\ge i} D_{i'}$,
        \item $(u-\id_D)\bigoplus_{i'\le i} D_{i'}\subset\bigoplus_{i'\le i-1} D_{i'}$,
        \item $D=\bigoplus_{\xi\in X_{\p}}D_\xi$, where $D_\xi=\st{x\in D\ver(\F^{\el})^j(x)\in D_{\xi(j)}}$ with $\F^{\el}=u^{-1}\circ\varphi\circ\mu(p^{-1})$,
        \item For any adapted lattice $M$ of $D$, let $M_{\xi}=M\cap D_{\xi}$, $k$ be the residue field of $K$, $\bar M=M/pM$ and $\bar M_\xi=M_\xi/pM_\xi$.
        \begin{enumerate}[label=(\roman*)]
            \item We have $M=\bigoplus_{\xi\in X} M_{\xi}$, $\F^{\el}(M)=M$ and $u(M)=M$.
            \item Let $\bar\F^{\el}$ (resp. $\bar u$) be the residue of $\F^{\el}$ (resp. of $u$) on $\bar M$. Then, there exists a descending chain $0=L_0\subsetneq\cdots\subsetneq L_r=\bar M$ of $\bar\F^{\el}$-stable $k$-linear subspaces such that for $1\le r'\le r$, $L_{r'}=\bigoplus_{\xi\in X_{\p}}(L_{r'}\cap\bar M_\xi)$ and $(\bar u-\id_{\bar M})L_{r'}\subset L_{r'-1}$.
        \end{enumerate}
    \end{enumerate}
\end{thm}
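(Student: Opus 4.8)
The plan is to produce the pair $\br{\br{D_i}_{i\in\mz},u}$ in two stages: first solve the problem modulo $p$ on the reduction of an adapted lattice, where it becomes a statement about a $\sigma$-semilinear automorphism together with a flag over the perfect residue field $k$; then lift the solution to characteristic zero by a successive-approximation argument. Uniqueness will be proved in parallel with the lift, the point being that each of the conditions (a)--(d) is compatible with reduction mod $p^n$ and rigid enough to pin the data down order by order.

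\textbf{Reduction mod $p$.} First I would fix an adapted lattice $M\subset D$ and an auxiliary $K_0$-splitting $D=\bigoplus_i D_i^{(0)}$ of the filtration chosen so that $M=\bigoplus_i\br{M\cap D_i^{(0)}}$ (possible since $W=\mathcal O_{K_0}$ is a PID and the steps $\fil_iD\cap M$ are saturated in $M$). Writing $\mu^{(0)}$ for the resulting cocharacter and $\F^{\el,(0)}:=\varphi\circ\mu^{(0)}(p^{-1})$, the adaptedness hypothesis guarantees, after this choice, that $\F^{\el,(0)}$ carries $M$ isomorphically onto $M$ — this is exactly strong divisibility rephrased — so reduction mod $p$ yields on $\bar M=M/pM$ a descending flag $\bar\fil_\bullet$ together with a $\sigma$-semilinear automorphism $\bar\F^{\el}$. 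The heart of the matter is the construction over $k$ of a canonical grading $\bar M=\bigoplus_{\xi\in X_{\p}}\bar M_\xi$ refining $\bar\fil_\bullet$, together with an $\bar\F^{\el}$-twisted unipotent $\bar u$ realizing it: concretely, decompose $\bar M$ into the primary components of a suitable power of $\bar\F^{\el}$ (which is $k$-linear, as $\sigma^{[K_0:\qp]}=\id$), track how each cyclic block meets the flag jumps, and use Lang's theorem / \'etale descent over $k$ to split each block into lines realizing a single periodic function $\xi$. Packaging this as the $\bar\F^{\el}$-stable chain $0=L_0\subsetneq\cdots\subsetneq L_r=\bar M$ with $L_{r'}=\bigoplus_{\xi}\br{L_{r'}\cap\bar M_\xi}$ and $(\bar u-\id)L_{r'}\subset L_{r'-1}$ is the bookkeeping content of condition (d)(ii), and is where the interaction between the filtration and the semilinear Frobenius must be controlled most carefully.

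\textbf{Lift to $D$ and uniqueness.} Once the solution is known on $\bar M$, I would lift it to $M$ (equivalently to $D$) by Hensel-type iteration over the complete ring $W$. Conditions (a)--(c) exhibit $\br{\br{D_i},u}$ as a fixed point of an explicit correction operator: given an approximate splitting and an approximate $u$ correct mod $p^n$, the failure of $u^{-1}\varphi\mu(p^{-1})$ to be $X_{\p}$-compatible with the grading is measured by a class that can be killed by adjusting the $D_i$ and $u$ modulo $p^{n+1}$, the relevant obstruction vanishing because the linearized conditions are the "open" ones (unipotence of $u$ relative to $\bigoplus_{i'\le i}D_{i'}$, transversality of the $D_i$ to the flag). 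Passing to the limit gives the pair over $D$; that it factors through $H_D$ and gives rise to the Hodge cocharacter $\mu$ is then formal from the Tannakian setup of the previous subsection. Uniqueness runs on the same engine: two solutions agree mod $p$ by the rigidity of the mod-$p$ construction, and the correction operator forces an agreement mod $p^n$ to propagate to mod $p^{n+1}$.

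\textbf{Main obstacle.} I expect the genuine difficulty to lie in the mod-$p$ step: showing that the $\sigma$-semilinear automorphism $\bar\F^{\el}$ and the flag on $\bar M$ admit a \emph{unique} common refinement indexed by the group $X_{\p}$ of periodic functions, compatible with a twisted-unipotent $\bar u$ and with the chain $(L_{r'})$. This is a slope-type decomposition statement whose proof must simultaneously exploit perfectness of $k$ (to split semilinear blocks into lines), finiteness of $\bar M$ (to run the descending induction producing the $L_{r'}$), and the precise way $\mu(p^{-1})$ redistributes the lattice across graded pieces; by comparison, the lift to $\qp$-coefficients and the uniqueness bootstrap should be routine once this step is in place.
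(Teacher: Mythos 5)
First, a framing point: the paper does not prove this statement at all — Theorem \ref{hodgedecomp} is quoted verbatim from Wintenberger (\cite[Introduction and Theorem 3.1.2]{wintenberger1984scindage}) and is used as a black box. So there is no in-paper proof to measure your attempt against; what you have written has to stand on its own as a reconstruction of Wintenberger's theorem, and as such it is an outline whose decisive steps are announced rather than carried out.

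The genuine gaps are the following. (1) The mod-$p$ step, which you yourself identify as the heart, is not addressed by the tools you name: the grading $D=\bigoplus_{\xi\in X_{\p}}D_\xi$ records how the iterates of $\F^{\el}$ permute the Hodge graded pieces $D_i$, and this is not visible in the primary (eigenvalue) decomposition of a linear power of $\bar\F^{\el}$; likewise a Lang/Hilbert~90 trivialization of the semilinear pair $(\bar M,\bar\F^{\el})$ is canonical only up to the finite-dimensional fixed space and carries no preferred compatibility with the flag, so neither existence nor, more seriously, uniqueness of the common refinement (together with the chain $(L_{r'})$ of (d)(ii)) is obtained. (2) In the lifting step, the vanishing of the obstruction is asserted "because the linearized conditions are the open ones"; openness of unipotence/transversality says nothing about solvability of the linearized equation over $W$ at each order, and this is precisely where the actual contraction/convergence argument must be made — without it both the existence of the limit and the claimed propagation of uniqueness from mod $p^n$ to mod $p^{n+1}$ are unsupported (the uniqueness claim "two solutions agree mod $p$ by the rigidity of the mod-$p$ construction" presupposes the canonicity you have not proved). (3) Condition (d) quantifies over \emph{all} adapted lattices $M$, while your construction and bootstrap fix a single $M$; since — as the paper itself stresses right after the theorem — the pair $((D_i),u)$ is only pinned down once (d) is imposed, you would still need to show that the object you build satisfies (d) for every adapted lattice, or is independent of the chosen one. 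Fixing these three points is essentially proving Wintenberger's theorem, so the proposal as it stands is a plausible strategy sketch, not a proof.
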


The periodic functions $\xi\in X_\p$ that appear in the decomposition $D=\bigoplus_{\xi\in X_{\p}}D_\xi$ are clearly an invariant of $D$. For simplicity, we take the following definition.
\begin{defn}\label{wintype}
    Let $D=(D,\varphi,\fil_\bullet)\in\mf_{K}^{\f}$. The set $X_D:=\{\xi\in X_\p~\!|~\!D_\xi\neq 0\}$ is called the \textit{Wintenberger type} of $D$.
\end{defn}
\subsection{Filtered $\varphi$-modules arising from geometry}
For a proper and smooth variety $X$ over $K$ with good reduction, an important invariant is the characteristic polynomial $h_{X,n}(T)\in\ql[T]$ of a 
$\mathrm{mod}$-$p$ Frobenius lift acting on $H^{n}_{\et}(X_{\bar K},\ql)$, which is independent of $\ell\neq p$. By Weil conjectures, this polynomial is defined over $\mz$ and indeed is a $q^n$-Weil polynomial where $q$ is the cardinality of the residue field of $K$. 
\begin{defn}
    Let $q$ be a power of $p$. A $q$-Weil number (resp. supersingular $q$-Weil number) is an algebraic integer $\alpha$ such that all of its embeddings in $\mc$ admit absolute value $\sqrt{q}$ (resp. $\alpha$ is a product of $\sqrt{q}$ and a complex root of unity). A monic polynomial in $\mz[T]$ is called a $q$-Weil polynomial (resp. supersingular $q$-Weil polynomial) if all its roots in $\bar\mq$ are $q$-Weil numbers (resp. supersingular $q$-Weil numbers) and its valuation at $T^2-q$ is even. Let $\mathscr P_{\w,q}$ (resp. $\mathscr P^{\sss}_{\w,q}$) denote the set of all $q$-Weil polynomials (resp. supersingular $q$-Weil polynomials).
\end{defn}

As a consequence of the Riemann Hypothesis, $h_{X,n}$ can also be recovered from $D_{\cris}(H^{n}_{\et}(X_{\bar K},\qp))$, the filtered $\varphi$-module associated with the $p$-adic cohomology. If we denote by $\varphi$ the crystalline Frobenius of this filtered $\varphi$-module, the characteristic polynomial of $\varphi^{[K_0:\qp]}$ is exactly $h_{X,n}(T)$.

Conversely, when $K=\qp$, Volkov \cite{volkov2005class} gives a series of conditions which classify the admissible filtered $\varphi$-modules arising from abelian varieties with good reduction over $\qp$.

\begin{thm}[{\cite[Corollary 5.9]{volkov2005class}}]\label{dfromav}
    Let $D\in\mf_{\qp}^{\f}$. There is an abelian variety $A/\qp$ with good reduction such that $D=D_\cris(V_p(A))$ if and only if $D$ satisfies the following conditions.
    \begin{enumerate}
        \item the Hodge numbers satisfy $h^{0}(D)=h^{1}(D)=\frac{1}{2}\dim D$,
        \item the $\varphi$-action on $D$ is semisimple and $\chi_{\varphi}$ is a $p$-Weil polynomial,
        \item there exists a nondegenerate skew form on $D$ under which $\varphi$ is a $p$-similitude and $\fil_1D$ is totally isotropic.
    \end{enumerate}
\end{thm}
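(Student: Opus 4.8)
This is a characterization, so the plan is to prove the two implications separately: the ``only if'' direction is routine, and the ``if'' direction carries all the weight. The external inputs I would rely on are the crystalline comparison theorem for abelian varieties with good reduction, Honda--Tate theory over $\mathbb{F}_p$, the classification of $p$-divisible groups over $\zp$ by admissible filtered $\varphi$-modules with Hodge--Tate weights in $\{0,1\}$ (classical Fontaine--Laffaille theory suffices here since $p\ge 7$, though Kisin's integral theory would cover all $p$), and Serre--Tate deformation theory together with Grothendieck algebraization.

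\textbf{The ``only if'' direction.} Suppose $D=D_\cris(V_p(A))$, with $A$ the generic fibre of an abelian scheme $\mathcal{A}/\zp$ with special fibre $\mathcal{A}_0/\mathbb{F}_p$. Via the comparison isomorphism, $D$ is $H^1_{\mathrm{dR}}(\mathcal{A}/\zp)[1/p]$ with its Hodge filtration and $\varphi$ is the Frobenius of $H^1_\cris(\mathcal{A}_0/\zp)[1/p]$, so (1) is just the Hodge symmetry $h^{1,0}=h^{0,1}=\dim A$. For (2), $\chi_\varphi$ equals the Weil polynomial of $\mathcal{A}_0$, which is a $p$-Weil polynomial by the Riemann hypothesis over finite fields and has even $(T^2-p)$-valuation because the Honda--Tate abelian variety attached to the Weil number $\sqrt p$ over $\mathbb{F}_p$ is two-dimensional (so $T^2-p$ can only occur to an even power in an $H^1$-characteristic polynomial); semisimplicity of $\varphi$ holds because the minimal polynomial of Frobenius on $H^1$ is squarefree, as $\mathbb{Q}[\pi]\subset\mathrm{End}^0(\mathcal{A}_0)$ is a product of number fields. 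For (3), every abelian variety admits a polarization $\lambda:A\to A^\vee$; applying $D_\cris$ to the resulting alternating Weil pairing $V_p(A)\times V_p(A)\to\qp(1)$ and trivializing $D_\cris(\qp(1))$ yields a nondegenerate skew form on $D$ under which $\varphi$ is a $p$-similitude, and $\fil_1 D$ is a Lagrangian for it, being the image of $H^0(\mathcal{A},\Omega^1)$, which is totally isotropic for the polarization pairing by the Hodge--Riemann relations.

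\textbf{The ``if'' direction.} Now suppose $D$ satisfies (1)--(3). Condition (1) forces the Hodge--Tate weights to be $0,1$ with equal multiplicity, and weak admissibility then forces all Newton slopes of $\varphi$ into $[0,1]$; so, up to isogeny, $D$ is the filtered $\varphi$-module of a $p$-divisible group $G$ over $\zp$, and the skew form of (3) endows $G$ with a quasi-polarization under which the Hodge filtration is Lagrangian. Next I would build the special fibre: since $K=\qp$, the pair $(D,\varphi)$ is just an isocrystal over $\mathbb{F}_p$, i.e.\ a $\qp[\varphi]$-module, which by semisimplicity is pinned down by the factorization of $\chi_\varphi$; as $\chi_\varphi$ is a $p$-Weil polynomial of even $(T^2-p)$-valuation, Honda--Tate produces an abelian variety $\mathcal{A}_0/\mathbb{F}_p$ whose $H^1$-isocrystal is isomorphic to $(D,\varphi)$ and which carries a polarization $\lambda_0$ matching the quasi-polarization of $G_0:=G\otimes\mathbb{F}_p$, and after adjusting $\mathcal{A}_0$ within its isogeny class one may take $\mathcal{A}_0[p^\infty]\cong G_0$ compatibly with the (quasi-)polarizations. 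Then $G$, together with its quasi-polarization, is precisely a deformation of the (quasi-)polarized $p$-divisible group $\mathcal{A}_0[p^\infty]$ over $\zp$; by Serre--Tate it lifts $(\mathcal{A}_0,\lambda_0)$ to a formal polarized abelian scheme over $\mathrm{Spf}\,\zp$, which algebraizes (polarized abelian schemes being projective) to an abelian scheme $\mathcal{A}/\zp$ with special fibre $\mathcal{A}_0$ and $\mathcal{A}[p^\infty]=G$. Setting $A:=\mathcal{A}_{\qp}$, which has good reduction, the crystalline comparison gives $D_\cris(V_p(A))\cong(H^1_\cris(\mathcal{A}_0/\zp)[1/p],\varphi,\fil_\bullet H^1_{\mathrm{dR}}(\mathcal{A}/\zp)[1/p])$, and by construction this is $(D,\varphi,\fil_\bullet)$.

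\textbf{The main obstacle.} The deformation- and comparison-theoretic steps are essentially formal once the special fibre is in place; the genuine difficulty is passing from the \emph{isogeny}-level inputs — Fontaine's $p$-divisible group dictionary and Honda--Tate — to an honest isomorphism of filtered $\varphi$-modules compatible with the skew form. Concretely, one must produce a single $\zp$-lattice $M\subset D$ that is simultaneously adapted to the filtration, satisfies $pM\subseteq\varphi(M)\subseteq M$ (so that it defines a $p$-divisible group over $\zp$ in the Fontaine--Laffaille dictionary), and is integral for the form of (3), and then realize, inside the Honda--Tate isogeny class cut out by $\chi_\varphi$, an abelian variety over $\mathbb{F}_p$ whose polarized Dieudonné module is exactly $M$. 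This gluing is where the parity condition in (2) and the isotropy of $\fil_1 D$ in (3) are genuinely used, and it is the technical heart of the argument.
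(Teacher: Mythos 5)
This statement is not proved in the paper at all: it is imported verbatim from Volkov \cite[Corollary 5.9]{volkov2005class}, so there is no internal proof to compare against, and your attempt should be measured against Volkov's argument. In outline your strategy is the same circle of ideas as hers: crystalline comparison and the Weil pairing for necessity (your ``only if'' direction is essentially correct, including the use of the two-dimensional simple object attached to the Weil number $\sqrt p$ to explain the parity condition at $T^2-p$, and semisimplicity of $\varphi$ from semisimplicity of $\mq[\pi]$), and for sufficiency the dictionary between weakly admissible modules with weights in $\{0,1\}$ and $p$-divisible groups over $\zp$, Honda--Tate over $\fp$, Serre--Tate deformation theory, and algebraization via a polarization.

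However, as a proof the ``if'' direction is incomplete, and the gap is exactly the step you yourself label ``the main obstacle'' without resolving it. Two things are missing. First, you must realize, inside the Honda--Tate isogeny class determined by $\chi_\varphi$, an abelian variety $\mathcal A_0/\fp$ whose $p$-divisible group is \emph{isomorphic} to $G_0$ (Tate's lattice-realization argument does this, but it has to be invoked and made compatible with the extra structure). Second, and more seriously, your lifting/algebraization step needs an honest polarization $\lambda_0$ of $\mathcal A_0$ whose associated form matches, up to scalar, the skew form of condition (3): an arbitrary nondegenerate $\varphi$-compatible alternating form on the isocrystal is only a quasi-polarization, and there is no a priori positivity (Rosati) built into condition (3), so one must argue that after adjusting within the isogeny class and rescaling the form one can arrange an actual polarization whose induced quasi-polarization lifts to $G$ over $\zp$ with $\fil_1D$ Lagrangian; this is precisely the technical content of Volkov's \S4--5 and cannot be dismissed as bookkeeping. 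Until that gluing of the integral, polarized data is carried out, the argument establishes only that the stated conditions are necessary and plausible-sufficient, not the equivalence.
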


\subsection{Tools for computing monodromy groups}
For $K/\qp$ unramified, any object $D\in\mf_{K}^{\f}$ is determined by its crystalline Frobenius $\varphi$ together with the Hodge decomposition. By Tannakian theory \cite[Proposition 2.21 (a)]{deligne2012tannakian}, this could be illustrated by the following.

\begin{prop}[{\cite[Proposition 2.5]{pink1998ℓ}}]\label{zardense}
    Let $D=(D,\varphi,\fil_\bullet)\in\mf_{K}^{\f}$ and $\mu$ be the Hodge cocharacter. For $i\in\mz$, let $\mu_i:\g_{m,K}\to H_D$ be the unique cocharacter such that for all $t\in K^\times$, $\mu_i(\sigma^i(t))=\varphi^i\circ\mu(t)\circ\varphi^{-i}$. The subgroup of $H_D$ generated by $\varphi^m$ and the image of $\mu_i$ for all $i\in\mz$ is Zariski dense in $H_D$.
\end{prop}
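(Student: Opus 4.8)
The plan is to apply the Tannakian density criterion \cite[Proposition 2.21 (a)]{deligne2012tannakian}. Let $H'\subseteq H_D$ be the smallest closed subgroup containing $\varphi^m$ and the images of all the $\mu_i$; here $\varphi^m$ makes sense as a $K$-point of $H_D$ because $\sigma^m=\mathrm{id}$ (so $\varphi^m$ is $K$-linear) and the $m$-th powers of the crystalline Frobenii form a $\otimes$-compatible functorial family of automorphisms of $\omega$, while each $\mu_i$ factors through $H_D$ by construction. Since $H'\hookrightarrow H_D$ is already a closed immersion, it is an isomorphism as soon as the restriction functor $\mathrm{Rep}(H_D)\to\mathrm{Rep}(H')$ on finite-dimensional $K$-representations is fully faithful with essential image closed under subobjects; by the usual graph argument — apply it to graphs of $K$-linear maps for full faithfulness, the essential-image condition then being immediate — both properties reduce to the single assertion that every $H'$-stable $K$-subspace of every object of $\ang{D}^\otimes$ is $H_D$-stable. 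Transporting this through the equivalence $\ang{D}^\otimes\simeq\mathrm{Rep}(H_D)$, and using that $\mf_{K}^{\f}$ is abelian with the subobjects of an $X=(D_X,\varphi_X,\fil_\bullet)$ being exactly its $\varphi_X$-stable $K$-subspaces equipped with the induced filtration (see e.g. \cite[\S8]{brinon2009cmi}), one is reduced to the concrete claim: a $K$-subspace $W\subseteq D_X=\omega(X)$ that is stable under $\varphi_X^m$ and under (the image in $\GL(D_X)$ of) every $\mu_i$, $i\in\mz$, is automatically $\varphi_X$-stable.

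To prove this, I would apply Wintenberger's structure theorem (Theorem \ref{hodgedecomp}) to each such $X$: it provides the unipotent operator $u$, which strictly decreases the Hodge filtration of $X$, together with the $\sigma$-semilinear operator $\F^{\el}=u^{-1}\circ\varphi_X\circ\mu_X(p^{-1})$, which permutes the pieces $D_{X,\xi}$ along the shift on $X_\p$, whence the canonical factorization $\varphi_X=u\circ\F^{\el}\circ\mu_X(p)$, where $\mu_X$ (the image of $\mu_0=\mu$ on $\omega(X)$) is the Hodge cocharacter of $X$. The hypothesis already forces $W$ to be a sum of Hodge weight spaces, hence $\mu_X(p)$-stable, so it suffices to prove $W$ stable under $\F^{\el}$ and under $u$; then $\varphi_X(W)=u(\F^{\el}(\mu_X(p)(W)))=W$. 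The point of demanding stability under $\mu_i$ for \emph{every} $i$ is that it says precisely that $\varphi_X^{-i}(W)$ is compatible with the Hodge grading of $X$ for all $i$; iterating and combining with $\varphi_X^m$-stability, one uses the description of the $D_{X,\xi}$ in Theorem \ref{hodgedecomp}(c) to conclude that $W$ is the direct sum of its intersections with the $D_{X,\xi}$, hence $\F^{\el}$-stable, and one expects to need the integral data of Theorem \ref{hodgedecomp}(d) — the $\bar\F^{\el}$-stable, $\xi$-graded flag $L_\bullet$ on $\bar M=M/pM$, against which $\bar u$ is unipotent triangular — to upgrade this to $u$-stability of $W$.

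I expect the main obstacle to be precisely this last step, the $u$-stability. Compatibility with the Hodge grading, or even with the finer $\xi$-grading, does not by itself make a subspace stable under the unipotent $u$, since $u$ is only triangular for the coarse filtration; one genuinely has to use the mod-$p$ structure of Theorem \ref{hodgedecomp}(d) together with the action of $\varphi_X^m$. In group-theoretic terms this is the assertion that $H_D$ — which is in general neither connected nor reductive — has its unipotent radical accounted for exactly by the interplay of the cocharacters $\mu_i$ with $\varphi^m$, whereas the $\mu_i$ alone generate only a reductive-type subgroup and $\varphi^m$ supplies the component group. Alternatively, one may simply quote Pink's original argument, \cite[Proposition 2.5]{pink1998ℓ}, which is the stated source of the proposition.
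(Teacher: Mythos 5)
The paper never proves this proposition: it is imported verbatim from Pink, and the reference to \cite[Proposition 2.21 (a)]{deligne2012tannakian} in the sentence preceding it is only motivational. So your closing fallback --- simply quoting Pink --- is exactly what the paper does, and on that level there is nothing to compare. Judged as a self-contained argument, however, your sketch contains one incorrect reduction and leaves its hardest steps unproved.

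First, subobjects of $X$ in the abelian category $\mf_{K}^{\f}$ are \emph{not} all $\varphi_X$-stable subspaces with the induced filtration: they are the $\varphi_X$-stable subspaces that are themselves weakly admissible, i.e.\ satisfy $t_H(W)=t_N(W)$, whereas a general $\varphi_X$-stable subspace of a weakly admissible module only satisfies $t_H(W)\le t_N(W)$ (this inequality-versus-equality distinction is exactly what the paper's Lemma \ref{adm} exploits). For instance, for $K=\qp$, $D=\qp e_1\oplus\qp e_2$, $\varphi=\diag(1,p)$, $\fil_1D=\qp(e_1+e_2)$, the line $\qp e_2$ is $\varphi$-stable but has $t_H=0<1=t_N$, so it is not a subobject and hence not $H_D$-stable. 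Consequently, even if you completed the argument that every $H'$-stable subspace $W$ is $\varphi_X$-stable, this would not yet give $H_D$-stability; you would still have to prove the Newton--Hodge equality for $W$, and this is precisely where weak admissibility must enter the proof --- your reduction silently discards it. Second, the decisive steps of the $\varphi$-stability claim are only announced: that $W$ is graded for the $\xi$-decomposition does not follow formally from stability under the $\mu_i$ (these control the Hodge grading of the subspaces $\varphi^{-i}(W)$, not the $\F^{\el}$-orbits, and $\F^{\el}$ already involves the unknown operator $u$), and the $u$-stability of $W$ is left open --- you flag it yourself as the expected main obstacle. Finally, the heuristic that ``the $\mu_i$ alone generate only a reductive-type subgroup and $\varphi^m$ supplies the component group'' is not accurate: in the paper's own computations (Proposition \ref{sit2}(ii), where $K=\qp$) the unipotent part of $H_D^\circ$ is generated precisely by the Frobenius conjugates of the Hodge cocharacter. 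In sum, what you have is a plausible plan whose essential steps are missing; the only complete route you offer is the citation, which coincides with the paper's.
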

The following lemma allows us to reformulate Proposition \ref{zardense} in terms of generation of Lie algebras.
\begin{lem}[{\cite[Corollary 10.17]{milne2017algebraic}}]\label{Liealg}
    Let $H_1,\cdots,H_n$ be smooth algebraic subgroups of a connected algebraic group $G$. If the Lie algebras of the subgroups $H_i$ generate the Lie algebra of $G$, then the subgroups $H_i$ generate $G$ as an algebraic group.
\end{lem}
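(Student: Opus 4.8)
The plan is to reduce the statement to the Lie correspondence for connected groups. Let $\mathfrak g=\operatorname{Lie}(G)$ and $\mathfrak h_i=\operatorname{Lie}(H_i)$, and let $H\le G$ be the smallest closed algebraic subgroup of $G$ containing $H_1,\dots,H_n$ (the intersection of all such); the assertion is that $H=G$. First I would reduce to the case where each $H_i$ is connected. Since $H_i$ is smooth, $\operatorname{Lie}(H_i^\circ)=\operatorname{Lie}(H_i)=\mathfrak h_i$, so the hypothesis is unaffected by replacing $H_i$ with $H_i^\circ$; and the smallest closed subgroup containing the $H_i^\circ$ is contained in $H$, so it is enough to prove the statement for connected $H_i$. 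In that case $H$ is itself connected: its identity component $H^\circ$ is a closed subgroup of $H$ containing every $H_i$ (a connected subscheme of $H$ through $e$ lies in $H^\circ$), hence $H\subseteq H^\circ$ by minimality, so $H=H^\circ$.

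Now the point is that $\operatorname{Lie}(H)$ is forced to be all of $\mathfrak g$. Indeed, $H_i\subseteq H$ gives $\mathfrak h_i\subseteq\operatorname{Lie}(H)$ for each $i$, and $\operatorname{Lie}(H)$ is a Lie subalgebra of $\mathfrak g$ (the bracket of $\mathfrak g$ restricts to the Lie algebra of any closed subgroup), so $\operatorname{Lie}(H)$ contains the Lie subalgebra generated by $\mathfrak h_1,\dots,\mathfrak h_n$, which by hypothesis is $\mathfrak g$. Hence $\operatorname{Lie}(H)=\mathfrak g$. Over a field of characteristic zero --- which is all we need, the coefficient field being $\bqp$ --- the connected group $H$ is automatically smooth, so $\dim H=\dim_{\bqp}\operatorname{Lie}(H)=\dim G$. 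Being a closed irreducible subscheme of the irreducible reduced scheme $G$ and of the same dimension, $H$ must equal $G$; undoing the reduction of the first step finishes the proof.

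The argument is essentially formal in characteristic zero, the two facts it rests on being that the Lie algebra of a closed subgroup is a Lie subalgebra of the ambient one and that a smooth connected subgroup of full dimension in $G$ coincides with $G$. The only place where genuine work would be needed is the extension to positive characteristic (the generality in which the cited corollary is stated): there the generated subgroup need not be smooth a priori, and one has to appeal to the structure theory of subgroups generated by connected algebraic subgroups to conclude that $\dim H=\dim_k\operatorname{Lie}(H)$ still holds. I would simply cite that structure theory rather than reprove it.
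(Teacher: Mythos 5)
There is nothing in the paper to compare against here: the paper does not prove this lemma, it simply quotes it as \cite[Corollary 10.17]{milne2017algebraic}. Your argument is a correct self-contained proof in the only generality the paper ever uses (algebraic groups over $\mathbb{Q}_p$ and $\overline{\mathbb{Q}}_p$, hence characteristic zero): letting $H$ be the smallest closed subgroup containing the $H_i$, its Lie algebra is a Lie subalgebra of $\operatorname{Lie}(G)$ containing every $\operatorname{Lie}(H_i)$, hence equals $\operatorname{Lie}(G)$; Cartier smoothness then gives $\dim H=\dim G$, and a closed subgroup of full dimension in a connected (hence irreducible) group is the whole group. Two small remarks: the preliminary reduction to connected $H_i$ is harmless but unnecessary, since at the end you can run the dimension count on $H^\circ$ directly; and your caveat about positive characteristic is the right one --- in Milne's generality the missing ingredient is precisely that a subgroup generated by smooth connected subschemes is itself smooth and connected, after which the same Lie-algebra dimension count goes through --- but none of this is needed for the applications in the paper, so citing it, as you propose (and as the paper does), is entirely adequate.
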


The following technical lemma will be used repeatedly in the later computation.

\begin{lem}\label{Liegene}
Let $K$ be a field with $\cha K\neq 2$ and $I_2\in\GL_2(K)$ be the identity matrix.
    \begin{enumerate}[label=(\roman*)]
        \item For $A,B,C,D\in\mat_{2\times 2}(K)$, the Lie subalgebra of $\gl_4(K)$ generated by the block matrices $\br{\begin{smallmatrix}
            I_2 & 0 \\
            0 & 0
        \end{smallmatrix}}$, $\br{\begin{smallmatrix}
            A & B\\
            C & D
        \end{smallmatrix}}$
        contains
        $$\br{\begin{smallmatrix}
            A & 0\\
            0 & D
        \end{smallmatrix}},~\br{\begin{smallmatrix}
            0 & B\\
            0 & 0
        \end{smallmatrix}}\qaq\br{\begin{smallmatrix}
            0 & 0\\
            C & 0
        \end{smallmatrix}}.$$
        \item For $\alpha_{i,j},\beta_{i,j}\in K$, the Lie subalgebra of $\gl_4(K)$ generated by
    $$\br{\begin{smallmatrix}
            1 & 0 & 0 & 0\\
            0 & 0 & 0 & 0\\
            0 & 0 & 1 & 0\\
            0 & 0 & 0 & 0
        \end{smallmatrix}}\qaq\br{\begin{smallmatrix}
            \alpha_{1,1} & \alpha_{1,2} & 0 & 0\\
            \alpha_{2,1} & \alpha_{2,2} & 0 & 0\\
            0 & 0 & \beta_{1,1} & \beta_{1,2}\\
            0 & 0 & \beta_{2,1} & \beta_{2,2}
        \end{smallmatrix}}$$
    contains
    $$\br{\begin{smallmatrix}
            \alpha_{1,1} & 0 & 0 & 0\\
            0 & \alpha_{2,2} & 0 & 0\\
            0 & 0 & \beta_{1,1} & 0\\
            0 & 0 & 0 & \beta_{2,2}
        \end{smallmatrix}},
        ~\br{\begin{smallmatrix}
            0 & \alpha_{1,2} & 0 & 0\\
            0 & 0 & 0 & 0\\
            0 & 0 & 0 & \beta_{1,2}\\
            0 & 0 & 0 & 0
        \end{smallmatrix}}\qaq
        \br{\begin{smallmatrix}
            0 & 0 & 0 & 0\\
            \alpha_{2,1} & 0 & 0 & 0\\
            0 & 0 & 0 & 0\\
            0 & 0 & \beta_{2,1} & 0
        \end{smallmatrix}}.$$
    \end{enumerate}
\end{lem}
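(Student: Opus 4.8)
The plan is to obtain all the claimed matrices by forming at most two nested commutators of the two generators, using the hypothesis $\cha K\neq 2$ only at the end, to separate the strictly upper-triangular part from the strictly lower-triangular part.

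For part (i), I would write $e=\br{\begin{smallmatrix} I_2 & 0 \\ 0 & 0 \end{smallmatrix}}$ and $M=\br{\begin{smallmatrix} A & B \\ C & D \end{smallmatrix}}$ for the two generators and let $\mathfrak h$ denote the Lie subalgebra they generate. A direct block computation gives $[e,M]=\br{\begin{smallmatrix} 0 & B \\ -C & 0 \end{smallmatrix}}\in\mathfrak h$ and, applying $\operatorname{ad}(e)$ once more, $[e,[e,M]]=\br{\begin{smallmatrix} 0 & B \\ C & 0 \end{smallmatrix}}\in\mathfrak h$. Since $2\in K^\times$, half of the sum and half of the difference of these two elements are $\br{\begin{smallmatrix} 0 & B \\ 0 & 0 \end{smallmatrix}}$ and $\br{\begin{smallmatrix} 0 & 0 \\ C & 0 \end{smallmatrix}}$, both in $\mathfrak h$; subtracting both from $M$ leaves $\br{\begin{smallmatrix} A & 0 \\ 0 & D \end{smallmatrix}}\in\mathfrak h$, which proves (i).

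For part (ii), I would reduce to (i) by the basis permutation exchanging the second and third coordinates, $(e_1,e_2,e_3,e_4)\mapsto(e_1,e_3,e_2,e_4)$; conjugation by the corresponding permutation matrix is an automorphism of $\gl_4(K)$, hence preserves the relation ``is contained in the Lie algebra generated by''. In the new coordinates the first generator becomes $\br{\begin{smallmatrix} I_2 & 0 \\ 0 & 0 \end{smallmatrix}}$ and the second becomes $\br{\begin{smallmatrix} A & B \\ C & D \end{smallmatrix}}$ with $A=\operatorname{diag}(\alpha_{1,1},\beta_{1,1})$, $B=\operatorname{diag}(\alpha_{1,2},\beta_{1,2})$, $C=\operatorname{diag}(\alpha_{2,1},\beta_{2,1})$ and $D=\operatorname{diag}(\alpha_{2,2},\beta_{2,2})$; applying part (i) and conjugating back produces exactly the three matrices listed in (ii). Alternatively one can redo the computation of (i) directly, since all the brackets involved preserve the two coordinate blocks $\{1,2\}$ and $\{3,4\}$ on which the second generator is supported, and the same two nested commutators (together with $\cha K\neq 2$) again suffice.

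I do not expect a genuine obstacle: the argument is a short, explicit bracket computation. The one point requiring care is that the single commutator $[e,M]$ couples $B$ with $-C$, so one really needs the second commutator $[e,[e,M]]$ together with $\cha K\neq 2$ to isolate $\br{\begin{smallmatrix} 0 & B \\ 0 & 0 \end{smallmatrix}}$ and $\br{\begin{smallmatrix} 0 & 0 \\ C & 0 \end{smallmatrix}}$ separately, rather than only their antisymmetric combination $\br{\begin{smallmatrix} 0 & B \\ -C & 0 \end{smallmatrix}}$.
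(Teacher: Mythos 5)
Your proposal is correct and follows essentially the same route as the paper: both compute the nested commutators $[e,M]=\br{\begin{smallmatrix} 0 & B \\ -C & 0\end{smallmatrix}}$ and $[e,[e,M]]=\br{\begin{smallmatrix} 0 & B \\ C & 0\end{smallmatrix}}$, use $\cha K\neq 2$ to separate the off-diagonal blocks, subtract from $M$ to get the block-diagonal part, and deduce (ii) from (i) by regarding the generators in (ii) as block matrices after a coordinate permutation. Your write-up merely makes explicit the half-sum/half-difference step and the permutation that the paper leaves implicit.
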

\begin{proof}
    By the definition of $\gl_4(K)$, we have
    $$\sq{\br{\begin{smallmatrix}
            I_2 & 0 \\
            0 & 0
        \end{smallmatrix}},\br{\begin{smallmatrix}
            A & B\\
            C & D
        \end{smallmatrix}}}=\br{\begin{smallmatrix}
            0 & B\\
            -C & 0
        \end{smallmatrix}},~\sq{\br{\begin{smallmatrix}
            I_2 & 0 \\
            0 & 0
        \end{smallmatrix}},\br{\begin{smallmatrix}
            0 & B\\
            -C & 0
        \end{smallmatrix}}}=\br{\begin{smallmatrix}
            0 & B\\
            C & 0
        \end{smallmatrix}}.$$
        It's clear that the $K$-linear subspace of $\gl_4(K)$ spanned by these matrices and $\br{\begin{smallmatrix}
            A & B\\
            C & D
        \end{smallmatrix}}$ contains 
        $$\br{\begin{smallmatrix}
            A & 0\\
            0 & D
        \end{smallmatrix}},~\br{\begin{smallmatrix}
            0 & B\\
            0 & 0
        \end{smallmatrix}}\qaq\br{\begin{smallmatrix}
            0 & 0\\
            C & 0
        \end{smallmatrix}}.$$
    The assertion (i) follows. The assertion (ii) is a direct corollary of (i).
\end{proof}

\section{Filtered $\varphi$-modules arising from supersingular abelian surfaces over $\qp$}\label{classphimodule}
For any abelian variety $A$ over $\qp$ with good reduction, the filtered $\varphi$-module $D_{\cris}(V_p(A))\in\mf_{\qp}^{\f}$ associated with $A$ carries rich information. The aim of this section is to classify for $p\ge 7$ all the filtered $\varphi$-modules associated with supersingular abelian surfaces over $\qp$, which form a full subcategory $\mfs$ of $\mf_{\qp}^{\f}$.
\subsection{Supersingular $p$-Weil polynomials of degree $4$}
For $D=(D,\varphi,\fil_\bullet)\in\mfs$, the crystalline Frobenius $\varphi$ acts linearly on $D$, with characteristic polynomial $\chi_{\varphi}\in\mathscr P^{\sss}_{\w,p}$. The next proposition proves the rigidity of the forms of supersingular $p$-Weil polynomials of degree $4$. 
\begin{prop}\label{classssweil}
    Let $p\ge 7$ be a prime number, then all of the supersingular $p$-Weil polynomials of degree $4$ are
    \begin{equation}\label{sspolylist}
        (X^2\pm p)^2\qaq X^4+\epsilon pX^2+p^2\whr\epsilon\in\st{0,\pm1}.
    \end{equation}
\end{prop}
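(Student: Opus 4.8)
The plan is to work directly with the four roots of a degree-$4$ supersingular $p$-Weil polynomial $P(X)\in\mathbb Z[X]$. By definition each root has the form $\sqrt p\,\zeta$ for a root of unity $\zeta$, and since $P$ is real-coefficiented the roots come in complex-conjugate pairs $\alpha,\bar\alpha$ with $\alpha\bar\alpha=p$ (note $\bar\alpha=p/\alpha$). So first I would separate into cases according to whether $P$ is a product of two real quadratics or is irreducible-ish. If $\alpha$ is real then $\alpha=\pm\sqrt p$, giving the factor $X^2-p$ or $X^2+p$; combining two such factors and using the evenness condition on the valuation at $X^2-p$ forces $(X^2-p)^2$ or $(X^2+p)^2$ (the mixed product $(X^2-p)(X^2+p)=X^4-p^2$ has odd valuation at $X^2-p$ and is excluded). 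That disposes of the "real roots" case.

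Next, assume all roots are non-real, so they split as $\{\alpha,\bar\alpha\}$ and $\{\beta,\bar\beta\}$ with $\alpha\bar\alpha=\beta\bar\beta=p$. Write $\alpha=\sqrt p\,\zeta$ with $\zeta$ a root of unity that is not $\pm1$; then $\alpha+\bar\alpha=\sqrt p(\zeta+\zeta^{-1})=2\sqrt p\cos\theta$ where $\zeta=e^{i\theta}$. The key point is that $P(X)=(X^2-(\alpha+\bar\alpha)X+p)(X^2-(\beta+\bar\beta)X+p)$ must have integer coefficients; in particular the coefficient of $X^3$, namely $-(\alpha+\bar\alpha+\beta+\bar\beta)$, is an integer, and the coefficient of $X$, namely $-p(\alpha+\bar\alpha+\beta+\bar\beta)$, is an integer as well — automatic once the former is. Set $s=\alpha+\bar\alpha$, $s'=\beta+\bar\beta$. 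Then $s+s'\in\mathbb Z$ and $ss'+2p\in\mathbb Z$, so $ss'\in\mathbb Z$, hence $s,s'$ are conjugate algebraic integers (roots of $T^2-(s+s')T+ss'$), each of absolute value $<2\sqrt p$. Now $s=2\sqrt p\cos\theta$ forces $s^2=4p\cos^2\theta=2p(1+\cos2\theta)$, and $\cos2\theta$ is (half of) the trace of $\zeta^2+\zeta^{-2}$; iterating, the constraint that $\zeta$ is a root of unity means $2\cos\theta$ is an algebraic integer all of whose conjugates lie in $[-2,2]$ — a classical Kronecker-type situation. The cleanest route: $\zeta^2$ is a root of unity, so $\alpha^2=p\zeta^2$ is $\sqrt p^2$ times a root of unity; the minimal polynomial of $\alpha^2$ over $\mathbb Q$ divides a polynomial of the form $\prod(X-p\zeta^2)$, and since $\alpha^2+\bar\alpha^2=s^2-2p$ is a rational integer lying in $(-2p-2p,2p)$ wait — more carefully, $s^2-2p=2p\cos2\theta\in[-2p,2p]\cap\mathbb Z$ and it is an algebraic integer conjugate to $s'^2-2p$. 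I would then enumerate: letting $m=s^2-2p$ and $m'=s'^2-2p$, these satisfy $m+m'=(s+s')^2-2ss'-4p\in\mathbb Z$ and $mm'\in\mathbb Z$ (both expressible in $s+s'$, $ss'$), and each $|m|\le 2p$. Writing the even/odd analysis of $\zeta$: if $\zeta$ has order $2,3,4,6$ then $2\cos\theta\in\{0,\pm1,\pm\sqrt2,\pm\sqrt3\}$...

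Actually the slickest version, which I would present, is this: since each root $\alpha$ is $\sqrt p$ times a root of unity $\zeta$, the element $\alpha^2/p=\zeta^2$ is a root of unity, hence $\alpha^2$ is a root of $X^n-p^n$ for some $n$; combined with $\alpha^2\bar\alpha^2=p^2$, the monic polynomial $Q(X):=P(X)P(-X)/(\text{sign})$ — more precisely the characteristic polynomial of $\varphi^2$ — has the form $\prod(X-p\zeta_j^2)$, is integral, and its roots are $p$ times roots of unity with product (over each conjugate pair) equal to $p^2$. The elementary symmetric functions then pin down $Q(X)=(X^2+\epsilon' pX+p^2)$-type factors with $\epsilon'$ controlled by $\zeta^2+\zeta^{-2}\in\{2,1,0,-1,-2\}$ corresponding to $\zeta^2$ of order $1,6,4,3,2$; the hypothesis $p\ge 7$ (really $p\ge 5$ suffices, but $7$ gives room) rules out the degenerate integer-collision cases and ensures the factorizations reassemble into exactly $X^4+\epsilon pX^2+p^2$ with $\epsilon\in\{0,\pm1\}$ (the absence of an $X^3$ or $X$ term following because a nonzero rational $s+s'$ with $s,s'$ of the constrained shape and $|s+s'|<4\sqrt p$ turns out to force $s+s'=0$ once one checks the finitely many root-of-unity possibilities).

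\textbf{Main obstacle.} The genuinely delicate part is showing the $X^3$- and $X$-coefficients must vanish, i.e. that one cannot have a "lopsided" pairing where $s=\alpha+\bar\alpha$ and $s'=\beta+\bar\beta$ are a nonzero conjugate pair of algebraic integers each of the special form $2\sqrt p\cos(\pi k/n)$. This is where the bound $p\ge 7$ does real work: one must check that the only way two such quantities can be Galois-conjugate rational-sum integers is when they are negatives of each other (or both zero), and a naive argument allows sporadic small-$p$ exceptions. I would handle it by listing the possible values of $2\cos\theta$ for $\zeta$ a root of unity with $2\sqrt p\cos\theta$ an algebraic integer of degree $\le 2$ — forcing $\zeta$ to have order in $\{1,2,3,4,6\}$ (so $2\cos\theta\in\{\pm2,\pm1,0,\pm\sqrt2$ excluded since $\sqrt{2p}\notin\mathbb Z$-ish$,\pm\sqrt3\}$) — and then finitely many arithmetic checks eliminate everything except the claimed list. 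Everything else (the real-root case, the evenness bookkeeping, reassembling the quadratic factors) is routine.
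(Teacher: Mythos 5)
Your overall strategy (pair the roots into complex-conjugate pairs $\{\alpha,\bar\alpha\}$, $\{\beta,\bar\beta\}$ with $\alpha\bar\alpha=\beta\bar\beta=p$, set $s=\alpha+\bar\alpha$, $s'=\beta+\bar\beta$, and reduce to an enumeration of the possible root-of-unity angles) is genuinely different from the paper's argument, which instead bounds $[\mathbb{Q}(\zeta_m\sqrt p):\mathbb{Q}]$ directly, uses that $p\ge 7$ is unramified in $\mathbb{Q}(\zeta_m)$ to exclude $\varphi(m)=8$, and then reads off the conjugacy classes for $m\in\{1,2,3,4,6,8,12\}$. Your route could in principle be completed, but as written it has a genuine gap exactly at the step you yourself flag as the main obstacle, and the sketch you offer for that step is incorrect. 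You claim that requiring $2\sqrt p\cos\theta$ to be an algebraic integer of degree $\le 2$ forces $\zeta$ to have order in $\{1,2,3,4,6\}$ and that $\pm\sqrt2,\pm\sqrt3$ are ``excluded''; this is false, since orders $8$ and $12$ give $s=\pm\sqrt{2p}$ and $\pm\sqrt{3p}$, which are quadratic algebraic integers, and they must survive your sieve because they are precisely what produces $X^4+p^2$ and $X^4-pX^2+p^2$ on the target list. So the elimination, taken literally, would delete two of the five polynomials you are trying to obtain, and no correct version of the enumeration is actually carried out: neither the rational case (where one needs $s\in\mathbb{Z}$, $p\mid s^2$, $|s|<2\sqrt p$, hence $s=0$ for $p>3$) nor the quadratic case (where one must rule out a nonzero integer trace $s+s'$) is done.

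The lopsided case is also where the hypothesis on $p$ does real, non-routine work, and your parenthetical ``really $p\ge 5$ suffices'' is false: for $p=5$ one has $\sqrt5\in\mathbb{Q}(\zeta_5)$, so $\zeta_5\sqrt5$ has degree $4$ over $\mathbb{Q}$ and its minimal polynomial (with conjugates $\pm\zeta_5^{j}\sqrt5$ and root sum $\pm5$) is a degree-$4$ supersingular $5$-Weil polynomial with nonzero odd coefficients, not of the form \eqref{sspolylist}. Moreover, even for $p\ge 7$ the size bound alone does not finish: from $s+s'\in\mathbb{Z}$, $ss'\in\mathbb{Z}$ one gets $p\mid (s+s')$ and $|s+s'|<4\sqrt p$, which forces $s+s'=0$ only when $p\ge 17$; for $p\in\{7,11,13\}$ one must separately exclude $s+s'=\pm p$ by checking which quadratic totally real values $(\zeta+\zeta^{-1})^2$ can actually occur for roots of unity. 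These finitely many checks are exactly the content your proposal defers, so the argument as it stands does not establish the proposition; by contrast, the paper's cyclotomic-degree argument avoids this case analysis entirely.
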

\begin{proof}
    It suffices to classify supersingular $p$-Weil numbers of degree\footnote{The degree of an algebraic number $\alpha\in\bar{\mq}$ is the number $[\mq(\alpha)\!:\!\mq]\in\mz$.} smaller than $4$. Let $p\ge 7$ be a prime number, $m$ a positive integer and $\zeta_m$ a fixed primitive $m$-th root of unity. Suppose that $[\mq(\zeta_m\sqrt{p}):\mq]\le 4$. We have immediately
    $$[\mq(\zeta_m):\mq]=\frac{[\mq(\zeta_m,\sqrt{p}):\mq]}{[\mq(\zeta_m,\sqrt{p}):\mq(\zeta_m)]}=\frac{[\mq(\zeta_m,\sqrt{p}):\mq(\zeta_m\sqrt{p})][\mq(\zeta_m\sqrt{p}):\mq]}{[\mq(\zeta_m,\sqrt{p}):\mq(\zeta_m)]}.$$
    Since $\mq(\zeta_m,\sqrt{p})=\mq(\zeta_m\sqrt{p},\sqrt{p})$, the numerator of the right hand side is bounded by $8$. Note that the left hand side is equal to the Euler function $\varphi(m)$. 
    
    Suppose that $\varphi(m)=8$ and hence $m\in\st{15,16,20,24,30}$. In this case, any $p\ge 7$ is unramified in $\mq(\zeta_m)$. Therefore, the denominator of the right hand side $[\mq(\zeta_m,\sqrt{p}):\mq(\zeta_m)]$ equals $2$ and the left hand side is less than or equal to $4$, contradiction.

    It follows that $\varphi(m)\le 4$ and hence $m\in\st{1,2,3,4,5,6,8,10,12}$. Note that $\zeta_5\sqrt{p}$ and $\zeta_{10}\sqrt{p}$ both have degree $8$. For each of the rest $m$, we fix a $m$-th root of unity $\zeta_m$. It follows that the conjugacy classes of these $\zeta_m\sqrt{p}$ are
    $$\st{\pm\sqrt{p}},~\st{\pm\zeta_4\sqrt{p}},~\st{\pm\zeta_3\sqrt{p},\pm\bar{\zeta}_3\sqrt{p}},~\st{\pm\zeta_8\sqrt{p},\pm\bar{\zeta}_8\sqrt{p}},~\st{\pm\zeta_{12}\sqrt{p},\pm\bar{\zeta}_{12}\sqrt{p}},$$
    which correspond to the polynomials in (\ref{sspolylist}).
\end{proof}

\subsection{Linear-algebraic constraints}
The classification of filtered $\varphi$-modules arising from supersingular abelian surfaces over $\qp$ is reduced to the analysis of the linear-algebraic conditions, as stated in the following lemma, which is a direct corollary of Theorem \ref{dfromav}.
\begin{lem}\label{ssfilphimod}
    An admissible filtered $\varphi$-module $D\in\mf_{\qp}^{\f}$ belongs to $\mfs$ if and only if $D$ satisfies the following conditions.
    \begin{enumerate}
        \item[$\mathbf{S1}$] $\dim D=4$ and the Hodge numbers satisfy $h^{0}(D)=h^{1}(D)=2$,
        \item[$\mathbf{S2}$] the $\varphi$-action on $D$ is semisimple and $\chi_{\varphi}$ is a supersingular $p$-Weil polynomial,
        \item[$\mathbf{S3}$] there exists a nondegenerate skew form on $D$ under which $\varphi$ is a $p$-similitude and $\fil_1D$ is totally isotropic.
    \end{enumerate}
\end{lem}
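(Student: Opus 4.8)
The plan is to obtain Lemma~\ref{ssfilphimod} as a specialization of Volkov's Theorem~\ref{dfromav}. Beyond that theorem, the only content to add is the translation of the two features that distinguish an object of $\mfs$ --- the underlying abelian variety being a \emph{surface}, and its reduction being \emph{supersingular} --- into the conditions $\mathbf{S1}$ and $\mathbf{S2}$.

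For the dimension I would use that $D_{\cris}$ preserves dimension over $\qp$ (here $K=\qp$, so $K_0=\qp$ and $\varphi$ is $\qp$-linear), whence $\dim D = \dim_{\qp}V_p(A) = 2\dim A$; so, for $D = D_{\cris}(V_p(A))$, the equality $\dim D = 4$ is precisely the condition that $A$ be an abelian surface. Once $\dim D = 4$ is imposed, condition (1) of Theorem~\ref{dfromav} reads $h^0(D) = h^1(D) = 2$, which is the remainder of $\mathbf{S1}$, and condition (3) is verbatim $\mathbf{S3}$. For the supersingularity I would use the compatibility recalled in \S\ref{pre} between the crystalline Frobenius and the $p$-Weil polynomial of the reduction: the roots of $\chi_\varphi$ for $D = D_{\cris}(V_p(A))$ are exactly the Frobenius eigenvalues of $\bar A$ (the passage from $H^1_{\et}(A_{\bqp},\qp)$ to its dual/twist $V_p(A)$ acts on these roots through the involution $\alpha\mapsto p/\alpha$, which only permutes the roots of a $p$-Weil polynomial). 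By the standard characterization of supersingular abelian varieties over a finite field --- those whose Frobenius eigenvalues are all $\sqrt p$ times a root of unity, i.e.\ are supersingular $p$-Weil numbers --- and since condition (2) of Theorem~\ref{dfromav} already guarantees that $\chi_\varphi$ is a $p$-Weil polynomial (so in particular that its valuation at $T^2-p$ is even), it follows that $\bar A$ is supersingular if and only if $\chi_\varphi$ is a supersingular $p$-Weil polynomial, i.e.\ if and only if $\mathbf{S2}$ holds.

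It then remains to assemble the equivalence. For the "only if" direction, write $D = D_{\cris}(V_p(A))$ with $A$ an abelian surface of supersingular good reduction: Theorem~\ref{dfromav} gives (1)--(3), being a surface gives $\dim D = 4$ and hence $\mathbf{S1}$, condition (3) is $\mathbf{S3}$, and supersingularity upgrades (2) to $\mathbf{S2}$. For the "if" direction, from $\mathbf{S1}$--$\mathbf{S3}$ one reads off Volkov's conditions (1)--(3), so $D = D_{\cris}(V_p(A))$ for some $A/\qp$ with good reduction; then $\dim D = 4$ forces $\dim A = 2$ and $\mathbf{S2}$ forces $\bar A$ to be supersingular, whence $A \in \mathbf{AbSurf}^{\sss}_{\qp}$ and $D \in \mfs$. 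The one genuinely non-formal point --- and the step I would be most careful about --- is the dictionary "$\bar A$ supersingular $\Leftrightarrow$ $\chi_\varphi$ a supersingular $p$-Weil polynomial"; the translation of the surface condition is purely a dimension count, and conditions (1), (3) of Theorem~\ref{dfromav} match $\mathbf{S1}$, $\mathbf{S3}$ directly.
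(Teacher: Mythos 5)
Your proposal is correct and matches the paper's treatment: the paper itself states Lemma \ref{ssfilphimod} as a direct corollary of Volkov's Theorem \ref{dfromav}, which is exactly the specialization you carry out (dimension count for the surface condition, and the standard equivalence between supersingular reduction and $\chi_{\varphi}$ being a supersingular $p$-Weil polynomial, noting that the involution $\alpha\mapsto p/\alpha$ preserves supersingular Weil numbers). No gaps; you simply make explicit the dictionary the paper leaves implicit.
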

\begin{lem}\label{adm}
    Let $D$ be any filtered $\varphi$-module over $\qp$ (not necessarily admissible) satisfying \hyperref[ssfilphimod]{$\mathbf{S1}$} and \hyperref[ssfilphimod]{$\mathbf{S2}$}. Then, $D$ is admissible if and only if $\fil_1D$ is not stable under $\varphi$.
\end{lem}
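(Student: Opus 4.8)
The plan is to use the fact that admissibility for a $4$-dimensional filtered $\varphi$-module with Hodge numbers $h^0=h^1=2$ is equivalent to a Newton $=$ Hodge condition on the whole module together with a weak admissibility (sub-object) condition: for every $\varphi$-stable $K_0$-subspace $D'\subset D$, equipped with the induced filtration, one needs $t_N(D')\ge t_H(D')$, and $t_N(D)=t_H(D)$ on the nose. First I would record that $t_H(D)=h^1(D)=2$ (since $\fil_1D$ is $2$-dimensional and $\fil_2D=0$), while $t_N(D)$ is the $p$-adic valuation of $\det\varphi$, which by \hyperref[ssfilphimod]{$\mathbf{S2}$} equals $\frac12\deg\chi_\varphi\cdot v_p(p)$; since $\chi_\varphi$ is a supersingular (hence $p$-Weil) polynomial of degree $4$ with constant term $p^2$, we get $v_p(\det\varphi)=2$. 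So the ``total'' equality $t_N(D)=t_H(D)$ holds automatically, and admissibility of $D$ is equivalent to the sub-object inequality $t_N(D')\ge t_H(D')$ for all $\varphi$-stable subspaces $D'$.

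Next I would analyze the possible $\varphi$-stable subspaces $D'$ using \hyperref[ssfilphimod]{$\mathbf{S2}$}: since $\varphi$ is semisimple (as a $\qp$-linear operator, $K_0=\qp$) with characteristic polynomial one of the supersingular polynomials in Proposition \ref{classssweil}, the $\varphi$-stable subspaces are exactly the sums of isotypic/irreducible components, and in every case the constant term being $p^2$ and the Newton polygon being a straight line of slope $1/2$ forces $v_p(\det(\varphi|_{D'}))=\frac12\dim D'$ for \emph{every} $\varphi$-stable $D'$ — this is where supersingularity does all the work, as each irreducible factor of $\chi_\varphi$ has all roots of valuation $1/2$. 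Hence for a $\varphi$-stable $D'$ we always have $t_N(D')=\frac12\dim D'$. The inequality $t_N(D')\ge t_H(D')$ is therefore automatic unless $\dim D'$ is odd or $t_H(D')$ exceeds $\frac12\dim D'$; one checks the odd-dimensional case causes no problem (minimal $\varphi$-stable subspaces have even dimension here, or if a $1$-dimensional one occurred over $\qp$ it would have $t_N=1/2>t_H\in\{0,1\}$ only fails when $t_H=1$, i.e.\ the line lies in $\fil_1D$ — but a $\varphi$-stable line inside $\fil_1 D$ would itself violate the total equality argument, so that case is excluded). The only genuinely dangerous case is a $2$-dimensional $\varphi$-stable $D'$ with $D'\otimes K\subset\fil_1D$, equivalently $D'=\fil_1D$ with $\fil_1D$ being $\varphi$-stable; then $t_H(D')=2>1=t_N(D')$, so $D$ fails to be admissible. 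Conversely, if $\fil_1D$ is not $\varphi$-stable, no $2$-dimensional $\varphi$-stable $D'$ can be contained in $\fil_1D$ (it would then equal $\fil_1D$ for dimension reasons, assuming we first note $\fil_1 D\cap D'$ has dimension $0$ or $2$ when $\dim D'=2$ and $D'\subset\fil_1 D$ means $D'=\fil_1 D$), and one checks the remaining $\varphi$-stable subspaces $D'$ of dimension $1$ or $3$ satisfy $t_H(D')\le 1\le\frac32$ resp.\ $t_H(D')\le \dim(\fil_1 D\cap D')\le 2\le\frac32\cdot\dim D'$ hmm — more carefully, for $\dim D'=3$, $t_H(D')\le 2<3/2\cdot 2$... let me just say: for every proper $\varphi$-stable $D'$ the Hodge number $t_H(D')$ is bounded by $\min(\dim D',2)$ and $t_N(D')=\frac12\dim D'$, and a short case check over $\dim D'\in\{1,2,3\}$ shows $t_N(D')\ge t_H(D')$ holds precisely when $\fil_1D$ is not $\varphi$-stable.

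The main obstacle I anticipate is bookkeeping the finitely many shapes of $\varphi$-stable subspaces across the five polynomial types of Proposition \ref{classssweil}, and in particular handling the case $(X^2\pm p)^2$ where $\varphi$ is not semisimple as a $\qp[X]$-module unless one works over $\bqp$ — but \hyperref[ssfilphimod]{$\mathbf{S2}$} hypothesizes semisimplicity of the $\varphi$-action, which rules out the non-semisimple ``$(X^2\pm p)^2$ with a single Jordan-type block'' possibility and leaves $\varphi\sim$ (two copies of the $X^2\pm p$ companion block), whose $\varphi$-stable subspaces are again well understood. With that hypothesis in hand, the argument reduces, as sketched, to the single observation that supersingularity makes $t_N(D')=\frac12\dim D'$ identically, so the only way admissibility can fail is the $2$-dimensional isotropic-and-$\varphi$-stable subspace $\fil_1D$, which is exactly the stated criterion.
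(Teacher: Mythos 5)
Your proposal is correct and follows essentially the same route as the paper: reduce admissibility to the weak-admissibility inequality $t_H(D')\le t_N(D')$ over $\varphi$-stable subspaces, use supersingularity to get $t_N(D')=\tfrac12\dim D'$ and \hyperref[ssfilphimod]{$\mathbf{S1}$} to get $t_H(D')=\dim_{\qp}(D'\cap\fil_1D)$, so the only possible failure is $D'=\fil_1D$. The paper dispatches your hedged discussion of dimensions $1$ and $3$ in one line — since $\chi_\varphi$ has all roots of valuation $\tfrac12$ it has no roots in $\qp$, so every proper sub-$\varphi$-module is $2$-dimensional — which is the clean version of your ``even-dimensionality'' remark and removes the need for the odd-dimensional case check.
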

\begin{proof}
    Let $D'\subset D$ be a proper sub-$\varphi$-module of $D$. We have $\dim D'=2$ since it corresponds to a certain $\qp$-factor of a supersingular $p$-Weil polynomial. Each eigenvalue of $\varphi$ in $\bqp$ has valuation $\frac{1}{2}$, so we have $t_N(D')=1$. On the other hand,
    $$t_H(D')=\sum_{i\in\mz}i\dim_{\qp}(D'\cap\fil_iD/D'\cap\fil
    _{i+1}D)\xlongequal{\mathbf{S1}}\dim_{\qp}(D'\cap\fil_1D).$$
    Therefore, $t_H(D')\le t_N(D')$ if and only if $\fil_1D\neq D'$. $D$ is admissible if and only if this holds for any such $D'$, that is, $\fil_1D$ itself is not $\varphi$-stable.
\end{proof}

The Hodge decomposition of $D\in\mfs$ is rigid. Theorem \ref{hodgedecomp} implies that there exists a decomposition of $\qp$-linear subspaces $D=D_1\oplus D_0$ and a $u\in\GL(D)$ such that
\begin{enumerate}[label=(\alph*)]
    \item $D_1=\fil_1D$,
    \item $(u-\id_D)D_0=0$ and $(u-\id_D)D_1\subset D_0$,
    \item $D=\bigoplus_{\xi\in X}D_{\xi}$ where $D_\xi=\st{x\in D\ver(\F^{\el})^j(x)\in D_{\xi(j)}}$ with $\F^{\el}=u^{-1}\circ\varphi\circ\mu(p^{-1})$.
\end{enumerate}
Moreover, the data $D_0$ and $u$ are unique if they satisfy the condition (d) in Theorem \ref{hodgedecomp}. We focus on the decomposition $D=\bigoplus_{\xi\in X_D}D_{\xi}$ first.

\begin{nota}
    For $r\in\mz_{\ge 1}$ and $a_0,\cdots,a_{r-1}\in\mz$, the symbol $[a_0a_1\cdots a_{r-1}]$ stands for a function $\xi\in X_{\p}$ of period dividing $r$ such that $\xi(k)=a_k$ for $0\le k\le r-1$.
\end{nota}

\begin{lem}\label{listxd}
    Suppose that $D\in\mf_{\qp}^{\f}$ satisfies the equivalent \hyperref[ssfilphimod]{$\mathbf{S1}$} and \hyperref[ssfilphimod]{$\mathbf{S2}$}. The Wintenberger type $X_D:=\{\xi\in X_\p~\!|~\!D_\xi\neq 0\}$ of $D$ is one of the following.
    \begin{enumerate}[label=(\Alph*).]
        \item $\left\{[0011],~\![0110],~\![1100],~\![1001]\right\}$,
        \item $\st{[01],~\![10]}$,
        \item $\st{[0],~\![1]}$,
        \item $\st{[001],[010],[100],[1]}$.
    \end{enumerate}
\end{lem}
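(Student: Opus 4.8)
The plan is to extract from conditions \hyperref[ssfilphimod]{$\mathbf{S1}$} and \hyperref[ssfilphimod]{$\mathbf{S2}$} exactly what constraints they place on the periodic functions $\xi \in X_\p$ that can occur in the Hodge decomposition, and then enumerate the finitely many possibilities. First I would record the two basic numerical inputs: since $h^0(D)=h^1(D)=2$, the Hodge filtration has jumps only at $i=0,1$, so each $\xi\in X_D$ takes values in $\st{0,1}$ (a function $\xi$ with $D_\xi\neq 0$ must have $\xi(j)\in\st{0,1}$ for all $j$, because $D_{\xi(j)}$ is one of the two Hodge pieces $D_0,D_1$); moreover $\dim_{\qp} D_1 = \dim_{\qp} D_0 = 2$. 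The second input is the Frobenius slope: by \hyperref[ssfilphimod]{$\mathbf{S2}$}, $\chi_\varphi$ is a supersingular $p$-Weil polynomial of degree $4$, so every eigenvalue of $\varphi$ on $D\otimes\bqp$ has $p$-adic valuation $\tfrac12$. This forces the ``average value'' of each $\xi\in X_D$ to be $\tfrac12$: concretely, by part (c) of Theorem \ref{hodgedecomp}, on $D_\xi$ the operator $\F^{\el} = u^{-1}\circ\varphi\circ\mu(p^{-1})$ cyclically permutes the graded pieces according to $\xi$, and since $\mu$ is the Hodge cocharacter, a period-$r$ function $\xi$ contributes a factor to $\chi_\varphi$ whose roots have valuation $\tfrac1r\sum_{k=0}^{r-1}\xi(k)$; supersingularity forces this to equal $\tfrac12$, i.e. $\sum_{k=0}^{r-1}\xi(k) = r/2$ over one period. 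In particular the minimal period $r$ of each $\xi\in X_D$ is even, and exactly half the values in one period are $1$.

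Next I would combine this with the dimension count $\dim D_1 = \dim D_0 = 2$ and $\dim D = 4$. Each $\xi\in X_D$ of minimal period $r$ accounts for a block of dimension equal to $\dim D_\xi$; the pieces $D_\xi$ and $D_{\xi'}$ for $\xi\neq\xi'$ intersect trivially, and the sum over $X_D$ is $D$, so $\sum_{\xi\in X_D}\dim D_\xi = 4$. Combined with $r$ even and $r \leq 4$ (a $\varphi$-stable indecomposable block has $\bqp$-dimension at least $r$, hence $r\in\st{2,4}$), the combinatorial skeleton is forced: either there is a single $\xi$ of period $4$ (giving a $4$-dimensional block and type (A) after accounting for the $\varphi$-orbit of such $\xi$ under the shift, which must also appear), or the decomposition splits into two period-$2$ pieces, etc. The shift-orbit structure matters because $X_D$ is stable under the shift $\xi \mapsto \xi(\cdot + 1)$ (this is built into part (c) of Theorem \ref{hodgedecomp}: applying $\F^{\el}$ cycles $D_\xi$ to $D_{\xi(\cdot+1)}$), so $X_D$ is a union of shift-orbits of even-period functions with average $\tfrac12$ and total dimension $4$. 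Listing shift-orbits: period $4$ with two $1$'s per period and minimal period genuinely $4$ gives the orbit $\st{[0011],[0110],[1100],[1001]}$ — this is (A); period $2$ gives the single orbit $\st{[01],[10]}$ — and combining one copy, which has total dimension $4$ if each $D_\xi$ is $2$-dimensional, gives (B); a constant function has period $1$ (odd) so cannot occur alone, but the ``fake'' pieces $[0]$ and $[1]$ arise precisely when the filtration is already $\varphi$-split, giving (C) $\st{[0],[1]}$; and the mixed case where a period-$3$-looking orbit $\st{[001],[010],[100]}$ (total dimension $3$, average $\tfrac13$... ) — here I need to be careful, and this is where the subtlety lies.

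The main obstacle, and the step requiring genuine care, is ruling out the spurious combinatorial possibilities and correctly identifying case (D) $\st{[001],[010],[100],[1]}$. Naively, average-$\tfrac12$ and even-period would exclude a period-$3$ orbit; the resolution is that $X_D$ is constrained not only by the slope condition on the whole of $\chi_\varphi$ but by how $\chi_\varphi$ factors over $\qp$ — a degree-$2$ $\qp$-irreducible factor of $\chi_\varphi$ corresponds to a $2$-dimensional $\varphi$-stable subspace, but a $4$-dimensional $\varphi$-irreducible piece, or a piece where $\varphi$ acts with eigenvalues generating $\qp(\zeta_3)\sqrt p$ (cf.\ the polynomials $X^4 + \epsilon p X^2 + p^2$ from Proposition \ref{classssweil}), can produce a period-$3$ cyclic structure on the \emph{unramified base change} even though globally the slope is $\tfrac12$. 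So the honest argument is: enumerate the possible $\chi_\varphi$ via Proposition \ref{classssweil}; for each, determine over $\qp$ (not $\bqp$) the possible $\varphi$-module structures on $D$ compatible with semisimplicity; then for each such $\varphi$-module, work out which shift-stable collections of $\xi$'s are compatible with $\dim D_1 = \dim D_0 = 2$ via the constraint that $\fil_1 D = D_1$ and part (a) of Theorem \ref{hodgedecomp}. The four cases (A)--(D) are exactly what survives; I would present the case analysis as a short table indexed by the shape of $\chi_\varphi$ and the $\qp$-decomposition of $(D,\varphi)$, checking in each line that the forced Wintenberger type is one of the four listed, and conversely exhibiting that each of (A)--(D) is realized.
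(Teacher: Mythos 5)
There is a genuine gap, and it sits at the center of your argument. Your key claimed constraint --- that supersingularity forces every $\xi\in X_D$ to have average value $\tfrac12$ over a period, hence even minimal period --- is both unjustified and inconsistent with the statement you are proving. It is unjustified because $\varphi$ does \emph{not} preserve the pieces $D_\xi$: only $\F^{\el}=u^{-1}\circ\varphi\circ\mu(p^{-1})$ shifts $D_\xi$ into $D_{\xi^1}$, and the unipotent $u$ mixes the shift-orbit blocks, so $\chi_\varphi$ does not factor along the Wintenberger decomposition and no valuation of a root of $\chi_\varphi$ is attached to an individual $\xi$. And it is inconsistent with the lemma itself, whose list deliberately contains $(C)=\st{[0],[1]}$ and $(D)=\st{[001],[010],[100],[1]}$, i.e.\ functions of period $1$ and $3$ with averages $0$, $1$, $\tfrac13$; the lemma is only a coarse combinatorial classification, and the elimination of $(C)$ and $(D)$ for $\chi_\varphi=X^4+\epsilon pX^2+p^2$ happens only later (Lemma \ref{xdneqc}) via the adapted-lattice condition (d) of Theorem \ref{hodgedecomp}, not by a slope-parity argument. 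You sense the tension yourself, but the fallback you sketch (``enumerate $\chi_\varphi$, determine the $\varphi$-module structures over $\qp$, tabulate, and also show each of (A)--(D) is realized'') is a plan rather than a proof, and it conflates this lemma with the later, harder classification; realizability of all four types is neither claimed nor needed.

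What the lemma actually requires is much less: (i) every $\xi\in X_D$ takes values in $\st{0,1}$ and $\dim D_0=\dim D_1=2$ (from \hyperref[ssfilphimod]{$\mathbf{S1}$}); (ii) $X_D$ is shift-stable with $\dim D_{\xi}=\dim D_{\xi^k}$, since $\F^{\el}$ carries $D_\xi$ isomorphically to $D_{\xi^1}$; and (iii) the one non-combinatorial input, $\dim D_{[0]}\neq 1$: a line $D_{[0]}\subset D_0$ would be $\F^{\el}$-stable with $\mu(p)$ and $u$ acting trivially on it, producing a $\qp$-eigenvector of $\varphi$, impossible because the roots of the supersingular polynomial $\chi_\varphi$ have valuation $\tfrac12$ (\hyperref[ssfilphimod]{$\mathbf{S2}$}). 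A case analysis on the maximal period ($\le 4$), counting how many $\xi$ in each shift orbit have $\xi(0)=0$ versus $\xi(0)=1$ against $\dim D_0=\dim D_1=2$, then yields exactly (A)--(D); for instance, in the maximal-period-$3$ case the orbit $\st{[011],[110],[101]}$ is excluded because it would force $\dim D_{[0]}=1$, while $\st{[001],[010],[100]}$ plus $[1]$ survives (the eigenvector argument fails for $[1]$ since $(u-\id)D_1\subset D_0$). Your proposal never isolates (ii) quantitatively or (iii) at all, so the enumeration cannot be completed as written.
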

\begin{proof}
    For $\xi\in X_\p$ and $k\in\mz$, denote by $P(\xi)$ the period of $\xi$ and $\xi^{k}(j)=\xi(j+k)$. It follows that $\dim D_{\xi}=\dim D_{\xi^k}$ for any $k\in\mz$. Moreover, by definition for $i=0,1$ we have $D_i=\bigoplus_{\xi\in X_D\atop\xi(0)=i}D_\xi$ and
    $$\sum_{\xi\in X_D\atop\xi(0)=i}\dim D_{\xi}=\dim D_i=2.$$
    Note that $\dim D_{[0]}\neq 1$. Otherwise for any $0\neq x\in D_{[0]}\subset D_0$, we have $\F^{\el}(x)$ proportional to $x$, and
    $$\varphi(x)=u(\F^{\el}(\mu(p)(x)))=u(\underbrace{\F^{\el}(px)}_{\in D_0})=p\F^{\el}(x).$$
    It follows that $x$ is an eigenvector of $\varphi$, contradictory to the fact that $\varphi$ has no root over $\qp$.
    We argue by cases, depending on $P_{\max}(D):=\max_{\xi\in X_D}P(\xi)\le 4$. 
    \begin{enumerate}
        \item[$\bullet$] If $P_{\max}(D)=4$, clearly $X_D=(A)$. 
        \item[$\bullet$] If $P_{\max}(D)=3$, there exists $\xi\in X_D$ such that $P(\xi)=\dim D_\xi=1$. It follows that $\xi=[1]$ and $X_D=(D)$. 
    
        \item[$\bullet$] If $P_{\max}(D)=2$, note that the only period functions with value in $\st{0,1}$ are $\xi=[01]$ and $\xi^1=[10]$. If $\dim D_\xi=\dim D_{\xi^1}=1$, then $[0]\in X_D$ and $D_{[0]}=1$ for dimensional reason, contradiction. Therefore, $\dim D_{\xi}=\dim D_{\xi^1}=2$ and $X_D=(B)$. 
        \item[$\bullet$] If $P_{\max}(D)=1$, it's clear that $X_D=(C)$.
    \end{enumerate}
\end{proof}

\subsection{The case $\chi_{\varphi}(X)=(X^2\pm p)^2$}
\begin{defn}\label{prodcase}
For $\epsilon'\in\st{\pm1}$, we denote by $D^{prod}_{\epsilon'}\in\mf_{\qp}^{\f}$ the filtered $\varphi$-modules satisfying \hyperref[ssfilphimod]{$\mathbf{S1}$} determined by the following data
\begin{enumerate}
    \item[$(prod)$] $D^{prod}_{\epsilon'}=(\qp^4,\varphi^{prod}_{\epsilon'},\fil_\bullet)$, $\mat_{\mathcal B}\br{\varphi^{prod}_{\epsilon'}}=\br{\begin{smallmatrix}
            0 & 0 & \epsilon'p & 0\\
            0 & 0 & 0 & \epsilon'p\\
            1 & 0 & 0 & 0\\
            0 & 1 & 0 & 0
        \end{smallmatrix}}$, $\fil_1D^{prod}_{\epsilon'}=\Span(e_1,e_2)$.
\end{enumerate}
\end{defn}
\begin{rem}
    The filtered $\varphi$-modules $D_{\epsilon'}^{prod}$ are products of two isomorphic subobjects of dimension $2$.
\end{rem} 
\begin{thm}\label{mainthm0}
    For $\epsilon'\in\st{\pm1}$, $D^{prod}_{\epsilon'}$ represents the unique isomorphism class of admissible filtered $\varphi$-modules over $\qp$ satisfying \hyperref[ssfilphimod]{$\mathbf{S1}$}, \hyperref[ssfilphimod]{$\mathbf{S2}$} with $\chi_{\varphi}(X)=(X^2-\epsilon' p)^2$. In this case, we have $X_{D^{prod}_{\epsilon'}}=(B)$ in Lemma \ref{listxd}.
\end{thm}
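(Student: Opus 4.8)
The plan is to show that the conditions $\mathbf{S1}$, $\mathbf{S2}$ together with $\chi_\varphi(X) = (X^2-\epsilon' p)^2$ and admissibility pin down the isomorphism class uniquely, and that $D^{prod}_{\epsilon'}$ is a representative. First I would verify that $D^{prod}_{\epsilon'}$ as written is admissible: by Lemma \ref{adm} it suffices to check that $\fil_1 D^{prod}_{\epsilon'} = \Span(e_1, e_2)$ is not $\varphi$-stable, which is immediate from the displayed matrix since $\varphi(e_1) = e_3 \notin \Span(e_1,e_2)$. One also checks $\chi_{\varphi^{prod}_{\epsilon'}}(X) = (X^2 - \epsilon' p)^2$ directly from the block form, and that $\varphi$ is semisimple (it is conjugate to a diagonalizable matrix over $\bqp$ since $\varphi^2 = \epsilon' p \cdot \mathrm{id}$, so its minimal polynomial $X^2 - \epsilon' p$ is squarefree). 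So $D^{prod}_{\epsilon'} \in \mfs$ is well-defined, establishing existence.

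For uniqueness, let $D = (\qp^4, \varphi, \fil_\bullet)$ be any admissible filtered $\varphi$-module satisfying $\mathbf{S1}$, $\mathbf{S2}$ with $\chi_\varphi(X) = (X^2 - \epsilon' p)^2$. Since $\varphi$ is semisimple with minimal polynomial $X^2 - \epsilon' p$, the operator $\varphi$ makes $\qp^4$ into a $2$-dimensional vector space over the field (or étale algebra) $\qp[\varphi] \cong \qp[X]/(X^2 - \epsilon' p)$; pick any $\qp$-basis $e_1, e_2$ of this module and set $e_3 := \varphi(e_1)$, $e_4 := \varphi(e_2)$, so that in the basis $\mathcal{B} = (e_1, e_2, e_3, e_4)$ the Frobenius has exactly the asserted matrix. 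It remains to move $\fil_1 D$ into the position $\Span(e_1, e_2)$ by an automorphism commuting with $\varphi$. Here I would use the $\varphi$-equivariant identification $\qp^4 \cong V \otimes_{\qp} E$ with $E = \qp[\varphi]$ and $V$ a $2$-dimensional $\qp$-space, under which $\GL_E(V) = \GL_2(E)$ acts by $\varphi$-equivariant automorphisms; the claim is that this action is transitive on the set of $2$-dimensional $\qp$-subspaces $F$ with $F$ not $\varphi$-stable and $\dim(F \cap \varphi F)$ forced by admissibility (here $F \cap \varphi F = 0$, so $F \oplus \varphi F = D$ and $F$ is the graph of some $E$-semilinear or a suitable $\qp$-linear map, which can be normalized). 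Concretely, such an $F$ determines and is determined by a $\qp$-linear splitting, and two such are related by an element of $\GL_2(E)$; tracking this gives $\fil_1 D = \Span(e_1, e_2)$ after change of basis, hence $D \cong D^{prod}_{\epsilon'}$.

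Finally, for the Wintenberger type: since $\varphi^2 = \epsilon' p \cdot \mathrm{id}_D$ and $\mu = \mu_{(1,1,0,0)}$ (the Hodge cocharacter with weights $1,1$ on $\fil_1 D$ and $0,0$ on a complement), I would compute $\F^{\el} = u^{-1} \circ \varphi \circ \mu(p^{-1})$ and observe that $u$ can be taken to be $\mathrm{id}_D$ (condition (b) of the Hodge decomposition is satisfiable with $u = \mathrm{id}$ precisely because $\fil_1 D$ has a $\varphi$-compatible complement — indeed $\varphi(\fil_1 D)$ is such a complement, being disjoint from $\fil_1 D$). Then $\F^{\el}$ swaps $D_1$ and $D_0$ up to scalar, so every nonzero $\xi$-eigenspace has $\xi$ of period $2$ alternating between $0$ and $1$; by Lemma \ref{listxd} the only possibility is $X_D = \st{[01],[10]} = (B)$.

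The main obstacle is the transitivity argument in the uniqueness step: one must carefully check that every admissible $\fil_1 D$ (equivalently, every $2$-plane meeting its $\varphi$-translate trivially) lies in a single orbit under the centralizer $\GL_2(E)$ of $\varphi$, and in particular that no further isomorphism invariant (beyond $\chi_\varphi$) survives. The key point making this work is that $\fil_1 D \oplus \varphi(\fil_1 D) = D$ forces $\fil_1 D$ to be free of rank one over $E$ — so the ambient $\GL_2(E)$ acts on it as $\GL_1(E)$ on lines, which is transitive up to the residual action needed to also fix the basis, matching the rigidity asserted.
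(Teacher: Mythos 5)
Your overall strategy (check that $D^{prod}_{\epsilon'}$ is admissible, then normalize $\varphi$ via the $\qp[\varphi]$-module structure and move the filtration by the centralizer) can be made to work, and the existence half is fine; but as written the uniqueness half has gaps at exactly the two points where the real work lies. First, the statement that admissibility forces $\fil_1D\cap\varphi(\fil_1D)=0$ is only asserted in a parenthesis, and this is the heart of the paper's proof. The missing argument: if $0\neq x\in\fil_1D$ with $\varphi(x)\in\fil_1D$, then $\varphi(x)\notin\qp x$ (since $X^2-\epsilon'p$ has no root in $\qp$), so $\fil_1D=\Span(x,\varphi(x))$, which is $\varphi$-stable because $\varphi^2=\epsilon'p\,\mathrm{id}$; this $2$-dimensional sub-$\varphi$-module has $t_N=1$ but $t_H=2$, contradicting admissibility (equivalently, it contradicts Lemma~\ref{adm}). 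Second, the ``key point'' you offer for the transitivity claim is false: $\fil_1D$ is not free of rank one over $E=\qp[\varphi]$ --- it is not an $E$-submodule at all, since $E$-submodules are precisely the $\varphi$-stable planes and $\fil_1D$ meets $\varphi(\fil_1D)$ trivially. The correct observation is that $\fil_1D\oplus\varphi(\fil_1D)=D$ forces any $\qp$-basis $(x,y)$ of $\fil_1D$ to be an $E$-basis of $D$, which does give transitivity of the centralizer $\GL_2(E)$ on such planes; better still, one can bypass both the transitivity and the prior normalization of $\varphi$ by working directly in the basis $(x,y,\varphi(x),\varphi(y))$, in which $\varphi$ automatically has the matrix of $D^{prod}_{\epsilon'}$ and $\fil_1D$ is spanned by the first two vectors --- this is essentially the paper's route (phrased there as $D=\Span(x,\varphi(x))\oplus\Span(y,\varphi(y))$, a sum of two isomorphic $2$-dimensional subobjects).

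For the Wintenberger type there is a further gap. Exhibiting the splitting $D_1=\fil_1D$, $D_0=\varphi(\fil_1D)$ with $u=\mathrm{id}$ and computing that $\F^{\el}$ swaps the two summands only verifies conditions (a)--(c) of Theorem~\ref{hodgedecomp} (with $u=\mathrm{id}$, condition (b) is vacuous, so your remark about a $\varphi$-compatible complement does no work); the uniqueness in that theorem is relative to all of (a)--(d), and you never address (d), so you have not yet identified your candidate splitting with the canonical one. This matters because, for the $4$-dimensional $D$, the type $(C)$ of Lemma~\ref{listxd} is not excluded by dimension counting alone (only $\dim D_{[0]}=1$ is easily ruled out, not $\dim D_{[0]}=2$). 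So you must either check condition (d) for your splitting on an adapted lattice, or argue as the paper does: reduce by functoriality to the two $2$-dimensional subobjects above, where $\dim D'_{[0]}\neq 1$ forces $X_{D'}=(B)$ and hence $X_D=(B)$.
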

\begin{proof}
    Let $D$ be an admissible filtered $\varphi$-modules over $\qp$ satisfying \hyperref[ssfilphimod]{$\mathbf{S1}$}, \hyperref[ssfilphimod]{$\mathbf{S2}$} with $\chi_{\varphi}(X)=(X^2-\epsilon' p)^2$. We claim that $\fil_1D\cap\varphi(\fil_1D)=0$. By contradiction, suppose that for some nonzero $x\in\fil_1D$ we have $\varphi(x)\in\fil_1D$. Then $\varphi(x)$ is not proportional to $x$, since otherwise $x$ is an eigenvector of $\varphi$ on $D$, contradictory to the fact that $\chi_{\varphi}(X)=(X^2-\epsilon' p)^2$ has no root over $\qp$. Therefore, $\fil_1D$ is spanned by $x$ and $\varphi(x)$. Using that $\varphi^2(x)=\mp px$, we have immediately $\tilde D:=\fil_1D$ is a $\qp[\varphi]$-submodule of $D$. On the other hand, we have $t_N(\tilde D)=1$ by the supersingularity of $\chi_{\varphi}(X)$. Therefore, we have
    $$t_H(\tilde D)=\sum_{i\in\mz}i\dim_{\qp}(\fil_iD\cap\tilde D)= 2>1=t_N(\tilde D),$$
    contradictory to the admissibility of $D$. Let $(x,y)$ be a basis of $\fil_1D$. Since $\fil_1D\cap\varphi(\fil_1D)=0$, the 
$\qp$-linear subspaces
    $$D':=\qp x\oplus\qp\varphi(x)\qaq D'':=\qp y\oplus\qp\varphi(y)$$
    define two isomorphic subobjects of $D\in\mf_{\qp}^{\f}$ and $D=D'\oplus D''$. Since the Hodge decomposition is given functorially, it suffices to study the filtered $\varphi$-module $D'$, which lives in the unique isomorphism class of filtered $\varphi$-modules over $\qp$ with $\chi_{\varphi}(X)=X^2-\epsilon'p$. Let $D'=D'_1\oplus D'_0$ be the Hodge decomposition of $D'$ and $u'$ be the automorphism as in Theorem \ref{hodgedecomp}. As in the proof of Lemma \ref{listxd}, we have $\dim D'_{[0]}\neq 1$. It follows that $X_{D'}=(B)$, $D'_1=D'_{[10]}$ and $D'_0=D'_{[01]}$. Therefore, we have $X_D=(B)$ as well and $D$ is isomorphic to the direct sum of $2$ copies of $D'$, and one can check that it is exactly $D^{prod}_{\epsilon'}$.
\end{proof}
\subsection{The case $\chi_{\varphi}(X)=X^4+\epsilon pX^2+p^2$}
Fix a parameter $\epsilon\in\st{0,\pm1}$. In contrast to the case $\chi_{\varphi}(X)=(X^2\pm p)^2$, there are infinitely many isomorphism classes of admissible filtered $\varphi$-modules with $\chi_{\varphi}(X)=X^4+\epsilon pX^2+p^2$. We can construct concretely families of such filtered $\varphi$-modules.
\begin{defn}\label{canfam}
For $\epsilon'\in\st{\pm1}$ and $a,b,a'\in\qp$ such that $ab\neq-1$, we denote by
$$D^{\epsilon,iso}_{\epsilon'},~D_{a'}^{\epsilon,\nu},~ D_{(a,b)}^{\epsilon,\mu}\in\mf_{\qp}^{\f}$$
the filtered $\varphi$-modules satisfying \hyperref[ssfilphimod]{$\mathbf{S1}$} determined by the following data
\begin{enumerate}
        \item[$(iso)$] $D^{\epsilon,iso}_{\epsilon'}=(\qp^4,\varphi^{\epsilon,iso}_{\epsilon'},\fil_\bullet)$, $\mat_{\mathcal B}\br{\varphi^{\epsilon,iso}_{\epsilon'}}=\br{\begin{smallmatrix}
            0 & 0 & \epsilon'p & 0\\
            0 & 0 & 0 & \epsilon'p\\
            1 & 0 & 0 & -(\epsilon+2\epsilon')p\\
            0 & 1 & 1 & 0
        \end{smallmatrix}}$, $\fil_1D^{\epsilon,iso}_{\epsilon'}=\Span(e_1,e_2)$,
        \item[$(\nu)$] $D^{\epsilon,\nu}_{a'}=(\qp^4,\varphi^{\epsilon,\nu}_{a'},\fil_\bullet)$, $\mat_{\mathcal B}\br{\varphi^{\epsilon,\nu}_{a'}}=\br{\begin{smallmatrix}
            0 & 0 & 0 & 1\\
            -p^2 & 0 & 0 & 0\\
            0 & 1 & 0 & -a'\\
            a'-\epsilon p & 0 & 1 & 0
        \end{smallmatrix}}$, $\fil_1D^{\epsilon,\nu}_{a'}=\Span(e_1,e_2)$,
        \item[$(\mu)$] $D^{\epsilon,\mu}_{(a,b)}=(\qp^4,\varphi^{\epsilon,\mu}_{(a,b)},\fil_\bullet)$, $\mat_{\mathcal B}\br{\varphi^{\epsilon,\mu}_{(a,b)}}=\br{\begin{smallmatrix}
            0 & 0 & 0 & -p^2\\
            0 & 0 & 1 & -\frac{a^2+\epsilon p+b^2p^2}{ab+1}\\
            1 & 0 & a & -\frac{a^3+\epsilon ap-bp^2}{ab+1}\\
            0 & 1 & b & -a
        \end{smallmatrix}}$, $\fil_1D^{\epsilon,\mu}_{(a,b)}=\Span(e_1,e_2)$,
\end{enumerate}
where $\mathcal B=(e_i)_{1\le i\le 4}$ is the canonical basis of $\qp^4$.
\end{defn}
\begin{rem}
    The filtered $\varphi$-modules $D_{\epsilon'}^{\epsilon,iso}$ should be thought of as "isolated" objects with respect to the natural families under consideration (see Proposition \ref{munu}).
\end{rem}

\begin{thm}\label{mainclass}
    Let $p\ge 7$ and $\epsilon\in\st{0,\pm1}$. Let $D=(D,\varphi,\fil_\bullet)\in\mf_{\qp}^{\f}$ be an admissible filtered $\varphi$-module with $\chi_{\varphi}(X)=X^4+\epsilon pX^2+p^2$, the following are equivalent.
    \begin{enumerate}
        \item[$\bullet$] $D\in\mfs$; 
        \item[$\bullet$] $D$ satisfies \hyperref[ssfilphimod]{$\mathbf{S1}$};
        \item[$\bullet$] $D$ is isomorphic to $D^{\epsilon,iso}_{\epsilon'}$, $D_{a'}^{\epsilon,\nu}$ or $D_{(a,b)}^{\epsilon,\mu}$ for some parameters $\epsilon',a,b,a'$ satisfy the following conditions:
        \begin{center}
            \begin{tabular}{|c|c|}
            \hline
                $\epsilon'$ & $\epsilon'\in\st{\pm 1}$\\
                \hline
                $a'$ & $a'\in\epsilon p+p^2\zp$ \\
                \hline
                $a, b$ & $v_p(a)\ge 1,v_p(b)\ge 0\mbox{ or}\atop v_p(a)=v_p(b)+1$.\\
            \hline
            \end{tabular}
        \end{center}
    \end{enumerate}
    Moreover, for each $D$ above, the Hodge decomposition is $D=D_1\oplus D_0$ where $D_1=\Span(e_1,e_2)$ and $D_0=\Span(e_3,e_4)$
\end{thm}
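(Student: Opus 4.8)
The plan is to establish $(1)\Leftrightarrow(2)$, $(3)\Rightarrow(2)$ and $(2)\Rightarrow(3)$, only the last of which is substantial. For $p\ge 7$ the polynomial $\chi_\varphi(X)=X^4+\epsilon pX^2+p^2$ is separable — its roots are the distinct numbers produced in the proof of Proposition \ref{classssweil} — so $\varphi$ is automatically semisimple, and by that proposition $\chi_\varphi$ is a supersingular $p$-Weil polynomial; hence $\mathbf{S2}$ holds for every $D$ in the statement. Then $(1)\Rightarrow(2)$ is immediate from Lemma \ref{ssfilphimod}. For $(2)\Rightarrow(1)$ one only needs $\mathbf{S3}$: since $\chi_\varphi$ is $p$-symmetric ($\alpha\mapsto p/\alpha$ permutes its roots without fixed points), $\varphi$ admits a nondegenerate bilinear form under which it is a $p$-similitude; the corresponding adjoint involution sends $\varphi$ to $p\varphi^{-1}$ and has fixed subalgebra the ramified quadratic $\qp[\varphi+p\varphi^{-1}]\subset\qp[\varphi]$, and a short computation with it shows that the space of alternating $p$-similitude forms on $D$ is $2$-dimensional over $\qp$ and that all its nonzero elements are nondegenerate. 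Requiring $\fil_1$ to be isotropic is a single linear condition on this plane, so there remains a nonzero — hence nondegenerate — alternating $p$-similitude form under which $\fil_1$ is isotropic; thus $\mathbf{S3}$ holds and $D\in\mfs$ by Lemma \ref{ssfilphimod}. Finally $(3)\Rightarrow(2)$ is trivial: each $D^{\epsilon,iso}_{\epsilon'}$, $D^{\epsilon,\nu}_{a'}$, $D^{\epsilon,\mu}_{(a,b)}$ satisfies $\mathbf{S1}$ by construction and is admissible by Lemma \ref{adm}, since in each case $\varphi$ carries $e_1$ or $e_2$ outside $\Span(e_1,e_2)$.

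The heart of the theorem is $(2)\Rightarrow(3)$. As $\chi_\varphi$ is separable, $D$ is free of rank one over $R:=\qp[\varphi]\cong\qp[X]/(\chi_\varphi)$, so $(D,\varphi)$ is determined up to isomorphism and $\operatorname{Aut}_{\qp[\varphi]}(D)=R^\times$; since a morphism of filtered $\varphi$-modules is a $\qp$-linear $\varphi$-equivariant map preserving $\fil_1$, the isomorphism classes of admissible $\mathbf{S1}$-modules with the prescribed $\chi_\varphi$ correspond bijectively to the $R^\times$-orbits of $2$-dimensional $\qp$-subspaces $W\subset D$ that are not $\varphi$-stable (not-$\varphi$-stable being admissibility, by Lemma \ref{adm}). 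I would carry out this orbit classification using the quadratic étale $\qp$-algebra $A:=\qp[\varphi^2]\cong\qp[Y]/(Y^2+\epsilon pY+p^2)$ — over which $D$ is free of rank two, with $\varphi\in\operatorname{Mat}_2(A)$ of trace $0$ and $\varphi^2$ a scalar matrix — together with the element $\varphi+p\varphi^{-1}\in R$, which squares to $(2-\epsilon)p$ and generates a ramified quadratic subalgebra of $R$ distinct from $A$.

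Concretely, I would stratify according to whether $\dim(W+\varphi W)$ is $3$ or $4$ (the value $2$ being ruled out by admissibility) and, in the split case $A\cong\qp\times\qp$, according to the position of $W$ relative to the two $\varphi^2$-eigenspaces of $D$. In each stratum one picks an adapted $\qp$-basis, normalizing $\varphi$ on $W$, uses the residual freedom in $R^\times$ to bring $W$ to a normal form, and recognizes the result as $D^{\epsilon,iso}_{\epsilon'}$, $D^{\epsilon,\nu}_{a'}$ or $D^{\epsilon,\mu}_{(a,b)}$: a distinguished sublocus produces the finitely many classes $D^{\epsilon,iso}_{\pm1}$, the stratum $\dim(W+\varphi W)=3$ produces (up to isomorphism) the single class $D^{\epsilon,\nu}_{\epsilon p}$, and the remaining generic stratum is swept out by the families $D^{\epsilon,\nu}$ and $D^{\epsilon,\mu}$. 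The constraints $a'\in\epsilon p+p^2\zp$ and the $v_p$-conditions on $(a,b)$ should fall out of this normalization together with the bookkeeping needed to list each isomorphism class exactly once. For the final assertion, $D_1=\fil_1=\Span(e_1,e_2)$ is forced by Theorem \ref{hodgedecomp}(a), and to identify $D_2=\Span(e_3,e_4)$ I would exhibit, for each listed matrix, an explicit $u\in\GL(\qp^4)$ fixing $\Span(e_3,e_4)$ and a suitable sublattice of $\zp^4$ for which $(D_1,D_2,u)$ satisfies conditions (a)--(d) of Theorem \ref{hodgedecomp}, then invoke the uniqueness asserted there.

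The main obstacle is the orbit computation in $(2)\Rightarrow(3)$, and within it the split case — equivalently, the case where $X^2+\epsilon pX+p^2$ has a root in $\qp$ — in which $D$ acquires extra $A$-stable planes and the normalization genuinely changes flavour; this is exactly the phenomenon behind the non-injectivity of $\bar c$ over $[-\epsilon p\!:\!1]$ in Theorem \ref{introthm3}. A secondary difficulty is pinning down the precise parameter ranges: one must simultaneously check that every isomorphism class is realized within the stated constraints and keep track of the partial overlaps among the normal forms of $D^{\epsilon,\nu}$ and $D^{\epsilon,\mu}$ and with the isolated classes $D^{\epsilon,iso}_{\pm1}$.
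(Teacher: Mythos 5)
Your reductions among the three bullets are fine: $\mathbf{S2}$ from separability of $\chi_\varphi$, admissibility of the listed modules via Lemma \ref{adm}, and the dimension-count argument for $\mathbf{S3}$ (a $2$-dimensional space of alternating $p$-similitude forms, all nonzero ones nondegenerate because a radical would be a $\varphi$-stable plane forcing $\det(\varphi)=p$ on a quadratic factor, which fails; isotropy of $\fil_1$ is one linear condition) is a correct and slightly slicker variant of Proposition \ref{automat}. The problem is the heart, $(2)\Rightarrow(3)$ together with the ``Moreover'' clause, which your outline does not actually prove, and where the mechanism you propose for the parameter ranges is misidentified. The ranges $a'\in\epsilon p+p^2\zp$ and the valuation conditions on $(a,b)$ are \emph{not} bookkeeping ``to list each isomorphism class exactly once'': the in-range parametrization is far from injective (e.g.\ for $\epsilon=0$ one has $c(p,0)=c(p^2,1)=-p^2$, so $D^{0,\mu}_{(p,0)}\simeq D^{0,\mu}_{(p^2,1)}$ with both pairs in range), and conversely out-of-range parameters such as $(a,0)$ with $v_p(a)=0$ still define admissible modules satisfying $\mathbf{S1}$ — they are simply isomorphic to in-range members of the \emph{other} family. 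What the ranges actually encode is that the canonical basis realizes the Hodge decomposition, i.e.\ Wintenberger's integrality condition (d) of Theorem \ref{hodgedecomp} (existence of an adapted lattice with the mod-$p$ filtration): in the paper this is exactly the content of Lemma \ref{rathertechnical} and the adapted-lattice computations inside Proposition \ref{mainthm1}, where adaptedness comes for free because the basis is built from the Hodge decomposition itself (after the Wintenberger types $(C),(D)$ are excluded in Lemma \ref{xdneqc}).

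In your route — $R^\times$-orbits of non-$\varphi$-stable planes in the rank-one $R=\qp[\varphi]$-module — the orbit analysis naturally outputs normal forms indexed by the invariant $\operatorname{tr}(\operatorname{pr}\circ\varphi^2|_{W})$ (this is essentially the paper's Section \ref{moduli}), with no valuation constraints in sight. To finish you would still have to prove two nontrivial things that your plan only gestures at: (i) every orbit contains a representative of one of the three families \emph{with parameters in the stated ranges}, which amounts to a surjectivity statement for $c$ restricted to the in-range locus, including the delicate split case $c=-\epsilon p$ where three distinct classes sit over one value of the invariant; and (ii) for in-range parameters the standard splitting $\Span(e_1,e_2)\oplus\Span(e_3,e_4)$ really is \emph{the} Hodge decomposition, which requires producing the unipotent $u$ and a lattice satisfying Theorem \ref{hodgedecomp}(d) — precisely the arithmetic of Lemma \ref{rathertechnical}, which succeeds only under the stated valuation conditions. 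Since (ii) is where the ranges come from and your outline treats it as a routine final verification while attributing the ranges to a uniqueness normalization that does not exist, the proposal as written has a genuine gap at the central step.
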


\begin{rem}
The families in Definition \ref{canfam} are always objects in $\mfs$. The additional conditions in Theorem \ref{mainclass} have two roles:
\begin{enumerate}
\item they are precisely the conditions under which the canonical basis used in Definition \ref{canfam} is adapted for the corresponding object, and
\item the objects satisfying these conditions still cover all isomorphism classes in $\mfs$.
\end{enumerate}
\end{rem}
\subsection{Proof of Theorem \ref{mainclass}}
The rest of the section is devoted to proving Theorem \ref{mainclass}. It suffices to classify admissible filtered $\varphi$-modules over $\qp$ satisfying \hyperref[ssfilphimod]{$\mathbf{S1}$} with $\chi_{\varphi}(X)=X^4+\epsilon pX^2+p^2$. For simplicity, we adopt the following definition. 
\begin{defn}\label{adaptedbasis}
    For $D\in\mf_{\qp}^{\f}$ satisfying \hyperref[ssfilphimod]{$\mathbf{S1}$}, a $\qp$-basis $(e_1,e_2,e_3,e_4)$ of $D$ is called \textit{adapted} if $D_1=\Span(e_1,e_2)$ and $D_0=\Span(e_3,e_4)$, where $D=D_0\oplus D_1$ is the Hodge decomposition.
\end{defn}
Using this terminology, for the filtered $\varphi$-module structures defined on $\qp^4$ listed in Theorem \ref{mainclass}, we will see that the canonical basis of $\qp^4$ is always adapted.
\begin{lem}\label{xdneqc}
    Let $D$ be an admissible filtered $\varphi$-modules over $\qp$ satisfying Condition \hyperref[ssfilphimod]{$\mathbf{S1}$} with $\chi_{\varphi}(X)=X^4+\epsilon pX^2+p^2$ for $\epsilon\in\st{0,\pm1}$, then $X_D\neq(C)$ or $(D)$ in Lemma \ref{listxd}.
\end{lem}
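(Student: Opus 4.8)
The plan is to rule out the shapes $(C)$ and $(D)$ (in the notation of Lemma~\ref{listxd}) separately, relying throughout on two facts. The arithmetic fact is that, as already observed in the proof of Lemma~\ref{adm}, every root of $\chi_{\varphi}(X)=X^4+\epsilon pX^2+p^2$ has $p$-adic valuation $\tfrac12$ (the Newton polygon is a single segment of slope $-\tfrac12$); in particular $\chi_\varphi$ has no root in $\qp$, hence no factor of degree $1$ or $3$ over $\qp$, and any $2$-dimensional $\varphi$-stable $\qp$-subspace $D'\subset D$ satisfies $t_N(D')=1$. The structural fact concerns the Hodge decomposition $D=D_1\oplus D_0$ with automorphism $u$ and $\F^{\el}=u^{-1}\circ\varphi\circ\mu(p^{-1})$ of Theorem~\ref{hodgedecomp}: I would first record that $\F^{\el}(D_\xi)=D_{\xi^1}$ for every $\xi$ (the inclusion $\subseteq$ is the definition of $D_\xi$, and equality holds since $\F^{\el}$ is bijective and $\xi\mapsto\xi^1$ permutes the finite set $X_D$), so in particular $\dim D_\xi=\dim D_{\xi^1}$; likewise, from $M=\bigoplus_\xi M_\xi$ and $\F^{\el}(M)=M$ in Theorem~\ref{hodgedecomp}(d) one gets $\F^{\el}(M_\xi)=M_{\xi^1}$ for any adapted lattice $M$. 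I will also use that $\mu(p)$ acts on $D_0$ as the identity, that $u|_{D_0}=\id$, and that $(u-\id)D_1\subseteq D_0$.

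\textbf{The case $X_D=(C)=\st{[0],[1]}$.} The defining property of $D_\xi$ gives $D_{[1]}\subseteq D_1$ and $D_{[0]}\subseteq D_0$, hence $D_{[1]}=D_1$ and $D_{[0]}=D_0$ since $D=D_{[0]}\oplus D_{[1]}=D_0\oplus D_1$. Thus $\F^{\el}$ preserves $D_0=D_{[0]}$, and combining this with $\mu(p)|_{D_0}=\id$ and $u|_{D_0}=\id$ yields $\varphi|_{D_0}=u\circ\F^{\el}\circ\mu(p)|_{D_0}=\F^{\el}|_{D_0}$. So $D_0$ is a $2$-dimensional $\varphi$-stable subspace on which $\varphi$ preserves the lattice $M_{[0]}=M\cap D_0$ (as $\F^{\el}(M_{[0]})=M_{[0]^1}=M_{[0]}$), whence $t_N(D_0)=v_p(\det\varphi|_{D_0})=0$; this contradicts $t_N(D_0)=1$. (Equivalently, $\chi_{\varphi|_{D_0}}$ has a unit constant term, so its two roots cannot both have valuation $\tfrac12$, although both are roots of $\chi_\varphi$.)

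\textbf{The case $X_D=(D)=\st{[001],[010],[100],[1]}$.} Here $D_{[001]},D_{[010]}\subseteq D_0$ and $D_{[100]},D_{[1]}\subseteq D_1$, so as in the previous case $D_{[001]}\oplus D_{[010]}=D_0$ and $D_{[100]}\oplus D_{[1]}=D_1$. Since $[001],[010],[100]$ are cyclic shifts of one another, $\dim D_{[001]}=\dim D_{[010]}=\dim D_{[100]}$, whence all four of $D_{[001]},D_{[010]},D_{[100]},D_{[1]}$ are lines. Now put $D':=D_{[100]}\oplus D_0\ (=D_{[100]}\oplus D_{[001]}\oplus D_{[010]})$, a $3$-dimensional subspace, and check it is $\varphi$-stable: $\mu(p)$ preserves $D'$ because it scales the line $D_{[100]}\subseteq D_1$ and fixes $D_0$; $\F^{\el}$ preserves $D'$ because it cyclically permutes $D_{[100]}\to D_{[001]}\to D_{[010]}\to D_{[100]}$ (using $[100]^1=[001]$, $[001]^1=[010]$, $[010]^1=[100]$); and $u$ preserves $D'$ because $u|_{D_0}=\id$ and, for $x\in D_{[100]}\subseteq D_1$, $u(x)=x+(u-\id)(x)$ with $(u-\id)(x)\in D_0\subseteq D'$. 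Hence $\varphi=u\circ\F^{\el}\circ\mu(p)$ maps $D'$ into itself, so $\varphi$ acts on the $1$-dimensional quotient $D/D'$ by a scalar $\lambda\in\qp$, a root of $\chi_\varphi$; this contradicts the fact that $\chi_\varphi$ has no root in $\qp$.

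The one step that needs care is the structural bookkeeping of the first paragraph, and within it the verification in case $(D)$ that $u$ sends the line $D_{[100]}$ — which lies in $D_1$, not $D_0$ — back into $D'$: this works precisely because $(u-\id)D_1\subseteq D_0$ and $D_0\subseteq D'$. Everything else is routine manipulation of the Hodge decomposition together with the observation that the roots of $\chi_\varphi$ have non-integral $p$-adic valuation.
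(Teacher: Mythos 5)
Your proof is correct, but it runs along a genuinely different track than the paper's. The paper argues computationally in both cases: it picks an explicit adapted basis (in case $(C)$ the basis $(x,y,z,\varphi(z))$, in case $(D)$ the basis $(x,y,\F^{\el}(y),(\F^{\el})^2(y))$), writes out the matrices of $u$, $\F^{\el}$, $\mu(p)$ and $\varphi$, expands $\chi_{\varphi}$ and compares coefficients; the contradiction in case $(C)$ comes from the failure of integrality of $\pr_1\circ\frac{\varphi}{p}$ on an adapted lattice (an entry of valuation $-1$), and in case $(D)$ from forcing $pt^2$ to be a root of $X^2+\epsilon pX+1$, i.e.\ a root of unity of odd valuation. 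You instead exhibit forbidden $\varphi$-stable subspaces directly: in case $(C)$ you identify $D_0=D_{[0]}$, observe $\varphi|_{D_0}=\F^{\el}|_{D_0}$ preserves the full-rank lattice $M_{[0]}$ (the equality $\F^{\el}(M_{[0]})=M_{[0]}$, which you correctly deduce from $\F^{\el}(M)=M$ and injectivity, is essential — mere inclusion would only give $v_p(\det)\ge 0$ and no contradiction), so $t_N(D_0)=0$ against the forced value $1$; in case $(D)$ you show $D_{[100]}\oplus D_0$ is a $3$-dimensional $\varphi$-stable subspace — the point that $u$ maps $D_{[100]}\subset D_1$ into it because $(u-\id)D_1\subset D_0$ is exactly the step that makes this work — forcing a rational eigenvalue of $\varphi$, impossible since all roots of $X^4+\epsilon pX^2+p^2$ have valuation $\tfrac12$. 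Your route is shorter and more conceptual (case $(D)$ needs no lattice at all, and case $(C)$ uses it only through a determinant), at the cost of not producing the explicit matrix normal forms that the paper's computation yields along the way; since nothing downstream of this lemma relies on those forms, the trade-off is harmless.
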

\begin{proof}
    By contradiction, suppose $X_D=(C)$. It follows that $D_0=D_{[0]}$ so $u\varphi(D_0)=D_0$.
    Take an adapted basis $\mb$ of $D$. Under this basis, the matrix of $u$, $\F^{\el}$ and $\mu(p)$ have the following forms, where $I_2=\br{\begin{smallmatrix}
        1 & 0\\
        0 & 1
    \end{smallmatrix}}$, $S,M_1,M_2\in\mat_{2\times 2}(\qp)$.
    $$\mat_{\mb}(u)=\br{\begin{smallmatrix}
        I_2 & 0\\
        S & I_2
    \end{smallmatrix}},~\mat_{\mb}(\F^{\el})=\br{\begin{smallmatrix}
        M_1 & 0\\
        0 & M_0
    \end{smallmatrix}}\qaq\mat_{\mb}(\mu(p))=\br{\begin{smallmatrix}
        pI_2 & 0\\
        0 & I_2
    \end{smallmatrix}}.$$
    In this case, we have $\chi_{\F^{\el}}=\chi_{\varphi}$ and hence $\F^{\el}$ has no eigenvector in $D$. It follows that for any nonzero $x\in D_1$, if $\varphi(x)=y+z$ where $y\in D_1$ and $z\in D_0$, then $y$ is not proportional to $x$. For the same reason, $\varphi(z)\in D_0$ is not proportional to $z$. So $\mathcal B:=(x,y,z,\varphi(z))$ is an adapted basis of $D$, under which the matrix of $\varphi$ has the form
    $$\mat_{\mathcal B}(\varphi)=\br{\begin{smallmatrix}
            0 & a & 0 & 0\\
            1 & b & 0 & 0\\
            1 & c & 0 & e\\
            0 & d & 1 & f
        \end{smallmatrix}}$$
        with $a,b,c,d,e,f\in\qp$. A direct calculation shows
        $$\chi_{\varphi}(X)=X^4-(b+f)X^3+(bf-a-e)X^2+(af+be)X+ae=X^4+\epsilon pX^2+p^2.$$
        Comparing the coefficients, we have
        $$b+f=0,~bf-a-e=\epsilon p,~af+be=0\qaq ae=p^2.$$
        If $b\neq0$, we have $a=e=\pm p$ and $b^2=-(a+e+\epsilon p)\in\st{\pm p,\pm 2p,\pm 3p}$, which is impossible for $p\ge 5$. Therefore, we have $b=f=0$ and $a+e=-\epsilon p$. In other words, $\st{a,e}$ is the set of roots of $X^2+\epsilon pX+p^2$. In particular, we have $v_p(a)=1$. Let $M$ be an adapted lattice of $D$. By the definition of adapted lattices and Theorem \ref{hodgedecomp} (d), we have $\pr_1\circ\frac{\varphi}{p}(M\cap D_1)\subset D_1$. However, $\det(\pr_1\circ\frac{\varphi}{p}|_{D_1})=-\frac{a}{p^2}\notin\zp$, contradiction. $X_D\neq(C)$.

        Suppose that $X_D=(D)$. Take $x,y\in D$ such that $D_{[1]}=\qp x$ and $D_{[100]}=\qp y$. One can check that $\mb=(x,y,\F^{\el}(y),(\F^{\el})^2(y))$ is an adapted basis of $D$. Under this basis, the matrix of $u$, $\F^{\el}$ and $\mu(p)$ have the following forms, where $t\in\qp^\times$ and $a,b,c,d\in\qp$.
        $$\mat_{\mb}(u)=\br{\begin{smallmatrix}
            1 & 0 & 0 & 0\\
            0 & 1 & 0 & 0\\
            a & b & 1 & 0\\
            c & d & 0 & 1
        \end{smallmatrix}},~\mat_{\mb}(\F^{\el})=\br{\begin{smallmatrix}
            t & 0 & 0 & 0\\
            0 & 0 & 0 & \frac{1}{t}\\
            0 & 1 & 0 & 0\\
            0 & 0 & 1 & 0
        \end{smallmatrix}}\qaq\mat_{\mb}(\mu(p))=\br{\begin{smallmatrix}
            p & 0 & 0 & 0\\
            0 & p & 0 & 0\\
            0 & 0 & 1 & 0\\
            0 & 0 & 0 & 1
        \end{smallmatrix}}.$$
        We deduce from $\varphi=u\circ \F^{\el}\circ\mu(p)$ that
        $$\mat_{\mb}(\varphi)=\br{\begin{smallmatrix}
            pt & 0 & 0 & 0\\
            0 & 0 & 0 & \frac{1}{t}\\
            apt & p & 0 & \frac{b}{t}\\
            cpt & 0 & 1 & \frac{d}{t}
        \end{smallmatrix}}$$
        and by taking the characteristic polynomial, we have
        $$\chi_{\varphi}(X)=X^4-\Big(\frac{d}{t}+pt\Big)X^3+\Big(pd-\frac{b}{t}\Big)X^2+\Big(pb-\frac{p}{t}\Big)X+p^2=X^4+\epsilon pX^2+p^2.$$
        Comparing the coefficients, we have $d=-pt^2$, $b=\frac{1}{t}$ and $pd-\frac{b}{t}=\epsilon p$. It follows that $p^2t^2+\frac{1}{t^2}+\epsilon p=0$ and hence $pt^2$ is a root of $X^2+\epsilon X+1$. Note that for $\epsilon\in\st{0,\pm 1}$, roots of $X^2+\epsilon X+1$ are roots of unity, whereas $2\nmid v_p(pt^2)$, contradiction.
\end{proof}
The following lemma is rather technical and will be used in the subsequent proofs.
\begin{lem}\label{rathertechnical}
    Let $a,b\in\qp$ with $ab\neq-1$. Then, the following are equivalent.
    \begin{enumerate}
        \item $c=-\frac{a^2+\epsilon p+b^2p^2}{ab+1}\in p\zp$.
        \item $v_p(a)\ge 1$ and $v_p(b)\ge 0$, or $v_p(a)=v_p(b)+1$.
    \end{enumerate}
    In both cases, there exists a $\zp$-lattice $N$ in $\qp^2$ satisfying the following conditions.
        \begin{enumerate}[label=(\roman*)]
            \item $\br{\begin{smallmatrix}
            0 & -1\\
            1 & \frac{c}{p}
        \end{smallmatrix}}N=N$,
            \item $\br{\begin{smallmatrix}
            -pb & a\\
            a+bc & pb
        \end{smallmatrix}}N\subset N$.
        \end{enumerate} 
\end{lem}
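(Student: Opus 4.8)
The statement has two essentially independent halves, and the plan is to treat them in turn: first the numerical equivalence $(1)\Leftrightarrow(2)$, then the construction of the lattice $N$. For the equivalence I would run an elementary case analysis on $\alpha:=v_p(a)$ and $\beta:=v_p(b)$ in $\mz\cup\st{+\infty}$, writing $c=-\frac{a^2+\epsilon p+b^2p^2}{ab+1}$. Two valuation facts should be recorded first: (a) $v_p(ab+1)=0$ when $\alpha+\beta\ge 1$, $v_p(ab+1)=\alpha+\beta$ when $\alpha+\beta\le -1$, and $v_p(ab+1)\ge 0$ when $\alpha+\beta=0$; and (b) the numerator is controlled by its terms, of valuations $2\alpha$, $v_p(\epsilon p)$ (equal to $1$ if $\epsilon\ne 0$ and to $+\infty$ if $\epsilon=0$) and $2\beta+2$, where the two even ones coincide precisely on the locus $\alpha=\beta+1$, so no cancellation can be hidden outside it. For $(2)\Rightarrow(1)$ I would handle the case $\alpha\ge 1,\ \beta\ge 0$ (then $v_p(ab+1)=0$ and the numerator has valuation $\ge 1$, so $v_p(c)\ge 1$) and the case $\alpha=\beta+1$ (then $a^2$ and $b^2p^2$ share the valuation $2\beta+2$, and a short computation, distinguishing whether the $\epsilon p$ term dominates, gives $v_p(c)\ge 1$ both for $\beta\ge 0$ and for $\beta\le -1$); the degenerate cases $a=0$ or $b=0$ are checked by hand. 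For $(1)\Rightarrow(2)$ I would argue by contrapositive: if neither ``$\alpha\ge 1$ and $\beta\ge 0$'' nor ``$\alpha=\beta+1$'' holds, split into $\alpha\le\beta$ and $\alpha\ge\beta+2$; in the first the failure of the first clause forces $\alpha\le 0$, in the second it forces $\beta\le -1$, and in both, facts (a) and (b) give $v_p(c)\le 0$.

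For the lattice, let $g_1$ and $g_2$ denote the two $2\times 2$ matrices appearing in (i) and (ii). The plan is to show that the single choice $N:=\zp^2+g_2\zp^2$ works uniformly. Since $(1)$ holds, $c/p\in\zp$, so $g_1$ has entries in $\zp$ and determinant $1$; hence $g_1\zp^2=g_1^{-1}\zp^2=\zp^2$. Everything then follows from two matrix identities. The first is $g_1g_2g_1=g_2$, equivalently $g_1g_2=g_2g_1^{-1}$: it is a consequence of the purely formal relation $g_1g_2+g_2g_1=\frac{c}{p}\,g_2$ (a direct $2\times 2$ check, valid for all $a,b,c$) together with the Cayley--Hamilton identity $g_1^{-1}=\frac{c}{p}\,I-g_1$. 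The second is $\operatorname{tr}(g_2)=0$ and $\det(g_2)=c+\epsilon p$, the latter using the defining relation $c(ab+1)=-(a^2+\epsilon p+b^2p^2)$; so $c+\epsilon p\in\zp$ and, by Cayley--Hamilton, $g_2^2=-(c+\epsilon p)\,I\in\mat_{2\times 2}(\zp)$. Granting these, $g_1N=g_1\zp^2+g_1g_2\zp^2=\zp^2+g_2(g_1^{-1}\zp^2)=\zp^2+g_2\zp^2=N$, which is (i), while $g_2N=g_2\zp^2+g_2^2\zp^2\subseteq g_2\zp^2+\zp^2=N$, which is (ii); and $N$ is a lattice since it contains $\zp^2$ and is bounded by any power of $p$ clearing the denominators of $g_2$.

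I do not expect a genuine obstacle. The one step needing a little foresight is spotting the uniform lattice $N=\zp^2+g_2\zp^2$ and, with it, the conjugation relation $g_1g_2g_1=g_2$; after that, (i) and (ii) follow in one line each, and the determinant computation for $g_2$ is routine. The bulk of the remaining work is the valuation bookkeeping behind the equivalence, which is straightforward once one notices that the only ambiguous regime --- equal valuations of $a^2$ and $b^2p^2$ --- is exactly the boundary case $v_p(a)=v_p(b)+1$ singled out in condition $(2)$.
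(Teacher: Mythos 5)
Your proposal is correct, and while the equivalence $(1)\Leftrightarrow(2)$ proceeds by essentially the same elementary valuation bookkeeping as the paper (the paper merely organizes the same cases into regions $D_1,D_2,D_3,L$ of the $(v_p(a),v_p(b))$-plane, the only delicate locus being $v_p(a)=v_p(b)+1$ where $a^2$ and $b^2p^2$ may cancel), your construction of the lattice is genuinely different and slicker. Writing $g_1,g_2$ for the matrices in (i),(ii), the paper splits along the two clauses of $(2)$: on $v_p(a)\ge 1,\ v_p(b)\ge 0$ it checks the standard lattice, and on the line $v_p(a)=v_p(b)+1\le 0$ it exhibits the explicit lattice $N=\zp e_1\oplus\zp\,p^{-n}(e_1+ue_2)$ with $u=bp/a\in\zp^\times$, $n=-v_p(a)$, and verifies (i),(ii) entry by entry using identities such as $a+bc=-\frac{c+\epsilon p+p^2b^2}{a}$. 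Your uniform choice $N=\zp^2+g_2\zp^2$ avoids this case split entirely, and the three inputs you need all check out: $g_1\in\SL_2(\zp)$ once $c\in p\zp$; the formal identity $g_1g_2+g_2g_1=\frac{c}{p}g_2$ (a direct computation, valid for all $a,b,c$); and $\tr(g_2)=0$, $\det(g_2)=-p^2b^2-a(a+bc)=c+\epsilon p$, whence $g_2^2=-(c+\epsilon p)I_2\in\mat_{2\times 2}(\zp)$. These are exactly the kind of relations the paper itself exploits later (compare $\bar M\bar S+\bar S\bar M=\frac{c}{p}\bar S$ in the proof of Proposition \ref{mainthm1} and $S^2=-\frac{\epsilon p+c}{p^2}I_2$, $MS+SM=cS$ in Proposition \ref{sit2}), and with Cayley--Hamilton giving $g_1^{-1}=\frac{c}{p}I_2-g_1$, hence $g_1g_2=g_2g_1^{-1}$, the verifications $g_1N=N$ and $g_2N\subset N$ are indeed one line each. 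What the paper's version buys is a completely explicit basis for the lattice in each regime; what yours buys is uniformity and the elimination of all valuation estimates from the lattice step. When writing it up, do carry out the deferred by-hand checks ($a=0$ or $b=0$ in the equivalence); they are immediate.
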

\begin{proof}
We divided $\mz^2$ into four regions $D_1,D_2,D_3$ and $L$ as following:
$$D_1=\st{x\le y,~x\le 0},~D_2=\st{x\ge y+2,~y\le -1},~D_3=\st{x\ge 1,~y\ge 0},~L=\st{x-y=1,~x\le 0}.$$
This is shown in the following figure.

\centerline{\tikzset{every picture/.style={line width=0.75pt}} 

\begin{tikzpicture}[x=0.75pt,y=0.75pt,yscale=-1,xscale=1]

\draw  (260.11,117.39) -- (443.11,117.39)(357.11,52.39) -- (357.11,217.39) (436.11,112.39) -- (443.11,117.39) -- (436.11,122.39) (352.11,59.39) -- (357.11,52.39) -- (362.11,59.39)  ;
\draw  [color={rgb, 255:red, 0; green, 0; blue, 0 }  ,draw opacity=0 ][fill={rgb, 255:red, 184; green, 233; blue, 134 }  ,fill opacity=0.62 ] (377.12,77.4) -- (417.11,77.4) -- (417.11,117.4) -- (377.12,117.4) -- cycle ;
\draw  [color={rgb, 255:red, 0; green, 0; blue, 0 }  ,draw opacity=0 ][fill={rgb, 255:red, 74; green, 144; blue, 226 }  ,fill opacity=0.36 ] (357.12,77.39) -- (357.11,117.39) -- (311.06,163.4) -- (277.08,197.36) -- (277.1,137.36) -- (277.12,77.36) -- cycle ;
\draw  [color={rgb, 255:red, 0; green, 0; blue, 0 }  ,draw opacity=0 ][fill={rgb, 255:red, 248; green, 231; blue, 28 }  ,fill opacity=0.47 ] (377.1,137.4) -- (417.1,137.41) -- (417.08,197.41) -- (377.08,197.4) -- (317.08,197.38) -- cycle ;
\draw  [draw opacity=0] (417.12,77.41) -- (417.08,203.97) -- (268.64,203.92) -- (268.69,77.36) -- cycle ; \draw  [color={rgb, 255:red, 155; green, 155; blue, 155 }  ,draw opacity=0.35 ] (417.12,77.41) -- (268.69,77.36)(417.11,97.41) -- (268.68,97.36)(417.11,117.41) -- (268.67,117.36)(417.1,137.41) -- (268.67,137.36)(417.09,157.41) -- (268.66,157.36)(417.09,177.41) -- (268.65,177.36)(417.08,197.41) -- (268.65,197.36) ; \draw  [color={rgb, 255:red, 155; green, 155; blue, 155 }  ,draw opacity=0.35 ] (417.12,77.41) -- (417.08,203.97)(397.12,77.4) -- (397.08,203.96)(377.12,77.4) -- (377.08,203.96)(357.12,77.39) -- (357.08,203.95)(337.12,77.38) -- (337.08,203.94)(317.12,77.38) -- (317.08,203.94)(297.12,77.37) -- (297.08,203.93)(277.12,77.36) -- (277.08,203.92) ; \draw  [color={rgb, 255:red, 155; green, 155; blue, 155 }  ,draw opacity=0.35 ]  ;
\draw [color={rgb, 255:red, 74; green, 144; blue, 226 }  ,draw opacity=1 ][line width=2.25]    (357.11,117.39) -- (277.08,197.36) ;
\draw [color={rgb, 255:red, 248; green, 231; blue, 28 }  ,draw opacity=1 ][line width=2.25]    (317.08,197.38) -- (377.1,137.4) ;
\draw [color={rgb, 255:red, 248; green, 231; blue, 28 }  ,draw opacity=1 ][line width=2.25]    (377.1,137.4) -- (417.1,137.41) ;
\draw [color={rgb, 255:red, 126; green, 211; blue, 33 }  ,draw opacity=1 ][line width=2.25]    (377.12,77.4) -- (377.11,102) -- (377.11,117.4) ;
\draw [color={rgb, 255:red, 208; green, 2; blue, 27 }  ,draw opacity=0.48 ][line width=2.25]    (357.1,137.39) -- (297.08,197.37) ;
\draw [color={rgb, 255:red, 126; green, 211; blue, 33 }  ,draw opacity=1 ][line width=2.25]    (377.11,117.4) -- (417.11,117.4) ;
\draw [color={rgb, 255:red, 74; green, 144; blue, 226 }  ,draw opacity=1 ][line width=2.25]    (357.11,117.39) -- (357.12,77.39) ;

\draw (307,88.4) node [anchor=north west][inner sep=0.75pt]    {$D_{1}$};
\draw (379.09,160.8) node [anchor=north west][inner sep=0.75pt]    {$D_{2}$};
\draw (389.09,88.8) node [anchor=north west][inner sep=0.75pt]    {$D_{3}$};
\draw (319.09,161.78) node [anchor=north west][inner sep=0.75pt]    {$L$};
\draw (448,106.4) node [anchor=north west][inner sep=0.75pt]    {$v_{p}( a)$};
\draw (340,30.4) node [anchor=north west][inner sep=0.75pt]    {$v_{p}( b)$};

\end{tikzpicture}
}
    
Then, (2) is equivalent to $(v_p(a),v_p(b))\in L\sqcup D_3$. We verify that (1) implies (2) by contradiction. If $(v_p(a),v_p(b))\in D_1$, we have $v_p(c)\le v_p(a^2)-\min\st{v_p(ab),1}\le 0$. Similarly, if $(v_p(a),v_p(b))\in D_2$, we have $v_p(c)\le v_p(b^2p^2)-\min\st{v_p(ab),1}\le 0$. 

Conversely, if $(v_p(a),v_p(b))\in L$, we have $v_p(c)\ge v_p(a^2)-v_p(ab)=1$. If $(v_p(a),v_p(b))\in D_3$, we have $v_p(c)=\min\st{v_p(a^2),v_p(b^2p^2),1}\ge1$.

It remains to prove the existence of $N$. Suppose that $(v_p(a),v_p(b))\in L$. Let $(e_1,e_2)$ be the canonical basis of $\qp^2$. Take $u=\frac{bp}{a}\in\zp^\times$ and $n=-v_p(a)\ge 1$. Consider $N=\zp e_1\oplus\zp\frac{e_1+ue_2}{p^n}$, we claim that conditions (i), (ii) hold. In fact, under the basis $(e_1,\frac{1}{p^n}(e_1+ue_2))$, conditions (i) and (ii) becomes
$$\br{\begin{smallmatrix}
    -u^{-1} & -\frac{1}{p^n}(u^{-1}+u+\frac{c}{p})\\
    \frac{p^n}{u} & u^{-1}(1+\frac{cu}{p})
    \end{smallmatrix}}
    \qaq \br{\begin{smallmatrix}
        -\frac{a+bc}{u}-bp & \frac{au-(a+bc)u^{-1}-2bp}{p^n}\\
        \frac{p^n(a+bc)}{u} & \frac{a+bc}{u}+bp
    \end{smallmatrix}}\in\GL_2(\zp).$$
Note that
$$\frac{c}{p}=-\frac{a^2+\epsilon p+b^2p^2}{p(ab+1)}=-\frac{a^2(1+u^2)+\epsilon p}{a^2u+p}.$$
We deduce that $v_p(u^{-1}+u+\frac{c}{p})=v_p(-\frac{c+\epsilon p}{a^2u})\ge 2n$ and thereby $-\frac{1}{p^n}(u^{-1}+u+\frac{c}{p})\in\zp$. On the other hand, we can verify the equality $a+bc=-\frac{c+\epsilon p+p^2b^2}{a}$. Using this equality repeatedly, we have immediately 
$$v_p\br{\frac{p^n(a+bc)}{u}}\ge n+v_p(p^2b^2)-v_p(a)=0$$
and
$$-\frac{a+bc}{u}-bp=\frac{c+\epsilon p}{au}\in\zp,~\frac{au-(a+bc)u^{-1}-2bp}{p^n}=\frac{c+\epsilon p}{ap^nu}\in\zp.$$
Suppose that $(v_p(a),v_p(b))\in D_3$. Let $(e_1,e_2)$ be the canonical basis of $\qp^2$. We can verify directly that conditions (i), (ii) hold for the standard lattice $N=\zp e_1\oplus\zp e_2$. This completes the proof.
\end{proof}

We now discuss the classification according to the cases $X_D=(A)$ or $(B)$. The following proposition is the main part of Theorem \ref{mainclass}.

\begin{prop}\label{mainthm1}
Let $p\ge 7$ be a prime and $\epsilon\in\st{0,\pm1}$. For $D\in\mf_{\qp}^{\f}$ satisfying \hyperref[ssfilphimod]{$\mathbf{S1}$} with $\chi_{\varphi}(X)=X^4+\epsilon pX^2+p^2$, exactly one of the following holds.
    \begin{enumerate}
        \item $X_D=(A)$, and $D\simeq D_{a}^{\epsilon,\nu}$ where $a\in\epsilon p+p^2\zp$,
        \item $X_D=(B)$, and $D\simeq D_{(a,b)}^{\epsilon,\mu}$ where $a,b\in\qp$ satisfy one of the following:
        \begin{enumerate}[label=(\roman*)]
            \item $v_p(a)\ge 1$ and $v_p(b)\ge 0$,
            \item $v_p(a)=v_p(b)+1$.
        \end{enumerate}
        \item $X_D=(B)$, and $D\simeq D_{\epsilon'}^{\epsilon,iso}$ where $\epsilon'=\pm1$.
    \end{enumerate}
\end{prop}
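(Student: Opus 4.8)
The plan is to push further the reduction of this section. Since $D\in\mf_{\qp}^{\f}$ is admissible and satisfies $\mathbf{S1}$ and $\mathbf{S2}$ with $\chi_\varphi(X)=X^4+\epsilon pX^2+p^2$, Lemma~\ref{xdneqc} rules out $X_D=(C)$ and $X_D=(D)$, leaving only $(A)$ and $(B)$. In both cases I would follow one template. Fix a basis of $D$ adapted to the Hodge decomposition $D=D_1\oplus D_0$, $D_1=\fil_1 D$; by Theorem~\ref{hodgedecomp}, $\F^{\el}=u^{-1}\circ\varphi\circ\mu(p^{-1})$ sends each $D_\xi$ isomorphically onto $D_{\xi^{1}}$, so the shape of $X_D$ dictates how $\F^{\el}$ moves the pieces. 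Writing $\varphi=u\circ\F^{\el}\circ\mu(p)$ in the adapted basis expresses its matrix through few parameters (the off-diagonal block $S$ of $u$, recalling $(u-\mathrm{id})D_1\subset D_0$, together with the data of $\F^{\el}$), and imposing $\chi_\varphi(X)=X^4+\epsilon pX^2+p^2$ coefficientwise cuts this down sharply. The leftover ambiguity is the choice of adapted basis; since any isomorphism of filtered $\varphi$-modules respects the Hodge decomposition functorially, two such $D$ are isomorphic precisely when their matrices are conjugate by a block change of basis preserving $D_1$ and $D_0$, and I use this freedom to bring $\varphi$ into one of the normal forms of Definition~\ref{canfam}. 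Finally I reimpose admissibility: by Lemma~\ref{adm} it amounts to "$\fil_1 D$ not $\varphi$-stable", which will hold automatically for all the normal forms, whereas the sharp input (that the constructed splitting is the genuine Hodge decomposition, equivalently that an adapted $\zp$-lattice satisfying condition (d) of Theorem~\ref{hodgedecomp} exists) is what produces the arithmetic conditions on the parameters.

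For $X_D=(A)$, the four lines $D_\xi$ with $\xi\in\{[0011],[0110],[1100],[1001]\}$ are cyclically permuted by $\F^{\el}$ in a single $4$-cycle. Building the adapted basis from the $\F^{\el}$-orbit of a lattice generator of one of them, $\F^{\el}$ becomes a cyclic shift twisted by the scalar $\lambda$ with $(\F^{\el})^4=\lambda\cdot\mathrm{id}$, and $\varphi$ depends only on $\lambda$ and $S$. Comparing $\chi_\varphi$ with $X^4+\epsilon pX^2+p^2$ forces $\lambda=-1$ (from the constant term, using $\det u=1$, $\det\mu(p)=p^2$) and collapses $S$ to a single scalar; a block change of basis then identifies $\varphi$ with $\varphi^{\epsilon,\nu}_{a'}$ for some $a'\in\epsilon p+p\zp$. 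Since $\F^{\el}$ acts on $M/pM$ by a single $4$-cycle, the only subspaces eligible for the chain in Theorem~\ref{hodgedecomp}(d)(ii) are $0$ and $M/pM$, forcing $\bar u=\mathrm{id}$, that is, $S\equiv 0\pmod p$; this is exactly $a'\in\epsilon p+p^2\zp$. Conversely one checks that for such $a'$ the module $D^{\epsilon,\nu}_{a'}$ is admissible with Wintenberger type $(A)$, giving (1).

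For $X_D=(B)$, $D_1=D_{[10]}$ and $D_0=D_{[01]}$ are $2$-dimensional and $\F^{\el}$ interchanges them; this forces $\varphi(D_1)=D_0$, so in an adapted basis $\varphi=\bigl(\begin{smallmatrix}0&P\\Q&R\end{smallmatrix}\bigr)$ with invertible $2\times2$ blocks $P=\pr_{D_1}\varphi|_{D_0}$, $Q=\pr_{D_0}\varphi|_{D_1}$, $R=\pr_{D_0}\varphi|_{D_0}$. Using the $\GL_2\times\GL_2$ of adapted-basis changes I normalize $Q=I_2$, leaving the diagonal $\GL_2$ acting by simultaneous conjugation on $(P,R)$; then $\chi_\varphi(X)=\det(X^2I_2-XR-P)$, and matching with $X^4+\epsilon pX^2+p^2$ gives $\det P=p^2$, $\mathrm{tr}\,R=0$, $\mathrm{tr}(RP)=0$, $\det R=\epsilon p+\mathrm{tr}\,P$. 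I then split on the conjugacy class of $P$. If $P$ is cyclic, normalize it to its companion matrix $\bigl(\begin{smallmatrix}0&-p^2\\1&\mathrm{tr}\,P\end{smallmatrix}\bigr)$; the remaining equations express $R$ through scalars $(a,b)$ with $\mathrm{tr}\,P=c(a,b)=-\frac{a^2+\epsilon p+b^2p^2}{ab+1}$ and identify $\varphi$ with $\varphi^{\epsilon,\mu}_{(a,b)}$. If $P$ is scalar it must be $\epsilon' pI_2$ with $\epsilon'\in\{\pm1\}$, the surviving conditions force $R$ to have characteristic polynomial $Y^2+(\epsilon+2\epsilon')p$ (irreducible over $\qp$, hence $R$ cyclic), and $\varphi$ is identified with $\varphi^{\epsilon,iso}_{\epsilon'}$; these are admissible unconditionally since $\varphi(e_1)=e_3\notin\fil_1 D$, giving (3). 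In the cyclic case the operator $\psi:=\F^{\el}|_{D_0}\circ\F^{\el}|_{D_1}=p^{-1}P$ must preserve $M\cap D_1$, and its characteristic polynomial is $Y^2-p^{-1}c(a,b)Y+1$, so $c(a,b)\in p\zp$ is necessary; conversely the complete set of conditions (d) of Theorem~\ref{hodgedecomp} for the natural lattice is exactly the content of Lemma~\ref{rathertechnical}, whose conditions (i),(ii) provide the adapted lattice, and $c(a,b)\in p\zp$ is equivalent to the stated valuation conditions on $(a,b)$. For $(a,b)$ outside this locus, $D^{\epsilon,\mu}_{(a,b)}$ is still admissible, but its genuine Hodge decomposition differs, its true type is $(A)$, and it falls into (1). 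This gives (2).

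The three cases are mutually exclusive: (1) is separated from (2) and (3) by the Wintenberger type, and (2) from (3) by whether the block $P=\pr_{D_1}\varphi|_{D_0}$ (determined by $D$ up to conjugation) is cyclic or scalar, so no admissible $D^{\epsilon,\mu}_{(a,b)}$ is isomorphic to any $D^{\epsilon,iso}_{\epsilon'}$. I expect essentially all the difficulty to sit in the case $X_D=(B)$: running the $\GL_2\times\GL_2$-normalization cleanly, separating the cyclic and scalar loci for $P$, verifying that the cyclic orbits are faithfully described by the chart $(a,b)\mapsto D^{\epsilon,\mu}_{(a,b)}$, and, most delicately, matching the adapted-lattice condition against the intricate Lemma~\ref{rathertechnical}, which is precisely why that lemma was established beforehand.
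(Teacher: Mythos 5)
Your proposal is correct and shares the skeleton of the paper's proof: exclude types $(C)$ and $(D)$ via Lemma \ref{xdneqc}, work in a basis adapted to the Hodge decomposition, match $\chi_\varphi$ coefficientwise to reach the normal forms of Definition \ref{canfam}, and convert condition (d) of Theorem \ref{hodgedecomp} together with Lemma \ref{rathertechnical} into the arithmetic constraints on the parameters. Your local execution differs in three places, all sound and two of them leaner than the paper's. First, in type $(B)$ you normalize $Q=I_2$ by adapted-basis changes and split on whether $P=\pr_{D_1}\varphi|_{D_0}$ is scalar or cyclic; since $u^{-1}\varphi^2|_{D_1}=PQ$, this is exactly the paper's dichotomy (proportionality of $u^{-1}\varphi^2(x)$ to $x$ on all of $\fil_1D$), just organized through the block normal form and the identity $\chi_\varphi(X)=\det(X^2I_2-XR-P)$. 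Second, your necessity argument for $c\in p\zp$ — the intrinsic operator $\psi=\F^{\el}|_{D_0}\circ\F^{\el}|_{D_1}$ preserves $M\cap D_1$ and has trace $c/p$ and determinant $1$ — is shorter than the paper's two-step route ($\det(pS)\in\zp$ from the lattice inclusions, then exclusion of the unit case via the chain condition), and reaches the same conclusion. Third, in type $(A)$ you force $\bar u=\id$ by noting that a $4$-cycle on the four lines $\bar M_\xi$ admits no nontrivial stable, decomposition-compatible subspace, where the paper argues through the image of $\bar u-\id$.

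Three details should be added in a full write-up. (a) Writing $\tr P=c(a,b)$ requires ruling out the degenerate case $ab=-1$ with $a^2+\epsilon p+b^2p^2=0$; the paper disposes of it with a short root-of-unity/valuation argument. (b) Lemma \ref{rathertechnical} only supplies a lattice satisfying the two stability conditions (the analogues of $\F^{\el}(N)=N$ and $u$-integrality); for the converse direction — which you do need, since the mutual exclusivity of (1)--(3) rests on knowing that the canonical bases of $D^{\epsilon,\mu}_{(a,b)}$ (for parameters in the stated range) and of $D^{\epsilon,iso}_{\epsilon'}$ are genuinely adapted — one must still verify the chain condition of Theorem \ref{hodgedecomp}(d)(ii); the paper does this by the anticommutation relation between the reductions of the two matrices of Lemma \ref{rathertechnical} in the $\mu$-case, and by an explicit two-step chain in the $iso$-case. (c) Your intermediate claim that the characteristic polynomial already forces $a'\in\epsilon p+p\zp$ in case $(A)$ is not correct as stated — that integrality comes from $u(M)=M$, not from $\chi_\varphi$ — but this is harmless, since your lattice argument then yields the final constraint $a'\in\epsilon p+p^2\zp$ exactly as in the paper.
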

\begin{proof}
        First, suppose that $X_D=(B)$. We have 
        $$u^{-1}\varphi(D_1)=u^{-1}\varphi(p^{-1}D_1)=u^{-1}\circ\varphi\circ\mu(p^{-1})(D_1)=\F^{\el}(D_1)=D_0.$$
        Therefore, we have $\varphi(D_1)=D_0$ by the unipotence of $u$. If there exists a nonzero $x\in D_1$ such that $y:=u^{-1}\varphi^2(x)$ is not proportional to $x$, we deduce that $\mathcal B=(x,y,\varphi(x),\varphi(y))$ is an adapted basis of $D$. The matrix of $\varphi$ under $\mathcal B$ has the form
        $$\mat_{\mathcal B}(\varphi)=\br{\begin{smallmatrix}
            0 & 0 & 0 & -p^2\\
            0 & 0 & 1 & c\\
            1 & 0 & a & d\\
            0 & 1 & b & e
        \end{smallmatrix}}$$
        with $a,b,c,d,e\in\qp$. A direct calculation shows
        $$\chi_{\varphi}(X)=X^4-(a+e)X^3+(ae-bd-c)X^2+(ac-d+bp^2)X+p^2=X^4+\epsilon pX^2+p^2.$$
        Comparing the coefficients, we have
        $$a+e=0,~ae-bd-c=\epsilon p\qaq ac-d+bp^2=0.$$
        We deduce that $-c(ab+1)=a^2+\epsilon p+b^2p^2$. If $ab=-1$ and $a^2+\epsilon p+b^2p^2=0$, we have $a^2-\epsilon abp+b^2p^2=0$. Since in this case $b\neq 0$, we have $-\frac{a}{bp}$ is a primitive $3$rd root of unity. It follows that $\frac{a^2}{p}=\frac{a}{bp}\cdot ab$ has normalized valuation $0$, which is impossible for $a\in\qp$. Therefore, $ab\neq-1$ and 
        $$c=-\frac{a^2+\epsilon p+b^2p^2}{ab+1},~d=-\frac{a^3+\epsilon ap-bp^2}{ab+1}\qaq e=-a.$$
        Therefore, we have $D\simeq D_{(a,b)}^{\epsilon,\mu}$. Denote $S=\br{\begin{smallmatrix}
            -b & a\\
            \frac{a+bc}{p^2} & b
        \end{smallmatrix}}$ and $M=\br{\begin{smallmatrix}
            0 & -p^2\\
            1 & c
        \end{smallmatrix}}$. We have $\mat_{\mb}(\varphi)=\br{\begin{smallmatrix}
            0 & M\\
            I_2 & SM
        \end{smallmatrix}}$
        and it follows that
        $$\mat_{\mb}(u)=\br{\begin{smallmatrix}
            I_2 & 0\\
            S & I_2
        \end{smallmatrix}},~\mat_{\mb}(\F^{\el})=\br{\begin{smallmatrix}
            0 & M\\
            \frac{1}{p}I_2 & 0
        \end{smallmatrix}}\qaq\mat_{\mb}(\mu(p))=\br{\begin{smallmatrix}
            pI_2 & 0\\
            0 & I_2
        \end{smallmatrix}}.$$ 
        Let $H$ be an adapted lattice of $D$ and $H_i=H\cap D_i$ for $i=0,1$. By Theorem \ref{hodgedecomp} (d), we have $H=H_1\oplus H_0$. For $i=0,1$, we denote $\pr_i\mathcal B=\mathcal B\cap D_i$. From now on, we identify $D_1$ and $D_0$ with lattices in $\qp^2$ canonically by identifying the bases $\pr_i\mathcal B$ with the canonical basis of $\qp^2$. Under this identification, the definition of an adapted lattice and Theorem \ref{hodgedecomp} (d) imply the containment of $\zp$-submodules of $\qp^2$
        $$\frac{1}{p}H_1\subset H_0,~MH_0\subset H_1\qaq SH_1\subset H_0.$$
        From the first two inclusions, we deduce that $\frac{1}{p}MH_1\subset H_1$. Note that $\det(\frac{1}{p}M)=1$, so the first two inclusions are indeed equalities. The third inclusion turns into $pSH_1\subset H_1$, which implies that $c+\epsilon p=\det(pS)\in\zp$. Moreover, if $c+\epsilon p$ is invertible in $\zp$, we have $(u-\id)H_1=H_0$ in $D$.  Using the notations in Theorem \ref{hodgedecomp} (d), on $\bar H$ we have $(\bar u-\id_{\bar H})\bar H=\bar H_0$ hence $\bar H_0\subset L_{r-1}$. Since $L_{r-1}$ is $\bar\F^{\el}$-stable, we have $\bar H_1=\bar\F^{\el}(\bar H_0)\subset L_{r-1}$. It follows that $L_{r-1}=\bar H$, contradiction. In conclusion, we have $c+\epsilon p\in p\zp$, hence $c\in p\zp$. The condition (d) of Theorem \ref{hodgedecomp} can be summarized to the existence of a lattice $N$ in $\qp^2$ satisfying the following conditions.
        \begin{enumerate}[label=(\roman*)]
            \item $\br{\begin{smallmatrix}
            0 & -1\\
            1 & \frac{c}{p}
        \end{smallmatrix}}N=N$,
            \item $\br{\begin{smallmatrix}
            -pb & a\\
            a+bc & pb
        \end{smallmatrix}}N\subset N$,
            \item on $N/pN$, the image of the $\fp$-endomorphism defined by $\br{\begin{smallmatrix}
            -pb & a\\
            a+bc & pb
        \end{smallmatrix}}$ is stable under the action of $\br{\begin{smallmatrix}
            0 & -1\\
            1 & \frac{c}{p}
        \end{smallmatrix}}$.
        \end{enumerate}  
        Note that condition (iii) always holds. Indeed, denote the endomorphisms of $N/pN$ defined by the matrices $\br{\begin{smallmatrix}
            0 & -1\\
            1 & \frac{c}{p}
        \end{smallmatrix}}$ and $\br{\begin{smallmatrix}
            -pb & a\\
            a+bc & pb
        \end{smallmatrix}}$ by $\bar M$ and $\bar S$ respectively, we have $\bar M\circ\bar S+\bar S\circ\bar M=\frac{c}{p}\bar S$. Apply both sides on $N/pN$ and note that $\bar M(N/pN)=N/pN$, we have $\bar M(\bar S(N/pN))\subset\bar S(N/pN)$ as claimed. Finally, Lemma \ref{rathertechnical} shows that the canonical basis of $\qp^4$ is adapted for the module $D^{\epsilon,\mu}_{(a,b)}$ if and only if $v_p(a)\ge 1$ and $v_p(b)\ge 0$, or $v_p(a)=v_p(b)+1$.

        Suppose $X_D=(B)$ and for all $x\in D_1$, $u^{-1}\varphi^2(x)$ is proportional to $x$. It follows that there exists a fix $\epsilon'\in\st{\pm 1}$ such that $u^{-1}\varphi^2(x)=\epsilon'px$. Let $(x,y)$ be a basis for $D_1$. We deduce that $\mathcal B=(x,y,\varphi(x),\varphi(y))$ is an adapted basis of $D$ and under which the matrix of $\varphi$ has the form
        $$\mat_{\mathcal B}(\varphi)=\br{\begin{smallmatrix}
            0 & 0 & \epsilon'p & 0\\
            0 & 0 & 0 & \epsilon'p\\
            1 & 0 & a & c\\
            0 & 1 & b & d
        \end{smallmatrix}}$$ 
        for some $a,b,c,d\in\qp$. A direct calculation shows
        $$\chi_{\varphi}(X)=X^4-(a+d)X^3+(ad-bc-2\epsilon'p)X^2+(a+d)\epsilon' pX+p^2=X^4+\epsilon pX^2+p^2.$$
        Comparing the coefficients, we get $a+d=0$ and $ad-bc=(\epsilon+2\epsilon')p$. If $b=0$, we have $a^2=-(\epsilon+2\epsilon')p\in\st{\pm p,\pm 2p,\pm 3p}$, which is impossible for $p\ge 5$. As before, we deduce that
        $$\mat_{\pr_1\mathcal B,\pr_0\mathcal B}(u|_{D_0})=\frac{\epsilon'}{p}\br{\begin{smallmatrix}
            a & -\frac{a^2+(\epsilon+2\epsilon')p}{b}\\
            b & -a
        \end{smallmatrix}}$$
        and that for any adapted lattice $H\subset D$, after identifying $D_1$ and $D_0$, we have $H_0=\frac{1}{p}H_1$ and  
        $$(\epsilon+2\epsilon')p=\det\br{\begin{smallmatrix}
            a & -\frac{a^2+(\epsilon+2\epsilon')p}{b}\\
            b & -a
        \end{smallmatrix}}\in p\zp.$$
        Therefore, we have $(\bar u-\id_{\bar H})\bar H_1\subsetneq\bar H_0$ is a $1$-dimensional subspace. For the sake of Theorem \ref{hodgedecomp} (d), take a nonzero element $t\in(\bar u-\id_{\bar H})\bar H_1$, let $r=2$ and $L_1=\Span(t,\F^{\el}(t))$. Since $\br{\begin{smallmatrix}
            a & -\frac{a^2+(\epsilon+2\epsilon')p}{b}\\
            b & -a
        \end{smallmatrix}}^2=-(\epsilon+2\epsilon')p$, the chain $0=L_0\subsetneq L_1\subsetneq L_2=\bar H$ satisfies the requirements.

        Finally, we should verify that $D\simeq D^{\epsilon,iso}_{\epsilon'}$. We can take $P\in\GL_2(\qp)$ such that 
        $$P\br{\begin{smallmatrix}
            a & -\frac{a^2+(\epsilon+2\epsilon')p}{b}\\
            b & -a
        \end{smallmatrix}}P^{-1}=\br{\begin{smallmatrix}
            0 & -(\epsilon+2\epsilon')p\\
            1 & 0
        \end{smallmatrix}},$$
        since these two matrices have the same characteristic polynomial which is separable. It follows that, under the canonical bases of $\qp^4$ as the underlying vector space of both $D$ and $D^{\epsilon,iso}_{\epsilon'}$, the matrix $\br{\begin{smallmatrix}
            P & 0\\
            0 & P
        \end{smallmatrix}}\in\GL_4(\qp)$ defines an isomorphism between these two modules, and our assertion follows.

        Suppose $X_D=(A)$. It's clear that $D$ is decomposed into $4$ subspaces of dimension $1$. 
        $$D=\underbrace{D_{[1100]}\oplus D_{[1001]}}_{D_1}\oplus \underbrace{D_{[0011]}\oplus D_{[0110]}}_{D_0}.$$
        Take a nonzero $x\in D_{[1100]}$, one can check that $\mb=(x,\F^{\el}(x),(\F^{\el})^2(x),(\F^{\el})^3(x))$ is an adapted basis of $D$. Under this basis, the matrix of $u$, $\F^{\el}$ and $\mu(p)$ have the following forms, where $a,b,c,d\in\qp$.
        $$\mat_{\mb}(u)=\br{\begin{smallmatrix}
            1 & 0 & 0 & 0\\
            0 & 1 & 0 & 0\\
            a & b & 1 & 0\\
            c & d & 0 & 1
        \end{smallmatrix}},~\mat_{\mb}(\F^{\el})=\br{\begin{smallmatrix}
            0 & 0 & 0 & -1\\
            1 & 0 & 0 & 0\\
            0 & 1 & 0 & 0\\
            0 & 0 & 1 & 0
        \end{smallmatrix}}\qaq\mat_{\mb}(\mu(p))=\br{\begin{smallmatrix}
            p & 0 & 0 & 0\\
            0 & p & 0 & 0\\
            0 & 0 & 1 & 0\\
            0 & 0 & 0 & 1
        \end{smallmatrix}}.$$
        We deduce from $\varphi=u\circ \F^{\el}\circ\mu(p)$ that
        $$\mat_{\mb}(\varphi)=\br{\begin{smallmatrix}
            0 & 0 & 0 & -1\\
            p & 0 & 0 & 0\\
            bp & p & 0 & -a\\
            dp & 0 & 1 & -c
        \end{smallmatrix}}\mbox{ and }\chi_{\varphi}(X)=X^4+cX^3+(pd+a)X^2+pbX+p^2=X^4+\epsilon pX^2+p^2.$$
        Comparing the coefficients, we have
        $$b=c=0\qaq dp=a-\epsilon p\qaq\mat_{\mathcal B}(\varphi)=\br{\begin{smallmatrix}
            0 & 0 & 0 & -1\\
            p & 0 & 0 & 0\\
            0 & p & 0 & -a\\
            a-\epsilon p & 0 & 1 & 0
        \end{smallmatrix}}.$$
        It follows that $D\simeq D_{a}^{\epsilon,\nu}$. Theorem \ref{hodgedecomp} (d) requires that any adapted lattice $H\subset D$ has a $\zp$-basis $\widetilde{\mathcal B}$ in which each element is proportional to one of elements in $\mathcal B=((f^{el})^{j}(x))_{0\le j\le 3}$. We deduce from the definition of adapted lattice, namely $H=\varphi(H)+\frac{1}{p}\varphi(\fil_1H)$, that $\widetilde{\mathcal B}=(\alpha_j\cdot(f^{el})^{j}(x))_{0\le j\le 3}$ for $\alpha_j\in\qp^\times$ such that $v_p(\alpha_i)=v_p(\alpha_j)$ for any $0\le i,j\le 3$. Without loss of generality, we can thereby suppose that $\alpha_i=\alpha_j$ for any $0\le i,j\le 3$. The matrices of $\F^{\el}$ and $u$ under $\widetilde{\mathcal B}$ are 
        $$\mat_{\widetilde{\mathcal B}}(\F^{\el})=\br{\begin{smallmatrix}
            0 & 0 & 0 & -1\\
            1 & 0 & 0 & 0\\
            0 & 1 & 0 & 0\\
            0 & 0 & 1 & 0
        \end{smallmatrix}}
        \qaq
        \mat_{\widetilde{\mathcal B}}(u)=\br{\begin{smallmatrix}
            1 & 0 & 0 & 0\\
            0 & 1 & 0 & 0\\
            a & 0 & 1 & 0\\
            0 & \frac{a-\epsilon p}{p} & 0 & 1
        \end{smallmatrix}}.$$ 
        We claim that $\bar u=\id_{\bar H}$. Indeed, we deduce from $\bar u(H)=H$ that the numbers $\frac{a-\epsilon p}{p},a$ belong to $\zp$, which implies $a\in p\zp$. Therefore, the image of $\bar u-\id_{\bar H}$ is contained in the $1$-dimensional linear subspace of $\bar H$ generated by $\overline{\varphi^2(x)}$. Theorem \ref{hodgedecomp} (d) requires that this image is contained in an $L_{r-1}\subset\bar H$ which is stable under $\F^{\el}$. However, there is no proper $\F^{\el}$-submodule of $\bar H$ containing $\overline{\varphi^2(x)}$, and our assertion follows. We deduce immediately that $\frac{a-\epsilon p}{p}\in p\zp$.

\end{proof} 
\subsection{Construction of certain skew forms}
By Lemma \ref{ssfilphimod}, to verify that $D^{prod}_{\epsilon'}$ and the filtered $\varphi$-modules listed in Theorem \ref{mainclass} belong to $\mfs$, it suffices to verify the \hyperref[ssfilphimod]{$\mathbf{S3}$} for them.

On $D_{\epsilon'}^{prod}$, take a $\qp$-basis $(x,y)$ of $\fil_1D$ and consider the skew form $\delta:D_{\epsilon'}^{prod}\times D_{\epsilon'}^{prod}\to\qp$ whose matrix under the basis $\mb:=(x,y,\varphi(x),\varphi(y))$ is
    $$\mat_{\mb}(\delta)=\br{\begin{smallmatrix}
        0 & 0 & 0 & 1\\
        0 & 0 & -\epsilon' & 0\\
        0 & \epsilon' & 0 & 0\\
        -1 & 0 & 0 & 0
    \end{smallmatrix}}.$$
It's clear that $\delta$ is nondegenerate, and one can check that under $\delta$, $\varphi$ is a $p$-similitude and $\fil_1D$ is totally isotropic. 

\begin{lem}\label{existcyc}
    Let $p$ be any prime, $D\in\mf_{\qp}^{\f}$ be a filtered $\varphi$-modules satisfying Condition \hyperref[ssfilphimod]{$\mathbf{S1}$} with $\chi_{\varphi}$ supersingular and separable. Then, there exists a $\varphi$-cyclic vector in $\fil_1D$.
\end{lem}
\begin{proof}
    The irreducible decomposition $\chi_{\varphi}(X)=f_1\cdots f_r$ in $\qp[X]$ gives rise to a decomposition of $D$ into $\varphi$-stable subspaces
    $$D=D_1\oplus\cdots\oplus D_r.$$
    We claim that the projections of $\fil_1D$ to each $D_i$ are all nonzero. Otherwise, we have $\fil_1D\subseteq D'$ for some $\varphi$-stable proper subspace $D'\subsetneq D$ and the weak admissibility yields
    $$t_H(D')=\sum_{i\in\mz}i\dim(\fil_iD\cap D')\stackrel{\hyperref[ssfilphimod]{\mathbf{S1}}}=\dim(\fil_1D\cap D')=2\stackrel{\f}{\le} t_N(D').$$
    Therefore, $t_N(D')=t_N(D)=2$. Note that $t_N(D')=\frac{1}{2}\dim D'$ by the supersingularness of $\chi_{\varphi}(X)$, which implies that $D'=D$, contradiction. Finally, we can pick an $x\in\fil_1D$ whose projections to each $D_i$ are all nonzero\footnote{Indeed, we use that $\qp$ has infinitely many elements here.}. Such an $x$ is $\varphi$-cyclic since $\varphi$ is separable.
\end{proof}

For the case $\chi_{\varphi}(X)=X^4+\epsilon pX^2+p^2$ for $\epsilon\in\st{0,\pm1}$, we have the following Proposition, which completes the proof of Theorem \ref{mainclass}.

\begin{prop}\label{automat}
    Let $p\ge 7$ be a prime and $\epsilon\in\st{0,\pm1}$. Let $D\in\mf_{\qp}^{\f}$ be a filtered $\varphi$-modules satisfying Condition \hyperref[ssfilphimod]{$\mathbf{S1}$} with $\chi_{\varphi}(X)=X^4+\epsilon pX^2+p^2$. Then $D$ satisfies \hyperref[ssfilphimod]{$\mathbf{S2}$} and \hyperref[ssfilphimod]{$\mathbf{S3}$}, which therefore arises from an abelian surface over $\qp$.
\end{prop}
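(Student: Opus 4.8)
The plan is to check the two remaining conditions $\mathbf{S2}$ and $\mathbf{S3}$ of Lemma~\ref{ssfilphimod}; that lemma then gives $D\in\mfs$, that is, $D=D_{\cris}(V_p(A))$ for a supersingular abelian surface $A/\qp$. Condition $\mathbf{S2}$ is immediate: $\chi_{\varphi}(X)=X^4+\epsilon pX^2+p^2$ appears in the list of Proposition~\ref{classssweil}, so it is a supersingular $p$-Weil polynomial; and, regarded as a quadratic in $X^2$ with discriminant $(\epsilon^2-4)p^2\neq 0$, its two roots in $X^2$ are $p$ times two distinct roots of unity, so $\chi_{\varphi}$ has four distinct roots and $\varphi$ is semisimple. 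Hence the real content is $\mathbf{S3}$: producing a nondegenerate alternating form under which $\varphi$ is a $p$-similitude and $\fil_1D$ is totally isotropic.

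For $\mathbf{S3}$ I would first invoke Proposition~\ref{mainthm1}: as $D$ is admissible, satisfies $\mathbf{S1}$ and has $\chi_{\varphi}(X)=X^4+\epsilon pX^2+p^2$, it is isomorphic to one of $D^{\epsilon,iso}_{\epsilon'}$, $D^{\epsilon,\nu}_{a'}$ or $D^{\epsilon,\mu}_{(a,b)}$, with $\fil_1D=\Span(e_1,e_2)$ in the canonical basis $\mathcal B=(e_1,\dots,e_4)$ of $\qp^4$. Hence it suffices to exhibit on each of these three modules an alternating form whose Gram matrix in $\mathcal B$ has the block-anti-diagonal shape $G=\left(\begin{smallmatrix}0&B\\-B^{\mathrm T}&0\end{smallmatrix}\right)$ with $B\in\GL_2(\qp)$: such a $G$ is automatically alternating and makes $\Span(e_1,e_2)$ totally isotropic, nondegeneracy of the form means $B$ invertible, and the $p$-similitude condition on $\varphi$ is the matrix identity $\Phi^{\mathrm T}G\Phi=pG$, where $\Phi$ is the matrix of $\varphi$ in $\mathcal B$.

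For $D^{\epsilon,\mu}_{(a,b)}$ and $D^{\epsilon,iso}_{\epsilon'}$ the matrix $\Phi$ has the block form $\left(\begin{smallmatrix}0&M\\I_2&N\end{smallmatrix}\right)$, and $\Phi^{\mathrm T}G\Phi=pG$ unwinds block by block to the single relation $B^{\mathrm T}M=-pB$ (its $(1,2)$- and $(2,1)$-blocks, which are transposes of one another) together with the symmetry of $B^{\mathrm T}N$ (its $(2,2)$-block; the $(1,1)$-block reads $0=0$). Solving $B^{\mathrm T}M=-pB$ by hand I would take $B=\left(\begin{smallmatrix}1&p+c\\-p&p^2\end{smallmatrix}\right)$ with $c=-\tfrac{a^2+\epsilon p+b^2p^2}{ab+1}$ in the $\mu$-case, and $B=\left(\begin{smallmatrix}0&1\\-1&0\end{smallmatrix}\right)$ resp.\ $B=\left(\begin{smallmatrix}1&0\\0&(2-\epsilon)p\end{smallmatrix}\right)$ for $\epsilon'=1$ resp.\ $\epsilon'=-1$; the symmetry of $B^{\mathrm T}N$ is then a one-line check with the entries of $\Phi$ (in the $\mu$-case it is precisely the identity $d=ca+bp^2$ relating the entries $c$ and $d=-\tfrac{a^3+\epsilon ap-bp^2}{ab+1}$). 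For $D^{\epsilon,\nu}_{a'}$ the adapted basis is cyclic and $\Phi$ is not of that block form, so there I would solve $\Phi^{\mathrm T}G\Phi=pG$ directly for the six free entries of a block-anti-diagonal alternating $G$; the system is readily solved, e.g.\ by $B=\left(\begin{smallmatrix}-p&0\\0&1\end{smallmatrix}\right)$.

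It remains to see that $\det B\neq 0$. This is manifest in the $\nu$- and $iso$-cases ($\det B\in\st{-p,1,(2-\epsilon)p}$). In the $\mu$-case $\det B=p(2p+c)$, and this is the one step that is not purely formal: if $2p+c=0$ then $a^2-2abp+b^2p^2=(2-\epsilon)p$, i.e.\ $(a-bp)^2=(2-\epsilon)p$, but $(2-\epsilon)p\in\st{p,2p,3p}$ has odd $p$-adic valuation and hence is not a square in $\qp$, a contradiction. With $\mathbf{S1}$, $\mathbf{S2}$ and $\mathbf{S3}$ in hand, Lemma~\ref{ssfilphimod} completes the proof. I expect the $\nu$-case to be the main nuisance: with no block decomposition available, one has to grind through the $4\times 4$ linear system $\Phi^{\mathrm T}G\Phi=pG$ by hand and check that a nondegenerate solution survives, whereas the $\mu$- and $iso$-cases are essentially transparent once the block reduction is set up.
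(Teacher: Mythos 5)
Your proposal is correct, and every concrete step checks out: for $D^{\epsilon,\mu}_{(a,b)}$ one has $\Phi=\left(\begin{smallmatrix}0&M\\ I_2&N\end{smallmatrix}\right)$ with $N=SM$, the block computation does reduce $\Phi^{\mathrm T}G\Phi=pG$ to $B^{\mathrm T}M=-pB$ plus symmetry of $B^{\mathrm T}N$, your $B=\left(\begin{smallmatrix}1&p+c\\-p&p^2\end{smallmatrix}\right)$ satisfies both (the symmetry being exactly $d=ac+bp^2$), and $\det B=p(2p+c)\neq 0$ because $2p+c=0$ would force $(a-bp)^2=(2-\epsilon)p$, of odd valuation; the $iso$-cases and the $\nu$-case choice $B=\left(\begin{smallmatrix}-p&0\\0&1\end{smallmatrix}\right)$ also verify directly (the $\nu$-case is in fact no harder than the others). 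However, your route is genuinely different from the paper's. You first invoke the classification (Proposition \ref{mainthm1}) to replace $D$ by one of the three normal forms and then write down a skew form case by case in the canonical adapted basis; this is legitimate, since Proposition \ref{mainthm1} is established independently of and before Proposition \ref{automat}, but it makes the statement logically dependent on the full classification. The paper instead gives a uniform construction with no case distinction and no appeal to the normal forms: it takes any nonzero $x\in\fil_1D$, works in the cyclic basis $(\varphi^i(x))_{0\le i\le 3}$, writes the second generator of $\fil_1D$ as $y\equiv\sum_j\alpha_j\varphi^j(x)\bmod\qp x$, and chooses the entries $x_1,x_2,x_3$ of an explicit alternating Gram matrix so that $\fil_1D$ is isotropic, nondegeneracy following from $\det=p^2((2-\epsilon)px_1^2-x_2^2)^2\neq 0$. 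Amusingly, both arguments bottom out in the same arithmetic fact, that $(2-\epsilon)p$ has odd $p$-adic valuation and hence is not a square in $\qp$. What your approach buys is transparency (explicit Gram matrices for each family, with the similitude equation reduced to two clean block identities); what the paper's buys is independence from the classification and a single argument covering all cases at once. One cosmetic slip: a block-anti-diagonal alternating $4\times 4$ matrix has four free entries, not six, but since you exhibit an explicit solution this is immaterial.
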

\begin{proof}
    Note that the polynomial $\chi_{\varphi}(X)=X^4+\epsilon pX^2+p^2$ is separable. Therefore, $D$ satisfies \hyperref[ssfilphimod]{$\mathbf{S2}$}. To verify that $D$ satisfies \hyperref[ssfilphimod]{$\mathbf{S3}$}, by Lemma \ref{existcyc} we can take a $\varphi$-cyclic vector $x\in\fil_1D$ which extends to a $\qp$-basis $(x,y)$ of $\fil_1D$. Denote by $\alpha_1,\alpha_2,\alpha_3\in\qp$ the coordinates of $y$ under the decomposition
    $$y\equiv\sum_{1\le j\le 3}\alpha_j\varphi^j(x)\mod\qp x.$$
    Take $x_1,x_2,x_3\in\qp$ such that 
    $$\begin{cases}
        (\alpha_1-(\epsilon-1)p\alpha_2)x_1+\alpha_2x_2=0\\
        (x_1,x_2)\neq(0,0)\\
        x_3=(1-\epsilon)px_1
    \end{cases}$$
    Consider the skew form $\delta:D\times D\to\qp$ whose matrix under the basis $\mb:=(\varphi^i(x))_{0\le i\le 3}$ is
    $$\mat_{\mb}(\delta)=\br{\begin{smallmatrix}
        0 & x_1 & x_2 & x_3\\
        -x_1 & 0 & px_1 & px_2\\
        -x_2 & -px_1 & 0 & p^2x_1\\
        -x_3 & -px_2 & -p^2x_1 & 0
    \end{smallmatrix}}.$$
    It follows that 
    $$\det(\mat_{\mb}(\delta))=(p^2x_1^2-px_2^2+px_1x_3)^2\xlongequal{x_3=(1-\epsilon)px_1}p^2((2-\epsilon)px_1^2-x_2^2)^2.$$
    The equation $(2-\epsilon)px_1^2=x_2^2$ has no solution for any $(0,0)\neq(x_1,x_2)\in\qp^2$ since $2\nmid v_p((2-\epsilon)px_1^2)$. It follows that $\delta$ is nondegenerate. One can check that under $\delta$, the endomorphism $\varphi$ is a $p$-similitude and the subspace $\fil_1D$ is totally isotropic. 
\end{proof}
\section{$p$-adic monodromy groups of supersingular abelian surfaces over $\qp$}\label{classmonodromy}
After establishing Theorem \ref{mainthm0} and Theorem \ref{mainclass}, we classify all filtered $\varphi$-modules $D$ arising from supersingular abelian surfaces over $\qp$. We proceed to compute the monodromy groups $H_D$ associated with these filtered $\varphi$-modules.


For later use, we record the common strategy used in the computations of the section. 

\begin{lem}\label{monogpcheck}
    Let $D\in\mfs$ and $G\subset\GL(D)$ be a connected closed subgroup of $\GL(D)$. Denote by $\mu:\g_m\to\GL(D)$ the Hodge cocharacter of $D$. Suppose that 
    \begin{enumerate}
        \item $G$ contains the scalar torus and $\mu(\mathbb G_m)$, and is stable under conjugation by $\varphi$;
        \item $\lie(G)$ is the Lie subalgebra of $\gl(D)$ generated by $I_D$ and the elements
    $$
    \varphi^i\D\mu(1)\varphi^{-i},\qquad i\in\mathbb Z.
    $$
    \end{enumerate}
    Then $G=H^\circ_D$.
\end{lem}
\begin{proof}
By Proposition \ref{zardense}, the group $H_D^\circ$ is generated by the scalar torus and the tori
$$
\varphi^i\mu(\mathbb G_m)\varphi^{-i},\qquad i\in\mathbb Z.
$$
By the first assumption, these tori are contained in $G$. By the second assumption and Lemma \ref{Liealg}, their Lie algebras generate $\lie(G)$. Hence $G=H_D^\circ$.
\end{proof}

After fixing an adapted basis, let $A=\mat(\varphi)$ and $P=\D\mu(1)=\diag(1,1,0,0)\in\gl_4$. Let 
$$\mathfrak l(A,P)=\ang{I_4,A^iPA^{-i}\mid i\in\mz}_{\lie}\subset\gl_4$$
be the Lie subalgebra generated by the matrices $A^iPA^{-i}$ together with $I_4$. In the cases considered below, the Frobenius $\varphi$ is semisimple and its eigenvalues are supersingular $p$-Weil numbers. Hence $\varphi^m$ is scalar for some $m>0$. Thus, by Proposition \ref{zardense}, the neutral component $H_D^\circ$ is generated by the scalar torus together with the tori $\varphi^i\mu(\g_m)\varphi^{-i}$.

Consequently, in each of the following cases with a proposed connected algebraic subgroup $G\subset\GL_4$, it is enough to do the following two checks. First, one verifies that all the tori $\varphi^i\mu(\g_m)\varphi^{-i}$ are contained in $G$. In practice this will be verified by showing that $\mu(\g_m)\subset G$ and that $G$ is stable under conjugation by $A$. Second, one verifies that $\lie~G=\mathfrak l(A,P)$. Then Lemma \ref{Liealg} implies $H_D^\circ=G$.
\subsection{The case $\chi_{\varphi}(X)=(X^2\pm p)^2$}
When $D\simeq D_{\pm1}^{prod}$, we can compute the monodromy groups directly.
\begin{prop}\label{x2pmp}
    For $\epsilon'=\pm1$, we have $H_{D^{prod}_{\epsilon'}}^\circ\simeq\mathbf G_{m,\qp}^2$.
\end{prop}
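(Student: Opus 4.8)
The plan is to apply Proposition \ref{zardense} directly. Here $D^{mult}_{\epsilon'}$ is the object $D^{prod}_{\epsilon'}$ of Definition \ref{prodcase}, which by Theorem \ref{mainthm0} is the unique admissible filtered $\varphi$-module over $\qp$ with $\chi_\varphi(X)=(X^2-\epsilon'p)^2$, and by Theorem \ref{mainclass} (or directly from Definition \ref{prodcase}) its Hodge cocharacter is $\mu(t)=\diag(t,t,1,1)\in\GL(D^{prod}_{\epsilon'})$. Since $\qp$ has no proper unramified subextension, the integer $m$ in Proposition \ref{zardense} equals $1$, so $H_{D^{prod}_{\epsilon'}}$ is the Zariski closure in $\GL_{4,\qp}$ of the abstract subgroup generated by $\varphi^{prod}_{\epsilon'}$ together with the cocharacters $\mu_i$, $i\in\mz$. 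First I would read off from Definition \ref{prodcase} that $\varphi^{prod}_{\epsilon'}=\br{\begin{smallmatrix} 0 & \epsilon' p I_2 \\ I_2 & 0 \end{smallmatrix}}$ in $2\times 2$ block form, so that $(\varphi^{prod}_{\epsilon'})^2=\epsilon' p\cdot\id$ is a scalar; hence $\varphi^{prod}_{\epsilon'}$ has order $2$ in $\mathrm{PGL}_4$ and the family $\st{\mu_i}_{i\in\mz}$ collapses to the two cocharacters $\mu_0=\mu$ and $\mu_1=\varphi^{prod}_{\epsilon'}\circ\mu\circ(\varphi^{prod}_{\epsilon'})^{-1}$.

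Next I would carry out the short block computation giving $\mu_0(t)=\diag(t,t,1,1)$ and $\mu_1(t)=\diag(1,1,t,t)$. The two one-parameter subgroups $\mu_0(\g_m)$ and $\mu_1(\g_m)$ therefore generate the $\qp$-split torus
$$T:=\{\diag(s,s,t,t):s,t\in\g_m\}\subset\GL_{4,\qp},\qquad T\simeq\mathbf G_{m,\qp}^2,$$
and the scalar matrices lie in $T$. Consequently the Zariski closure of $\langle\mu_0(\qp^\times),\mu_1(\qp^\times)\rangle$ is exactly $T$.

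Finally I would adjoin $\varphi^{prod}_{\epsilon'}$. It normalizes $T$ — conjugation by it swaps the two $\g_m$-factors, since $\varphi^{prod}_{\epsilon'}\mu_0(\varphi^{prod}_{\epsilon'})^{-1}=\mu_1$ and $\varphi^{prod}_{\epsilon'}\mu_1(\varphi^{prod}_{\epsilon'})^{-1}=(\varphi^{prod}_{\epsilon'})^2\mu_0(\varphi^{prod}_{\epsilon'})^{-2}=\mu_0$ — its square $\epsilon' p\cdot\id$ lies in $T$, and $\varphi^{prod}_{\epsilon'}\notin T$ because it is block-antidiagonal. Hence the group generated by $T(\qp)$ and $\varphi^{prod}_{\epsilon'}$ is $T(\qp)\sqcup\varphi^{prod}_{\epsilon'}T(\qp)$, whose Zariski closure is $T\sqcup\varphi^{prod}_{\epsilon'}T$. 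Therefore $H_{D^{prod}_{\epsilon'}}=T\sqcup\varphi^{prod}_{\epsilon'}T$, and its neutral connected component is $T\simeq\mathbf G_{m,\qp}^2$, as claimed.

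The computation is elementary; the only point requiring care is checking that adjoining $\varphi^{prod}_{\epsilon'}$ contributes a second connected component rather than enlarging the identity component, which is precisely the content of $(\varphi^{prod}_{\epsilon'})^2$ being a scalar. As a consistency check one may instead use that $D^{prod}_{\epsilon'}\simeq D'\oplus D'$ for a $2$-dimensional subobject $D'$, so that $\langle D^{prod}_{\epsilon'}\rangle^\otimes=\langle D'\rangle^\otimes$, and run the same argument inside $\GL_2$: the two conjugate Hodge cocharacters of $D'$ generate the full diagonal torus of $\GL_2$, again $\simeq\mathbf G_{m,\qp}^2$, matching the block-diagonal picture above.
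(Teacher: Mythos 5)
Your proof is correct and takes essentially the same route as the paper: apply Pink's density result (Proposition \ref{zardense}), observe that $(\varphi^{prod}_{\epsilon'})^2=\epsilon' p\cdot\id$ so the conjugated Hodge cocharacters collapse to $\diag(t,t,1,1)$ and $\diag(1,1,t,t)$, and conclude that the neutral component is the torus $\{\diag(s,s,t,t)\}\simeq\g_{m}^2$. Your explicit bookkeeping of the second component $\varphi^{prod}_{\epsilon'}\cdot\{\diag(s,s,t,t)\}$ only makes precise what the paper handles implicitly via the remark that $\varphi$ has finite order in $\mathrm{PGL}(D)$, so no further changes are needed.
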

\begin{proof}
    Under the canonical adapted basis $\mathcal B$ of $D_{\pm1}^{prod}$, $\varphi$ and the Hodge cocharacter $\mu$ becomes
    $$\mat_{\mathcal B}(\varphi)=\br{\begin{smallmatrix}
        0 & 0 & \epsilon' p & 0\\
        0 & 0 & 0 & \epsilon' p\\
        1 & 0 & 0 & 0\\
        0 & 1 & 0 & 0
    \end{smallmatrix}}\qaq\mu:t\mapsto\br{\begin{smallmatrix}
        t & 0 & 0 & 0\\
        0 & t & 0 & 0\\
        0 & 0 & 1 & 0\\
        0 & 0 & 0 & 1
    \end{smallmatrix}}.$$
    We can thereby verify that
    $$\st{\varphi^i\circ\mu(t)\circ\varphi^{-i}}_{i\in\mz}=\st{\diag(t,t,1,1),\diag(1,1,t,t)}_{t\in\qp^\times},$$
    and the closed subgroup of $\GL_D=\GL_{4}$ generated by these tori together with scalars is clearly $\g_{m}^2$ .
\end{proof}
\subsection{The case $\chi_{\varphi}(X)=X^4+\epsilon pX^2+p^2$}
To compute the monodromy groups of $D_a^{\epsilon,\nu}$, $D_{(a,b)}^{\epsilon,\mu}$ and $D_{\epsilon'}^{\epsilon,iso}$, we need to work on the Lie algebras side. Invoking Lemma \ref{Liealg}, we can compute the corresponding Lie algebras in a straightforward manner.
\begin{prop}\label{sit1}
    Let $D=D_{a}^{\epsilon,\nu}$. If $a=\epsilon=0$, we have $H_D^\circ\simeq\g_{m}^3$. Otherwise, we have
    $$H_D^\circ\simeq\GL_2\times_{\det}\GL_2:=\st{\br{\begin{smallmatrix}
            g & 0\\
            0 & g'
        \end{smallmatrix}}~\!\Big|~\!g,g'\in\GL_2(\qp),~\!\det(g)=\det(g')}.$$
\end{prop}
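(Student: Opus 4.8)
The plan is to invoke Pink's density result (Proposition~\ref{zardense}) together with the finiteness of the order of $\varphi_D$ modulo scalars. Since $K=\qp$, Proposition~\ref{zardense} presents $H_D$ as the Zariski closure in $\GL_D=\GL_4$ of the subgroup generated by $\varphi:=\varphi_D$ and the tori $T_i:=\varphi^i\mu(\g_m)\varphi^{-i}$, $i\in\mz$; moreover, as $\varphi$ is semisimple with $\chi_\varphi=X^4+\epsilon pX^2+p^2$, the operator $\varphi^2/p$ satisfies $x^2+\epsilon x+1=0$, hence has order $3$, $4$ or $6$, so $\varphi^{2k}=p^kI_4$ for the corresponding $k$, only finitely many $T_i$ occur, and (as explained at the beginning of this section) $H_D^\circ$ is the connected algebraic subgroup generated by the $T_i$ and the scalar torus $Z:=\g_m\cdot I_4$. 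By Lemma~\ref{Liealg} it therefore suffices to compute the Lie subalgebra $\mathfrak h\subseteq\gl_4$ generated by $\operatorname{Lie}Z=\qp I_4$ and the lines $\operatorname{Lie}T_i=\qp X_i$, where $X_i:=\varphi^iX_0\varphi^{-i}$ and $X_0:=\diag(1,1,0,0)$ is the Lie algebra generator of $\mu(\g_m)$ --- so that $X_i$ is the projector onto $\varphi^i(D_1)$ along $\varphi^i(D_2)$ in the adapted basis. Once $\mathfrak h$ is identified with $\operatorname{Lie}G$ for an explicit connected $G\subseteq\GL_4$ containing every $T_i$ and $Z$, we conclude $H_D^\circ=G$: in characteristic zero a connected algebraic subgroup of $\GL_4$ is determined by its Lie algebra, and by Lemma~\ref{Liealg} the groups $T_i$ and $Z$ (which lie in $G$ and whose Lie algebras generate $\operatorname{Lie}G$) generate $G$.

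For the degenerate case $a'=\epsilon=0$ the Frobenius acts by $e_1\mapsto -p^2e_2\mapsto -p^2e_3\mapsto -p^2e_4\mapsto -p^2e_1$; hence $\varphi^4=-p^2I_4$, the plane $D_1=\langle e_1,e_2\rangle$ is interchanged with $D_2=\langle e_3,e_4\rangle$ by $\varphi^2$, and $\varphi$ sends $D_1\mapsto\langle e_2,e_3\rangle$ and $D_2\mapsto\langle e_1,e_4\rangle$. Consequently $X_0=\diag(1,1,0,0)$, $X_1=\diag(0,1,1,0)$, $X_2=\diag(0,0,1,1)$, $X_3=\diag(1,0,0,1)$ are all diagonal and span the $3$-dimensional subalgebra $\{\diag(a,b,c,d):a+c=b+d\}$, which already contains $I_4=X_0+X_2$. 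Thus $\mathfrak h$ equals this subalgebra and $H_D^\circ$ is the corresponding subtorus $\{\diag(s,u,v,w):sv=uw\}\cong\g_m^3$; that $Z\subseteq H_D^\circ$ is also visible from $\mu_0(t)\mu_2(t)=tI_4$.

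For the generic case $(a',\epsilon)\neq(0,0)$ I would first write out $\varphi$, its powers (using $\varphi^{2k}=p^kI_4$), and the projectors $X_i=\varphi^iX_0\varphi^{-i}$ explicitly. The crucial point is that, in block form relative to $D_1\oplus D_2$, the off-diagonal blocks of $X_2$ do \emph{not} both vanish precisely when $(a',\epsilon)\neq(0,0)$ --- equivalently, when $\varphi^2$ carries $D_1$ neither onto $D_1$ nor onto $D_2$. Applying Lemma~\ref{Liegene}(i) to the pairs $(X_0,X_i)$ shows that $\mathfrak h$ contains each of the four block components (the two diagonal blocks, the upper-right block and the lower-left block) of every $X_i$; iterating Lie brackets among these and $I_4$, again via Lemma~\ref{Liegene}, one checks that $\mathfrak h$ is exactly the $7$-dimensional Lie algebra of matrices which, relative to an explicit decomposition $\qp^4=W_1\oplus W_2$ into $2$-planes, are block-diagonal with matching traces, i.e.\ $\mathfrak h\simeq\{(M,N)\in\gl_2\oplus\gl_2:\operatorname{tr}M=\operatorname{tr}N\}$. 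The a priori inclusion $\mathfrak h\subseteq\mathfrak{gsp}_4$ --- the monodromy group preserving, up to scalar, the symplectic form coming from the Weil pairing (cf.\ \hyperref[ssfilphimod]{$\mathbf{S3}$}) --- is a useful constraint: it bounds $\dim\mathfrak h$, and since the external tensor product of the two standard $2$-dimensional representations of $\mathrm{SL}_2\times\mathrm{SL}_2$ admits no invariant symplectic form, it rules out that alternative realization and forces the block-diagonal one. Finally, the block-diagonal copy of $\GL_2\times_{\det}\GL_2$ inside $\GL_4$ attached to $W_1\oplus W_2$ is connected, has Lie algebra $\mathfrak h$, and contains all the $T_i$ and $Z$; hence $H_D^\circ\simeq\GL_2\times_{\det}\GL_2$ (as algebraic groups over $\bqp$).

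I expect the genuine difficulty to lie entirely in the generic case, and in two places rather than in the block-level bracket computation (which Lemma~\ref{Liegene} renders routine): first, verifying \emph{uniformly} over all admissible $a'\in\epsilon p+p^2\zp$ with $(a',\epsilon)\neq(0,0)$ that some $X_i$ has a nonzero off-diagonal block --- this is the only point at which the hypothesis is used; and second, pinning down the invariant decomposition $W_1\oplus W_2$, equivalently proving $\dim\mathfrak h=7$ and not larger. The cleanest route to the latter is to exhibit $W_1$ and $W_2$ explicitly and observe that every projector $X_i$, hence all of $\mathfrak h$, preserves each of them.
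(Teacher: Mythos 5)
Your overall strategy is the paper's: invoke Proposition~\ref{zardense}, use finiteness of the order of $\varphi$ in $\mathrm{PGL}_4$ to reduce to the tori $T_i=\varphi^i\mu(\g_m)\varphi^{-i}$ and scalars, pass to Lie algebras via Lemma~\ref{Liealg}, and compute the span of the conjugated Hodge projectors $X_i=\varphi^iX_0\varphi^{-i}$ with the help of Lemma~\ref{Liegene}. Your treatment of the degenerate case $a'=\epsilon=0$ is complete and correct, and agrees with the paper (the paper's $\{\mathrm{diag}(t_1,t_2,t_3,t_4):t_1+t_2=t_3+t_4\}$ in the reordered basis is your $\{a+c=b+d\}$).

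The generic case, however, is only a plan, and the one place where you do make a definite claim is too weak. The invariant decomposition you leave ``to be pinned down'' is simply $W_1=\langle e_1,e_3\rangle$, $W_2=\langle e_2,e_4\rangle$: the matrix of $\varphi$ swaps these two planes, so every $X_i$ preserves both, and this is exactly why the paper passes to the basis $\mathcal B'=(e_1,e_3,e_2,e_4)$, where the tori visibly land in the block-diagonal $\GL_2\times_{\det}\GL_2$; this gives the upper bound on $\mathfrak h$ with no need for the $\mathfrak{gsp}_4$ detour. For the lower bound, your assertion that the hypothesis $(a',\epsilon)\neq(0,0)$ enters only through ``some $X_i$ having a nonzero off-diagonal block'' does not suffice as stated: a single nonzero off-diagonal component (which, in the $W_1\oplus W_2$ picture, is a pair $(\alpha E_{12},\beta E_{12})$ living diagonally across the two $\gl_2$-factors) together with the diagonal part need not generate the full four-dimensional space of within-block off-diagonal entries --- one must check that the off-diagonal components extracted from \emph{several} conjugates (and their brackets with the diagonal part) span it, uniformly in $a'\in\epsilon p+p^2\zp$ and $\epsilon$. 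This is precisely the content of the paper's computation of $\varphi^i\D\mu'(1)\varphi^{-i}$ for $i=1,\dots,5$ in the basis $\mathcal B'$, followed by the direct verification that the extracted matrices (with entries such as $a'$, $\tfrac{-a'+\epsilon p}{p^2}$, $\tfrac{-\epsilon}{p}$, $\epsilon p-a'$) span all off-diagonal positions whenever $(a',\epsilon)\neq(0,0)$. Since you have neither exhibited $W_1,W_2$ nor carried out (or correctly reduced) this uniform span check, the generic case remains a gap in your argument, albeit one that is filled by exactly the computation you anticipate.
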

\begin{proof}
    Under the canonical adapted basis $\mathcal B=(e_i)_{1\le i\le 4}$ of $D_{a}^{\epsilon,\nu}$, $\varphi$ and the Hodge cocharacter $\mu$ becomes
    $$\mat_{\mathcal B}(\varphi)=\br{\begin{smallmatrix}
            0 & 0 & 0 & 1\\
            -p^2 & 0 & 0 & 0\\
            0 & 1 & 0 & -a\\
            a-\epsilon p & 0 & 1 & 0
        \end{smallmatrix}}\qaq\mu:t\mapsto\br{\begin{smallmatrix}
        t & 0 & 0 & 0\\
        0 & t & 0 & 0\\
        0 & 0 & 1 & 0\\
        0 & 0 & 0 & 1
    \end{smallmatrix}}$$
    where $a\in\epsilon p+p^2\zp$. Denote $\mathcal B'=(e_1,e_3,e_2,e_4)$, we have
    $$\mat_{\mathcal B'}(\varphi)=\br{\begin{smallmatrix}
            0 & 0 & 0 & 1\\
            0 & 0 & 1 & -a\\
            -p^2 & 0 & 0 & 0\\
            a-\epsilon p & 1 & 0 & 0
        \end{smallmatrix}}\qaq\mu':t\mapsto\br{\begin{smallmatrix}
        t & 0 & 0 & 0\\
        0 & 1 & 0 & 0\\
        0 & 0 & t & 0\\
        0 & 0 & 0 & 1
    \end{smallmatrix}}$$
We verify the two assumptions of Lemma \ref{monogpcheck}.

\noindent\textbf{Verification of Lemma \ref{monogpcheck}(1).} With respect to the decomposition determined by the basis $B'$, the group
$G=\GL_2\times_{\det}\GL_2$ consists of block diagonal matrices whose two diagonal blocks have the same determinant. Therefore, the scalar torus and the elements $\mu'(\mathbb G_m)\subset G$ are contained in $G$. 

It remains to check stability under conjugation by $\varphi$. In the basis $B'$, the matrix of $\varphi$ is block anti-diagonal. Hence conjugation by $\varphi$ preserves the determinants of the two blocks of any element in $\GL_2\times_{\det}\GL_2$. Therefore it preserves the condition defining $G$, and hence $G$ is stable under conjugation by $\varphi$.

\noindent\textbf{Verification of Lemma \ref{monogpcheck}(2).} Note that $\D\mu'(1)=\diag(1,0,1,0)$. Denote
$$\mathfrak t:=\st{\mathrm{diag}(t_1,t_2,t_3,t_4)\in\gl_4(\qp)~\!|~\!t_1+t_2=t_3+t_4}$$
and
$$\mathfrak a:=\st{\br{\begin{smallmatrix}
            0 & \alpha_{1,2} & 0 & 0\\
            \alpha_{2,1} & 0 & 0 & 0\\
            0 & 0 & 0 & \beta_{1,2}\\
            0 & 0 & \beta_{2,1} & 0
        \end{smallmatrix}}\in\gl_4(\qp)~\!\Big|~\!\alpha_{i,j},\beta_{i,j}\in\qp}.$$
We have
        $$\mathfrak t+\mathfrak a=\st{\br{\begin{smallmatrix}
            g & 0\\
            0 & g'
        \end{smallmatrix}}~\!\Big|~\!g,g'\in\mathfrak{gl}_2(\mq_p),~\!\tr(g)=\tr(g')}=\lie(G).$$

\noindent\textbf{Case 1.} If $\epsilon=\pm1$, using $\epsilon^2=1$ iterately we have
    $$
    \mat_{\mathcal B'}(\varphi)^i\D\mu'(1)\mat_{\mathcal B'}(\varphi)^{-i}=
    \begin{cases}
        \br{\begin{smallmatrix}
            0 & 0 & 0 & 0\\
            a & 1 & 0 & 0\\
            0 & 0 & 1 & 0\\
            0 & 0 & \frac{-a+\epsilon p}{p^2} & 0
        \end{smallmatrix}}\quad &i=1\\
        \br{\begin{smallmatrix}
            \frac{-a^2+a\epsilon p}{p^2} & \frac{-a+\epsilon p}{p^2} & 0 & 0\\
            \frac{a^3+ap^2-a^2\epsilon p}{p^2} & \frac{a^2+p^2-a\epsilon p}{p^2} & 0 & 0\\
            0 & 0 & 0 & 0\\
            0 & 0 & \frac{-\epsilon}{p} & 1
        \end{smallmatrix}}&i=2\\
        \br{\begin{smallmatrix}
            \frac{p-\epsilon a}{p} & \frac{-\epsilon}{p} & 0 & 0\\
            \frac{-ap+\epsilon a^2}{p} & \frac{\epsilon a}{p} & 0 & 0\\
            0 & 0 & \frac{p-\epsilon a}{p} & a-\epsilon p\\
            0 & 0 & \frac{-a}{p^2} & \frac{\epsilon a}{p}
        \end{smallmatrix}}&i=3\\
        \br{\begin{smallmatrix}
            \frac{a\epsilon p-a^2}{p^2} & \frac{-a}{p^2} & 0 & 0\\
            \frac{a^3-2a^2\epsilon p+2ap^2-\epsilon p^3}{p^2} & \frac{a^2-a\epsilon p+p^2}{p^2} & 0 & 0\\
            0 & 0 & 0 & \epsilon p\\
            0 & 0 & 0 & 1
        \end{smallmatrix}}&i=4\\
        \br{\begin{smallmatrix}
            1 & 0 & 0 & 0\\
            \epsilon p-a & 0 & 0 & 0\\
            0 & 0 & 0 & a\\
            0 & 0 & 0 & 1
        \end{smallmatrix}}&i=5.
    \end{cases}
    $$
    Using Lemma \ref{Liegene} (ii), the first (resp. the last) matrix together with $\D\mu'(1)$ generates $\diag(0,1,1,0)$ (resp. $\diag(1,0,0,1)$). Therefore, the Lie algebra generated by the Lie algebras of the tori $\varphi^i\mu'(\g_m)\varphi^{-i}$ contains $\mathfrak t$. Using Lemma \ref{Liegene} (ii) again, these Lie algebras generate
    $$\begin{matrix}
        {\br{\begin{smallmatrix}
            0 & 0 & 0 & 0\\
            a & 0 & 0 & 0\\
            0 & 0 & 0 & 0\\
            0 & 0 & \frac{-a+\epsilon p}{p^2} & 0
        \end{smallmatrix}}},
        &{\br{\begin{smallmatrix}
            0 & 0 & 0 & 0\\
            \frac{a^3+ap^2-a^2\epsilon p}{p^2} & 0 & 0 & 0\\
            0 & 0 & 0 & 0\\
            0 & 0 & \frac{-\epsilon}{p} & 0
        \end{smallmatrix}}},
        &{\br{\begin{smallmatrix}
            0 & \frac{-a+\epsilon p}{p^2} & 0 & 0\\
            0 & 0 & 0 & 0\\
            0 & 0 & 0 & 0\\
            0 & 0 & 0 & 0
        \end{smallmatrix}}},
        &{\br{\begin{smallmatrix}
            0 & 0 & 0 & 0\\
            \frac{-ap+\epsilon a^2}{p} & 0 & 0 & 0\\
            0 & 0 & 0 & 0\\
            0 & 0 & \frac{-a}{p^2} & 0
        \end{smallmatrix}}},
        &{\br{\begin{smallmatrix}
            0 & \frac{-\epsilon}{p} & 0 & 0\\
            0 & 0 & 0 & 0\\
            0 & 0 & 0 & a-\epsilon p\\
            0 & 0 & 0 & 0
        \end{smallmatrix}}}\\
        m_1 & m_2 & m_3 & m_4 & m_5
    \end{matrix}$$
        and
        $$
        \begin{matrix}
        {\br{\begin{smallmatrix}
            0 & 0 & 0 & 0\\
            \frac{a^3-2a^2\epsilon p+2ap^2-\epsilon p^3}{p^2} & 0 & 0 & 0\\
            0 & 0 & 0 & 0\\
            0 & 0 & 0 & 0
        \end{smallmatrix}}},
        &{\br{\begin{smallmatrix}
            0 & \frac{-a}{p^2} & 0 & 0\\
            0 & 0 & 0 & 0\\
            0 & 0 & 0 & \epsilon p\\
            0 & 0 & 0 & 0
        \end{smallmatrix}}},
        &{\br{\begin{smallmatrix}
            0 & 0 & 0 & 0\\
            0 & 0 & 0 & 0\\
            0 & 0 & 0 & a\\
            0 & 0 & 0 & 0
        \end{smallmatrix}}},
        &{\br{\begin{smallmatrix}
            0 & 0 & 0 & 0\\
            \epsilon p-a & 0 & 0 & 0\\
            0 & 0 & 0 & 0\\
            0 & 0 & 0 & 0
        \end{smallmatrix}}}.\\
        m_6 & m_7 & m_8 & m_9
        \end{matrix}
        $$
        It remains to show that the $\qp$-subspace spanned by the matrices above is always $\mathfrak a$. We distinguish the two cases $a=\epsilon p$ and $a\neq\epsilon p$. If $a=\epsilon p$, the matrices $m_1,m_2,m_5,m_7$ form a $\qp$-basis of $\mathfrak a$. Otherwise, the matrices $m_1,m_3,m_5,m_9$ form a $\qp$-basis of $\mathfrak a$. In both cases, we have $H_D^\circ=\GL_{2}\times_{\det}\GL_{2}$ by Lemma \ref{Liealg}. 

    \noindent\textbf{Case 2.} If $\epsilon=0$ and $a\neq 0$, we have
    $$\mat_{\mathcal B'}(\varphi)^i\D\mu'(1)\mat_{\mathcal B'}(\varphi)^{-i}=\begin{cases}
        \br{\begin{smallmatrix}
            0 & 0 & 0 & 0\\
            a & 1 & 0 & 0\\
            0 & 0 & 1 & 0\\
            0 & 0 & \frac{-a}{p^2} & 0
        \end{smallmatrix}}\quad &i=1\\
        \br{\begin{smallmatrix}
            \frac{-a^2}{p^2} & \frac{-a}{p^2} & 0 & 0\\
            \frac{a^3+ap^2}{p^2} & \frac{a^2+p^2}{p^2} & 0 & 0\\
            0 & 0 & 0 & 0\\
            0 & 0 & 0 & 1
        \end{smallmatrix}}\quad &i=2\\
        \br{\begin{smallmatrix}
            1 & 0 & 0 & 0\\
            -a & 0 & 0 & 0\\
            0 & 0 & 0 & a\\
            0 & 0 & 0 & 1
        \end{smallmatrix}}\quad &i=3
    \end{cases}$$
    Again by Lemma \ref{Liegene} (ii), the first (resp. the last) matrix together with $\D\mu'(1)$ generates $\diag(0,1,1,0)$ (resp. $\diag(1,0,0,1)$). Therefore, the Lie algebra generated by the Lie algebras of the tori $\varphi^i\mu'(\g_m)\varphi^{-i}$ contains $\mathfrak t$. Using Lemma \ref{Liegene} (ii) again, these Lie algebras generate
    $$\begin{matrix}
        {\br{\begin{smallmatrix}
            0 & 0 & 0 & 0\\
            a & 0 & 0 & 0\\
            0 & 0 & 0 & 0\\
            0 & 0 & \frac{-a}{p^2} & 0
        \end{smallmatrix}}},
        &{\br{\begin{smallmatrix}
            0 & 0 & 0 & 0\\
            \frac{a^3+ap^2}{p^2} & 0 & 0 & 0\\
            0 & 0 & 0 & 0\\
            0 & 0 & 0 & 0
        \end{smallmatrix}}},
        &{\br{\begin{smallmatrix}
            0 & \frac{-a}{p^2} & 0 & 0\\
            0 & 0 & 0 & 0\\
            0 & 0 & 0 & 0\\
            0 & 0 & 0 & 0
        \end{smallmatrix}}},
        &{\br{\begin{smallmatrix}
            0 & 0 & 0 & 0\\
            -a & 0 & 0 & 0\\
            0 & 0 & 0 & 0\\
            0 & 0 & 0 & 0
        \end{smallmatrix}}},
        &{\br{\begin{smallmatrix}
            0 & 0 & 0 & 0\\
            0 & 0 & 0 & 0\\
            0 & 0 & 0 & a\\
            0 & 0 & 0 & 0
        \end{smallmatrix}}}\\
        n_1 & n_2 & n_3 & n_4 & n_5
    \end{matrix}$$
    By assumption $a\neq 0$, and the matrices $n_1,n_3,n_4,n_5$ form a $\qp$-basis of $\mathfrak a$. By Lemma \ref{Liealg}, we have $H_D^\circ=\GL_{2}\times_{\det}\GL_{2}$. 
        
    \noindent\textbf{Case 3.} If $a=\epsilon=0$, we have $\varphi^4=-p^2$ and
        $$\mat_{\mathcal B'}(\varphi)^i\D\mu'(1)\mat_{\mathcal B'}(\varphi)^{-i}=\begin{cases}
            \diag(0,1,1,0)\quad &i=1\\
            \diag(0,1,0,1)\quad &i=2\\
            \diag(1,0,0,1)\quad &i=3.
        \end{cases}$$
        It follows that the Lie algebras of the tori $\varphi^i\mu'(\g_m)\varphi^{-i}$ generate
    $$\mathfrak t:=\st{\mathrm{diag}(t_1,t_2,t_3,t_4)\in\gl_4(\qp)~\!|~\!t_1+t_2=t_3+t_4}.$$ 
    By Lemma \ref{Liealg}, we have $H_D^\circ=\st{\mathrm{diag}(t_1,t_2,t_3,t_4)\in\GL_4(\qp)~\!|~\!t_1t_2=t_3t_4}\simeq\g_{m}^3$.
\end{proof}
\begin{prop}\label{sit2}
    For any $a,b\in\qp$ such that $ab\neq-1$, let $D=D_{(a,b)}^{\epsilon,\mu}$ and $c=-\frac{a^2+\epsilon p+b^2p^2}{ab+1}$. Denote $S=\br{\begin{smallmatrix}
            -b & a\\
            \frac{a+bc}{p^2} & b
        \end{smallmatrix}}$ and $M=\br{\begin{smallmatrix}
            0 & -p^2\\
            1 & c
        \end{smallmatrix}}$.
    \begin{enumerate}[label=(\roman*)]
        \item If $c\neq-\epsilon p$ and $p\ge 5$, 
    $$H_D^\circ=\st{\left(\begin{smallmatrix}
            \alpha_{1,1}I_2+\alpha_{1,1}'M & \alpha_{1,2}S+\alpha_{1,2}'MS\\
            \alpha_{2,1}S+\alpha_{2,1}'MS & \alpha_{2,2}I_2+\alpha_{2,2}'M
        \end{smallmatrix}\right)~\!\Big|~\!\begin{smallmatrix}
            \alpha_{i,j},\alpha_{i,j}'\in\qp,~\det\not=0\\
            \alpha_{1,1}\alpha_{2,2}'-\alpha_{2,2}\alpha_{1,1}'=
            -\frac{\epsilon p+c}{p^2}(\alpha_{1,2}\alpha_{2,1}'-\alpha_{2,1}\alpha_{1,2}')
        \end{smallmatrix}}.$$
    In this case, we have $H_{D,\bar{\mq}_p}^\circ\simeq_{\bqp}\GL_{2}\times_{\det}\GL_{2}$.
        \item If $c=-\epsilon p$, we have
    $$H_D^\circ=\st{\br{\begin{smallmatrix}
            \alpha_{1,1}I_2 & \alpha_{1,2}S\\
            \alpha_{2,1}S & \alpha_{2,2}I_2
        \end{smallmatrix}}~\!\Big|~\!\alpha_{i,j}\in\qp,~\det\neq 0}.$$
    \begin{enumerate}
        \item If $ab\neq 0$, we have $H_D^\circ\simeq\g_a^2\rtimes_g\g_m^2$ with the canonical action $g(s,t)=\br{\begin{smallmatrix}
            s & 0\\
            0 & t
        \end{smallmatrix}}$, which is non-reductive; 
        \item If $ab=0$, we have $a=b=0$ and $H_D^\circ\simeq\mathbf G_m^2$.
    \end{enumerate}
    \end{enumerate}
\end{prop}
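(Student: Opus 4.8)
The plan is to follow the pattern of the proof of Proposition~\ref{sit1}: reduce the computation of $H_D^\circ$ to a question about generating a Lie subalgebra of $\gl_4(\qp)$, and then carry out that generation explicitly using the block form of $\varphi$ and Lemma~\ref{Liegene}. First I would fix the adapted basis $\mathcal B=(x,\,u\varphi^2(x),\,\varphi(x),\,u\varphi^3(x))$ from the proof of Proposition~\ref{mainthm1}, in which $\mat_{\mathcal B}(\varphi)=\br{\begin{smallmatrix}0&M\\I_2&SM\end{smallmatrix}}$ and the Hodge cocharacter satisfies $\D\mu(1)=\br{\begin{smallmatrix}I_2&0\\0&0\end{smallmatrix}}$. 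Since supersingularity makes $\varphi$ of finite order in $\mathrm{PGL}(D)$, by Proposition~\ref{zardense} and Lemma~\ref{Liealg} it suffices to determine the Lie subalgebra $\mathfrak h\subset\gl_4(\qp)$ generated by the (finitely many, up to the order of $\varphi$ modulo scalars) conjugates $\Phi_i:=\varphi^i\D\mu(1)\varphi^{-i}$ together with the scalars $\qp\cdot\mathrm{id}$. A one-line block computation gives $\mat_{\mathcal B}(\varphi)^{-1}=\br{\begin{smallmatrix}-S&I_2\\M^{-1}&0\end{smallmatrix}}$, whence $\Phi_0=\br{\begin{smallmatrix}I_2&0\\0&0\end{smallmatrix}}$, $\Phi_1=\br{\begin{smallmatrix}0&0\\-S&I_2\end{smallmatrix}}$, and explicit (messier) formulas for $\Phi_2,\Phi_3,\dots$; feeding in $M^2=cM-p^2I_2$ and the identity $S^2=p^{-2}\det(pS)\,I_2=\tfrac{c+\epsilon p}{p^2}\,I_2$ — the equality $\det(pS)=c+\epsilon p$ and $\tr S=0$ being among the relations already established in the proof of Proposition~\ref{mainthm1} — every $2\times2$ block of every $\Phi_i$ lies in $\Span_{\qp}(I_2,M,S,MS)$. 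The argument then bifurcates according to whether $c+\epsilon p=0$, i.e.\ whether $S$ is nilpotent.

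In case (i), $c\neq-\epsilon p$, so $S$ is invertible and traceless. Here I would apply Lemma~\ref{Liegene}(i) to each pair $(\Phi_0,\Phi_i)$ to peel off the block-diagonal, upper-right and lower-left parts of $\Phi_i$ individually, then take iterated brackets of these pieces with $\Phi_0$, with one another and with $\mathrm{id}$, simplifying throughout via $M^2=cM-p^2$ and $S^2=\tfrac{c+\epsilon p}{p^2}I_2$. This shows that $\mathfrak h$ contains — and, by checking that the bracket of two such matrices does not leave the span, equals — the $7$-dimensional space of block matrices $\br{\begin{smallmatrix}P&Q\\R&T\end{smallmatrix}}$ with $P,T\in\Span(I_2,M)$, $\tr P=\tr T$, $Q\in\Span(S,MS)$ and $R\in\Span(S,SM)$, which is the Lie algebra of the group $H$ in the statement. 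It then remains to identify $H$ over $\bqp$: for every admissible $D^{\epsilon,\mu}_{(a,b)}$ one has $c\neq\pm2p$, because $(a+bp)^2=-(\epsilon+2)p$ and $(a-bp)^2=(2-\epsilon)p$ have odd $p$-adic valuation and hence no solution in $\qp$; consequently $M$ has distinct eigenvalues over $\bqp$, and writing $\Span(I_2,M)\otimes_{\qp}\bqp\cong\bqp\times\bqp$ through its two idempotents and re-expressing the block description of $H$ accordingly identifies $H^\circ_{D,\bqp}$ with $\GL_2\times_{\det}\GL_2$.

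In case (ii), $c=-\epsilon p$, so $S^2=0$ and $S$ is either $0$ — equivalently $a=b=0$, the intersection point $o$ in the moduli picture of \S\ref{moduli} — or nilpotent of rank $1$ (this last possibility occurring precisely when $T^2+\epsilon T+1$ splits over $\qp$). Now the generators collapse: the diagonal blocks of the $\Phi_i$ lose their $M$-component and the off-diagonal blocks lose their $MS$- and $SM$-components, so Lemma~\ref{Liegene}(i) produces, beyond $\Phi_0$ and $\mathrm{id}$, only $\br{\begin{smallmatrix}0&0\\0&I_2\end{smallmatrix}}$, $\br{\begin{smallmatrix}0&S\\0&0\end{smallmatrix}}$ and $\br{\begin{smallmatrix}0&0\\S&0\end{smallmatrix}}$; one checks directly that these already span a Lie subalgebra, namely $\st{\br{\begin{smallmatrix}\alpha_{11}I_2&\alpha_{12}S\\\alpha_{21}S&\alpha_{22}I_2\end{smallmatrix}}}$, the Lie algebra of the group in the statement. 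If $S=0$ this group is $\st{\diag(\alpha_{11},\alpha_{11},\alpha_{22},\alpha_{22})}\simeq\g_m^2$; if $S\neq0$, conjugating $S$ to $\br{\begin{smallmatrix}0&1\\0&0\end{smallmatrix}}$ displays the group as $\g_a^2\rtimes_g\g_m^2$ with $g(s,t)=\br{\begin{smallmatrix}s&0\\0&t\end{smallmatrix}}$ — the $\g_a^2$ being the two nilpotent directions and the $\g_m^2$ the block scalars $\diag(s,s,t,t)$ — which is non-reductive, completing both parts.

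The step I expect to be hardest is the ``closure under bracket'' half of case (i): showing $\mathfrak h$ does not exceed the $7$-dimensional candidate, equivalently that the candidate is itself a Lie subalgebra of $\gl_4$. This is genuinely delicate because $MS\neq SM$ and $MS\notin\Span(S,SM)$ in general, so one cannot argue by block bookkeeping alone but must exploit the algebra relations $M^2=cM-p^2$ and $S^2=\tfrac{c+\epsilon p}{p^2}I_2$ in $\mat_2(\qp)$; this is exactly where the hypotheses $p\ge5$ (keeping $\chi_\varphi$ separable) and $c\neq-\epsilon p$ (keeping $S$ non-nilpotent) are needed. The $\bqp$-identification of $H$ with $\GL_2\times_{\det}\GL_2$ is a secondary point that still requires care, but once the explicit block form of $H$ is in hand it reduces to diagonalizing $M$ over $\bqp$.
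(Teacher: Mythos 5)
Your route is the paper's route: reduce via Proposition \ref{zardense} and Lemma \ref{Liealg} to generating a Lie subalgebra from the conjugates $\varphi^i\D\mu(1)\varphi^{-i}$, exploit the quadratic relations of $M$ and $S$, and identify the group over $\bqp$ by diagonalizing $M$ (your exclusion of $c=\pm2p$ for $p\ge 5$ is exactly the paper's argument). However, two of your concrete claims fail as stated. First, the sign of $S^2$: since $\tr S=0$ and $\det S=\frac{\epsilon p+c}{p^2}$, Cayley--Hamilton gives $S^2=-\frac{\epsilon p+c}{p^2}I_2$, not $+\frac{c+\epsilon p}{p^2}I_2$; your own cited facts ($\tr S=0$, $\det(pS)=c+\epsilon p$) force the opposite sign from the one you wrote, and this sign is precisely the one entering the constraint in the statement. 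Second, and more seriously, your description of the $7$-dimensional candidate in case (i) by the condition $\tr P=\tr T$ is wrong: the very first generator $\Phi_0=\D\mu(1)=\br{\begin{smallmatrix}I_2&0\\0&0\end{smallmatrix}}$ has $\tr P=2\neq0=\tr T$, so your candidate space does not contain the tori you start from, and the asserted equality of $\mathfrak h$ with ``the Lie algebra of the group $H$ in the statement'' breaks. The correct linearization of the constraint $\alpha_{1,1}\alpha_{2,2}'-\alpha_{2,2}\alpha_{1,1}'=-\frac{\epsilon p+c}{p^2}(\alpha_{1,2}\alpha_{2,1}'-\alpha_{2,1}\alpha_{1,2}')$ at the identity is $\alpha_{1,1}'=\alpha_{2,2}'$, i.e.\ equal $M$-components of the two diagonal blocks (this is what the paper proves is generated); the trace-matching picture is only valid after base change to $\bqp$, once $M$ is diagonalized and the coordinates are re-paired into the two genuine $\GL_2$-factors, which mix the original diagonal blocks.

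A smaller but real omission: you never invoke the relation $MS+SM=cS$ (equivalently $\tr(MS)=0$), which is what keeps every block of every conjugate inside $\Span(I_2,M)$, $\Span(S,MS)$, $\Span(S,SM)$ and makes the candidate stable under conjugation by $\varphi$ and under brackets; the relations $M^2=cM-p^2I_2$ and $S^2\in\qp I_2$ alone do not suffice. In case (ii) the same relation (combined with $S^2=0$, $S\neq0$) is what forces $MS\in\qp S$ --- the paper instead verifies $MS=-\frac{p^2b}{a}S$ directly when $ab\neq0$ --- and without it your assertion that the off-diagonal blocks ``lose their $MS$- and $SM$-components'' is unjustified. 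Note also that $c=-\epsilon p$ together with $ab=0$ forces $a=b=0$, so $S=0$ occurs exactly there. With these repairs (correct sign, correct linear condition $\alpha_{1,1}'=\alpha_{2,2}'$, and the anticommutation relation) your plan coincides with the paper's proof.
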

\begin{proof}
    Denote $S:=\br{\begin{smallmatrix}
            -b & a\\
            \frac{a+bc}{p^2} & b
        \end{smallmatrix}}$, $M:=\br{\begin{smallmatrix}
            0 & -p^2\\
            1 & c
        \end{smallmatrix}}\in\mat_{2\times 2}(\qp)$ as in the statement. The matrices $S,M\in\mat_{2\times 2}(\qp)$ satisfy relations 
        \begin{equation}\label{sm}
            S^2=-\frac{\epsilon p+c}{p^2}I_2,~M^2-cM+p^2I_2=0\qaq MS+SM=cS.
        \end{equation}
    Denote $q=-\frac{\epsilon p+c}{p^2}=S^2$, $E=\qp[M]\subset R=\mat_{2\times 2}(\qp)$ and $t\mapsto\bar t$ the involution of $E$ induced by $M\mapsto c-M$. By (\ref{sm}) we have 
    \begin{equation}\label{conjcomm}
        St=\bar tS,~\forall t\in E.
    \end{equation}
    As in the statement, denote by 
        $$G:=\st{\left(\begin{smallmatrix}
            \alpha_{1,1}I_2+\alpha_{1,1}'M & \alpha_{1,2}S+\alpha_{1,2}'MS\\
            \alpha_{2,1}S+\alpha_{2,1}'MS & \alpha_{2,2}I_2+\alpha_{2,2}'M
        \end{smallmatrix}\right)~\!\Big|~\!\begin{smallmatrix}
            \alpha_{i,j},\alpha_{i,j}'\in\qp,~\det\not=0\\
            \alpha_{1,1}\alpha_{2,2}'-\alpha_{2,2}\alpha_{1,1}'=-\frac{\epsilon p+c}{p^2}(\alpha_{1,2}\alpha_{2,1}'-\alpha_{2,1}\alpha_{1,2}')
        \end{smallmatrix}}$$
        and
        $$G':=\st{\br{\begin{smallmatrix}
            \alpha_{1,1}I_2 & \alpha_{1,2}S\\
            \alpha_{2,1}S & \alpha_{2,2}I_2
        \end{smallmatrix}}~\!\Big|~\!\alpha_{i,j}\in\qp,~\det\neq 0}$$
        the subsets in $\mat_{2\times 2}(R)$
        
    \noindent\textbf{Verify that $G$ and $G'$ are subgroups.}     
    For $x=x_0+x_1 M\in E$, denote $\alpha_E(x)=x_0$ and $\beta_E(x)=x_1$. For $u,v,w,t\in E$, denote
    $$X(u,v,w,t)=\br{\begin{smallmatrix}
        u & vS\\
        wS & t
    \end{smallmatrix}}\in\mat_{2\times 2}(R).$$
    Let $P=\br{\begin{smallmatrix}
        1 & 0\\
        0 & S^{-1}
    \end{smallmatrix}}\in\mat_{2\times 2}(R)$. Using (\ref{conjcomm}) we have 
    $$P^{-1}X(u,v,w,t)P=P^{-1}\br{\begin{smallmatrix}
        u & vS\\
        wS & t
    \end{smallmatrix}}P
    =\br{\begin{smallmatrix}
        u & v\\
        SwS & StS^{-1}
    \end{smallmatrix}}
    =\br{\begin{smallmatrix}
        u & v\\
        q\bar w & \bar t
    \end{smallmatrix}}.$$
    Therefore, we have $G\subset P\mat_{2\times 2}(E)P^{-1}$. Since $E$ is a field, the determinant map 
    $$\det_E:\mat_{2\times 2}(E)\to E$$ 
    is well-defined. Denote
    $$\Delta(u,v,w,t)=\det_{E}(P^{-1}X(u,v,w,t)P)\in E.$$
    We have 
    $$\Delta(u,v,w,t)=\det_{E}\br{\begin{smallmatrix}
        u & v\\
        q\bar w & \bar t
    \end{smallmatrix}}=u\bar t-qv\bar w$$
    and
    $$\beta_E(\Delta(u,v,w,t))=\alpha_E(t)\beta_E(u)-\alpha_E(u)\beta_E(t)-q\alpha_E(w)\beta_E(v)+q\alpha_E(v)\beta_E(w).$$
    Substituting $X(u,v,w,t)=\br{\begin{smallmatrix}
            \alpha_{1,1}I_2+\alpha_{1,1}'M & \alpha_{1,2}S+\alpha_{1,2}'MS\\
            \alpha_{2,1}S+\alpha_{2,1}'MS & \alpha_{2,2}I_2+\alpha_{2,2}'M
        \end{smallmatrix}}$, we have
    $$\beta_E(\Delta(u,v,w,t))=\alpha_{1,1}\alpha_{2,2}'-\alpha_{2,2}\alpha_{1,1}'+\frac{\epsilon p+c}{p^2}(\alpha_{1,2}\alpha_{2,1}'-\alpha_{2,1}\alpha_{1,2}')$$
    and it follows that 
    $$G=\st{A\in\mat_{2\times 2}(R)\mid \det_E(P^{-1}AP)\in\qp^\times}.$$
    Since $A\mapsto \det_E(P^{-1}AP)$ is a group homomorphism, $G$ is a subgroup.

    It remains to show that $G'$ is a subgroup when $q=0$. Note that
    $$\mathcal A:=\st{\br{\begin{smallmatrix}
        u & vS\\
        wS & t
    \end{smallmatrix}}\big|u,v,w,t\in\qp}$$
    is a $\qp$-subalgebra of $\mat_{2\times 2}(R)$. It follows that $G'=\mathcal{A}\cap\GL_4(\qp)=\mathcal{A}^\times$ is a subgroup.
    
    Differentiating the defining equations of $G$ and $G'$ at the identity, we have
        $$\lie(G)=\st{\br{\begin{smallmatrix}
            \alpha_{1,1}I_2+\alpha_{1,1}'M & \alpha_{1,2}S+\alpha_{1,2}'MS\\
            \alpha_{2,1}S+\alpha_{2,1}'MS & \alpha_{2,2}I_2+\alpha_{2,2}'M
        \end{smallmatrix}}~\!\Big|~\!
            \alpha_{i,j},\alpha_{i,j}'\in\qp,~
            \alpha_{1,1}'=\alpha_{2,2}'}$$
        and
        $$\lie(G')=\st{\br{\begin{smallmatrix}
            \alpha_{1,1}I_2 & \alpha_{1,2}S\\
            \alpha_{2,1}S & \alpha_{2,2}I_2
        \end{smallmatrix}}~\!\Big|~\!\alpha_{i,j}\in\qp}.$$
    Under the canonical adapted basis $\mathcal B=(e_i)_{1\le i\le 4}$ of $D_{(a,b)}^{\epsilon,\mu}$, the matrices of $\varphi$ and the Hodge cocharacter are
    $$\mat_{\mathcal B}(\varphi)=\br{\begin{smallmatrix}
            0 & 0 & 0 & -p^2\\
            0 & 0 & 1 & -\frac{a^2+\epsilon p+b^2p^2}{ab+1}\\
            1 & 0 & a & -\frac{a^3+\epsilon ap-bp^2}{ab+1}\\
            0 & 1 & b & -a
        \end{smallmatrix}}=\br{\begin{smallmatrix}
            0 & M\\
            I_2 & SM 
        \end{smallmatrix}}\qaq\mu:t\mapsto\br{\begin{smallmatrix}
        t & 0 & 0 & 0\\
        0 & t & 0 & 0\\
        0 & 0 & 1 & 0\\
        0 & 0 & 0 & 1
    \end{smallmatrix}}=\br{\begin{smallmatrix}
            tI_2 & 0\\
            0 & I_2 
        \end{smallmatrix}}.$$
    Note that $\br{\begin{smallmatrix}
            0 & M\\
            I_2 & SM 
        \end{smallmatrix}}^{-1}=\br{\begin{smallmatrix}
            -S & I_2\\
            M^{-1} & 0
        \end{smallmatrix}}$. We will verify the two assumptions of Lemma \ref{monogpcheck} case by case.
    
    \noindent\textbf{Verification of Lemma \ref{monogpcheck}(1).} 
    
    \noindent\textbf{Case 1.} If $c\neq-\epsilon p$, let $u,v,w,t\in E$ and we have
    $$\mat_{\mathcal B}(\varphi)X(u,v,w,t)\mat_{\mathcal B}(\varphi)^{-1}=\br{\begin{smallmatrix}
            0 & M\\
            I_2 & SM 
        \end{smallmatrix}}\br{\begin{smallmatrix}
        u & vS\\
        wS & t
    \end{smallmatrix}}\br{\begin{smallmatrix}
            -S & I_2\\
            M^{-1} & 0
        \end{smallmatrix}}=\br{\begin{smallmatrix}
        t-qwM & wMS\\
        (\bar t-u+v\bar M^{-1}-q\bar w\bar M)S & u+q\bar w\bar M
    \end{smallmatrix}}=X(u',v',w',t')$$
    where 
    $$u'=t-qwM,~v'=wM,~w'=\bar t-u+v\bar M^{-1}-q\bar w\bar M,~t'=u+q\bar w\bar M.$$
    It follows that
    \begin{align*}
        \Delta(u',v',w',t')&=u'\overline{t'}-qv'\overline{w'}\\
        &=(t-qwM)(\bar u+qwM)-qwM(t-\bar u+\bar vM^{-1}-qwM)\\
        &=t\bar u-qw\bar v=\overline{\Delta(u,v,w,t)}.
    \end{align*}
    Therefore, for $A\in\mat_{2\times 2}(R)$, conjugation by $\mat_{\mb}(\varphi)$ preserves the condition $\det_E(P^{-1}AP)\in\qp^\times$ and hence preserves $G$.

    \noindent\textbf{Case 2.} If $c=-\epsilon p$ and $ab\neq 0$, we have $q=0$. Note that $a+bc=-\frac{c+\epsilon p+p^2b^2}{a}=-\frac{p^2b^2}{a}$ and hence
    \begin{equation}\label{mss}
        MS=\br{\begin{smallmatrix}
        \frac{b^2p^2}{a}&-bp^2\\
        \frac{b^3p^2}{a^2}&-\frac{b^2p^2}{a}
    \end{smallmatrix}}=-\frac{p^2b}{a}S.
    \end{equation}
    For $u,v,w,t\in \qp$, we deduce from the same computation above together that
    $$\mat_{\mathcal B}(\varphi)X(u,v,w,t)\mat_{\mathcal B}(\varphi)^{-1}=\br{\begin{smallmatrix}
        t & wMS\\
        (\bar t-u+v\bar M^{-1})S & u
    \end{smallmatrix}}.$$
    By (\ref{mss}), $ES=\qp S\subset R$. Therefore, $(\bar t-u+v\bar M^{-1})S,wMS\in\qp S$ and conjugation by $\mat_{\mb}(\varphi)$ preserves $G'$.

    \noindent\textbf{Case 3.} If $c=-\epsilon p$ and $ab=0$, we deduce from $c=-\frac{a^2+\epsilon p+b^2p^2}{ab+1}$ that $a^2+b^2p^2=0$. It follows that $a=b=0$ and $S=0$. We have
    $$\mat_{\mathcal B}(\varphi)=\br{\begin{smallmatrix}
        0 & M\\
        1 & 0
    \end{smallmatrix}}\qaq G'=\st{\br{\begin{smallmatrix}
            uI_2 & 0\\
            0 & tI_2
        \end{smallmatrix}}~\!\Big|~\!u,t\in\qp^\times}.$$
    We deduce that $\mat_{\mathcal B}(\varphi)G'\mat_{\mathcal B}(\varphi)^{-1}=G'$.

        \noindent\textbf{Verification of Lemma \ref{monogpcheck}(2).} 
        
        \noindent\textbf{Case 1.} If $c\neq-\epsilon p$, we have
        $$
        \mat_{\mathcal B}(\varphi)^i\D\mu(1)\mat_{\mathcal B}(\varphi)^{-i}=
        \begin{cases}
            \br{\begin{smallmatrix}
            0 & 0\\
            -S & I_2
        \end{smallmatrix}}\quad &i=1\\
        \br{\begin{smallmatrix}
            I_2-\frac{\epsilon p+c}{p^2}M & -MS\\
            -\frac{\epsilon p+c}{p^2}SM+S & -\frac{\epsilon p+c}{p^2}(M-cI_2)
        \end{smallmatrix}}&i=2\\
        \br{\begin{smallmatrix}
            -\frac{\epsilon p+c}{p}I_2 & -(\epsilon p+c)S+MS\\
            -S-\frac{1}{p}SM& \br{1+\frac{\epsilon p+c}{p}}I_2
        \end{smallmatrix}}&i=3.
        \end{cases}
        $$
        Using Lemma \ref{Liegene}, the Lie algebra generated by the Lie algebras of the tori $\varphi^i\mu(\g_m)\varphi^{-i}$ contains
        $$\br{\begin{smallmatrix}
            I_2 & 0\\
            0 & 0
        \end{smallmatrix}},~\br{\begin{smallmatrix}
            0 & 0\\
            0 & I_2
        \end{smallmatrix}},~\br{\begin{smallmatrix}
            0 & 0\\
            -S & 0
        \end{smallmatrix}},~\br{\begin{smallmatrix}
            I_2-\frac{\epsilon p+c}{p^2}M & 0\\
            0 & -\frac{\epsilon p+c}{p^2}(M-cI_2)
        \end{smallmatrix}},~\br{\begin{smallmatrix}
            0 & -MS\\
            0 & 0
        \end{smallmatrix}},~\br{\begin{smallmatrix}
            0 & 0\\
            -\frac{\epsilon p+c}{p^2}SM+S & 0
        \end{smallmatrix}},~\br{\begin{smallmatrix}
            0 & -(\epsilon p+c)S+MS\\
            0 & 0
        \end{smallmatrix}}$$
        which span $\lie(G)$ over $\qp$. Therefore, We have $H_D^\circ=G$ by Lemma \ref{Liealg}.
        
        It remains to prove that $G\simeq_{\bqp}\GL_{2}\times_{\det}\GL_{2}$. Note that over $\bqp$, the matrix $M$ is conjugated to $\widetilde M=\br{\begin{smallmatrix}
            \mu_1 & 0\\
            0 & \mu_2
        \end{smallmatrix}}$ with $\mu_1\neq\mu_2\in\bar\mq_p^\times$. Indeed, $M$ is of rank $2$ with characteristic polynomial $X^2-cX+p^2$. By Jordan decomposition, if $M$ is not of this form, we have $c=\pm 2p$. It follows that $-\frac{a+\epsilon p+b^2p^2}{ab+1}=\pm 2p$ and $(a\pm bp)^2=-(\epsilon\pm 2)p\in\st{\pm p,\pm 3p}$, impossible when $p\ge 5$. On the other hand, using the relations $S^2=-\frac{\epsilon p+c}{p^2}I_2$ and $MS+SM=cS$, we can show that the same conjugation makes $S$ conjugate to $\widetilde S=\br{\begin{smallmatrix}
            0 & \lambda_2\\
            \lambda_1 & 0
        \end{smallmatrix}}$ with $\lambda_1,\lambda_2\in\bqp^\times$. It follows that $G$ is conjugated to the group
        $$\st{\left(\begin{smallmatrix}
            \alpha_{1,1}I_2+\alpha_{1,1}'\widetilde M & \alpha_{1,2}\widetilde S+\alpha_{1,2}'\widetilde M\widetilde S\\
            \alpha_{2,1}\widetilde S+\alpha_{2,1}'\widetilde M\widetilde S & \alpha_{2,2}I_2+\alpha_{2,2}'\widetilde M
        \end{smallmatrix}\right)~\!\Big|~\!\begin{smallmatrix}
            \alpha_{i,j},\alpha_{i,j}'\in\qp,~\det\not=0\\
            \alpha_{1,1}\alpha_{2,2}'-\alpha_{2,2}\alpha_{1,1}'=\\
            -\frac{\epsilon p+c}{p^2}(\alpha_{1,2}\alpha_{2,1}'-\alpha_{2,1}\alpha_{1,2}')
        \end{smallmatrix}}=\st{\left(\begin{smallmatrix}
        \beta_{1,1} & 0 & 0 & \beta_{1,2}\\
        0 & \beta_{2,2}' & \beta_{2,1}' & 0\\
        0 & \beta_{1,2}' & \beta_{1,1}' & 0\\
        \beta_{2,1} & 0 & 0 & \beta_{2,2}\end{smallmatrix}\right)~\!\Big|~\!\begin{smallmatrix}
            \beta_{i,j},\beta_{i,j}'\in\bqp,~\det\neq 0\\ \beta_{1,1}\beta{2,2}-\beta{1,2}\beta{2,1}=\\
            \beta_{1,1}'\beta{2,2}'-\beta{1,2}'\beta{2,1}'
        \end{smallmatrix}}$$
        which, after switching the second and the fourth vectors in the basis, becomes
        $$\st{\left(\begin{smallmatrix}
        \beta_{1,1} & \beta_{1,2} & 0 & 0\\
        \beta_{2,1} & \beta{2,2} & 0 & 0\\
        0 & 0 & \beta_{1,1}' & \beta_{1,2}'\\
        0 & 0 & \beta_{2,1}' & \beta_{2,2}'\end{smallmatrix}\right)~\!\Big|~\!\begin{smallmatrix}
            \beta_{i,j},\beta_{i,j}'\in\bqp,~\det\neq 0\\  \beta_{1,1}\beta{2,2}-\beta{1,2}\beta{2,1}=\\
            \beta_{1,1}'\beta{2,2}'-\beta{1,2}'\beta{2,1}'
        \end{smallmatrix}}=\GL_{2}\times_{\det}\GL_{2}.$$

    \noindent\textbf{Case 2.} If $c=-\epsilon p$ and $ab\neq 0$, we have
    $$\mat_{\mathcal B}(\varphi)\D\mu(1)\mat_{\mathcal B}(\varphi)^{-1}=\br{\begin{smallmatrix}
            0 & M\\
            I_2 & SM
        \end{smallmatrix}}\br{\begin{smallmatrix}
            I_2 & 0\\
            0 & 0
        \end{smallmatrix}}\br{\begin{smallmatrix}
            -S & I_2\\
            M^{-1} & 0
        \end{smallmatrix}}=\br{\begin{smallmatrix}
            0 & 0\\
            -S & I_2
        \end{smallmatrix}}
    $$
    and
    $$
        \mat_{\mathcal B}(\varphi)^2\D\mu(1)\mat_{\mathcal B}(\varphi)^{-2}=\br{\begin{smallmatrix}
            0 & M\\
            I_2 & SM
        \end{smallmatrix}}^2\br{\begin{smallmatrix}
            I_2 & 0\\
            0 & 0
        \end{smallmatrix}}\br{\begin{smallmatrix}
            -S & I_2\\
            M^{-1} & 0
        \end{smallmatrix}}^2=\br{\begin{smallmatrix}
            I_2 & -MS\\
            S & 0
        \end{smallmatrix}}.$$
 
    By Lemma \ref{Liegene} and (\ref{mss}), the Lie algebra generated by the Lie algebras of the tori $\varphi^i\mu(\g_m)\varphi^{-i}$ contains
    $$\br{\begin{smallmatrix}
            I_2 & 0\\
            0 & 0
        \end{smallmatrix}},~\br{\begin{smallmatrix}
            0 & 0\\
            0 & I_2
        \end{smallmatrix}},~\br{\begin{smallmatrix}
            0 & 0\\
            -S & 0
        \end{smallmatrix}}\qaq\br{\begin{smallmatrix}
            0 & \frac{p^2b}{a}S\\
            0 & 0
        \end{smallmatrix}}.$$
    In this case, we have $G'\simeq\g_{a}^2\rtimes_g\g_{m}^2$ and the subgroup $\st{\br{\begin{smallmatrix}
            1 & \alpha S\\
            0 & 1
        \end{smallmatrix}}~\!\big|~\!\alpha\in\qp}$ of $G'$ is nontrivial, normal and unipotent.
        
    \noindent\textbf{Case 3.} If $c=-\epsilon p$ and $ab\neq 0$, as above we have $a=b=0$. It follows that $S=0$. The subgroup $G'$ becomes $\st{\br{\begin{smallmatrix}
            \alpha I_2 & 0\\
            0 & \alpha'I_2
        \end{smallmatrix}}~\!\big|~\alpha\alpha'\neq 0}\simeq\g_{m,\qp}^2$.
\end{proof}

\begin{example}\label{majaexp2}
    Volkov \cite[\S3]{volkov2023abelian} gave an example of a nonsemisimple filtered $\varphi$-module arising from supersingular abelian surface: let $p\equiv 1~(\mathrm{mod}~3)$ be a prime and $\zeta_3\in\qp$ be a primitive third root of unity, consider a $4$-dimensional $\qp$-vector space $D$ with a $\qp$-basis $\mathcal B=(x_1,y_1,x_2,y_2)$ equipped with a filtered $\varphi$-module structure given by
    $$\mat_{\mathcal B}(\varphi)=\br{\begin{smallmatrix}
        0 & \zeta_3p & 0 & 0 \\
        1 & 0 & 0 & 0\\
        0 & 0 & 0 &\zeta_3^{-1}p\\
        0 & 0 & 1 & 0
    \end{smallmatrix}}\qaq\fil_0D=D,~\fil_1D=\qp x_1\oplus\qp(y_1+x_2),~\fil_2D=0.$$
    We can show that $D\simeq D^{1,\mu}_{(-\zeta_3^{-1}p,1)}$ and in this case, $c:=-\frac{a^2+\epsilon p+b^2p^2}{ab+1}=-\epsilon p$. Theorem \ref{sit2} implies that $H_D^\circ\simeq\g_a^2\rtimes_g\g_m^2$ which is non-reductive, which reproves that $D$ is non-semisimple \cite[Proposition 3.1(b)]{volkov2023abelian}.
\end{example}

\begin{prop}\label{sit3}
    Denote $D=D_{\epsilon'}^{\epsilon,iso}$, we have 
    $$H_D^\circ=\st{\br{\begin{smallmatrix}
            \alpha_{1,1}I_2 & \alpha_{1,2}S\\
            \alpha_{2,1}S & \alpha_{2,2}I_2
        \end{smallmatrix}}~\!\Big|~\!\alpha_{i,j}\in\qp,~\det\not=0}~\mbox{where}~S=\br{\begin{smallmatrix}
            0 & -(\epsilon+2\epsilon')p\\
            1 & 0
        \end{smallmatrix}}.$$
    In this case, we have $H^\circ_{D_{\epsilon'}^{\epsilon,iso}}\simeq_{\bqp}\GL_{2}$.
\end{prop}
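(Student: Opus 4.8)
The plan is to follow the scheme of Propositions \ref{sit1} and \ref{sit2}. Writing $g:=\mat_{\mathcal B}(\varphi^{\epsilon,iso}_{\epsilon'})$ in $2\times2$-block form, part $(iso)$ of Definition \ref{canfam} gives $g=\br{\begin{smallmatrix} 0 & \epsilon'pI_2\\ I_2 & S\end{smallmatrix}}$ with $S=\br{\begin{smallmatrix} 0 & -(\epsilon+2\epsilon')p\\ 1 & 0\end{smallmatrix}}$, and the Hodge cocharacter is $\mu(t)=\br{\begin{smallmatrix} tI_2 & 0\\ 0 & I_2\end{smallmatrix}}$. First I would record the two elementary facts used throughout: $S^2=-(\epsilon+2\epsilon')p\,I_2$, where $-(\epsilon+2\epsilon')p$ is a non-square in $\qp$ because $\epsilon+2\epsilon'\in\st{\pm1,\pm2,\pm3}$ and $p\ge7$; and $g^{-1}=\br{\begin{smallmatrix} -\frac{\epsilon'}{p}S & I_2\\ \frac{\epsilon'}{p}I_2 & 0\end{smallmatrix}}$ (using $(\epsilon')^2=1$). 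By Proposition \ref{zardense} applied with $K=\qp$ (so $\sigma=\id$ and the cocharacters $\mu_i$ have images $g^i\mu(t)g^{-i}$), together with the finiteness of the order of $\varphi$ in $\mathrm{PGL}(D)$ recalled at the start of \S\ref{classmonodromy}, the group $H_D^\circ$ is generated by the finite family of tori $\mathcal F=\st{g^i\mu(t)g^{-i}}_{i\in\mz}$.

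For the inclusion $H_D^\circ\subseteq H$, where $H$ is the subgroup of $\GL_4$ displayed in the statement, I would check by a direct block computation — using only $S^2=-(\epsilon+2\epsilon')p\,I_2$ — that $H$ is stable under conjugation by $g$; since $\mu(t)\in H$, every torus in $\mathcal F$ lies in $H$, hence so does $H_D^\circ$. For the reverse inclusion I would invoke Lemma \ref{Liealg} (the group $H$ being connected, as will follow from the last paragraph) and reduce to showing that the Lie algebras of the tori in $\mathcal F$, i.e. the conjugates $g^i\br{\begin{smallmatrix} I_2 & 0\\ 0 & 0\end{smallmatrix}}g^{-i}$, generate the $4$-dimensional Lie algebra $\lie(H)=\st{\br{\begin{smallmatrix} \alpha_{1,1}I_2 & \alpha_{1,2}S\\ \alpha_{2,1}S & \alpha_{2,2}I_2\end{smallmatrix}}:\alpha_{i,j}\in\qp}$. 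The $i=1$ conjugate equals $\br{\begin{smallmatrix} 0 & 0\\ -\frac{1}{\epsilon'p}S & I_2\end{smallmatrix}}$ and the $i=2$ conjugate equals $\br{\begin{smallmatrix} -(1+\epsilon\epsilon')I_2 & -S\\ \ast & \ast\end{smallmatrix}}$ — here it is used that $\frac{(\epsilon+2\epsilon')p}{\epsilon'p}=\epsilon\epsilon'+2\in\mz$, which keeps the diagonal blocks scalar. Applying Lemma \ref{Liegene}(i) to each of these matrices together with $\br{\begin{smallmatrix} I_2 & 0\\ 0 & 0\end{smallmatrix}}$ produces $\br{\begin{smallmatrix} 0 & 0\\ 0 & I_2\end{smallmatrix}}$, $\br{\begin{smallmatrix} 0 & 0\\ S & 0\end{smallmatrix}}$ and $\br{\begin{smallmatrix} 0 & S\\ 0 & 0\end{smallmatrix}}$; these four matrices span $\lie(H)$, so Lemma \ref{Liealg} gives $H\subseteq H_D^\circ$, whence $H_D^\circ=H$.

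For $H\simeq_{\bqp}\GL_2$: over $\bqp$ the matrix $S$ is diagonalizable (its square is a nonzero scalar and $\cha\bqp=0$), say $\bqp$-conjugate to $\diag(\lambda,-\lambda)$ with $\lambda^2=-(\epsilon+2\epsilon')p$; carrying out this conjugation in each $2\times2$ block and then permuting the basis to $(e_1,e_3,e_2,e_4)$ identifies $H_{\bqp}$ with $\st{(h,\delta h\delta^{-1}):h\in\GL_{2,\bqp}}$, where $\delta:=\diag(1,-1)$, which is isomorphic to $\GL_2$ via $h\mapsto(h,\delta h\delta^{-1})$; in particular $H$ is geometrically connected, as used above. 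The only step that requires genuine care is the Lie-algebra generation: the conjugates for $i=0,1$ alone span a mere $3$-dimensional subalgebra, so the $i=2$ conjugate is genuinely needed, and one must keep track of the scalar coefficients to confirm that everything remains defined over $\qp$; the remaining computations are routine block arithmetic.
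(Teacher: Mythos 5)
Your proposal is correct and follows essentially the same route as the paper's proof: Proposition \ref{zardense} together with Lemmas \ref{Liealg} and \ref{Liegene}, using the conjugates of $\br{\begin{smallmatrix} I_2 & 0\\ 0 & 0\end{smallmatrix}}$ by $\varphi$ and $\varphi^2$ to span $\lie(H)$, and then diagonalizing $S$ over $\bqp$ to identify $H$ with $\GL_2$. Your explicit verification that $H$ is stable under conjugation by $\varphi$ (giving $H_D^\circ\subseteq H$) spells out a step the paper leaves implicit, and your $i=2$ conjugate agrees with the paper's displayed matrix after substituting $a=\epsilon'$ there.
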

\begin{proof}
    Under the canonical adapted basis $\mathcal B=(e_i)_{1\le i\le 4}$ of $D_{\epsilon'}^{\epsilon,iso}$, the matrices of $\varphi$ and the Hodge cocharacter are
    $$\mat_{\mathcal B}(\varphi)=\br{\begin{smallmatrix}
            0 & 0 & \epsilon'p & 0\\
            0 & 0 & 0 & \epsilon'p\\
            1 & 0 & 0 & -(\epsilon+2\epsilon')p\\
            0 & 1 & 1 & 0
        \end{smallmatrix}}=\br{\begin{smallmatrix}
            0 & \epsilon'pI_2\\
            I_2 & S 
        \end{smallmatrix}}\qaq\mu:t\mapsto\br{\begin{smallmatrix}
        t & 0 & 0 & 0\\
        0 & t & 0 & 0\\
        0 & 0 & 1 & 0\\
        0 & 0 & 0 & 1
    \end{smallmatrix}}.$$
    As in the statement, denote $S:=\br{\begin{smallmatrix}
            0 & -(\epsilon+2\epsilon')p\\
            1 & 0
     \end{smallmatrix}}$ and
     $$G:=\st{\br{\begin{smallmatrix}
            \alpha_{1,1} & \alpha_{1,2}S\\
            \alpha_{2,1}S & \alpha_{2,2}
        \end{smallmatrix}}~\!\Big|~\!\alpha_{i,j}\in\qp,~\det\not=0}.$$
     Denote $q=-(\epsilon+2\epsilon')p$. We have $S^2=qI_2$. It follows that
     $$\mathcal A:=\st{\br{\begin{smallmatrix}
            \alpha_{1,1} & \alpha_{1,2}S\\
            \alpha_{2,1}S & \alpha_{2,2}
        \end{smallmatrix}}~\!\Big|~\!\alpha_{i,j}\in\qp}$$
    is a $\qp$-subalgebra of $\mat_{4\times 4}(\qp)$ and that $G=\mathcal A\cap\GL_4(\qp)=\mathcal A^\times$ is a group.
    It remains to verify the two assumptions of Lemma \ref{monogpcheck}.
    
    \noindent\textbf{Verification of Lemma \ref{monogpcheck}(1).} Let $\alpha_{i,j}\in\qp$. We have
    $$\br{\begin{smallmatrix}
            0 & \epsilon' pI_2\\
            I_2 & S
        \end{smallmatrix}}\br{\begin{smallmatrix}
            \alpha_{1,1}I_2 & \alpha_{1,2}S\\
            \alpha_{2,1}S & \alpha_{2,2}I_2
        \end{smallmatrix}}\br{\begin{smallmatrix}
            0 & \epsilon' pI_2\\
            I_2 & S
        \end{smallmatrix}}^{-1}=\br{\begin{smallmatrix}
            (-\epsilon'p\alpha_{2,2}+\epsilon'pqa_{2,1})I_2 & -\alpha_{2,1}p^2S\\
            (\alpha_{1,1}-\alpha_{1,2}-\alpha_{2,2}+q\alpha_{2,1})S & -\epsilon'p(\alpha_{1,1}+q\alpha_{2,1})I_2
        \end{smallmatrix}}\in\mathcal A.$$
        Using $G=\mathcal A\cap\GL_4(\qp)$, we deduce that $\mat_{\mathcal B}(\varphi)G\mat_{\mathcal B}(\varphi)^{-1}=G$.
    
    \noindent\textbf{Verification of Lemma \ref{monogpcheck}(2).} We have
        $$
            \br{\begin{smallmatrix}
            0 & \epsilon' pI_2\\
            I_2 & S
        \end{smallmatrix}}\br{\begin{smallmatrix}
            I_2 & 0\\
            0 & 0
        \end{smallmatrix}}\br{\begin{smallmatrix}
            0 & \epsilon' pI_2\\
            I_2 & S
        \end{smallmatrix}}^{-1}=\br{\begin{smallmatrix}
            0 & 0\\
            -\frac{\epsilon'}{p}S & I_2
        \end{smallmatrix}}
        \mbox{ and }
        \br{\begin{smallmatrix}
            0 & \epsilon' pI_2\\
            I_2 & S
        \end{smallmatrix}}^2\br{\begin{smallmatrix}
            I_2 & 0\\
            0 & 0
        \end{smallmatrix}}\br{\begin{smallmatrix}
            0 & \epsilon' pI_2\\
            I_2 & S
        \end{smallmatrix}}^{-2}=\br{\begin{smallmatrix}
            -\frac{\epsilon+\epsilon'}{\epsilon'}I_2 & -S\\
            -\frac{\epsilon+\epsilon'}{p}S & \frac{\epsilon+2\epsilon'}{\epsilon'}I_2
        \end{smallmatrix}}.$$
    By Lemma \ref{Liegene}, the Lie algebra generated by the Lie algebras of the tori $\varphi^i\mu(\g_m)\varphi^{-i}$ contains 
    $$\br{\begin{smallmatrix}
            I_2 & 0\\
            0 & 0
        \end{smallmatrix}},~\br{\begin{smallmatrix}
            0 & 0\\
            0 & I_2
        \end{smallmatrix}},~\br{\begin{smallmatrix}
            0 & 0\\
            -\frac{\epsilon'}{p}S & 0
        \end{smallmatrix}},~\br{\begin{smallmatrix}
            0 & -S\\
            0 & 0
        \end{smallmatrix}}.$$
    These matrices form a $\qp$-basis of $\lie(G)$.
    
    It remains to prove that $G$ is isomorphic to $\GL_2$ over $\bqp$. Note that the matrix $S$ is conjugated to $\br{\begin{smallmatrix}
            \sigma& 0\\
            0 & -\sigma
        \end{smallmatrix}}$ for some $\sigma\in\bar\mq_p^\times$. It follows that
    $$H_{D,\bar\mq_p}^\circ\simeq\st{\br{\begin{smallmatrix}
            \alpha_{1,1} & 0 & \alpha_{1,2}\sigma & 0\\
            0 & \alpha_{1,1} & 0 & -\alpha_{1,2}\sigma\\
            \alpha_{2,1}\sigma & 0 & \alpha_{2,2} & 0\\
            0 & -\alpha_{2,1}\sigma & 0 & \alpha_{2,2}
        \end{smallmatrix}}~\!\Big|~\!\alpha_{i,j}\in\qp,~\det\not=0}.$$
    The map
    $$\br{\begin{smallmatrix}
            \alpha_{1,1} & 0 & \alpha_{1,2}\sigma & 0\\
            0 & \alpha_{1,1} & 0 & -\alpha_{1,2}\sigma\\
            \alpha_{2,1}\sigma & 0 & \alpha_{2,2} & 0\\
            0 & -\alpha_{2,1}\sigma & 0 & \alpha_{2,2}
        \end{smallmatrix}}\mapsto\br{\begin{smallmatrix}
            \alpha_{1,1} & \alpha_{1,2}\sigma\\
            \alpha_{2,1}\sigma & \alpha_{2,2}
        \end{smallmatrix}}$$
    defines an isomorphism between $H_{D,\bar\mq_p}^\circ$ and $\GL_{2,\bar\mq_p}$.
\end{proof}
Combining all the cases above, we have the following conclusion.
\begin{thm}\label{mainthm}
    Fix a prime $p\ge 7$, we have the following isomorphisms over $\bqp$.
    \begin{enumerate}
        \item $H_{D^{prod}_{\epsilon'}}^\circ\simeq\g_{m}^2$,
        \item $H_{D^{\epsilon,\nu}_{a'}}^\circ\simeq\begin{cases}
        \g_m^3 &\mbox{if }a'=\epsilon=0\\
        \GL_2\times_{\det}\GL_2&\mbox{otherwise},
    \end{cases}$
        \item $H_{D^{\epsilon,\mu}_{(a,b)}}^\circ\simeq\begin{cases}
        \g_m^2&\mbox{if } a=b=0\\
        \g_a^2\rtimes_g\g_m^2 &\mbox{if }c(a,b)=-\epsilon p,\mbox{ and }a\neq 0\mbox{ or }b\neq 0\\
        \GL_2\times_{\det}\GL_2&\mbox{otherwise},
    \end{cases}$
        \item $H_{D^{\epsilon,iso}_{\epsilon'}}^\circ\simeq\GL_2$
    \end{enumerate}
    
    where $c(a,b)=-\frac{a^2+\epsilon p+b^2p^2}{ab+1}$ and $g(s,t)=\br{\begin{smallmatrix}
            s & 0\\
            0 & t
        \end{smallmatrix}}$. In particular, the neutral connected component $G_{V_p(A),\bar{\mathbb Q}_p}^\circ$ of the $p$-adic monodromy group associated with a supersingular abelian surface $A$ over $\qp$ is isomorphic to one of the following.
    $$\mathbf G_{m}^2,~\mathbf G_{m}^3,~\mathbf G_{a}^2\rtimes_g\mathbf G_{m}^2,~\GL_{2},~\GL_{2}\times_{\det}\GL_{2}.$$
\end{thm}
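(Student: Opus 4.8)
My plan is to assemble the theorem from the four propositions proved in this section together with the classification of \S\ref{classphimodule}. Items (1)--(4) are, after matching notation (note $D^{prod}_{\epsilon'}=D^{mult}_{\epsilon'}$ in the notation of Proposition~\ref{x2pmp}), precisely Propositions~\ref{x2pmp}, \ref{sit1}, \ref{sit2} and~\ref{sit3}. For items (1) and (2) these already give $H_D^\circ$ up to $\qp$-isomorphism; for items (3) and (4) I would extract the $\bqp$-isomorphism type directly from the proofs of Propositions~\ref{sit2} and~\ref{sit3}, where one diagonalizes the auxiliary matrix $S$ (and, in case (3), also $M$) over $\bqp$ using the relations $S^2=-\tfrac{\epsilon p+c}{p^2}I_2$, $M^2-cM+p^2I_2=0$ and $MS+SM=cS$. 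So no new computation is needed for items (1)--(4) beyond collecting those four statements.

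For the ``in particular'' clause I would argue as follows. By Proposition~\ref{classssweil}, for $p\ge7$ the polynomial $\chi_{\varphi_D}$ of any $D\in\mfs$ is either $(X^2\pm p)^2$ or $X^4+\epsilon pX^2+p^2$ with $\epsilon\in\st{0,\pm1}$. Theorem~\ref{mainthm0} handles the first shape and Theorem~\ref{mainclass} the second, so together they show that every $D\in\mfs$ is isomorphic in $\mf_{\qp}^{\f}$ to one of $D^{prod}_{\epsilon'}$, $D^{\epsilon,iso}_{\epsilon'}$, $D^{\epsilon,\nu}_{a'}$ or $D^{\epsilon,\mu}_{(a,b)}$. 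Given $A\in\mathbf{AbSurf}^{\sss}_{\qp}$ put $D=D_\cris(V_p(A))\in\mfs$; the $p$-adic algebraic monodromy group $G_{V_p(A)}$ is isomorphic to $H_{V_p(A)}$, which is an inner form of $H_D$ and hence $\bqp$-isomorphic to it, and likewise for the neutral components. Reading off items (1)--(4) for the four families then yields that $G^\circ_{V_p(A),\bqp}$ is $\bqp$-isomorphic to one of $\g_m^2$, $\g_m^3$, $\g_a^2\rtimes_g\g_m^2$, $\GL_2$ or $\GL_2\times_{\det}\GL_2$.

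The one step needing attention --- more bookkeeping than a genuine obstacle --- is the branching in item (3). When $c(a,b)=-\epsilon p$ the computation in Proposition~\ref{sit2} shows $H_D^\circ\simeq\g_a^2\rtimes_g\g_m^2$ exactly when $ab\ne0$ (so that $S\ne0$) and $H_D^\circ\simeq\g_m^2$ when $ab=0$, which forces $a=b=0$; I would check that both alternatives are compatible with the admissibility conditions on $(a,b)$ in Theorem~\ref{mainclass} (the point $a=b=0$ satisfies $v_p(a)\ge1$, $v_p(b)\ge0$ vacuously), so that each of the five groups in the displayed list is genuinely realized. Once this is in place, the enumeration is complete and the theorem follows.
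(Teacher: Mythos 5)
Your proposal is correct and matches the paper's own treatment: Theorem~\ref{mainthm} is obtained there exactly by combining Propositions~\ref{x2pmp}, \ref{sit1}, \ref{sit2} and \ref{sit3} (with $D^{mult}_{\epsilon'}=D^{prod}_{\epsilon'}$, a notational slip in the paper) together with the classification of $\mfs$ from Theorem~\ref{mainthm0} and Theorem~\ref{mainclass}, and the Tannakian fact that $H_D$ and the monodromy group of $V_p(A)$ are inner forms, hence $\bqp$-isomorphic. Your extra check on the branching $ab=0$ versus $ab\neq0$ in item (3) is consistent with Proposition~\ref{sit2}(ii) and is not needed for the ``one of the following'' formulation, so nothing is missing.
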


\begin{cor}\label{semisimplicity}
    Let $D\in\mfs$. Then $D$ is non-semisimple if and only if $D\simeq D^{\epsilon,\mu}_{(a,b)}$ for some $\epsilon\in\st{0,\pm1}$, $ab\neq-1$ and $(0,0)\neq(a,b)\in\qp^2$ satisfying $c(a,b)=-\epsilon p$. Equivalently,
    $$\epsilon=0,p\equiv 1~\mathrm{mod}~4\qaq D\simeq D^{0,\mu}_{(ipb,b)}$$
    for $b\in\qp^\times$ and some primitive fourth root of unity $i\in \qp$, or
    $$\epsilon=\pm1,p\equiv 1~\mathrm{mod}~3\qaq D\simeq D^{\epsilon,\mu}_{(-\epsilon\omega pb,b)}$$
    for $b\in\qp^\times$ and some primitive third root of unity $\omega\in\qp$.
\end{cor}
\begin{proof}
    By Tannakian formalism, the object $D$ corresponds to the faithful standard representation of its monodromy group $H_D$. Since we work in characteristic zero, this representation is semisimple if and only if $H_D^\circ$ is reductive (see \cite[Proposition 2.23, Remark 2.28]{deligne2012tannakian}). 
    
    Let $D\in\mfs$. By Theorem \ref{mainthm}, $H_D^\circ$ is non-reductive if and only if $D\simeq D^{\epsilon,\mu}_{(a,b)}$ for some $\epsilon\in\st{0,\pm1}$, $ab\neq-1$ and $(0,0)\neq(a,b)\in\qp^2$ satisfying $c(a,b)=-\epsilon p$. Equivalently, we have
    $$a^2-\epsilon abp+b^2p^2=0.$$
    Hence $ab=0$ implies $a=b=0$ and we can assume that $ab\neq 0$. There are two cases:
    
    If $\epsilon=0$, $t:=\frac{a}{bp}$ satisfies $t^2=-1$. So we have $p\equiv 1 ~\mathrm{mod}~4$ and $t$ is a primitive fourth root of unity. 
    
    If $\epsilon=\pm1$, $s:=-\frac{a}{\epsilon pb}$ satisfies $s^2+s+1=0$. So we have $p\equiv 1~\mathrm{mod}~3$ and $s$ is a primitive third root of unity.
\end{proof}
\begin{rem}\label{normalization}
    In the statement above, the parameter $b\in\mathbb Q_p^\times$ can be removed after passing to isomorphism classes. Indeed, using Theorem \ref{propmain}, for any $b\in\qp^\times$, we have
    $$D^{0,\mu}_{(ipb,b)}\simeq D^{0,\mu}_{(ip,1)}\text{ and for }\epsilon=\pm1,~ D^{\epsilon,\mu}_{(-\epsilon\omega pb,b)}\simeq D^{\epsilon,\mu}_{(-\epsilon\omega p,1)}.$$
\end{rem}

\section{A moduli space description}\label{moduli}
Fix a prime $p\ge 7$ and a parameter $\epsilon\in\st{0,\pm 1}$. In this section, we study the set $\mwa$ of isomorphism classes of objects in $\mfs$ with Weil polynomial $\chi_{\varphi}(X)=X^4+\epsilon pX^2+p^2$. By Proposition \ref{automat}, the elements of $\mwa$ are precisely admissible filtered $\varphi$-modules over $\qp$ satisfying \hyperref[ssfilphimod]{$\mathbf{S1}$} with $\chi_{\varphi}(X)=X^4+\epsilon pX^2+p^2$. 

We first treat $\mwa$ as a set and study the corresponding monodromy groups. We then construct a surjective Zariski quotient presentation $\pi:\mbp^2(\qp)\twoheadrightarrow\mwa$, whose fibers are Zariski locally closed subsets of $\mbp^2(\qp)$. We endow $\mwa$ with the quotient topology induced by the Zariski topology via $\pi$. The resulting topology records specialization relations among the isomorphism classes and is used below to study the loci with fixed monodromy group.

A main tool to describe $\mwa$ is the Grassmannian $Y=\Gr(2,4)$. Indeed, for any such $D$, we can take a certain $\qp$-basis $\mb$ of $D$ such that 
\begin{equation}\label{matrixofphi}
    \mat_{\mb}(\varphi)=\br{\begin{smallmatrix}
    0 & 0 & 0 & -p^2\\
    1 & 0 & 0 & 0\\
    0 & 1 & 0 & -\epsilon p\\
    0 & 0 & 1 & 0
\end{smallmatrix}}.
\end{equation}
Since $D$ satisfies \hyperref[ssfilphimod]{$\mathbf{S1}$}, the only undetermined data is the choice of $\fil_1D\subset\qp^4$, described by $Y(\qp)$.
\begin{defn}\label{dp}
    We denote by $D_P$ the filtered $\varphi$-module represented by $P\in Y(\qp)$, and by $\adlc(Y(\qp))$ the subset of $Y(\qp)$ consisting of the points representing admissible filtered $\varphi$-modules. By Lemma \ref{adm}, $\adlc(Y(\qp))$ is the complement of the $\varphi$-stable points in $Y(\qp)$.
\end{defn}
Note that $\GL_4$ acts algebraically on $Y$, under which $\adlc(Y(\qp))$ is stable. After identifying $\varphi$ with the matrix (\ref{matrixofphi}), we have the following observation.
\begin{prop}
    Denote $Z_{\varphi}(\qp)=\st{g\in\GL_4(\qp)|g\varphi=\varphi g}$. The quotient set $\adlc(Y(\qp))/Z_{\varphi}(\qp)$ is identified with $\mwa$ via $[P]\mapsto D_P$. 
\end{prop}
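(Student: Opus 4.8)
The plan is to show that $P\mapsto[D_P]$ defines a well-defined surjection $Y(\qp)^{\f}\to\mwa$ whose fibres are exactly the $Z_\varphi(\qp)$-orbits; this is precisely the asserted bijection $Y(\qp)^{\f}/Z_\varphi(\qp)\xrightarrow{\sim}\mwa$.

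\emph{The map is well defined.} Fix $P\in Y(\qp)^{\f}$. The module $D_P=(\qp^4,\varphi,\fil_\bullet)$ has crystalline Frobenius the matrix (\ref{matrixofphi}), so $\chi_\varphi(X)=X^4+\epsilon pX^2+p^2$, and its filtration is $\fil_2D_P=0\subsetneq\fil_1D_P=P\subsetneq\fil_0D_P=D_P$ with $\dim P=2$; hence $D_P$ satisfies \hyperref[ssfilphimod]{$\mathbf{S1}$}. Since $P$ lies in the admissible locus (Definition \ref{dp}), Proposition \ref{automat} shows that $D_P$ also satisfies \hyperref[ssfilphimod]{$\mathbf{S2}$} and \hyperref[ssfilphimod]{$\mathbf{S3}$}, so $D_P\in\mfs$ by Lemma \ref{ssfilphimod}; and $[D_P]\in\mwa$ because $\chi_\varphi$ is the prescribed Weil polynomial.

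\emph{Fibres are $Z_\varphi(\qp)$-orbits.} If $g\in Z_\varphi(\qp)$ and $W\subset\qp^4$ is $\varphi$-stable, then $\varphi(gW)=g\varphi(W)\subseteq gW$; thus $Z_\varphi(\qp)$ preserves the set of $\varphi$-stable subspaces and hence, by Definition \ref{dp}, its complement $Y(\qp)^{\f}$. Now take $P,P'\in Y(\qp)^{\f}$. As $\mfs$ is a full subcategory of $\mf_{\qp}^{\f}$, an equality $[D_P]=[D_{P'}]$ amounts to an isomorphism $f\colon D_P\xrightarrow{\sim}D_{P'}$ of filtered $\varphi$-modules. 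Viewing $f$ as an element of $\GL_4(\qp)$ (both modules have underlying space $\qp^4$), compatibility of $f$ with the two crystalline Frobenii — both equal to the matrix (\ref{matrixofphi}) — reads $f\varphi=\varphi f$, i.e. $f\in Z_\varphi(\qp)$, while compatibility with the filtrations forces $f(P)=\fil_1D_{P'}=P'$ (the inclusion $f(\fil_1D_P)\subseteq\fil_1D_{P'}$ is an equality for dimension reasons). Conversely, any $g\in Z_\varphi(\qp)$ with $gP=P'$ furnishes such an isomorphism. Hence $P\mapsto[D_P]$ descends to an injection $Y(\qp)^{\f}/Z_\varphi(\qp)\hookrightarrow\mwa$.

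\emph{Surjectivity and the main point.} Let $D$ represent a class in $\mwa$. Since $\qp$ is its own maximal unramified subextension, $\varphi_D$ is $\qp$-linear with $\chi_{\varphi_D}(X)=X^4+\epsilon pX^2+p^2$, which is separable; therefore the minimal and characteristic polynomials of $\varphi_D$ coincide, $\varphi_D$ is non-derogatory, and $D$ admits a $\qp$-basis in which $\varphi_D$ is the companion matrix (\ref{matrixofphi}). Such a basis identifies $D$ with $D_P$ for $P:=\fil_1D\subset\qp^4$ two-dimensional; admissibility of $D$ together with Lemma \ref{adm} forces $P$ not to be $\varphi$-stable, so $P\in Y(\qp)^{\f}$ and $[D]=[D_P]$ lies in the image. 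The one step deserving care is this companion-matrix normalization — passing from the abstract operator $\varphi_D$ to the fixed matrix (\ref{matrixofphi}) via separability of the Weil polynomial — together with the bookkeeping of which of \hyperref[ssfilphimod]{$\mathbf{S1}$}–\hyperref[ssfilphimod]{$\mathbf{S3}$} and admissibility are automatic; everything else follows directly from Proposition \ref{automat}, Lemma \ref{ssfilphimod} and Lemma \ref{adm}.
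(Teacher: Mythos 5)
Your proposal is correct. The paper states this proposition as an observation without proof, and your argument (well-definedness via Lemma \ref{adm}, Proposition \ref{automat} and Lemma \ref{ssfilphimod}; fibres equal to $Z_{\varphi}(\qp)$-orbits since an isomorphism of the two modules is precisely an element of $Z_{\varphi}(\qp)$ carrying one filtration to the other; surjectivity by the cyclic-vector/companion-matrix normalization made possible by separability of $X^4+\epsilon pX^2+p^2$) is exactly the routine verification the paper leaves implicit, matching how the surrounding text (Definition \ref{dp}, Proposition \ref{surjpi}) uses the identification.
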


\begin{nota}
    Let $K$ be any field and $P\in Y(K)$. Denote by $V_P\subset K^4$ the linear subspace defined by $P$. Denote by $U(K)\subset Y(K)$ the open subvariety of $Y$ defined by the condition 
    $$K^4=V_P\oplus\varphi(V_P).$$ For $P\in U(K)$, we denote by $\pr_P$ the projection from $K^4$ onto $V_P$ along $\varphi(V_P)$. 
\end{nota}
\begin{defn}\label{func}
    Fix the canonical embedding $i:\mba^1\to\mbp^1$, $a\mapsto[a:1]$. We denote by $c$ the morphism from $U$ to $\mbp^1$ defined by $P\mapsto i(\tr(\pr_P\circ\varphi^2|_{V_P}))$. We denote by $\dm_c$ the domain of definition of $c$ (the maximal locus where $c$ can be extended), which is an open subvariety of $Y$.
\end{defn}

\subsection{Parametrization by $\mbp^2(\qp)$}
Let $K$ be any field, we can decompose $K^4$ as
$$K^4=Ke_1\oplus\bigoplus_{2\le i\le 4}Ke_i,$$ 
where $(e_i)_{1\le i\le 4}$ is the canonical basis of $K^4$. By considering only the $2$-dimensional subspaces of $K^4$ containing $e_1$, we can define a closed immersion 
$$\iota:\mbp^2=\Gr(1,3)\hookrightarrow\Gr(2,4)=Y.$$
Moreover, we have $\iota(\mbp^2(\qp))\subset \adlc(Y(\qp))$ (since there is no $2$-dimensional $\varphi$-stable subspace of $\qp^4$ containing $e_1$).
\begin{prop}\label{surjpi}
    The composition $\pi:\mbp^2(\qp)\stackrel{\iota}\to \adlc(Y(\qp))\twoheadrightarrow\mwa$ is surjective.
\end{prop}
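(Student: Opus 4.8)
The statement to prove is that $\pi: \mbp^2(\qp) \stackrel{\iota}{\to} Y(\qp)^{\f} \twoheadrightarrow \mwa$ is surjective, i.e. every admissible filtered $\varphi$-module $D$ satisfying $\mathbf{S1}$ with $\chi_{\varphi}(X) = X^4 + \epsilon p X^2 + p^2$ is isomorphic to some $D_P$ with $P \in \iota(\mbp^2(\qp))$. The plan is to trace through the classification already established in Theorem \ref{mainclass}: every such $D$ is isomorphic to one of $D^{\epsilon,iso}_{\epsilon'}$, $D^{\epsilon,\nu}_{a'}$, or $D^{\epsilon,\mu}_{(a,b)}$ (with the admissible parameters). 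So it suffices to realize each of these three families as $D_P$ for suitable $P$ in the image of $\iota$, which means: (1) conjugate the Frobenius matrix of each family into the companion-type normal form (\ref{matrixofphi}), and (2) check that after this conjugation the line $\fil_1 D$ gets carried into a $2$-plane containing $e_1$.

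\smallskip

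For step (1): the matrix (\ref{matrixofphi}) is the companion matrix of $X^4 + \epsilon p X^2 + p^2$ (one checks its characteristic polynomial directly), which is separable for $p \ge 7$ by Proposition \ref{classssweil}. Since all three families $\varphi^{\epsilon,iso}_{\epsilon'}$, $\varphi^{\epsilon,\nu}_{a'}$, $\varphi^{\epsilon,\mu}_{(a,b)}$ have the same separable characteristic polynomial, each is conjugate over $\qp$ to the companion matrix; concretely, for any vector $v$ that is cyclic for $\varphi$, the basis $(v, \varphi v, \varphi^2 v, \varphi^3 v)$ puts $\varphi$ in companion form. The key point for step (2) is to choose the cyclic vector $v$ to lie in $\fil_1 D$. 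Indeed, $\fil_1 D$ is $2$-dimensional and not $\varphi$-stable (admissibility, via Lemma \ref{adm}), so pick $v \in \fil_1 D$ with $\varphi(v) \notin \fil_1 D$; then $v, \varphi(v), \varphi^2(v), \varphi^3(v)$ are linearly independent (if not, $v$ would generate a proper $\varphi$-stable subspace, forcing $\fil_1 D$ to be $\varphi$-stable or $v$ to be an eigenvector, both impossible since $\chi_\varphi$ is irreducible over $\qp$ in the cases at hand — and even when $\chi_\varphi$ factors, a $2$-dimensional $\varphi$-stable subspace has $t_H \le 1 < 2 = t_H(\fil_1 D)$ unless it is not $\fil_1 D$). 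In the basis $\mb = (v, \varphi v, \varphi^2 v, \varphi^3 v)$, the first basis vector $e_1 = v$ lies in $\fil_1 D$, hence $\fil_1 D = \mathrm{Span}(e_1, w)$ for some $w$, so the corresponding point $P \in Y(\qp)$ lies in $\iota(\mbp^2(\qp))$ by the very definition of $\iota$.

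\smallskip

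Assembling: given any $D \in \mwa$, realize it (up to isomorphism) in companion form with $\fil_1 D \ni e_1$ as above; this exhibits $D \simeq D_P$ with $P \in \iota(\mbp^2(\qp))$, so $[D] = \pi(Q)$ where $\iota(Q) = P$. Hence $\pi$ is surjective. The main obstacle — and the only place requiring genuine care — is the linear independence of $(v, \varphi v, \varphi^2 v, \varphi^3 v)$ for $v \in \fil_1 D \setminus \varphi^{-1}(\fil_1 D)$: one must rule out that $v$ lies in a $\varphi$-stable subspace of dimension $1$, $2$, or $3$. Dimension $1$ is excluded because $\chi_\varphi$ has no root in $\qp$; dimension $3$ is excluded because $\chi_\varphi$ has no cubic factor over $\qp$ (its factorization type over $\qp$, read off from Proposition \ref{classssweil}, is either irreducible or a product of two quadratics); dimension $2$ is excluded because any $2$-dimensional $\varphi$-stable subspace $D'$ has $t_H(D') \le 1$ by the computation in Lemma \ref{adm}, whereas if $v \in D'$ and $\varphi(v) \in D'$ then $D' = \fil_1 D$ (forced, since $D'$ is spanned by $v, \varphi(v)$ both in $\fil_1 D$), contradicting $t_H(\fil_1 D) = 2$. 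This is exactly the admissibility argument already used, so no new input is needed beyond what the excerpt provides.
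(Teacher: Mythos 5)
Your route is essentially the paper's: pick a vector of $\fil_1D$ whose $\varphi$-orbit is a basis, pass to the companion form (\ref{matrixofphi}), and note that the resulting filtration contains the first basis vector, hence the corresponding point lies in the image of $\iota$. (The detour through the three families of Theorem \ref{mainclass} announced in your plan is not needed, and in fact you do not use it.) However, the step you yourself single out as the crux --- that any $v\in\fil_1D$ with $\varphi(v)\notin\fil_1D$ is cyclic --- is not established by your argument, and as stated it is false. In the dimension-$2$ case you argue that if $D'=\qp v\oplus\qp\varphi(v)$ were $\varphi$-stable then $D'=\fil_1D$ ``since $v,\varphi(v)$ both lie in $\fil_1D$''; but by your own choice $\varphi(v)\notin\fil_1D$, so $D'\neq\fil_1D$, and admissibility only forces $\dim_{\qp}(D'\cap\fil_1D)=t_H(D')\le t_N(D')=1$, which is perfectly compatible with $v\in D'$. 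Concretely, when $X^2+\epsilon pX+p^2$ has two roots $\mu_1,\mu_2\in\qp$, one has $D=D'_1\oplus D'_2$ with $D'_i=\ker(\varphi^2-\mu_i)$, and there are admissible filtrations with $\fil_1D=\qp v_1\oplus\qp v_2$, $v_i\in D'_i$ nonzero; then $v=v_1$ satisfies $\varphi(v)\notin\fil_1D$, yet $(v,\varphi v,\varphi^2v,\varphi^3v)$ spans only the plane $D'_1$.

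The gap is genuine but easily repaired, and the repair is also what a careful reading of the paper's proof (which is equally terse, taking an arbitrary nonzero $x\in\fil_1D_P$) requires. Since $\chi_\varphi$ has no linear or cubic factor over $\qp$, the only proper nonzero $\varphi$-stable subspaces are the at most two planes $D'_1,D'_2$ above (none at all when $\chi_\varphi$ is irreducible), and each meets $\fil_1D$ in at most a line by the admissibility inequality $t_H\le t_N=1$ from Lemma \ref{adm}. As $\fil_1D$ is a $2$-dimensional space over the infinite field $\qp$, it is not a union of two lines, so one can choose $v\in\fil_1D$ outside $D'_1\cup D'_2$; such a $v$ lies in no proper nonzero $\varphi$-stable subspace and is therefore cyclic. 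With this choice of $v$ the remainder of your argument (companion form, $e_1\in\fil_1D$, hence a point of $\iota(\mbp^2(\qp))$ mapping to the given class) goes through and coincides with the paper's proof.
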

\begin{proof}
    For any $P\in \adlc(Y(\qp))$, by Lemma \ref{existcyc} we can take $x,y\in\fil_1D_P$ such that $x$ is $\varphi$-cyclic and that $\fil_1D_P=\qp x\oplus\qp y$. If $y=\sum_{0\le j\le 3}c_j\varphi^j(x)$, let $Q=\iota([c_1:c_2:c_3])$ and we have an isomorphism of filtered $\varphi$-modules $D_Q\simeq D_P$ via taking the canonical basis of $V=\qp^4$ to the basis $\st{\varphi^j(x)}_{0\le j\le 3}$ of $D_P$. Therefore, we have $\pi(Q)=\pi(P)$ and $\pi$ is surjective.
\end{proof}
For convenience, for $P\in\mbp^2(\qp)$, we denote by $D_P$ the filtered $\varphi$-module $D_{\iota(P)}$.

\begin{prop}\label{munu}
    Consider the points $P_{\pm}=[(\epsilon\pm1)p:0:1]\in\mbp^2(\qp)$ and morphisms
    $$\mba^1\stackrel{\nu}\to\mbp^2\qaq\mba^2\backslash\st{xy=-1}\stackrel{\mu}\to\mbp^2$$
    defined by
    $$a'\stackrel{\nu}\mapsto[a':0:1]\qaq (a,b)\stackrel{\mu}\mapsto\sq{-\frac{a+\epsilon pab^2-b^3p^2}{ab+1}:1:-b}.$$
    For $\epsilon'\in\st{\pm1}$ and $a,b,a'\in\qp$ such that $ab\neq-1$, we have
    $$D_{P_{\epsilon'}}\simeq D_{\epsilon'}^{\epsilon,iso},~D_{\nu(a')}\simeq D_{a'}^{\epsilon,\nu}\qaq D_{\mu(a,b)}\simeq D_{(a,b)}^{\epsilon,\mu}.$$
\end{prop}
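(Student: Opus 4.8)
The plan is to verify the three isomorphisms one at a time, in each case by exhibiting an explicit $\varphi$-cyclic vector of the target module lying in its $\fil_1$.

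\textbf{The common mechanism.} For a point $Q=\iota([c_1\!:\!c_2\!:\!c_3])$, the module $D_Q$ is by definition the filtered $\varphi$-module on $\qp^4$ whose crystalline Frobenius is the companion matrix $(\ref{matrixofphi})$ in the canonical basis and with $\fil_1D_Q=\Span(e_1,\,c_1e_2+c_2e_3+c_3e_4)$; as $D_Q$ satisfies $\mathbf{S1}$ this determines the whole filtration. Now let $D$ be any of $D^{\epsilon,iso}_{\epsilon'}$, $D^{\epsilon,\nu}_{a'}$, $D^{\epsilon,\mu}_{(a,b)}$. Its Frobenius $\varphi$ has characteristic polynomial $X^4+\epsilon pX^2+p^2$, which is separable, hence equal to the minimal polynomial of $\varphi$; thus a vector $v$ is $\varphi$-cyclic precisely when $(v,\varphi v,\varphi^2v,\varphi^3v)$ is a basis. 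For such a $v\in\fil_1D$, Cayley--Hamilton gives $\varphi^4v=-p^2v-\epsilon p\,\varphi^2v$, so in the basis $\mathcal B'=(v,\varphi v,\varphi^2v,\varphi^3v)$ the matrix of $\varphi$ is exactly $(\ref{matrixofphi})$. Writing a second generator of $\fil_1D$ in $\mathcal B'$ and subtracting off its $v$-component gives $\fil_1D=\Span\bigl(v,\;c_1\varphi v+c_2\varphi^2v+c_3\varphi^3v\bigr)$ for a well-defined point $[c_1\!:\!c_2\!:\!c_3]\in\mbp^2(\qp)$, and the change of basis $\mathcal B'$ then furnishes an isomorphism of filtered $\varphi$-modules $D\simeq D_{\iota([c_1:c_2:c_3])}$ (it carries $\fil_1$ to $\fil_1$, hence respects the filtration, since $\fil_0=D$ and $\fil_2=0$). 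It remains to choose $v$ in each case and compute $[c_1\!:\!c_2\!:\!c_3]$.

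\textbf{The three computations.} I would take $v=e_1$ for $D^{\epsilon,iso}_{\epsilon'}$ and for $D^{\epsilon,\mu}_{(a,b)}$, and $v=e_2$ for $D^{\epsilon,\nu}_{a'}$; all of these lie in $\Span(e_1,e_2)=\fil_1D$. One computes $\varphi v,\varphi^2v,\varphi^3v$ from the matrices in Definition \ref{canfam}; the determinant of the transition matrix to the canonical basis, which confirms cyclicity, comes out to $\epsilon'p$ in the $(iso)$ case, $-1$ in the $(\nu)$ case, and $-\tfrac{1+\epsilon pb^2+b^4p^2}{ab+1}$ in the $(\mu)$ case. In the last of these the numerator equals $q(pb^2)$ with $q(Y)=Y^2+\epsilon Y+1$; any root of $q$ lying in $\qp$ is a $p$-adic unit (here $p\ge 7$ and $\epsilon^2-4\in\st{-3,-4}$), whereas $v_p(pb^2)$ is odd, so the numerator never vanishes and $v=e_1$ is cyclic for every $(a,b)$ with $ab\neq-1$. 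Eliminating $e_3$ and $e_4$ from the relations between $v,\varphi v,\varphi^2v,\varphi^3v$ and the canonical basis then expresses $\fil_1D=\Span(e_1,e_2)$ in $\mathcal B'$, and reading off the class of the second generator one gets $[(\epsilon+\epsilon')p\!:\!0\!:\!1]=P_{\epsilon'}$ for $(iso)$, $[a'\!:\!0\!:\!1]=\nu(a')$ for $(\nu)$, and $\bigl[-\tfrac{a+\epsilon pab^2-b^3p^2}{ab+1}\!:\!1\!:\!-b\bigr]=\mu(a,b)$ for $(\mu)$, as claimed.

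\textbf{Where the work is.} Only the $(\mu)$ case requires a genuine computation. Taking $v=e_1$ in $D^{\epsilon,\mu}_{(a,b)}$ one finds $\varphi v=e_3$, $\varphi^2v=e_2+ae_3+be_4$ and $\varphi^3v=-bp^2e_1+(a+bc)e_2+(a^2+bd)e_3+e_4$, where $c,d$ are the entries appearing in Definition \ref{canfam}; solving for $e_2$ in terms of $\mathcal B'$ produces the coefficients $1-ab-b^2c$ and $-a+a^2b+b^2d$, and the whole expression collapses through the identities
$$1-ab-b^2c=\frac{1+\epsilon pb^2+b^4p^2}{ab+1}\qaq -a+a^2b+b^2d=-\frac{a+\epsilon pab^2-b^3p^2}{ab+1}.$$
The first is nonzero for every $b\in\qp$ by the valuation-parity argument above, so $e_1$ is cyclic; it also shows $\fil_1D=\Span\bigl(e_1,\,(-a+a^2b+b^2d)\,\varphi v+\varphi^2v-b\,\varphi^3v\bigr)$, whose class in $\mbp^2(\qp)$ is $\mu(a,b)$ by the second identity. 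The $(iso)$ and $(\nu)$ cases are one-line eliminations once $v$ is fixed, since the Frobenius matrices there have very few nonzero entries.
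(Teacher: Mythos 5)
Your proposal is correct and follows essentially the same route as the paper's proof: both identify $D_P$ through the criterion that $\fil_1$ be spanned by a vector $x$ together with a combination $\sum_{i=1}^3 t_i\varphi^i(x)$, choose the same vectors ($e_1$ in the $(iso)$ and $(\mu)$ cases, $e_2$ in the $(\nu)$ case), and obtain the point of $\mbp^2(\qp)$ by the same elimination. If anything, your write-up is a bit more careful than the paper's $(\mu)$ computation, since you check cyclicity via the nonvanishing of $1+\epsilon pb^2+b^4p^2$ (for $b\neq 0$ by the odd valuation of $pb^2$, and trivially when $b=0$) and you avoid dividing by $b$, so the case $b=0$ is covered uniformly.
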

\begin{proof}
    Recall that for $P=[t_1:t_2:t_3]\in\mbp^2(\qp)$, the isomorphism class of $D_P$ is determined by the condition that
    \begin{equation}\label{co}
        \mbox{there exists } x\in D_P\mbox{ such that }\fil_1D_P=\Span\br{x,\textstyle{\sum_{i=1}^3t_i\varphi^i(x)}}.
    \end{equation}
    The canonical adapted basis of $D_{a'}^{\epsilon,\nu}$ has the form $(y,x,\varphi(x),\varphi^2(x))$ such that $\varphi^3(x)=y-a'\varphi(x)$. It follows that $\fil_1D_{a'}^{\epsilon,\nu}$ is generated by $x$ and $a'\varphi(x)+\varphi^3(x)$, and the assertion $D_{\nu(a')}\simeq D_{a'}^{\epsilon,\nu}$ follows from (\ref{co}). 
    
    Similarly, the canonical adapted basis of $D_{\epsilon'}^{\epsilon,iso}$ has the form $(x,y,\varphi(x),\varphi(y))$ such that
    $$\begin{cases}
        \varphi^2(x)=\epsilon'px+\varphi(y)\\
        \varphi^2(y)=\epsilon'py-(\epsilon+2\epsilon')p\varphi(x).
    \end{cases}$$
    It follows that 
    $$\varphi^3(x)=\epsilon'p\varphi(x)+\varphi^2(y)=\epsilon'py-(\epsilon+\epsilon')p\varphi(x).$$
    Therefore, $\fil_1D_{\epsilon'}^{\epsilon,iso}$ is generated by $x$ and $(\epsilon+\epsilon')p\varphi(x)+\varphi^3(x)$ and our assertion $D_{P_{\epsilon'}}\simeq D_{\epsilon'}^{\epsilon,iso}$ follows from (\ref{co}).

    Finally, the canonical adapted basis of $D_{(a,b)}^{\epsilon,\mu}$ has the form $(x,y,\varphi(x),\varphi(y))$ such that
    \begin{equation}\label{ast}
        \begin{cases}
        \varphi^2(x)=y+a\varphi(x)+b\varphi(y)\\
        \varphi^2(y)=p^2x+cy+d\varphi(x)-a\varphi(y)
    \end{cases}
    \end{equation}
    where $c=-\frac{a^2+\epsilon p+b^2p^2}{ab+1}$ and $d=ac+bp^2$. Since $\fil_1D_{(a,b)}^{\epsilon,\mu}=\Span(x,y)$, it suffices to compute the coordinates of $y$ under the basis $(\varphi^i(x))_{0\le i\le 3}$. If $b\neq0$, using (\ref{ast}) repeatedly, we have
    \begin{align*}
        \varphi^3(x)=\varphi(y)+a\varphi^2(x)+b\varphi^2(y)&=bp^2x+bcy+bd\varphi(x)+a\varphi^2(x)+(1-ab)\varphi(y)\\
        &=bp^2x+(a+bc-b^{-1})y+(bd-ab^{-1}+a^2)\varphi(x)+b^{-1}\varphi^2(x).
    \end{align*}
    A direct computation shows that $bd-ab^{-1}+a^2=-\frac{a+\epsilon pab^2-b^3p^2}{ab+1}$. Therefore, we have
    $$(a+bc-b^{-1})y\equiv\frac{a+\epsilon pab^2-b^3p^2}{ab+1}\varphi(x)+\varphi^2(x)-b\varphi^3(x)\mod x.$$
    We deduce that $D_{\mu(a,b)}\simeq D_{(a,b)}^{\epsilon,\mu}$ from (\ref{co}). If $b=0$, by (\ref{ast}) we have $y=-a\varphi(x)+\varphi^2(x)$. Since $\mu(a,0)=[-a:1:0]$, we have $D_{\mu(a,0)}\simeq D_{(a,0)}^{\epsilon,\mu}$.
\end{proof}
The following lemma is technical, and we will prove it at the end of next subsection.
\begin{lem}\label{dfwa}
    We have $\adlc(Y(\qp))=\dm_c(\qp)$. In particular, the rational map $c$ is defined at every point of the admissible subset $\adlc(Y(\qp))\subseteq Y(\qp)$. 
\end{lem}
The first main theorem of this section is the following.
\begin{thm}\label{propmain}
    The map (of sets) $c:\adlc(Y(\qp))\rightarrow\mbp^1(\qp)$ factors through $\mwa$. Equivalently, there exists a unique map (of sets) $\bar c:\mwa\to\mbp^1(\qp)$ such that the diagram
    $$
\begin{tikzcd}
\adlc(Y(\qp)) \arrow[d, two heads] \arrow[r, "c"] & \mbp^1(\qp) \\
\mwa \arrow[ru, "\bar c", dashed]                &            
\end{tikzcd}$$
commutes. Moreover, $\bar c$ is injective outside $\bar c^{-1}(i(-\epsilon p))$ (where $i$ is the embedding in Definition \ref{func}). 
\end{thm}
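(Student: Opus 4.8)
The plan is to prove two things: that $c$ is constant on the fibres of the surjection $Y(\qp)^{\f}\twoheadrightarrow\mwa$, and that the induced map $\bar c$ is injective away from $\bar c^{-1}(i(-\epsilon p))$. For the first, note that for $P\in U(\qp)$ the scalar $\tr(\pr_P\circ\varphi^2|_{V_P})$ is nothing but the trace of the composite $\fil_1D_P\hookrightarrow D_P\xrightarrow{\varphi_{D_P}^{2}}D_P\twoheadrightarrow\fil_1D_P$, the last arrow being the projection along $\varphi_{D_P}(\fil_1D_P)$; this is visibly an isomorphism invariant of $D_P$, so $c|_U$ is constant on $Z_\varphi(\qp)$-orbits. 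As $U$ is dense in $Y$ and $\mbp^1$ is separated, $c$ is then $Z_\varphi(\qp)$-invariant on all of $Y^{\df}(\qp)$; concretely $\pr_{gP}=g\circ\pr_P\circ g^{-1}$ for $g\in Z_\varphi(\qp)$, and $Y^{\df}$ is $Z_\varphi$-stable by maximality of the domain of definition of $c$. Via the identification $\mwa=Y(\qp)^{\f}/Z_\varphi(\qp)=Y^{\df}(\qp)/Z_\varphi(\qp)$ of Lemma~\ref{dfwa}, $c$ thus descends uniquely to $\bar c\colon\mwa\to\mbp^1(\qp)$.

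For the injectivity I would first make $c\circ\iota$ explicit. Projecting $\varphi^{2}(e_1)$ and $\varphi^{2}(t_1e_2+t_2e_3+t_3e_4)$ onto $V_P=\Span(e_1,\,t_1e_2+t_2e_3+t_3e_4)$ along $\varphi(V_P)$, one finds on the locus where $\iota([t_1:t_2:t_3])\in U$ (equivalently $t_2^2-t_1t_3+\epsilon pt_3^2\neq0$) that
$$
(c\circ\iota)([t_1:t_2:t_3])=\bigl[-(t_1-\epsilon pt_3)^2-\epsilon pt_2^2-p^2t_3^2\ :\ t_2^2-t_1t_3+\epsilon pt_3^2\bigr].
$$
These two quadratic forms have no common zero in $\mbp^2(\qp)$: a common zero would force $(t_2/t_3)^2$ to be a root of $X^2+\epsilon pX+p^2$, impossible since this polynomial is supersingular and its roots have odd valuation. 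Since $\iota(\mbp^2(\qp))\subset Y^{\df}(\qp)$ (Lemma~\ref{dfwa}), $c\circ\iota$ is a morphism near every $\qp$-point of $\mbp^2$ and agrees with the displayed pair of forms on a dense open; hence its indeterminacy locus avoids all $\qp$-points, and the displayed formula therefore computes $c\circ\iota$ on all of $\mbp^2(\qp)$. Feeding in the points $P_{\pm}$ and the parametrisations $\nu,\mu$ of Proposition~\ref{munu} gives
$$
\bar c(D^{\epsilon,\nu}_{a'})=[(a'-\epsilon p)^2+p^2:a'-\epsilon p],\qquad\bar c(D^{\epsilon,\mu}_{(a,b)})=[c(a,b):1],\qquad\bar c(D^{\epsilon,iso}_{\epsilon'})=[2\epsilon'p:1],
$$
where $c(a,b)=-\frac{a^2+\epsilon p+b^2p^2}{ab+1}$ is the function appearing in Theorem~\ref{mainthm}, whence the notational coincidence.

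Now fix $v\in\mbp^1(\qp)$ with $v\neq i(-\epsilon p)$ and let two classes of $\mwa$ lie over $v$. By Proposition~\ref{mainthm1} each is isomorphic to some $D^{\epsilon,\nu}_{a'}$ with $a'\in\epsilon p+p^2\zp$, to some $D^{\epsilon,\mu}_{(a,b)}$ with $(a,b)$ as in Theorem~\ref{mainclass}, or to some $D^{\epsilon,iso}_{\epsilon'}$. By Lemma~\ref{rathertechnical} the $\mu$-type representatives satisfy $v_p(\bar c)\ge1$, while the formulas above show the $\nu$-type ones satisfy $v_p(\bar c)\le0$ or $\bar c=[1:0]$, and the iso-type ones satisfy $\bar c=[\pm2p:1]$; elementary valuation arguments show in addition that $a'\mapsto\bar c(D^{\epsilon,\nu}_{a'})$ is injective on $\epsilon p+p^2\zp$ and that neither $c(a,b)$ nor $s+p^2/s$ (with $s\in p^2\zp$) can equal $\pm2p$. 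Hence the fibres over $[1:0]$, $[2p:1]$ and $[-2p:1]$ are the single classes $D^{\epsilon,\nu}_{\epsilon p}$, $D^{\epsilon,iso}_{1}$ and $D^{\epsilon,iso}_{-1}$; every fibre over a value with $v_p\le0$ contains at most one class (a $\nu$-type one); and the only remaining case is $v=[c_0:1]$ with $v_p(c_0)\ge1$ and $c_0\notin\st{-\epsilon p,\pm2p}$, where both classes are $\mu$-type with $c(a,b)=c_0$.

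The crux is therefore: \emph{if $c(a,b)=c(a',b')=:c_0$ with $c_0\notin\st{-\epsilon p,\pm2p}$, then $D^{\epsilon,\mu}_{(a,b)}\simeq D^{\epsilon,\mu}_{(a',b')}$}. I would prove this by producing a block-diagonal isomorphism $\diag(h,h)$ with $h\in\GL_2(\qp)$: writing $\mat_\mb(\varphi)=\br{\begin{smallmatrix}0&M\\I_2&SM\end{smallmatrix}}$ as in the proof of Proposition~\ref{mainthm1}, with $M=\br{\begin{smallmatrix}0&-p^2\\1&c_0\end{smallmatrix}}$ depending only on $c_0$ and $S,S'$ the matrices attached to $(a,b)$ and $(a',b')$, one checks that $\diag(h,h)$ preserves $\fil_1$ and intertwines the two Frobenii exactly when $h$ commutes with $M$ and $hSh^{-1}=S'$. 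Both $S,S'$ lie in the $2$-dimensional $\qp$-space $W=\st{X\in\mat_{2\times2}(\qp)\ver MX+XM=c_0X}$, on which $X\mapsto X^2$ takes scalar values $q(X)\cdot I_2$ for a binary quadratic form $q$ which is nondegenerate (hyperbolic after base change to $\bqp$), with $q(S)=q(S')=-\frac{\epsilon p+c_0}{p^2}\neq0$ precisely because $c_0\neq-\epsilon p$. Conjugation by $\qp[M]^{\times}$ preserves $q$ and --- since $M$ has distinct eigenvalues, which holds as $c_0\neq\pm2p$ --- acts on $W$ through an isomorphism $\qp[M]^{\times}/\qp^{\times}\xrightarrow{\,\sim\,}\mathrm{SO}(q)$; as $\mathrm{SO}(q)$ acts simply transitively on the nonempty level set $\st{X\in W\ver q(X)=-\frac{\epsilon p+c_0}{p^2}}$, this yields the desired $h$. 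I expect this final step to be the main obstacle. The hypothesis $c_0\neq-\epsilon p$ is indispensable: for $c_0=-\epsilon p$ the relevant level set degenerates to the isotropic cone of $q$ and splits into several $\mathrm{SO}(q)$-orbits, which is exactly the behaviour forcing $\bar c^{-1}(i(-\epsilon p))$ to be excluded; the values $\pm2p$, by contrast, never arise from the $\mu$-family and only require the bookkeeping in the previous paragraph, since there $D^{\epsilon,iso}_{\pm1}$ already determines the fibre.
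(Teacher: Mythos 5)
Your proposal is correct, and at the decisive point it relies on the same mechanism as the paper: an isomorphism $D^{\epsilon,\mu}_{(a,b)}\simeq D^{\epsilon,\mu}_{(a',b')}$ of the form $\diag(h,h)$ with $h$ commuting with $M$ and conjugating $S$ into $S'$. What differs is the surrounding organization. The paper does not separate fibres by valuation: it decomposes $\mbp^2(\qp)=U_{\infty}\sqcup U_{\mu}\sqcup U_{\nu}\sqcup\st{P_{\pm}}$, shows every point of $U_{\infty}$ gives the single class $D^{\epsilon,\nu}_{\epsilon p}$, shows every point of $U_{\nu}$ is isomorphic to a $\mu$-point via the explicit change of basis $H=\br{\begin{smallmatrix}1 & a'-\epsilon p\\ 1 & \frac{p^2}{a'-\epsilon p}\end{smallmatrix}}$, and observes that $(c\circ\iota)^{-1}(\pm 2p)=\st{P_{\pm}}$; you instead normalize every class through Proposition \ref{mainthm1} and separate the three families by $v_p(\bar c)$ using Lemma \ref{rathertechnical}, which buys a cleaner case analysis at the price of leaning on the full classification (the paper's fibrewise reduction is more self-contained here, but both are legitimate). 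Your descent of $c$ (conjugation-invariance of the trace on $U$ under $Z_{\varphi}(\qp)$, then density of $U$ and separatedness of $\mbp^1$) is likewise a tidier route than the paper's, which argues on $\mbp^2$ and treats $U_{\infty}$ separately. Finally, the step you flagged as the main obstacle does go through, and your formulation is in fact more careful than the paper's on the rationality of $h$: since $c_0\neq\pm2p$, $E:=\qp[M]$ is quadratic \'etale, $W$ is a free rank-one $E$-module (one checks $MX\in W$ for $X\in W$, and $Xz=\bar zX$ for $z\in E$), so $q$ is a scaled norm form $\lambda N_{E/\qp}$; conjugation by $g\in E^{\times}$ acts on $W$ as multiplication by $g/\bar g$, and Hilbert 90 (trivially in the split case) gives surjectivity onto $E^{1}=\mathrm{SO}(q)(\qp)$, after which simple transitivity on the level set $q=-\frac{\epsilon p+c_0}{p^2}\neq0$ produces $h\in\GL_2(\qp)$. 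The paper handles the same point by diagonalizing $M$ over $\bqp$ and asserting that the conjugating matrix may be taken in $Z(M)(\qp)$; your torus argument is exactly the justification of that assertion, so nothing essential is missing from your outline beyond writing out these standard verifications.
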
  
\begin{proof}
By Proposition \ref{surjpi}, $\mwa$ is a quotient set of $\mbp^2(\qp)$ under $\pi$. Two points $P,P'\in\mbp^2(\qp)$ are in the same fiber of $\pi$ if and only if $D_{P}\simeq D_{P'}$ as filtered $\varphi$-modules.
    $$
\begin{tikzcd}
                                                                                         & \mbp^2(\qp) \arrow[rd, "\iota"] \arrow[rdd, "\pi"', two heads] \arrow[rrd, "c\circ\iota"] &                                                  &             \\
\mba^2(\qp)\backslash\st{ab=-1} \arrow[ru, "\mu"] \arrow[rrd, "{D_{-}^{\epsilon,\mu}}"'] & \mba^1(\qp) \arrow[u, "\nu"] \arrow[rd, "{D_{-}^{\epsilon,\nu}}" description]             & \adlc(Y(\qp)) \arrow[d, two heads] \arrow[r, "c"] & \mbp^1(\qp) \\
                                                                                         &                                                                                           & \mwa \arrow[ru, "\bar c", dashed]                &            
\end{tikzcd}
$$
Denote $\{\infty\}=\mbp^2(\qp)\backslash\mathrm{im}~\! i$. We can verify that $c\circ\iota$ is given by
\begin{equation}\label{c}
    [x:y:z]\mapsto-\frac{x^2-\epsilon pxz+p^2z^2}{\epsilon pz^2+y^2-xz}-\epsilon p.
\end{equation}
Consider the decomposition
\begin{equation}\label{decompmbp}
    \mbp^2(\qp)=U_{\infty}\sqcup U_{\mu}\sqcup U_{\nu}\sqcup\st{P_{\pm}}
\end{equation}
        where $P_{\pm}=[(\epsilon\pm1)p:0:1]\in\mbp^2(\qp)$ and the subsets $U_{\infty}$, $U_{\mu}$ and $U_{\nu}$ are defined as
\begin{align*}
    U_{\infty}&=\st{P\in\mbp^2(\qp)~\!|~\!c(\iota(P))=\infty},\\
    U_{\mu}&=\st{[x:y:z]\in\mbp^2(\qp)~\!|~\!y\neq 0}\backslash U_{\infty},\\
    U_{\nu}&=\st{[a:0:1]~\!|~\!a\in\qp\backslash\st{\epsilon p,(\epsilon\pm1)p}}.
\end{align*}
    If $c(\iota(P))=\infty$ for some $P=[x:y:z]\in\mbp^2(\qp)$, we claim that $\fil_1(D_P)\cap\varphi(\fil_1(D_P))\neq 0$. Indeed, let $v:=(xy,xz,yz,z^2)\in\fil_1(D_P)\subset\qp^4$, we have
    $$\varphi(v)=(-z^2,xy,xz-\epsilon pz^2,yz)\in\qp^4.$$
    Since $c(\iota(P))=\infty$, by (\ref{c}) we have $xz-\epsilon pz^2=y^2$ and it's clear that $0\neq\varphi(v)\in\fil_1(D_P)$ and our assertion follows. Invoking Theorem \ref{mainclass}, $D=D_{\epsilon p}^{\epsilon,\nu}$ represents the only isomorphism class such that $\fil_1(D)\cap\varphi(\fil_1(D))\neq 0$. Therefore, $D_P$ is isomorphic to $D_{\epsilon p}^{\epsilon,\nu}$.

    Let $P,P'\in\mbp^2(\qp)\backslash U_{\infty}$ such that $D_P\simeq D_{P'}$. Then, $c(P)=c(P')$ by the intrinsic Definition \ref{func}. Therefore, the map $c$ factors through $\pi$, which gives the desired map $\bar c:\mwa\to\mbp^1(\qp)$.

    To verify that $\bar c$ is injective outside $\bar c^{-1}(i(-\epsilon p))$, we analyse its behavior on each part of (\ref{decompmbp}) separately. Denote by $V=\st{(a,b)\in\qp^2~\!|~\! ab\neq -1}\subset\qp^2$. We can verify directly that $\mu$ gives an isomorphism between $V$ and $U_{\mu}$ 
    and that $c(\iota(\mu(a,b)))=-\frac{a^2+\epsilon p+b^2p^2}{ab+1}$, which coincides with the notation $c$ in Theorem \ref{mainclass}. For $(a_1,b_1),(a_2,b_2)\in V$, denote $S_i=\br{\begin{smallmatrix}
            -b_i & a_i\\
            \frac{a_i+b_ic_i}{p^2} & b_i
        \end{smallmatrix}}$ and $M_i=\br{\begin{smallmatrix}
            0 & -p^2\\
            1 & c_i
        \end{smallmatrix}}\in\mat_{2\times 2}(\qp)$ where $c_i=c(a_i,b_i)$. We have the relations 
        \begin{equation}\label{rel}
            M_iS_i+S_iM_i=c_iS_i\qaq(S_i)^2=-\frac{\epsilon p+c_i}{p^2}I_2.
        \end{equation} 
        If $c_1=c_2\neq-\epsilon p,\pm2p$, we claim that $D_{\mu(a_1,b_1)}\simeq D_{\mu(a_2,b_2)}$. In this case we have $M_1=M_2=:M$.

        \begin{lem}
            There exists $H\in Z(M)\cap\GL_2(\qp)$ such that $S_2=HS_1H^{-1}$.
        \end{lem}
        \begin{proof}
            Let $E=\qp[M]$. By assumption, $E/\qp$ is \'etale and denote by $t\mapsto\bar t$ the nontrivial involution of $E$ induced by $M\mapsto c-M$. By (\ref{rel}), for $i=1,2$ we have $S_iM=\bar MS_i$. Therefore, we have $S_it=\bar tS_i$ for any $t\in E$. Denote $u=S_2S_1^{-1}\in \GL_2(\qp)$. For any $t\in E$, we have 
            $$ut=S_2S_1^{-1}t=S_2\bar tS_1^{-1}=tS_2S_1^{-1}=tu.$$ 
            It follows that $u\in E^\times$ since $E^\times$ is a maximal commutative subalgebra in $\mat_2(\qp)$. Moreover, we have
            $$S_2^2=uS_1uS_1=u\bar uS_1^2.$$
            Since $S_1^2=S_2^2=-\frac{\epsilon p+c}{p^2}\neq 0$, we have $u\bar u=1$. By Hilbert 90 we have $u=H\bar H^{-1}$ for some $H\in E^\times=Z(M)\cap\GL_2(\qp)$, which yields
            $$S_2=H\bar H^{-1}S_1=HS_1H^{-1}.$$
        \end{proof}
        By the lemma above, we have $S_2=HS_1H^{-1}$ for some $H\in Z(M_1)(\qp)$. It follows that the matrices $\mat_{\mathcal B}(\varphi_{D_{(a_i,b_i)}^{\epsilon,\mu}})=\br{\begin{smallmatrix}
            0 & M_i\\
            I_2 & S_iM_i
        \end{smallmatrix}}$ $(i=1,2)$ are conjugate via $\br{\begin{smallmatrix}
            H & 0\\
            0 & H
        \end{smallmatrix}}$, which proves that $D_{(a_1,b_1)}^{\epsilon,\mu}\simeq D_{(a_2,b_2)}^{\epsilon,\mu}$. 
        
        If $c_1=c_2=-\epsilon p$. We can conjugate simultaneously the matrices $M_1=M_2$, $S_1$ and $S_2$ into $\widetilde M=\br{\begin{smallmatrix}
            \mu_1 & 0\\
            0 & \mu_2
        \end{smallmatrix}}$, $\widetilde S_1=\br{\begin{smallmatrix}
            0 & \lambda_2\\
            \lambda_1 & 0
        \end{smallmatrix}}$ and $\widetilde S_2=\br{\begin{smallmatrix}
            0 & \lambda_2'\\
            \lambda_1' & 0
        \end{smallmatrix}}$ where $\mu_i$ are roots of $X^2+\epsilon pX+p^2$ and $\lambda_i,\lambda_i'\in\qp$. By assumption, $\lambda_1\lambda_2=\lambda_1'\lambda_2'=0$. In this case, the isomorphism class of $D_{(a_i,b_i)}^{\epsilon,\mu}$ depends on the $Z(\widetilde M)$-conjugation class of $\widetilde S_i$. Since $Z(\widetilde M)$ is the canonical maximal torus in $\GL_2$, there are three such classes, represented by $\br{\begin{smallmatrix}
            0 & 0\\
            1 & 0
        \end{smallmatrix}}$, $\br{\begin{smallmatrix}
            0 & 1\\
            0 & 0
        \end{smallmatrix}}$ and $\br{\begin{smallmatrix}
            0 & 0\\
            0 & 0
        \end{smallmatrix}}$. Using (\ref{c}), the three conjugation classes above are respectively corresponding to the cases 
        \begin{equation}\label{cases}
            \frac{x}{pz}=-\mu_1,~\frac{x}{pz}=-\mu_2\qaq x=z=0
        \end{equation}
        which gives the three preimages under $\bar c$ of $i(-\epsilon p)$.

        For any $P\in U_{\nu}$, we claim that there exists $P'\in U_{\mu}$ such that $\pi(P)=\pi(P')$. Note that for $P=[x:0:z]$, $c(P)\neq\infty$ if and only if $z\neq 0$ and $x\neq\epsilon pz$. The function $a\mapsto c([a:0:1])$ is clearly identified with $a\mapsto a-\epsilon p+\frac{p^2}{a-\epsilon p}$. It suffices to prove that, for any $a'\in\qp\backslash\st{\epsilon p,(\epsilon\pm1)p}$, there exists $(a,b)\in\qp^2$ with $ab+1\neq 0$ such that $D_{a'}^{\epsilon,\nu}\simeq D_{(a,b)}^{\epsilon,\mu}$. Let $(e_i)_{1\le i\le 4}$ be the canonical adapted basis of $D_{a'}^{\epsilon,\nu}$. Denote $\mathcal B'=(e_2,e_1,\varphi(e_2),\varphi(e_1))$, the matrix of $\varphi_{D_{a'}^{\epsilon,\nu}}$ under $\mathcal B'$ is
        $$\mat_{\mathcal B'}(\varphi_{D_{a'}^{\epsilon,\nu}})=\br{\begin{smallmatrix}
            0 & 0 & a'-\epsilon p & 0\\
            0 & 0 & 0 & \frac{p^2}{a'-\epsilon p}\\
            1 & 0 & 0 & \frac{1}{a'-\epsilon p}\\
            0 & 1 & -p^2-a'^2+\epsilon a'p & 0
        \end{smallmatrix}}.$$ 
        Denote $H=\br{\begin{smallmatrix}
            1 & a'-\epsilon p\\
            1 & \frac{p^2}{a'-\epsilon p}
        \end{smallmatrix}}$. By the assumption that $a'\in\qp\backslash\st{\epsilon p,(\epsilon\pm1)p}$, the matrix $H$ is well-defined and invertible. The conjugation of $\mat_{\mathcal B'}(\varphi_{D_{a'}^{\epsilon,\nu}})$ by $\br{\begin{smallmatrix}
            H & 0\\
            0 & H
        \end{smallmatrix}}$ has the form of $\varphi_{D_{(a,b)}^{\epsilon,\mu}}$ given in Definition \ref{canfam}, which gives an isomorphism between $D_{a'}^{\epsilon,\nu}$ and certain $D_{(a,b)}^{\epsilon,\mu}$ and verifies our assertion.

        Finally, for $\epsilon'=\pm 1$, $c^{-1}(2\epsilon'p)$ is the single point $P_{\pm}=[(\epsilon\pm1)p:0:1]$, corresponding to the only isomorphism class $D_{\epsilon'}^{\epsilon,iso}$. This completes our proof.
\end{proof}
\begin{cor}\label{distwintype}
    Let $P\in \adlc(Y(\qp))$. The Wintenberger type of $D_P$ satisfies
    $$X_{D_P}=\begin{cases}
        (A)\quad &v_p(\bar c(P))\le 0\\
        (B) &v_p(\bar c(P))\ge 1.
    \end{cases}$$
\end{cor}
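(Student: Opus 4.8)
The plan is to combine the classification of Wintenberger types in Proposition~\ref{mainthm1} with the explicit formula for $c\circ\iota$ obtained in the proof of Theorem~\ref{propmain} (formula~(\ref{c})), and then to translate the resulting identities into valuation inequalities by invoking Lemma~\ref{rathertechnical}. By Lemma~\ref{dfwa} the map $c$ is defined on all of $Y(\qp)^{\f}$ and descends to $\bar c$ on all of $\mwa$, so (writing $\bar c(P)$ for $\bar c$ applied to the class of $P$) the quantity $v_p(\bar c(P))$ is defined for every $P\in Y(\qp)^{\f}$.

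The first step is to recall, from Lemma~\ref{xdneqc} together with Proposition~\ref{mainthm1}, that for $P\in Y(\qp)^{\f}$ the type $X_{D_P}$ is necessarily $(A)$ or $(B)$, and more precisely: $X_{D_P}=(A)$ exactly when $D_P\simeq D_{a'}^{\epsilon,\nu}$ for some $a'\in\epsilon p+p^2\zp$, while $X_{D_P}=(B)$ exactly when $D_P\simeq D_{(a,b)}^{\epsilon,\mu}$ for some $(a,b)$ satisfying the arithmetic constraints of Proposition~\ref{mainthm1}(2), or $D_P\simeq D_{\epsilon'}^{\epsilon,iso}$ for some $\epsilon'\in\st{\pm1}$. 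Since $\bar c$ is the descent of $c$ and, by Proposition~\ref{munu}, these three families are precisely the modules $D_{\nu(a')}$, $D_{\mu(a,b)}$ and $D_{P_{\pm}}$, it suffices to compute $v_p$ of $c\circ\iota$ along the loci $\nu(\mba^1)$, $\mu(\mba^2\backslash\st{ab=-1})$ and $\st{P_{\pm}}$.

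The second step is the valuation bookkeeping. Along the $\nu$-locus, the proof of Theorem~\ref{propmain} identifies $(c\circ\iota)([a':0:1])$ with $(a'-\epsilon p)+p^2/(a'-\epsilon p)$, equal to $[1:0]$ when $a'=\epsilon p$; writing $a'=\epsilon p+p^2t$ with $t\in\zp$ this becomes $p^2t+t^{-1}$, whose two summands have valuations $\ge 2$ and $\le 0$ respectively, hence distinct, so $v_p(\bar c(P))=-v_p(t)\le 0$ (to be read as $-\infty$ when $t=0$). Along the $\mu$-locus, $(c\circ\iota)(\mu(a,b))=c(a,b)=-\frac{a^2+\epsilon p+b^2p^2}{ab+1}$, and Lemma~\ref{rathertechnical} states precisely that the arithmetic constraints of Proposition~\ref{mainthm1}(2) are equivalent to $c(a,b)\in p\zp$, i.e.\ to $v_p(\bar c(P))\ge 1$. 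Finally $(c\circ\iota)(P_{\pm})=[2\epsilon'p:1]$, so $v_p(\bar c(P))=1$ since $p\ge 7>2$. Assembling the three computations, $X_{D_P}=(A)$ forces $v_p(\bar c(P))\le 0$ and $X_{D_P}=(B)$ forces $v_p(\bar c(P))\ge 1$; as these alternatives are incompatible and one of them always holds, the stated dichotomy follows.

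The step I expect to demand the most care is the first valuation estimate: one has to check that the parameter range $a'\in\epsilon p+p^2\zp$ defining the type-$(A)$ family corresponds exactly to $v_p(\bar c(P))\le 0$, and in particular handle the boundary value $a'=\epsilon p$, where $\bar c(P)=[1:0]$ and only the convention $v_p([1:0])=-\infty$ keeps the statement clean. The other two cases reduce to quoting Lemma~\ref{rathertechnical} and an elementary remark on $v_p(2p)$, so I anticipate no genuine obstacle there.
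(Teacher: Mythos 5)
Your proposal is correct and follows essentially the same route as the paper: reduce to the three families of Proposition \ref{mainthm1} via Theorem \ref{propmain} and Proposition \ref{munu}, then compute $v_p(\bar c)$ on each family, using the formula $a'\mapsto a'-\epsilon p+p^2/(a'-\epsilon p)$ on the $\nu$-locus, the equivalence of Lemma \ref{rathertechnical} (which is exactly what underlies the constraint $c(a,b)\in p\zp$ in the paper's proof of Proposition \ref{mainthm1}(2)) on the $\mu$-locus, and $\bar c=\pm 2p$ at $P_{\pm}$. Your extra care with the boundary case $a'=\epsilon p$, $\bar c=[1\!:\!0]$, matches the paper's convention $v_p([1\!:\!0])=-\infty$.
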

\begin{proof}
    Using Theorem \ref{propmain} and Proposition \ref{munu}, we can compute the value of $\bar c$ corresponding to the three cases in Proposition \ref{mainthm1}.

   In the above proof we see that the function $a\mapsto c([a:0:1])$ is identified with $a\mapsto a-\epsilon p+\frac{p^2}{a-\epsilon p}$. The case (1) in Proposition \ref{mainthm1} implies that $a-\epsilon p\in p^2\zp$, which yields $v_p(\bar c)\le 0$. In the proof of Proposition \ref{mainthm1} (2), we have shown that $c(a,b)\in p\zp$, which implies that $v_p(\bar c)\ge 1$ in this case. Finally, case (3) corresponds to $\bar c=\pm2p$, which completes our proof.
\end{proof}
\begin{rem}\label{diagramp2}
For illustrating the decomposition (\ref{decompmbp}), we represent $\mbp^2(\qp)$ as the disjoint union of 
$$\mathbb A^2(\qp)=\st{[x:y:z]\in\mbp^2(\qp)~|~y\neq 0}\qaq\mbp^1(\qp)=\st{[x:0:z]\in\mbp^2(\qp)}$$
in the following diagram, where the open disk corresponds to $\mathbb A^2(\qp)$ and half of its boundary corresponds to $\mbp^1(\qp)$. This is merely a topological model (recall that $\mbp^2(\mathbb R)$ can be obtained from the closed disk $\mathbb D^2$ by identifying antipodal points on the boundary), intended to make the subsequent results more transparent.
\begin{center}
\tikzset{every picture/.style={line width=0.75pt}} 

\begin{tikzpicture}[x=0.75pt,y=0.75pt,yscale=-1,xscale=1]

\draw  [draw opacity=0] (244.01,229.64) .. controls (192.32,228.74) and (150.69,186.78) .. (150.69,135.15) .. controls (150.69,83.28) and (192.7,41.18) .. (244.72,40.66) -- (245.69,135.15) -- cycle ; \draw   (244.01,229.64) .. controls (192.32,228.74) and (150.69,186.78) .. (150.69,135.15) .. controls (150.69,83.28) and (192.7,41.18) .. (244.72,40.66) ;  
\draw  [draw opacity=0][dash pattern={on 4.5pt off 4.5pt}] (244.68,229.65) .. controls (245.02,229.65) and (245.35,229.65) .. (245.69,229.65) .. controls (298.16,229.65) and (340.69,187.34) .. (340.69,135.15) .. controls (340.69,82.96) and (298.16,40.65) .. (245.69,40.65) .. controls (245.02,40.65) and (244.36,40.66) .. (243.69,40.67) -- (245.69,135.15) -- cycle ; \draw  [dash pattern={on 4.5pt off 4.5pt}] (244.68,229.65) .. controls (245.02,229.65) and (245.35,229.65) .. (245.69,229.65) .. controls (298.16,229.65) and (340.69,187.34) .. (340.69,135.15) .. controls (340.69,82.96) and (298.16,40.65) .. (245.69,40.65) .. controls (245.02,40.65) and (244.36,40.66) .. (243.69,40.67) ;  
\draw  [color={rgb, 255:red, 0; green, 0; blue, 0 }  ,draw opacity=1 ][fill={rgb, 255:red, 0; green, 0; blue, 0 }  ,fill opacity=1 ] (177.68,67.3) .. controls (177.68,65.93) and (178.8,64.81) .. (180.17,64.81) .. controls (181.55,64.81) and (182.67,65.93) .. (182.67,67.3) .. controls (182.67,68.68) and (181.55,69.8) .. (180.17,69.8) .. controls (178.8,69.8) and (177.68,68.68) .. (177.68,67.3) -- cycle ;
\draw  [color={rgb, 255:red, 0; green, 0; blue, 0 }  ,draw opacity=1 ][fill={rgb, 255:red, 0; green, 0; blue, 0 }  ,fill opacity=1 ] (177.68,203.3) .. controls (177.68,201.93) and (178.8,200.81) .. (180.17,200.81) .. controls (181.55,200.81) and (182.67,201.93) .. (182.67,203.3) .. controls (182.67,204.68) and (181.55,205.8) .. (180.17,205.8) .. controls (178.8,205.8) and (177.68,204.68) .. (177.68,203.3) -- cycle ;
\draw    (215.17,45.8) -- (275.67,224.67) ;
\draw    (273.67,45.67) -- (217.67,224.67) ;
\draw  [color={rgb, 255:red, 0; green, 0; blue, 0 }  ,draw opacity=1 ][fill={rgb, 255:red, 255; green, 255; blue, 255 }  ,fill opacity=1 ] (241.67,136.39) .. controls (241.67,134.03) and (243.57,132.13) .. (245.92,132.13) .. controls (248.27,132.13) and (250.18,134.03) .. (250.18,136.39) .. controls (250.18,138.74) and (248.27,140.64) .. (245.92,140.64) .. controls (243.57,140.64) and (241.67,138.74) .. (241.67,136.39) -- cycle ;
\draw  [color={rgb, 255:red, 0; green, 0; blue, 0 }  ,draw opacity=1 ][fill={rgb, 255:red, 0; green, 0; blue, 0 }  ,fill opacity=1 ] (243.43,136.39) .. controls (243.43,135.01) and (244.55,133.89) .. (245.92,133.89) .. controls (247.3,133.89) and (248.42,135.01) .. (248.42,136.39) .. controls (248.42,137.76) and (247.3,138.88) .. (245.92,138.88) .. controls (244.55,138.88) and (243.43,137.76) .. (243.43,136.39) -- cycle ;
\draw    (154,110) .. controls (194,80) and (271.67,233) .. (311.67,203) ;

\draw (215,80.4) node [anchor=north west][inner sep=0.75pt]    {$l_{1}$};
\draw (232,131.4) node [anchor=north west][inner sep=0.75pt]    {$o$};
\draw (263,80.4) node [anchor=north west][inner sep=0.75pt]    {$l_{2}$};
\draw (154,54.4) node [anchor=north west][inner sep=0.75pt]    {$P_{+}$};
\draw (154,198.4) node [anchor=north west][inner sep=0.75pt]    {$P_{-}$};
\draw (172,136.4) node [anchor=north west][inner sep=0.75pt]    {$c=\infty $};
\draw (140,241.4) node [anchor=north west][inner sep=0.75pt]    {$\mbp^2(\qp)=U_{\infty}\sqcup U_{\mu}\sqcup U_{\nu}\sqcup\st{P_{\pm}}$};

\end{tikzpicture}
\end{center}
\end{rem}

\begin{rem}\label{intersection}
    In the proof, we can see that if $X^2+\epsilon pX+p^2$ has no solution in $\qp$, the set $c^{-1}(-\epsilon p)$ is a single point $o=[0:1:0]$ and thereby $\bar c$ is injective on $\mwa$. On the other hand, if $\mu_{i}\in\qp$ $(i=1,2)$ are the two different solutions of $X^2+\epsilon pX+p^2$, then $c^{-1}(-\epsilon p)$ consists of two projective lines $l_1,l_2$ in $\mbp^2(\qp)$ which pass $o$ and are of slopes $-\mu_1p,-\mu_2p$ respectively. Moreover, $l_1\backslash\st{o},l_2\backslash\st{o}$ and $\st{o}$ belong to three different $Z_{\varphi}(\qp)$-orbits respectively. Therefore, under the quotient topology of $\mwa$, the points $\pi(l_1)$ and $\pi(l_2)$ specialize to $\pi(o)$.
\end{rem}

\subsection{The GIT viewpoint}
Throughout this subsection, we fix an algebraically closed field $K$ over $\qp$ and work over $K$. The goal of this subsection is to prove that the rational map $c:Y_K\dashrightarrow\mbp^1_K$ defined by Definition \ref{func} becomes a projective GIT quotient. We can suppose that $\varphi$ is diagonal
$$\varphi=\br{\begin{smallmatrix}
    \lambda_1 & 0 & 0 & 0\\
    0 & \lambda_2 & 0 & 0\\
    0 & 0 & \lambda_3 & 0\\
    0 & 0 & 0 & \lambda_4
\end{smallmatrix}}\in\GL_4(K)$$ 
where $(\lambda_i)_{1\le i\le 4}\in(K^\times)^4$ are pairwise distinct nonzero elements. Denote by $G$ the algebraic group such that
$$G(R)=\st{g\in\SL_4(R)~|~g\varphi=\varphi g}$$
for any commutative ring $R$. It's clear that $G\simeq\g_m^3$ is a subtorus of the canonical maximal subtorus in $\GL_4$, which inherits the canonical action of $\GL_4$ on $Y$. 

Generally, the machinery of GIT asks for a nice (very ample) linearisation of the action, which is equivalent to a $G$-equivariant projective embedding of $X$. In our case, we should consider the Pl\"ucker embedding 
\begin{equation}\label{pluckerembedding}
    Y=\Gr(2,4)\to\mbp^5=\pj K[x_{ij}]_{1\le i<j\le 4}
\end{equation}
identify $Y$ with the closed subvariety of $\mbp^5$ defined by the Pl\"ucker relation 
$$x_{12}x_{34}-x_{13}x_{24}+x_{14}x_{23}=0.$$
Equivalently, (\ref{pluckerembedding}) induces an isomorphism $Y\simeq\pj R(Y)$ where $$R(Y)=K[x_{ij}]_{1\le i<j\le 4}/(x_{12}x_{34}-x_{13}x_{24}+x_{14}x_{23}).$$
The embedding (\ref{pluckerembedding}) becomes $G$-equivariant under the homomorphism $G\to\GL(\bigoplus_{1\le i<j\le 4} Kx_{ij})$ defined by $\diag(a_1,a_2,a_3,a_4)x_{ij}=a_ia_jx_{ij}$. It turns out that in this case, we can determine the invariant subalgebra.
\begin{prop}\label{ginvsubalg}
    The canonical morphism $K[x,y]\to R(Y)$ defined by $x\mapsto x_{12}x_{34}$ and $y\mapsto x_{14}x_{23}$ induces an isomorphism $K[x,y]\simeq R(Y)^G$, where $R(Y)^G$ is the $G$-invariant subalgebra.
\end{prop}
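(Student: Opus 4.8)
The plan is to prove the two ring maps displayed compose to an isomorphism by an explicit computation in the Pl\"ucker coordinate ring, organized by the weight grading coming from the torus $G\simeq\g_m^3$.

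First I would record the $G$-action on the six Pl\"ucker coordinates: $\diag(a_1,a_2,a_3,a_4)$ scales $x_{ij}$ by the character $\chi_{ij}=a_ia_j$ (with the determinant one condition $a_1a_2a_3a_4=1$ on $G$, so really these characters live in $X^*(G)$ modulo the relation $\chi_{12}\chi_{34}=\chi_{13}\chi_{24}=\chi_{14}\chi_{23}=1$, since each equals $a_1a_2a_3a_4$). Thus $x:=x_{12}x_{34}$, $y:=x_{14}x_{23}$ and also $x_{13}x_{24}$ are all $G$-invariant, and the Pl\"ucker relation $x_{12}x_{34}-x_{13}x_{24}+x_{14}x_{23}=0$ says $x_{13}x_{24}=x+y$, so indeed $K[x,y]$ surjects onto the invariant subalgebra generated by these three quadratic monomials; this already gives the map $K[x,y]\to R(Y)^G$ and shows it lands in the invariants.

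Next I would show surjectivity onto $R(Y)^G$. Since $R(Y)$ is graded by $X^*(G)$, the invariant subalgebra is the span of the weight-zero monomials. A monomial $\prod x_{ij}^{n_{ij}}$ in $K[x_{ij}]$ has weight $\sum n_{ij}\chi_{ij}$; because the $\lambda_i$ are pairwise distinct the three products $\chi_{12}\chi_{34},\chi_{13}\chi_{24},\chi_{14}\chi_{23}$ are the only nontrivial relations among the $\chi_{ij}$ (here I would check that the sublattice of $X^*(\GL_4\text{-torus})$ they generate has the quotient of the $\chi_{ij}$-span being exactly rank $3$, using that $\{(i,j)\}$ indexes edges of $K_4$ and $G$ is the codimension-one subtorus cutting the determinant). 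A weight-zero monomial, worked modulo the Pl\"ucker relation to eliminate $x_{13}x_{24}$, must therefore be a polynomial in $x_{12}x_{34}$ and $x_{14}x_{23}$: concretely, reduce using $x_{13}x_{24}\equiv x+y$ until no $x_{13},x_{24}$ remain, then observe that weight zero forces the exponents of $x_{12},x_{34}$ to be equal (both determine the $a_1$-grading jointly) and likewise for $x_{14},x_{23}$, so the reduced monomial is a power of $x$ times a power of $y$. Hence $K[x,y]\twoheadrightarrow R(Y)^G$.

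Finally, injectivity: $x$ and $y$ are algebraically independent in $R(Y)$. For this I would either exhibit a point of $Y$ (equivalently a $2$-plane, e.g. spanned by $e_1+e_3$ and $e_2+e_4$ scaled freely) at which $x_{12}x_{34}$ and $x_{14}x_{23}$ take arbitrary prescribed values, or note that $R(Y)$ is an integral domain of dimension $5$ and the $G$-action has generic orbits of dimension $3$, so $\dim R(Y)^G\ge 2$, forcing $K[x,y]\hookrightarrow R(Y)^G$ to be an isomorphism by comparing transcendence degrees. The main obstacle I anticipate is the combinatorial bookkeeping in the surjectivity step — correctly identifying all weight-zero monomials modulo the single Pl\"ucker relation and showing the reduction terminates in $K[x,y]$ — rather than anything deep; everything else is linear algebra on the character lattice of $G$.
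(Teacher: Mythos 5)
Your strategy is sound and genuinely different in organization from the paper's. You work upstairs in $K[x_{ij}]$: you compute the kernel of the weight map $\mathbb{Z}^6\to X^*(G)$, $(n_{ij})\mapsto\prod(a_ia_j)^{n_{ij}}$, which is indeed spanned by $e_{12}+e_{34}$, $e_{13}+e_{24}$, $e_{14}+e_{23}$ (so a monomial is $G$-invariant iff $n_{12}=n_{34}$, $n_{13}=n_{24}$, $n_{14}=n_{23}$), and then eliminate $x_{13}x_{24}$ via the Pl\"ucker relation; since the Pl\"ucker ideal is generated by a weight vector, the weight decomposition descends and $R(Y)^G$ is the image of $K[x_{ij}]^G$, so this does give surjectivity of $K[x,y]\to R(Y)^G$. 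The paper avoids the lattice computation: it exhibits $R(Y)$ as a free module over $S=K[x_{12}x_{34},x_{14}x_{23}]$ with basis the monomials containing no complementary pair ($d_{12}d_{34}=d_{13}d_{24}=d_{14}d_{23}=0$), and rules out nonconstant invariant basis monomials by a covering argument: a support consisting of pairwise non-complementary $2$-sets covering $\{1,2,3,4\}$ must be three $2$-sets through a common index $k$, so $\Phi_k(\underline d)$ strictly exceeds every other $\Phi_m(\underline d)$, whereas triviality of the weight forces all $\Phi_m$ equal. The free-module statement packages surjectivity and the algebraic independence of $x_{12}x_{34}$, $x_{14}x_{23}$ in one stroke; your lattice computation is arguably more transparent for the surjectivity half but must treat injectivity separately. (Minor point: the distinctness of the $\lambda_i$ only serves to identify $G$ with the full diagonal torus of $\SL_4$; after that the relation lattice is a direct computation.)

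The injectivity step is the one place where your concrete proposals fail as written. The witness you suggest, the plane spanned by $e_1+e_3$ and $e_2+e_4$ with free scalings, satisfies $x_{13}=x_{24}=0$, so the Pl\"ucker relation forces $x_{12}x_{34}+x_{14}x_{23}=0$ there; on that family $x+y\equiv 0$ identically, so it cannot detect algebraic independence. The fix is easy: on the chart of planes spanned by $e_1+ae_3+be_4$ and $e_2+ce_3+de_4$ one gets $x_{12}=1$, $x_{13}=c$, $x_{14}=d$, $x_{23}=-a$, $x_{24}=-b$, $x_{34}=ad-bc$, hence $x=ad-bc$ and $y=-ad$, which are algebraically independent. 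Your alternative dimension count is also not valid as stated: ``generic orbits of dimension $3$, so $\dim R(Y)^G\ge 2$'' is false as a general principle (e.g.\ $\mathbf{G}_m$ acting on $\mathbb{A}^2$ with weights $(1,1)$ has $1$-dimensional generic orbits but only constant invariants); to argue this way you would need the generic $G$-orbit in the affine cone to be closed, which does hold here (the six weights $e_i+e_j$ have $0$ in the relative interior of their convex hull in $X^*(G)_{\mathbb{R}}$) but requires saying so. With either repair the argument is complete and independent of the paper's.
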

\begin{proof}
    Denote $S=K[x_{12}x_{34},x_{14}x_{23}]\subset R(Y)$ and $\underline{d}=(d_{ij})_{1\le i<j\le 4}$. Let
    $$D=\st{\underline{d}\in(\mathbb Z_{\ge 0})^6~\!|~\!d_{12}d_{34}=d_{13}d_{24}=d_{14}d_{23}=0}.$$
    It follows that $S\subset R(Y)^G$ and that $R(Y)$ is a free $S$-module with basis 
    $$\st{\prod_{1\le i<j\le 4}x_{ij}^{d_{ij}}~\!|~\!\underline{d}\in D}.$$ 
    The monomial $f=\prod_{1\le i<j\le 4}x_{ij}^{d_{ij}}$ is a weight vector of weight $\alpha(f)=\prod_{1\le i<j\le 4}(\alpha_i\alpha_j)^{d_{ij}}$, where $\alpha_i$ is the cocharacter of $G$ defined by
    $$\alpha_i(\diag(a_1,a_2,a_3,a_4))=a_i.$$
    Therefore, $f$ is $G$-stable if and only if $\alpha(f)$ is trivial. Consider the map
    $$\Phi:(\mathbb Z_{\ge 0})^6\to(\mathbb Z_{\ge 0})^4,~
        \underline{d}\mapsto\br{\Phi_k(\underline{d})}_{k\in[\![1,4]\!]},\mbox{ where }\Phi_k(\underline{d})=\sum_{i=1}^{k-1}d_{ik}+\sum_{j=k+1}^4d_{kj}.$$
    For $n\in\mz_{\ge 0}$, denote
    $$E_n=\st{\underline{d}\in(\mathbb Z_{\ge 0})^6~\!|~\!\Phi_k(\underline{d})=n,~\forall~k\in[\![1,4]\!]}.$$
    Then, $\alpha(f)$ is trivial if and only if $\underline{d}\in\bigsqcup_{n\ge 0}E_n$.

    To verify that $R(Y)^G=S$, it suffices to show that $D\cap E_n=\varnothing$ for any $n\ge 1$. Denote 
    $$U=\st{\underline{d}\in(\mathbb Z_{\ge 0})^6~\!|~\!\Phi_k(\underline{d})>0,~\forall k\in[\![1,4]\!]}$$
    and for $k\in[\![1,4]\!]$,
    $$V_k=\st{\underline{d}\in(\mathbb Z_{\ge 0})^6~\!|~\!\Phi_k(\underline{d})>\Phi_{m}(\underline{d}),~\forall~0\le m\le 4,~m\neq k}.$$
    For any $n\ge 1$, by definition we have $E_n\subset U$ and $E_n\cap V_k=\varnothing$. We claim that $D\cap U\subset\bigsqcup_{k\in[\![1,4]\!]}V_k$, which thereby implies that $D\cap E_n=\varnothing$. 
    
    Denote by $\Sigma$ the set consisting of the subsets of $2$ elements in $[\![1,4]\!]$ and for $A=\st{i<j}\in\Sigma$, let $d_A=d_{ij}$. For any $\underline{d}\in D\cap U$, the set $\Sigma(\underline{d})=\st{A\in\Sigma~\!|~\!d_A\neq 0}$ satisfies
    \begin{enumerate}
        \item[$\bullet$] $A\in\Sigma(\underline{d})$ implies that $A^c\not\in\Sigma(\underline{d})$
        \item[$\bullet$] $\bigcup_{A\in\Sigma(\underline{d})}A=[\![1,4]\!]$
    \end{enumerate}
    which implies that $\#\Sigma(\underline{d})=3$ and hence $\#\bigcap_{A\in\Sigma(\underline{d})}A=1$. It follows that $\bigcap_{A\in\Sigma(\underline{d})}A=\st{k}$ for a certain $k\in[\![1,4]\!]$, which implies that $\underline{d}\in V_k$ as claimed.
\end{proof}
In particular, we see that the ideal $R(Y)_+^G$ is generated by $x_{12}x_{34},x_{14}x_{23}$. By definition, the semistable set $Y^{ss}$ is given by the complement to the closed subscheme of $Y$ defined by $R(Y)_+^G$, and the GIT quotient of this action is given by $Y^{ss}\to Y/\!\!/G:=\pj R(Y)^G\simeq\mbp^1$. This is a special case of Gelfand-MacPherson Correspondence (see \cite[Theorem 2.4.7]{kapranov1992chowquotientsgrassmanniani}). We have the following application.

\begin{prop}\label{cisgit}
    There exist nonzero $G$-invariant sections $s_1,s_2\in\Gamma(Y,\mo_{Y}(2))^G$ such that
    $$\Gamma(Y,\mo_{Y}(2))^G=K s_1\oplus K s_2\qaq c=[s_1:s_2].$$
    In particular, we have $\dm_{c}=Y^{ss}$ both being the complement to the common vanishing locus of $s_1$ and $s_2$. The rational map $c:Y\dashrightarrow\mbp^1_{K}$ is the projective GIT quotient with respect to $\mo_{Y}(1)$. 
\end{prop}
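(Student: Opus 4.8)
The plan is to realize $\bar c$ as the projectivization of a basis of the two-dimensional space $\Gamma(Y_{\bqp},\mo_{Y_{\bqp}}(2))^G$; then everything reduces to Proposition \ref{ginvsubalg}. Since $R(Y)$ is the homogeneous coordinate ring of $Y$ for the Pl\"ucker embedding (the Grassmannian being projectively normal), its degree-$n$ part is $\Gamma(Y,\mo_Y(n))$. Taking $G$-invariants, which is compatible with the grading, and applying Proposition \ref{ginvsubalg} in degree $2$, one gets $\Gamma(Y_{\bqp},\mo_{Y_{\bqp}}(2))^G=\bqp\, x_{12}x_{34}\oplus\bqp\, x_{14}x_{23}$.

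The key step is to present $c$ as a ratio of two $G$-invariant sections of $\mo_{Y_{\bqp}}(2)$. For a plane $V\subset\bqp^4$ spanned by $v_1,v_2$, I would set $\delta(V)=\det(v_1\mid v_2\mid\varphi v_1\mid\varphi v_2)$; a change of basis of $V$ multiplies this by the square of its transition determinant, so $\delta$ is a global section of $\mo_{Y_{\bqp}}(2)$, and it vanishes exactly on $Y_{\bqp}\setminus U$. On $U$, Cramer's rule applied to the relations $\varphi^2 v_i\equiv\sum_j m_{ji}v_j$ modulo $\varphi(V)$ expresses the coefficients $m_{ji}$, and hence the trace $m_{11}+m_{22}=\tr(\pr_P\circ\varphi^2|_{V_P})$, as $\delta(V)^{-1}$ times a polynomial of degree $2$ in the Pl\"ucker coordinates; therefore $s_1:=\delta\cdot\tr(\pr_P\circ\varphi^2|_{V_P})$ is the restriction to $U$ of a global section of $\mo_{Y_{\bqp}}(2)$, and with $s_2:=\delta$ one has $c=[s_1:s_2]$ as rational maps on $Y_{\bqp}$. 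Since every $g\in G$ commutes with $\varphi$ and, lying in $\SL_4$, has $\det g=1$, the section $\delta$ is $G$-invariant; and as $g$ carries the decomposition $V_P\oplus\varphi(V_P)$ onto $V_{gP}\oplus\varphi(V_{gP})$, it conjugates $\pr_P\circ\varphi^2|_{V_P}$ into $\pr_{gP}\circ\varphi^2|_{V_{gP}}$, so the trace is $G$-invariant as well. Consequently $s_1,s_2\in\bqp\, x_{12}x_{34}\oplus\bqp\, x_{14}x_{23}$; they are linearly independent because $\bar c$ is non-constant (e.g.\ by Theorem \ref{propmain}), so $\{s_1,s_2\}$ is a basis of $\Gamma(Y_{\bqp},\mo_{Y_{\bqp}}(2))^G$ and $\bar c=[s_1:s_2]$.

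It then remains to match the open sets and to recognize the quotient. The hyperplane sections $\{x_{12}=0\}$, $\{x_{34}=0\}$, $\{x_{14}=0\}$, $\{x_{23}=0\}$ of $Y$ are pairwise distinct irreducible divisors, so $x_{12}x_{34}$ and $x_{14}x_{23}$ have no common component on $Y$, and the same holds for $s_1,s_2$ since they span the same subspace; hence the base locus $V(s_1)\cap V(s_2)=V(x_{12}x_{34})\cap V(x_{14}x_{23})$ has codimension at least $2$ and $[s_1:s_2]$ is a morphism precisely on its complement. Because the domain of definition of a rational map is unchanged under the base change $\qp\to\bqp$, this gives $Y_{\bqp}^{\df}=Y_{\bqp}\setminus(V(x_{12}x_{34})\cap V(x_{14}x_{23}))$. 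On the other hand $R(Y)_+^G$ is the ideal $(x_{12}x_{34},x_{14}x_{23})$ (as observed after Proposition \ref{ginvsubalg}), so the semistable locus for the $G$-linearized bundle $\mo_{Y_{\bqp}}(1)$ is the same open set, whence $Y_{\bqp}^{\df}=Y_{\bqp}^{ss}$. Finally the projective GIT quotient is $Y_{\bqp}^{ss}\to\pj R(Y)^G\cong\mbp^1_{\bqp}$, given by $[x_{12}x_{34}:x_{14}x_{23}]$; since $\{s_1,s_2\}$ is another basis of $\Gamma(Y_{\bqp},\mo_{Y_{\bqp}}(2))^G$, the two maps $[s_1:s_2]$ and $[x_{12}x_{34}:x_{14}x_{23}]$ differ by an automorphism of $\mbp^1_{\bqp}$, and so $\bar c$ is the projective GIT quotient with respect to $\mo_{Y_{\bqp}}(1)$.

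I expect the main obstacle to lie in the middle paragraph: one must verify carefully that the Cramer-rule numerator $s_1$ genuinely extends to a degree-$2$ section of $\mo_{Y_{\bqp}}(2)$ with no residual denominator and is $G$-invariant, bearing in mind that only the trace $m_{11}+m_{22}$, and not the individual entries $m_{ji}$, is independent of the chosen basis of $V$. A secondary point requiring care is the codimension-$\ge2$ claim for the base locus $V(s_1)\cap V(s_2)$, which guarantees that $[s_1:s_2]$ really does fail to extend there; this follows from the description of coordinate Schubert divisors on the Grassmannian but should be spelled out.
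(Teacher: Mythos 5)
Your proof is correct, but it takes a genuinely different route from the paper's. The paper diagonalizes $\varphi$ over $\bqp$ and computes $\bar c$ explicitly in the Pl\"ucker coordinates of an eigenbasis: it exhibits $\bar c$ as the ratio of two explicit linear combinations of the invariant monomials $x_{14}x_{23}$ and $x_{13}x_{24}$, whose coefficient matrix has determinant $\prod_{i<j}(\lambda_i-\lambda_j)\neq 0$; invariance, the basis property, and the comparison with the GIT quotient of Proposition \ref{ginvsubalg} are all read off from that one formula. You instead argue intrinsically: $s_2=\delta$ with $\delta(V)=\det(v_1\mid v_2\mid\varphi v_1\mid\varphi v_2)$ and $s_1=\delta\cdot\tr(\pr_P\circ\varphi^2|_{V_P})$, with invariance coming from $g\varphi=\varphi g$ and $\det g=1$, and linear independence from the non-constancy of $\bar c$. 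This avoids any eigenvalue computation and, unlike the paper, makes explicit why $Y^{\df}_{\bqp}$ is exactly the complement of the base locus (no common component, codimension $\ge 2$), a point the paper leaves implicit and later uses in the proof of Lemma \ref{dfwa}; the price is that you do not obtain the closed formula for $\bar c$ that the paper reuses there. The step you flag as the main obstacle does go through: writing $\omega=v_1\wedge v_2$, your Cramer numerator is $\bigl(\varphi^2v_1\wedge v_2+v_1\wedge\varphi^2v_2\bigr)\wedge\bigl(\varphi v_1\wedge\varphi v_2\bigr)$, i.e.\ the pairing of $D(\omega)$ with $\Lambda^2\varphi(\omega)$, where $D$ is the derivation induced by $\varphi^2$ on $\Lambda^2\bqp^4$; both factors are linear in $\omega$, so this is a quadratic form in the Pl\"ucker coordinates and $s_1$ is indeed a global $G$-invariant section of $\mo_{Y_{\bqp}}(2)$. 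One phrase to tighten: ``the same holds for $s_1,s_2$ since they span the same subspace'' --- what follows from spanning the same subspace is only that $V(s_1)\cap V(s_2)$ equals the base locus $V(x_{12}x_{34})\cap V(x_{14}x_{23})$; the absence of a common component of $s_1,s_2$ is then a consequence of that locus having codimension $\ge 2$ (distinct Schubert divisors), not an a priori fact.
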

\begin{proof}
Over $K$, we can diagonalize the matrix of $\varphi$ into $\diag(\lambda_1,\lambda_2,\lambda_3,\lambda_4)$ and use the Pl\"ucker embedding (\ref{pluckerembedding}). Compute by definition, for $[\underline x]\in V(x_{12}x_{34}-x_{13}x_{24}+x_{14}x_{23})\subset\mbp^5(K)$, we have
$$
c([\underline x]) = \frac{(\lambda_1\lambda_3+\lambda_2\lambda_4)(\lambda_1-\lambda_3)(\lambda_2-\lambda_4)x_{14}x_{23} - (\lambda_1\lambda_4+\lambda_2\lambda_3)(\lambda_1-\lambda_4)(\lambda_2-\lambda_3)x_{13}x_{24}}{-(\lambda_1-\lambda_3)(\lambda_2-\lambda_4)x_{14}x_{23} + (\lambda_1-\lambda_4)(\lambda_2-\lambda_3)x_{13}x_{24}}.
$$
Note that by Proposition \ref{ginvsubalg}, $x_{14}x_{23}$ and $x_{13}x_{24}$ consist of a $K$-basis of $\Gamma(Y,\mo_{Y}(2))^G$. It follows that $c$ is induced by $G$-invariant sections. Moreover,
$$
\left|\begin{smallmatrix}
(\lambda_1\lambda_3+\lambda_2\lambda_4)(\lambda_1-\lambda_3)(\lambda_2-\lambda_4) & -(\lambda_1\lambda_4+\lambda_2\lambda_3)(\lambda_1-\lambda_4)(\lambda_2-\lambda_3)\\
-(\lambda_1-\lambda_3)(\lambda_2-\lambda_4) & (\lambda_1-\lambda_4)(\lambda_2-\lambda_3)
\end{smallmatrix}\right|
=\prod_{1\le i<j\le 4}(\lambda_i-\lambda_j)\neq 0.$$
Therefore, the morphism $c$ differs by an automorphism of $\mbp^1$ from the projective GIT quotient obtained from Proposition \ref{ginvsubalg}.
\end{proof}

\begin{proof}[Proof of Lemma \ref{dfwa}]
    From the proof of Proposition \ref{cisgit}, we see that for any $\bqp$-algebra $K$,
    $$
        \dm_c(K)=Y(K)\backslash\st{[\underline x]\in Y(K)~\!|~\! x_{12}x_{34}=x_{13}x_{24}=0}
    $$
    and that
    $$\st{x_{12}x_{34}=x_{13}x_{24}=0}=\bigcup_{1\le k\le 4}\underbrace{\st{P\in Y(K)~\!|~\!V_P\subset{\textstyle\bigoplus_{i\neq k}}Ke_i}}_{Z_k(K)}\cup\bigcup_{1\le k\le 4}\underbrace{\st{P\in Y(K)~\!|~\!e_k\in V_P}}_{W_k(K)}.$$
Note that this decomposition is defined over $\qp$. Let $P\in Y(\qp)$ and denote by $V_P\subset\qp^4$ the $2$-dimensional subspace defined by $P$. If $P\in Z_k(\qp)$, we have $\dim\sum_{i\ge 0}\varphi^i(V_P)\le 3$. Note that $\sum_{i\ge 0}\varphi^i(V_P)$ is $\varphi$-stable and we deduce that $\dim\sum_{i\ge 0}\varphi^i(V_P)=2$ since the polynomial $\chi_{\varphi}$ has no factor of degree $3$ over $\qp$. It follows that $V_P$ is $\varphi$-stable. If $P\in W_k(\qp)$, $V_P\otimes\bqp$ contains an eigenvector $v$ of $\varphi$. It follows that $V_P\otimes\bqp$ is spanned by $\st{\sigma(v)}_{\sigma\in\Gamma_{\qp}}$, which implies that $V_P$ is $\varphi$-stable. 

On the other hand, suppose that $P\in Y(\qp)$ corresponds to a $\varphi$-stable subspace $V_P\subset\qp^4$. We claim that $V_P\otimes\bqp$ has the form $\ang{e_m,e_n}$ for some $1\le m\neq n\le 4$. Indeed, if $v=\sum a_ie_i\in V_P\otimes\bqp$ with $a_k\in\bqp^\times$ for some $1\le k\le 4$, we have 
$$\prod_{i\neq k}(\varphi-\lambda_i)\cdot v=\prod_{i\neq k}(\lambda_k-\lambda_i)a_ke_k\in V_P\otimes\bqp.$$
Therefore, $V_P\otimes\bqp$ is spanned by basis vectors, and our assertion follows from the dimension counting. In particular, we have $P\in \st{x_{12}x_{34}=x_{13}x_{24}=0}$.

By definition, $\adlc(Y(\qp))$ is the complement of the $\varphi$-stable points, and Lemma \ref{dfwa} follows.
\end{proof}
\subsection{Distribution of monodromy groups} 
Invoking the results in Section \ref{classmonodromy}, we can determine the neutral components of the algebraic monodromy groups associated with the isomorphism class $D_P$ for any $P\in\mwa$. Combining Theorem \ref{mainthm}, Theorem \ref{propmain} and Remark \ref{intersection}, we have the following result.
\begin{thm}\label{distmono}
    We have $H_{D_{P}}^\circ\simeq_{\bqp}\GL_{2}\times_{\det}\GL_{2}$ for all but finitely many $P\in\mwa$. The exceptions are 
    $$H_{D_{P}}^\circ\simeq_{\bqp}\begin{cases}
        \GL_{2}&\bar c(P)=i(\pm2p)\\
        \g_{m}^2~\mbox{or}~\g_{a}^2\rtimes_g\g_{m}^2&\bar c(P)=i(-\epsilon p)\\
        \g_{m}^3 &\epsilon=0\mbox{ and }\bar c(P)=\infty
    \end{cases}$$
    where $g(s,t)=\br{\begin{smallmatrix}
        s & 0\\
        0 & t
    \end{smallmatrix}}$. When $\bar c(P)=i(-\epsilon p)$, in the context of Remark \ref{intersection}, the case $H_{D_{P}}^\circ\simeq_{\bqp}\g_{m}^2$ occurs when $P=\pi(o)$, and $H_{D_{P}}^\circ\simeq_{\bqp}\g_{a}^2\rtimes\g_{m}^2$ when $P=\pi(x_i)$ where $x_i\in l_i\backslash\st{o}$ for $i=1,2$.
\end{thm}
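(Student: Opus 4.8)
The plan is to read off the answer from the classification of monodromy groups in Theorem~\ref{mainthm}, using the dictionary between $\mwa$ and the families $D^{\epsilon,iso}_{\epsilon'}$, $D^{\epsilon,\nu}_{a'}$, $D^{\epsilon,\mu}_{(a,b)}$ provided by Proposition~\ref{munu}, together with the formula for $c\circ\iota$ established inside the proof of Theorem~\ref{propmain}.

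First I would observe that, by Proposition~\ref{mainthm1}, every $P\in\mwa$ has $D_P$ isomorphic to one of $D^{\epsilon,iso}_{\epsilon'}$, $D^{\epsilon,\nu}_{a'}$ or $D^{\epsilon,\mu}_{(a,b)}$, so Theorem~\ref{mainthm} applies directly and yields $H^{\circ}_{D_P}\simeq_{\bqp}\GL_2\times_{\det}\GL_2$ except in three situations: (a) $D_P\simeq D^{\epsilon,iso}_{\epsilon'}$, giving $H^{\circ}\simeq\GL_2$; (b) $D_P\simeq D^{\epsilon,\nu}_{a'}$ with $a'=\epsilon=0$, giving $H^{\circ}\simeq\g_m^3$; (c) $D_P\simeq D^{\epsilon,\mu}_{(a,b)}$ with $c(a,b)=-\epsilon p$, giving $H^{\circ}\simeq\g_m^2$ or $\g_a^2\rtimes_g\g_m^2$.

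The second step is to recast conditions (a), (b), (c) in terms of $\bar c(P)$. From the proof of Theorem~\ref{propmain} one has $c(\iota(\mu(a,b)))=c(a,b)$, so (c) holds exactly when $\bar c(P)=i(-\epsilon p)$; likewise $\bar c(P_{\epsilon'})=2\epsilon'p$ and $c(\iota(\nu(a')))$ is the function $a'\mapsto (a'-\epsilon p)+p^2/(a'-\epsilon p)$. Since that proof also identifies each fiber $\bar c^{-1}(i(\pm2p))$ with a single point (the class of $D^{\epsilon,iso}_{\pm1}$) and $\bar c^{-1}(\infty)$ with the single class of $D^{\epsilon,\nu}_{\epsilon p}$, condition (a) is equivalent to $\bar c(P)=i(\pm2p)$, while (b) forces $\epsilon=0$ and $\bar c(P)=\infty$; conversely, for $\epsilon\neq0$ the point $\bar c(P)=\infty$ corresponds to $D^{\epsilon,\nu}_{\epsilon p}$ with $a'=\epsilon p\neq0$, which already lies in the generic stratum. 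Finiteness of the exceptional locus is then immediate: the exceptional values $i(\pm2p)$, $i(-\epsilon p)$, $\infty$ form a finite subset of $\mbp^1(\qp)$, and by Theorem~\ref{propmain} the map $\bar c$ is injective away from $\bar c^{-1}(i(-\epsilon p))$, which has at most three points by Remark~\ref{intersection}.

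Finally, to obtain the refined description over $\bar c^{-1}(i(-\epsilon p))$ I would combine Remark~\ref{intersection} with Proposition~\ref{sit2}. If $X^2+\epsilon pX+p^2$ has no root in $\qp$, then $\bar c^{-1}(i(-\epsilon p))=\{\pi(o)\}$ with $o=[0:1:0]=\mu(0,0)$, hence $D_{\pi(o)}\simeq D^{\epsilon,\mu}_{(0,0)}$; here $ab=0$, so $S=0$ and $H^{\circ}\simeq\g_m^2$ by Proposition~\ref{sit2}. If the polynomial splits with distinct roots, then $\bar c^{-1}(i(-\epsilon p))$ consists of $\pi(o)$ together with the classes $\pi(x_i)$ for $x_i\in l_i\setminus\{o\}$, $i=1,2$; again $\pi(o)$ gives $\g_m^2$, and each $\pi(x_i)$ is represented by some $D^{\epsilon,\mu}_{(a,b)}$ with $c(a,b)=-\epsilon p$ and $ab\neq0$, so Proposition~\ref{sit2}(ii) gives $H^{\circ}\simeq\g_a^2\rtimes_g\g_m^2$. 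I expect the proof to be short given this machinery; the one point needing a small computation is that a parameter with $c(a,b)=-\epsilon p$ has $S=0$ only when $(a,b)=(0,0)$, so that the points of $l_i\setminus\{o\}$ genuinely lie in the non-reductive stratum of Proposition~\ref{sit2} rather than the degenerate one, after which Proposition~\ref{sit2} and Remark~\ref{intersection} separate the two cases. The rest is bookkeeping with results already established.
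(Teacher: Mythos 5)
Your proposal is correct and takes essentially the same route as the paper, whose proof of Theorem \ref{distmono} is exactly this combination of Theorem \ref{mainthm} with Proposition \ref{munu}, Theorem \ref{propmain} and Remark \ref{intersection}; you merely spell out the bookkeeping, including the useful check (already implicit in the proof of Proposition \ref{sit2}) that $c(a,b)=-\epsilon p$ with $(a,b)\neq(0,0)$ forces $ab\neq0$, so the points of $l_i\setminus\{o\}$ land in the non-reductive stratum. Note that your conclusion $\g_{m}^2$ at $P=\pi(o)$ is the intended one: the ``$\g_{m}^3$'' appearing at that spot in the statement of Theorem \ref{distmono} is a typo, as the introduction's version (Theorem \ref{introthm4}) and Proposition \ref{sit2} confirm.
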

\begin{proof}
    This result can be viewed as a geometric presentation of Theorem \ref{mainthm}, and it follows directly from that theorem in combination with Proposition \ref{munu}.
\end{proof}

We can also consider the distribution of algebraic monodromy groups of the filtered $\varphi$-modules associated with the points on $\mbp^2(\qp)$. The distribution for the case $\epsilon=0$ is illustrated in the following diagram.
\begin{center}
\tikzset{every picture/.style={line width=0.75pt}} 

\begin{tikzpicture}[x=0.75pt,y=0.75pt,yscale=-1,xscale=1]

\draw  [draw opacity=0] (264.01,249.64) .. controls (212.32,248.74) and (170.69,206.78) .. (170.69,155.15) .. controls (170.69,103.28) and (212.7,61.18) .. (264.72,60.66) -- (265.69,155.15) -- cycle ; \draw   (264.01,249.64) .. controls (212.32,248.74) and (170.69,206.78) .. (170.69,155.15) .. controls (170.69,103.28) and (212.7,61.18) .. (264.72,60.66) ;  
\draw  [draw opacity=0][dash pattern={on 4.5pt off 4.5pt}] (264.68,249.65) .. controls (265.02,249.65) and (265.35,249.65) .. (265.69,249.65) .. controls (318.16,249.65) and (360.69,207.34) .. (360.69,155.15) .. controls (360.69,102.96) and (318.16,60.65) .. (265.69,60.65) .. controls (265.02,60.65) and (264.36,60.66) .. (263.69,60.67) -- (265.69,155.15) -- cycle ; \draw  [dash pattern={on 4.5pt off 4.5pt}] (264.68,249.65) .. controls (265.02,249.65) and (265.35,249.65) .. (265.69,249.65) .. controls (318.16,249.65) and (360.69,207.34) .. (360.69,155.15) .. controls (360.69,102.96) and (318.16,60.65) .. (265.69,60.65) .. controls (265.02,60.65) and (264.36,60.66) .. (263.69,60.67) ;  
\draw  [color={rgb, 255:red, 0; green, 0; blue, 0 }  ,draw opacity=1 ][fill={rgb, 255:red, 0; green, 0; blue, 0 }  ,fill opacity=1 ] (197.68,87.3) .. controls (197.68,85.93) and (198.8,84.81) .. (200.17,84.81) .. controls (201.55,84.81) and (202.67,85.93) .. (202.67,87.3) .. controls (202.67,88.68) and (201.55,89.8) .. (200.17,89.8) .. controls (198.8,89.8) and (197.68,88.68) .. (197.68,87.3) -- cycle ;
\draw  [color={rgb, 255:red, 0; green, 0; blue, 0 }  ,draw opacity=1 ][fill={rgb, 255:red, 0; green, 0; blue, 0 }  ,fill opacity=1 ] (197.68,223.3) .. controls (197.68,221.93) and (198.8,220.81) .. (200.17,220.81) .. controls (201.55,220.81) and (202.67,221.93) .. (202.67,223.3) .. controls (202.67,224.68) and (201.55,225.8) .. (200.17,225.8) .. controls (198.8,225.8) and (197.68,224.68) .. (197.68,223.3) -- cycle ;
\draw    (235.17,65.8) -- (295.67,244.67) ;
\draw    (293.67,65.67) -- (237.67,244.67) ;
\draw  [color={rgb, 255:red, 0; green, 0; blue, 0 }  ,draw opacity=1 ][fill={rgb, 255:red, 255; green, 255; blue, 255 }  ,fill opacity=1 ] (261.67,156.39) .. controls (261.67,154.03) and (263.57,152.13) .. (265.92,152.13) .. controls (268.27,152.13) and (270.18,154.03) .. (270.18,156.39) .. controls (270.18,158.74) and (268.27,160.64) .. (265.92,160.64) .. controls (263.57,160.64) and (261.67,158.74) .. (261.67,156.39) -- cycle ;
\draw  [color={rgb, 255:red, 0; green, 0; blue, 0 }  ,draw opacity=1 ][fill={rgb, 255:red, 0; green, 0; blue, 0 }  ,fill opacity=1 ] (263.43,156.39) .. controls (263.43,155.01) and (264.55,153.89) .. (265.92,153.89) .. controls (267.3,153.89) and (268.42,155.01) .. (268.42,156.39) .. controls (268.42,157.76) and (267.3,158.88) .. (265.92,158.88) .. controls (264.55,158.88) and (263.43,157.76) .. (263.43,156.39) -- cycle ;
\draw    (174,130) .. controls (214,100) and (291.67,253) .. (331.67,223) ;
\draw    (257.67,33) -- (246.67,90) ;
\draw    (270.67,33) -- (283.67,90) ;
\draw    (274,160) -- (370.67,206) ;

\draw (235,100.4) node [anchor=north west][inner sep=0.75pt]    {$l_{1}$};
\draw (252,151.4) node [anchor=north west][inner sep=0.75pt]    {$o$};
\draw (283,100.4) node [anchor=north west][inner sep=0.75pt]    {$l_{2}$};
\draw (158,72.4) node [anchor=north west][inner sep=0.75pt]    {$\GL_{2}$};
\draw (197,147.4) node [anchor=north west][inner sep=0.75pt]    {$\g_{m}^3$};
\draw (373,201.4) node [anchor=north west][inner sep=0.75pt]    {$\g_{m}^2$};
\draw (158,217.4) node [anchor=north west][inner sep=0.75pt]    {$\GL_{2}$};
\draw (233,11.4) node [anchor=north west][inner sep=0.75pt]    {$\g_{a}^2\rtimes\g_{m}^2$};
\draw (296,129.4) node [anchor=north west][inner sep=0.75pt]    {$\GL_{2}\times_{\det}\GL_{2}$};
\draw (102,257) node [anchor=north west][inner sep=0.75pt]   [align=left] {Distribution of monodromy groups on $\mbp^2(\qp)$ when $\epsilon=0$};

\end{tikzpicture}
\end{center}

\bibliographystyle{amsalpha}
\bibliography{bib}
\end{document}